\numberwithin{equation}{section}
\numberwithin{figure}{section}
\newcommand{\kom}[1]{}
\renewcommand{\kom}[1]{{\bf [#1]}}
\newcommand{\be}{\begin{equation}}
\newcommand{\ee}{\end{equation}}
 \def\1{\raisebox{2pt}{\rm{$\chi$}}}
\newtheorem{theorem}{Theorem}[section]
\newtheorem{lem}[theorem]{Lemma}
\newtheorem{proposition}[theorem]{Proposition}
\newtheorem{remark}{Remark}[section]
\newcommand{\R}{{\mathbb R}}
\newcommand{\N}{{\mathbb N}}
\newcommand{\E}{{\mathbb E\,}}
 \newcommand{\eps}{\varepsilon}
 \def\1{\raisebox{2pt}{\rm{$\chi$}}}
\newcommand{\Rn}{\mathbb{R}^n}
\begin{document}
\title[Singularity formation at multiple times]
{Singularity formation and regularization \\ 
at multiple times in the viscous \\
Hamilton-Jacobi equation}

\date{\today}

\author[Mizoguchi]{Noriko Mizoguchi}%
\address{Department of Mathematics, Tokyo Gakugei University,
Koganei, Tokyo 184-8501, Japan }
\email{mizoguti@u-gakugei.ac.jp}

\author[Souplet]{Philippe Souplet}%
\address{Universit\'e Sorbonne Paris Nord,
CNRS UMR 7539, Laboratoire Analyse, G\'{e}om\'{e}trie et Applications,
93430 Villetaneuse, France}
\email{souplet@math.univ-paris13.fr}

 \begin{abstract}  
It is known that the Cauchy-Dirichlet problem for the super\-qua\-dratic viscous Hamilton-Jacobi equation, 
which has important applications in stochastic control theory,
admits a unique, global viscosity solution.
Solutions thus exist in the weak sense after the appearance of singularity in finite time, 
which occurs through gradient blow-up (GBU) on the boundary. 
Whereas the theory of viscosity solutions has been extensively studied and applied 
to many partial differential equations, 
there seem to be less results on refined behavior of solutions.
Although occurrence of two types of GBU with loss of boundary condition (LBC) and without LBC are known,
detailed behavior of viscosity solutions after GBU has remained open except 
for a strongly restricted special class of solutions in one space dimension. 
\vskip 1.5pt

In this paper, in general dimensions, for each $ m \ge 1 $ we construct solutions which undergo GBU 
and LBC at least at $ m $ times 
and then recover regularity. 
We also construct solutions that exhibit GBU without LBC at their first blowup time. 
In one space dimension, we obtain the complete classification of viscosity solutions at each time, 
which extends to radial cases in higher dimensions.  
Furthermore for each $ m \ge 2 $ and an arbitrarily given combination of GBU types with/without LBC at $ m $ times 
in an arbitrarily given order,
we show the existence of a solution which exhibits this exact combination of GBU. 
Some solutions display a new type of behavior called "bouncing".
\vskip 1.5pt

Global weak solutions of the viscous Hamilton-Jacobi equation with singularity at multiple times
turn out to display a larger variety of behaviors than those of the Fujita equation.  
We introduce a method based on an arbitrary
number of critical parameters, whose continuity 
requires a delicate argument.
Since we do not rely on any known special solution unlike in the proof in Fujita equation, 
our method is expected to apply to other equations.
\vskip 1.5pt

Singular behaviors at multiple times are completely new in the context not only of viscous Hamilton-Jacobi 
equation but also of stochastic control theory. In this framework our results imply that
for certain spatial distributions of rewards, if a controled Brownian particle starts near the boundary, 
then the net gain attains profitable values on different time horizons but not on some intermediate times.
 \end{abstract}

\maketitle

\tableofcontents

\section{Introduction and main results}

\subsection{Problem}

We consider the initial boundary value problem for the viscous Hamil\-ton-Jacobi equation: %%%
\be\label{vhj1}
\begin{cases}
u_t-\Delta u=|\nabla u|^p, & \quad x\in\Omega,\ t>0,\\
u =0, & \quad x\in\partial\Omega,\ t>0,\\
u(x,0) =\phi(x),  & \quad x\in\Omega  
\end{cases}
\ee
with $ p > 2 $, where $\phi\in X:=\bigl\{\phi\in C^1(\overline\Omega);\ \phi\ge 0,\ \phi=0 \hbox{ on } \partial\Omega\bigr\} $.
Throughout this paper, $\Omega$ is a smoothly bounded domain of $\R^n$.

By standard theory \cite{Fri}, problem~\eqref{vhj1} admits a unique, maximal classical solution $u$ satisfying $ u \ge 0 $
in $ \overline\Omega \times [0,T) $, where $T=T(\phi)\in (0,\infty]$ denotes the maximal existence time.
It is known that, for suitably large initial data, solutions blow up in finite time, 
i.e., $T(\phi)<\infty$ (see, e.g., \cite{A96, ABG89, S02}).
Since the solution satisfies 
$$\|u(t)\|_\infty\le \| \phi \|_\infty,\quad 0<t<T,$$ 
by the maximum principle, the classical solution can only cease to exist through {\bf gradient blowup} (GBU)
$$ \lim_{t\to T}\|\nabla u(t)\|_\infty=\infty.$$
However the solution survives after the blow-up time and can be continued as a generalized viscosity solution of \eqref{vhj1}. 
More precisely, by \cite{BDL04}, problem \eqref{vhj1} admits a unique, global (generalized) viscosity solution; 
this solution is nonnegative and continuous in $\overline\Omega\times[0,\infty)$, and it coincides with the (unique) classical 
solution in~$(0,T)$.
Throughout this paper, we shall also denote it by $u$, without risk of confusion
(or $u(\phi;\cdot,\cdot)$ when we need to emphasize its dependence on the initial data).

Whereas the theory of viscosity solutions (cf.~\cite{CIL}) has been extensively studied and applied to many partial differential 
equations, there seem to be less results on refined behavior of the weak solution. 
Our purpose is to investigate the behavior of the global viscosity solution $u$ for $t>T(\phi)$.
First of all, it is known that the solution $u$ enjoys interior regularity, i.e.~$u \in C^{1,2}(\Omega\times(0,\infty))$, 
and solves the PDE in the pointwise sense in $\Omega\times(0,\infty)$,
but it may lose the boundary conditions in the classical sense.  
Indeed such as possibility, which was suggested in \cite{BDL04}, was confirmed in \cite{PS2, QR16} 
where it was shown that, for suitably large initial data, 
the solution undergoes a loss of boundary conditions (LBC) at some times $t>T(\phi)$, i.e., 
$$\sup_{x\in\partial\Omega} u(x,t)>0.$$
However, some exceptional GBU solutions without LBC were also shown to exist in \cite{PS2, PS3},
found as separatrices between global solutions and GBU solutions with LBC.
We see that LBC solutions, which are meant to satisfy zero boundary conditions in the generalized viscosity sense
(see \cite{BDL04} for precise formulation), nevertheless have to continuously take on some positive boundary values.
This apparently paradoxical situation can however be interpreted in a more intuitive way,
when one recalls that the global viscosity solution can also be obtained as the limit 
of a sequence of global classical solutions of regularized versions of problem \eqref{vhj1}, with truncated nonlinearity
(see e.g.~\cite{PS3} and Section~2 below for details). Since this convergence is monotone increasing but not uniform up to the boundary,
the loss of boundary conditions can in this framework be seen as a more familiar
 boundary layer phenomenon.

On the other hand, it was shown in \cite{PZ} that $u$ 
becomes a classical solution again for all sufficiently large time, i.e. there exists $\tilde T\ge T$ such that 
$$\hbox{$u\in C^{2,1}(\overline\Omega\times (\tilde T,\infty))$,\ 
with $u=0$ on $\partial\Omega\times [\tilde T,\infty)$,}$$
 and furthermore $u$ decays exponentially in $C^1(\overline\Omega)$ as $t\to\infty$. 
The natural and important question is thus to describe the behavior of $u$ in the intermediate time range $[T,\tilde T]$.
In \cite{PS3}, for $n=1$ and $\Omega=(0,1)$, 
a strongly restricted special 
class of symmetric initial data was studied, 
for which the solution immediately loses the boundary conditions after $T$ 
and is immediately and permanently regularized after recovering the boundary condition. 
It has remained open whether or not there exists a viscosity solution undergoing GBU at multiple times. 
We affirmatively answer the question in general dimensions constructing a viscosity solution exhibiting GBU with LBC at multiple times.
We give a precise behavior in one dimension and in radial domains in higher dimensions.
There have been no results on the fundamental behaviors like finiteness of GBU times 
and existence of waiting time of recovery of regularity, except the special case in \cite{PS3}. 
We give a complete classification at each time.
Furthermore for each $ m \ge 2 $ and arbitrarily given combination of GBU types with/without LBC at $ m $ times, 
we construct a viscosity solution undergoing this exact combination of GBU 
 in the case of $ n = 1 $, which can be easily extended 
to radial case with $ n \ge 2 $.

In view of the very large literature devoted to the viscous Hamilton-Jacobi equation, this introduction 
makes no attempt to be exhaustive.
For other aspects of \eqref{vhj1} and related problems, we refer to, e.g.,
\cite{ABG89, FL94, CG96, S02, BSW, BKL, ARS04, HM04, SV, GH08, LS, ZL13, PS, QS07, FPS19, FLa, Esteve, AS20}
 and the references therein.

\subsection{Applications to stochastic control problems} 
Let us recall that \eqref{vhj1} arises in stochastic control problems. 
Namely, consider the controlled $n$-dimen\-sional stochastic differential equation
$$dX_s =\alpha_s ds+dW_s,\  \ s>0, \quad\hbox{ with } X_0 =x\in\Omega,$$
where the stochastic process $(X_s)_{s>0}$ represents the position or state of the system,
$(W_s)_{s>0}$ is a standard Brownian motion and 
$(\alpha_s)_{s>0}$ is the control
(in other words, the controller can choose the velocity of $X$).
The spatial distribution of rewards is given by a function $u_0\in C_0(\overline\Omega)$.
More precisely, at a given time horizon $s=t>0$, the final reward is
$u_0(X_t)$ if $X$ stays in $\Omega$ until time~$t$,
and $0$ otherwise. Finally, the cost of the control at each time $s$ is assumed to be
$k_p |\alpha_s|^q$ as long as $X_s$ stays in $\Omega$, 
where $q=p/(p-1)$ is the conjugate exponent of $p$ and $k_p>0$ is a normalization constant.
The goal of the controller is then to maximize the net gain
$$G_t=\chi_{\tau>t}u_0(X_t)-k_p\displaystyle\int_0^\tau|\alpha_s|^q\, ds,$$
where $\tau$ denotes the first exit time of $X$ from $\Omega$.
It is known (see \cite{BB, BDL04, FlSo} for details) 
that the maximal gain, also called value function of the stochastic control problem,
is given by the unique global (continuous) viscosity solution $u$ of \eqref{vhj1}, namely:
$$u(x,t)=\sup_{(\alpha_s)_s}  \E{\hskip 0.5pt}\bigl(G_t\, | \, X_0=x\bigr),$$
where $\E{\hskip 0.5pt}\bigl(\,\cdot\,| \, X_0=x\bigr)$ denotes the conditional expectation 
with respect to the event $\{X_0=x\}$,
and the supremum is taken over all (admissible) controls.
In this framework, the existence of solutions with multiple times of loss and recovery of boundary conditions,
obtained in Theorems~\ref{thm1}-\ref{conjz1} below have an interesting interpretation:  
 the system (controlled Brownian particle) starts near the boundary, 
for certain spatial distributions of rewards inside the domain,
the maximal gain attains profitable values on different time horizons but not on some intermediate times.

\subsection{Results in general domains}

Our first main result in general domains is the following. It shows the existence
of solutions undergoing GBU with losses and recoveries of boundary conditions at arbitrarily many times.

\goodbreak
\begin{theorem}\label{thm1} 
Let $p>2$ and let $\Omega$ be any smoothly bounded domain of $\R^n$.
For any integer $m\ge 2$, there exists $\phi\in X$ undergoing GBU with at least $m$ 
losses and recoveries of boundary conditions.
More precisely, we have 
$T(\phi)<\infty$ and there exist times $s_m>\cdots>s_1>T(\phi)$ and nonempty open subintervals $J_i\subset (s_i,s_{i+1})$
for $i=1,\dots,m-1$, such that
\be\label{conclthm1A}
\sup_{x\in\partial\Omega} u(x,s_i)>0 \hbox{ for each $i\in\{1,\dots,m\}$,}
\ee
\be\label{conclthm1B}
\begin{aligned}
&\hbox{$u$ is a classical solution on $J_i$ for each $i\in\{1,\dots,m-1\}$, i.e.}\\
&\qquad\quad\hbox{i.e., $u\in C^{2,1}(\overline\Omega\times J_i)$ \ \ and \ \ $u=0$ on $\partial\Omega\times J_i.$}
\end{aligned}
\ee
\end{theorem}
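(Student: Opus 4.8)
The plan is to build $\phi$ by a shooting / continuity argument on a one-parameter family of initial data $\phi_\lambda$, interpolating between a "small" datum (global classical solution, no GBU) and a "large" datum (immediate GBU with LBC), and then to iterate this $m$ times. First I would fix a suitable reference profile supported near a boundary point $x_0 \in \partial\Omega$ and set $\phi_\lambda = \lambda \psi$ (or a more elaborate $m$-parameter family as the abstract suggests). For small $\lambda$ the solution is global and, by the results recalled in the introduction (the regularization theorem from \cite{PZ}), it is classical and decays; for large $\lambda$ one has $T(\phi_\lambda)<\infty$ and, by \cite{PS2,QR16}, LBC occurs on some time interval after $T$. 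The core of the argument is to track a critical parameter $\lambda^*$ separating these two regimes and to show that at $\lambda=\lambda^*$ the solution undergoes GBU and LBC, then immediately recovers regularity (this is the separatrix picture of \cite{PS3}), and crucially that \emph{after} recovery the solution has become small enough in $C^1$ that one can graft a second "bump" and repeat. Iterating, one produces, for the $m$-parameter family, a parameter vector $\boldsymbol\lambda^*$ for which GBU+LBC happens at $m$ distinct times $s_1<\cdots<s_m$, with classical behavior on intervals $J_i$ in between.

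The key steps, in order, would be: (1) set up the regularized problems $u_\eps$ with truncated nonlinearity $|\nabla u|^p \wedge (1/\eps)$ (as in Section~2), whose solutions are global and classical, and recall monotone convergence $u_\eps \uparrow u$; (2) establish the needed continuous dependence: the GBU time $T(\phi)$, the LBC set, and the recovery time $\tilde T(\phi)$ vary appropriately (semicontinuously) with $\phi$ — this is where the "delicate continuity argument" flagged in the abstract lives; (3) prove the basic dichotomy for the one-parameter subfamily: there is a threshold $\lambda^*$ below which the solution is global-and-eventually-classical and above which LBC occurs, and at $\lambda=\lambda^*$ one has GBU with LBC at a single time followed by instantaneous and permanent regularization — this uses zero-number / intersection-comparison arguments and the classification of GBU self-similar profiles; (4) a "reset" lemma: after the separatrix solution recovers regularity at some time $\sigma_1$, its $C^1$ norm at $t=\sigma_1$ can be made as small as we like (by choosing the bump parameters), so the state at $\sigma_1$ plays the role of a fresh small datum; (5) iterate (3)–(4) using a second bump located near another (or the same) boundary point with a second parameter, producing two LBC times; then induct to reach $m$ times, obtaining the parameter vector $\boldsymbol\lambda^*$ and the intervals $J_i$ by a simultaneous multi-parameter continuity/connectedness argument (a degree- or connectedness-type argument in $[0,1]^m$).

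I expect the main obstacle to be step (2)–(3): showing that the "intermediate" critical solution actually exhibits LBC (rather than, say, GBU without LBC, or remaining classical) and that it regularizes \emph{immediately and permanently} after a single LBC time, so that the induction can proceed. Equivalently, one must rule out that the separatrix either never loses the boundary condition or, once it does, keeps losing it on a complicated time set; the clean "one GBU time, then permanent recovery" picture is essential for grafting the next bump. The technical heart is a refined comparison/zero-number analysis controlling the number of times the boundary value $\sup_{\partial\Omega} u(\cdot,t)$ can become positive, combined with uniform-in-$\eps$ gradient estimates away from the chosen boundary bump so that the other part of the domain stays classical and small. A secondary difficulty is making the $m$-parameter continuity argument rigorous: the map $\boldsymbol\lambda \mapsto (s_1,\dots,s_m,\text{recovery data})$ is only semicontinuous in general, so one needs care (e.g. working with carefully chosen sub/super-level sets of the parameters and a topological connectedness argument rather than naive intermediate-value reasoning) to extract a single $\boldsymbol\lambda^*$ realizing all $m$ losses and recoveries simultaneously, together with the nonempty open intervals $J_i$ of classical behavior.
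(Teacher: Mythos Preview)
Your plan is substantially more elaborate than what the paper actually does for this theorem, and it conflates the mechanisms behind several distinct results in the paper. For Theorem~\ref{thm1} the proof is a \emph{direct} multibump construction with no shooting or critical-parameter selection at all. One fixes scales $0<\eps_1\ll\eps_2\ll\cdots\ll\eps_m$ (chosen by backward recursion) and sets $\phi=\sum_{i=1}^m\phi_i$, where each $\phi_i=\psi_{\eps_i}$ is a rescaled bump supported in $\{\eps_i\lesssim\delta(x)\lesssim 2\eps_i\}$. By the natural scaling of the equation, $\phi_i$ alone forces LBC at a time $s_i=c_0\eps_i^2$ (Proposition~\ref{lem2}); since $\phi\ge\phi_i$, the comparison principle for viscosity solutions (Proposition~\ref{compP}) immediately gives $\sup_{\partial\Omega}u(\cdot,s_i)>0$ for the full solution at each $s_i$. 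For the classical intervals $J_i$ in between, one observes that on the strip $\{\delta(x)<\eps_i\}$ the datum $\phi$ is bounded by $K_1\eps_{i-1}^\alpha$, which is tiny relative to $\eps_i$; a barrier argument (Proposition~\ref{lem3bis}, adapting a construction of \cite{PZ}) then yields classical regularity on a time interval $(\hat s_i,2\hat s_i)\subset(s_{i-1},s_i)$. The entire proof is a page of explicit estimates once those two propositions are available; no continuity in a parameter, no zero-number analysis, and no separatrix is invoked.

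Your step~(3) also contains a concrete misconception: at the threshold $\lambda^*$ separating global classical behavior from LBC, the limiting solution undergoes GBU \emph{without} LBC, not ``GBU with LBC at a single time followed by permanent regularization''. That is precisely the separatrix phenomenon of \cite{PS2,PS3}, and it is what the paper exploits in Theorem~\ref{thm1B} (first GBU without LBC, then $m$ losses). So the induction you describe cannot start as written, and your ``reset'' step~(4) has no foothold. The multi-parameter shooting architecture you outline --- iteratively selecting critical values, with continuity of the critical-parameter maps proved via zero-number arguments on differences of solutions --- is, however, very close in spirit to what the paper does for the much harder Theorem~\ref{conjz1}, where arbitrary prescribed sequences of $C/L$ behaviors (including GBU without LBC and bouncing) must be realized. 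For Theorem~\ref{thm1} itself that machinery is unnecessary.
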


We can also show that the scenario in Theorem~\ref{thm1} can be preceded by the first GBU without LBC.

\begin{theorem}\label{thm1B} 
Let $p>2$ and let $\Omega$ be any smoothly bounded domain of $\R^n$.
For any integer $m\ge 2$, there exists $\phi\in X$ such that $u$ undergoes:

$\bullet$ GBU without LBC at $t=T(\phi)\,<\infty$,
i.e.~$u=0$ on $\partial\Omega\times [0,T(\phi)+\tau]$ for some $\tau>0$;

$\bullet$ and then at least $m$ losses and recoveries of boundary conditions;
 namely~there exist times $s_m>\cdots>s_1>T(\phi)+\tau$ 
and nonempty open subintervals $J_i\subset (s_i,s_{i+1})$ for $i=1,\dots,m-1$, such that 
\eqref{conclthm1A}-\eqref{conclthm1B} holds.
\end{theorem}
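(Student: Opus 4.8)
The plan is to obtain Theorem~\ref{thm1B} as a refinement of the construction used for Theorem~\ref{thm1}, by prepending one extra critical-parameter stage that produces a first GBU of the borderline (separatrix) type, i.e.~without LBC. Concretely, I would work in a two-parameter family of initial data $\phi_{a,b}\in X$ (or a finite-dimensional family of which two directions are relevant), where $b$ is the parameter governing the first GBU and $a$ packages the already-constructed mechanism that forces the subsequent $m$ losses and recoveries of boundary conditions as in Theorem~\ref{thm1}. For fixed suitable $a$ (chosen so that the Theorem~\ref{thm1} scenario is ``active'' once the solution has grown large enough), I would vary $b$ and distinguish three regimes: for small $b$, the solution is global and classical (no GBU at all); for large $b$, the solution undergoes ordinary GBU with LBC at its first blowup time; and at the threshold value $b^*=\sup\{b:\ u(\phi_{a,b};\cdot)\text{ stays classical up to its first GBU without LBC}\}$ one isolates, exactly as in the separatrix construction of \cite{PS2,PS3}, a solution whose first GBU occurs \emph{without} LBC. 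This gives the first bullet: $u=0$ on $\partial\Omega\times[0,T(\phi)+\tau]$ for some $\tau>0$, the point being that a no-LBC GBU is followed by an interval of regularity (the solution re-enters the smooth regime immediately, by the regularity results quoted in the introduction and the characterization of LBC-free blowup).

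The second step is to ensure that, after this first regularization interval, the solution is still ``large enough'' near the boundary that the Theorem~\ref{thm1} machinery can be switched on and produces the further $m$ losses and recoveries. This is where the continuity-of-critical-parameters argument announced in the abstract does the real work: I would set up nested critical parameters, the outermost one being $b$ (or its threshold $b^*$) and the inner ones being the $m$ parameters controlling the times $s_1<\dots<s_m$ of the later GBU/LBC events. The key is a monotonicity and continuity statement: the map $b\mapsto u(\phi_{a,b};\cdot,\cdot)$ is monotone and continuous in the appropriate topology (locally uniform on $\overline\Omega\times[0,\infty)$, using the comparison principle for viscosity solutions and stability under monotone limits via the truncated-nonlinearity approximation described in Section~2), so that at $b=b^*$ the solution inherits, in the limit, both the no-LBC first GBU and — because $b^*$ is still in the range where the inner construction is feasible — the prescribed later behavior. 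One then applies Theorem~\ref{thm1} (or rather re-runs its proof) to the ``restarted'' data $u(\phi;\cdot,T(\phi)+\tau)$, or more precisely to an auxiliary initial datum chosen in the same family, to extract the times $s_m>\cdots>s_1>T(\phi)+\tau$ and the intervals $J_i$ with \eqref{conclthm1A}--\eqref{conclthm1B}.

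The main obstacle I expect is precisely the compatibility of the two parameter mechanisms at the threshold: one must show that pushing $b$ down to the no-LBC value $b^*$ does not destroy the later multiple-GBU scenario, i.e.~that the ``window'' of admissible parameters for the inner construction has nonempty interior \emph{relative to} the slice $\{b=b^*\}$. This requires (i) quantitative lower bounds on the solution near the boundary on the regularity interval right after the first GBU — so that the later blowup can still be triggered — and (ii) upper semicontinuity/lower semicontinuity of the relevant blowup and LBC times as functions of the parameters, so that the threshold values of the inner parameters vary continuously as $b\to b^*$ and the corresponding limit solution genuinely exhibits LBC at $s_1,\dots,s_m$ and classicality on the $J_i$. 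A secondary technical point is verifying that the first GBU without LBC is indeed followed by a genuine \emph{open} interval of classical regularity of positive length $\tau$ and that $u=0$ on $\partial\Omega$ throughout $[0,T(\phi)+\tau]$, which I would get from the interior regularity $u\in C^{1,2}(\Omega\times(0,\infty))$ together with the boundary regularity criterion that rules out LBC when the gradient does not concentrate a boundary ``crater'' — the same criterion used to characterize separatrix solutions in \cite{PS2,PS3}.

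Finally, I would note that the argument is robust: the three-regime dichotomy in $b$ and the continuity of critical parameters are exactly the ingredients used for Theorem~\ref{thm1}, so Theorem~\ref{thm1B} follows by adding one layer to the same scheme rather than by a separate construction, consistently with the methodological remark in the abstract that nothing here relies on explicit special solutions.
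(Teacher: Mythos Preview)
Your overall plan—deform the multibump initial data of Theorem~\ref{thm1} along one parameter and locate a threshold value at which the first GBU is of separatrix (no-LBC) type—is exactly the paper's strategy. But you overcomplicate the argument in one place and underspecify it in another.

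The overcomplication: you treat the compatibility of the first GBU-without-LBC with the later $m$ LBC events as the main obstacle, and propose nested critical parameters and a restart from $u(\cdot,T(\phi)+\tau)$ to handle it. In the paper's construction this obstacle does not arise. With $\Phi_\lambda=\lambda\phi_1+\sum_{j\ge 2}\phi_j$, one has $\Phi_\lambda\ge\phi_i$ for every $i\ge 2$ and every $\lambda\in[0,1]$, so by comparison (Proposition~\ref{compP}) the LBC at times $s_2,\dots,s_m$ and the classical intervals $(\hat s_i,2\hat s_i)$ hold \emph{uniformly} in $\lambda$ (cf.~\eqref{conclthm1BB}--\eqref{conclthm2BB}). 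No inner critical parameters are needed, and nothing is lost at the threshold. Related to this: at $\lambda=0$ the solution is not globally classical—the remaining bumps still cause later GBU—what matters is only that it is classical on $(0,2\hat s_2)$.

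The underspecification: the genuinely delicate point is the one you gloss over—why does GBU persist at the threshold $\lambda^*=\inf\{\lambda:\,T(\Phi_\lambda)<\hat s_2\}$? Continuous dependence in $L^\infty$ gives $u_{\lambda^*}=0$ on $\partial\Omega\times[0,2\hat s_2]$ (limit from below), but to conclude $T(\Phi_{\lambda^*})\le\hat s_2$ one must rule out that the gradient singularity evaporates in the limit from above. The paper does this via a quantitative, \emph{uniform} lower bound on the boundary blowup profile (Proposition~\ref{lemCentral2}, from \cite{FPS19}): along a sequence $\lambda_j\downarrow\lambda^*$ with $T_j<\hat s_2$ and GBU points $a_j\in\partial\Omega$, one has $u_{\lambda_j}(a_j+r\nu_{a_j},T_j)\ge\tfrac{c_p}{2}r^\alpha$ for $r\in(0,\eta]$ with $\eta$ independent of $j$, and this passes to the limit to force $u_{\lambda^*}(\cdot,t_0)\notin C^1(\overline\Omega)$. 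Your appeal to the separatrix arguments of \cite{PS2,PS3} is in the right spirit, but the concrete tool doing the work here is this profile estimate. Finally, note that in general domains the paper does not claim immediate \emph{regularization} after the first GBU, only $u=0$ on $\partial\Omega\times[0,T(\phi)+\tau]$; the classical interval afterward comes from the uniform construction \eqref{conclthm2BB}, not from a no-waiting-time principle.
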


\subsection{Results in one space dimension}

We will obtain more precise results in the case of $ n=1$. 
We only consider the behavior at $x=0$.
Similar results at $x=1$ immediately follow by considering the reflected solution $\tilde u(x,t)=u(1-x,t)$.

Set
$$X_1:=\bigl\{\phi\in C^1([0,1]);\ \phi\ge 0,\ \phi(0)=\phi(1)=0\bigr\},$$ 
denote
$$U^*(x)=c_p\, x^\alpha,\quad x>0,\quad\hbox{ with $\alpha=\frac{p-2}{p-1}$,\ \ $c_p=(p-2)^{-1}(p-1)^{(p-2)/(p-1)}$,}$$
the singular steady state vanishing at $x=0$, and set
\be\label{defNt}
N(t):=z(u(\cdot,t)-U^*),
\ee
where $z$ denotes the number of sign changes on $[0,1]$, i.e.
$$z(v)=\sup\bigl\{m\in\N;\ \exists\, x_0<\dots<x_m\in (0,1),\ v(x_{i-1})v(x_i)<0,\ i=1,\dots,m\bigr\},$$
 with the convention $z(v)=-\infty$ in case $v\equiv 0$.
Let us also recall (see \cite[Lemma 5.1]{PS3}) that
\be\label{lemz3ahyp0}
u_x(0,t)=\lim_{x\to 0} u_x(x,t) \in \R\cup\{+\infty\},\quad t>0
\ee
(where the limit exists, possibly $+\infty$).
A crucial ingredient in our analysis is the study of the following ``transition'' set: 
 \be\label{singulartimes}
\mathcal{T}=\mathcal{T}_\phi:=\bigl\{t>0;\ u(0,t)=0\ \hbox{and}\ u_x(0,t)=\infty\bigr\},
\ee
in connection with the intersection number properties of the solution $u$ with the singular steady state $U^*$.
For future reference, we also denote the singular set
 \be\label{singulartimesT}
\mathcal{S}=\mathcal{S}_\phi:=\bigl\{t>0;\ u_x(0,t)=\infty\bigr\}.
\ee

The following two theorems are our main one-dimensional results.
The first one gives a classification of all possible behaviors at any time, for any viscosity solution to problem~\eqref{vhj1}.

\begin{theorem}\label{thmz1} 
Let $p>2$, $n=1$, $\Omega=(0,1)$, $\phi\in X_1$. 

(i) The set $\mathcal{T}=\mathcal{T}_\phi$ is at most finite.
 Moreover, if $N(0)<\infty$ then we have 
\be\label{cardS}
\#\,\mathcal{T}\le N(0).
\ee

(ii) On each interval between two consecutive times $t_1,t_2\in\mathcal{T}$, the solution is either:

\ \ $\bullet$ classical up to $x=0$, i.e.:
\be\label{classicalint}
\hbox{$u\in C^{2,1}([0,\frac12]\times(t_1,t_2))$ \quad and \quad $u=0$ on $\{0\}\times(t_1,t_2)$}
\ee

\ \ $\bullet$ or of LBC type at $x=0$, i.e.;
\be\label{LBCint}
\hbox{$u>0$ on $\{0\}\times(t_1,t_2)$.}
\ee
\end{theorem}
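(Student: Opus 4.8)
The plan is to build everything on the intersection-comparison theory with the singular steady state $U^*$, combined with the interior parabolic regularity $u\in C^{1,2}(\Omega\times(0,\infty))$ and the one-sided boundary limit \eqref{lemz3ahyp0}. First I would establish the key monotonicity fact for $N(t)=z(u(\cdot,t)-U^*)$: since $u$ solves a linear parabolic equation (after freezing $|\nabla u|^{p-2}$ as a coefficient on compact subsets of $\Omega\times(0,\infty)$) and $U^*$ is a stationary solution of the same equation away from $x=0$, the function $w=u-U^*$ is a solution of a linear parabolic equation on $(0,1)\times(0,\infty)$ up to the behavior at $x=0$. The zero-number principle (Sturmian theory, in the form available for such equations, e.g. via Angenent) then gives that $N(t)$ is nonincreasing in $t$, and strictly decreases whenever $w(\cdot,t)$ has a degenerate (multiple) zero in the open interval. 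A delicate point is the boundary $x=0$: I would need to check, using \eqref{lemz3ahyp0} and the known dichotomy of GBU behavior from \cite{PS3}, that no zeros of $w$ can be created at $x=0$ in a way that would violate the monotonicity — intuitively, near $x=0$ either $u(0,t)>0$ (so $w(0,t)=u(0,t)-0>0$, no zero there) or $u(0,t)=0$ and then $u$ is either classical (so $u\sim c x$ near $0$, while $U^*\sim c_p x^\alpha$ with $\alpha<1$, so $w<0$ near $0$, again no boundary zero) or $u_x(0,t)=\infty$, which is precisely the transition set $\mathcal T$.

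For part (i), the strategy is to show that each time $t_0\in\mathcal T$ forces a strict drop of $N$. At a transition time, $u(0,t_0)=0$ and $u_x(0,t_0)=\infty$; since $U^*$ also vanishes at $0$ with infinite derivative and is the unique such singular steady state, I expect $u(\cdot,t_0)-U^*$ to have a "boundary-adherent" zero at $x=0$ with higher-order contact, or more precisely that the graph of $u(\cdot,t_0)$ must cross $U^*$ near $x=0$ in a way that is consumed immediately after $t_0$ — so for $t$ slightly larger than $t_0$ we have one fewer sign change than for $t$ slightly smaller. This is where I would invoke the characterization of the two GBU types (with/without LBC) as in \cite{PS2,PS3,QR16}: whether $u$ detaches above $U^*$ (LBC) or falls back below it (regularization) corresponds to which side of $U^*$ the solution lies immediately after $t_0$, and in both cases an intersection with $U^*$ disappears. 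Since $N(t)$ is a nonnegative integer (when finite) that strictly decreases at each element of $\mathcal T$ and never increases, $\mathcal T$ is finite and $\#\mathcal T\le N(0)$ when $N(0)<\infty$; when $N(0)=\infty$ one argues that $N(t)<\infty$ for every $t>0$ (instantaneous regularization of the zero number, again a standard consequence of the zero-number theory / smoothing), so finiteness of $\mathcal T$ still holds, just without the explicit bound.

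For part (ii), fix two consecutive times $t_1<t_2$ in $\mathcal T$ (or let $t_1=0$, resp. $t_2=+\infty$, at the ends). On the open interval $(t_1,t_2)$ there is no transition time, so for each $t$ in this range either $u_x(0,t)<\infty$ or ($u_x(0,t)=\infty$ but $u(0,t)>0$, the LBC situation). I would show the alternative is \emph{rigid} across the whole interval: the set of $t\in(t_1,t_2)$ with $u(0,t)>0$ is open (by continuity of $u$ up to the boundary, from \cite{BDL04}) and I claim it is also closed in $(t_1,t_2)$ — if $u(0,t_k)>0$ and $t_k\to t_*\in(t_1,t_2)$ with $u(0,t_*)=0$, then $t_*$ would have to be a GBU/transition time, contradicting $t_*\notin\mathcal T$ unless $u_x(0,t_*)<\infty$, which one rules out by a continuity/monotonicity argument on the boundary flux together with the zero-number count (the crossing structure with $U^*$ cannot change on $(t_1,t_2)$). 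Hence on $(t_1,t_2)$ either $u(0,\cdot)\equiv 0$ throughout — and then, since there is no transition time, $u_x(0,\cdot)$ stays finite, and local parabolic regularity up to the flat boundary (Schauder estimates for the quasilinear equation with bounded gradient, as in the regularization results of \cite{PZ}) upgrades this to $u\in C^{2,1}([0,\frac12]\times(t_1,t_2))$ with $u=0$ on $\{0\}\times(t_1,t_2)$, giving \eqref{classicalint} — or $u(0,\cdot)>0$ throughout, giving \eqref{LBCint}.

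The main obstacle I anticipate is the boundary analysis at the transition times: making rigorous the claim that passing through a time in $\mathcal T$ strictly decreases the zero number $N$, which requires a careful local description near $x=0$ of how the graph of $u(\cdot,t)$ separates from $U^*$ — this is where the "delicate argument" alluded to in the abstract presumably enters, and where one must combine the precise GBU rate/profile information near $x=0$ with a version of the Sturmian lemma valid up to a singular boundary point. The remainder (global monotonicity of $N$, openness/closedness of the LBC time-set, and the interior-to-boundary regularity upgrade) I expect to be comparatively routine given the cited results.
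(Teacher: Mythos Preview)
Your plan for part~(i) has a real gap: the monotonicity of $N(t)=z(u(\cdot,t)-U^*)$ on $[0,1]$ is \emph{false} in general. You correctly flag the boundary $x=0$ as delicate, but the actual obstruction is at $x=1$. Since the viscosity solution can lose the boundary condition at $x=1$ as well, $u(1,t)$ may exceed $U^*(1)=c_p$ and later drop back below it; each such crossing creates a new sign change of $u-U^*$ at the right endpoint, and $N$ goes up. The paper proves this explicitly (Proposition~\ref{PropNonmonot}): for suitable large symmetric data there are times $t_1<t_2$ with $N(t_1)=0$ and $N(t_2)=1$. So your Step~1 (``$N$ is nonincreasing'') cannot be carried out as stated, and the bound $\#\mathcal T\le N(0)$ does not follow directly from Sturmian theory applied to $u-U^*$.

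The paper circumvents this in two ways. Under the extra hypothesis $\|\phi\|_\infty\le c_p$ (or $\phi$ symmetric), the maximum principle forces $u(1,t)<c_p$ for all $t>0$, the right-boundary issue disappears, and your argument essentially works; this is the combination of Proposition~\ref{ZeroMonot}(ii) and Proposition~\ref{zerob2}. For general $\phi$, the paper abandons $N$ and instead tracks $Z_{k,j}(t)=z(u_k(\cdot,t)-U_j)$, where $u_k$ are the global classical solutions of the truncated problems~\eqref{vhj1k} and $U_j$ are the \emph{regular} steady states~\eqref{defUa}. These are smooth on $[0,1]$ with $u_k(1,t)=0<U_j(1)$, so the standard zero-number principle applies with no boundary leakage at either endpoint; the substantial work (Proposition~\ref{approxdrop}, relying on Lemmas~\ref{lemapprox} and~\ref{basic-prop0} for the local analysis near $x=0$) is then to show that for each $t\in\mathcal T$ and suitably large $j,k$ the count $Z_{k,j}$ drops by at least one across any neighborhood of $t$. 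Combined with the uniform bound $Z_{k,j}(t_0)\le N(t_0)$ from Lemma~\ref{lemz1b}, this yields the finiteness of $\mathcal T$ in general.

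Your outline for part~(ii) is close to the paper's argument. The closedness of $\{t\in(t_1,t_2):u(0,t)>0\}$ is obtained not via ``monotonicity of the boundary flux'' but via the profile estimate of Lemma~\ref{basic-prop0}(ii): whenever $u(0,s)>0$ one has $|u(x,s)-u(0,s)-U^*(x)|\le Kx^2$, and passing to the limit along $s\to t_*$ with $u(0,t_*)=0$ forces $u_x(0,t_*)=\infty$, hence $t_*\in\mathcal T$. In the complementary case $u(0,\cdot)\equiv 0$ on $(t_1,t_2)$, the same profile lemma (now applied to a hypothetical sequence $(x_j,t_j)\to(0,\tau)$ with $u_x(x_j,t_j)\to\infty$) rules out unbounded $u_x$ on compact time-subintervals, after which parabolic estimates give the $C^{2,1}$ regularity up to $x=0$.
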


Our results also answer another question left open in \cite{PS3}: as a consequence of Theorem~\ref{thmz1}, 
we see that waiting time phenomena cannot occur, at least in one space dimension.
Once a solution undergoes gradient blowup, the solution either loses boundary conditions immediately or is immediately regularized.

The next result is in some sense the reciprocal of Theorem~\ref{thmz1}.
It shows that any given finite sequence of behaviors a priori permitted by Theorem~\ref{thmz1} is 
actually realized for suitable choice of initial data.
In the following, the letters $C, L$ respectively stand for ``Classical up to $x=0$'' and ``LBC at $x=0$''
(cf.~\eqref{classicalint}-\eqref{LBCint}).

\begin{theorem}\label{conjz1} 
Let $p>2$, $n=1$, $\Omega=(0,1)$ and let $\ell$ be a positive integer.   
Let $(\sigma_i)_{0\le i\le \ell}$ be any finite sequence with values in $\{C,L\}$, such that $\sigma_0=\sigma_{\ell+1}=C$,
and set $t_0=0$ and $t_\ell=\infty$.
There exist $\phi\in X_1$ and times $0<t_1<\dots<t_\ell\in\mathcal{T}$ such that

\begin{itemize}

\item[$\bullet$] for each $i\in \{0,\dots,\ell\}$, the behavior of $u$ on $(t_i,t_{i+1})$ is of type~$\sigma_i$
\smallskip

\item[$\bullet$]  $u$ is classical up to $x=1$ for all times, 
i.e.~$u\in C^{2,1}([\frac12,1]\times(0,\infty))$ and $u=0$ on $\{1\}\times(0,\infty)$.

\end{itemize}
\end{theorem}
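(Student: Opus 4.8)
The plan is to realize the prescribed pattern $(\sigma_i)$ by a continuity/shooting argument in a multi-parameter family of initial data, using the intersection-number function $N(t)=z(u(\cdot,t)-U^*)$ as the bookkeeping device. First I would set up a family $\phi_{\mathbf a}\in X_1$ depending on parameters $\mathbf a=(a_1,\dots,a_k)$ (with $k$ roughly proportional to the number $\#\{i:\sigma_i=L\}$ of LBC phases to be produced), designed so that increasing $a_j$ raises the ``size'' of the solution near $x=0$ on a corresponding time window. For each fixed phase one recalls the dichotomy already available: if the solution stays below $U^*$ near $x=0$ it is regularized (type $C$, \eqref{classicalint}), while if it exceeds $U^*$ there it is of LBC type (type $L$, \eqref{LBCint}); the transition occurs exactly on the finite set $\mathcal T_\phi$ of \eqref{singulartimes}, and by Theorem \ref{thmz1}(i) the number of such transitions is controlled by $N(0)$, which in turn is controlled by the number of parameters through the structure of $\phi_{\mathbf a}$. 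The role of the boundary at $x=1$ is kept trivial (second bullet) by choosing $\phi_{\mathbf a}$ small and fixed on $[\frac12,1]$, so that $u$ never reaches $U^*$ near $x=1$; this is a standard comparison argument and I would dispose of it first.

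The core is an induction on $\ell$, or equivalently a nested continuity argument on the parameters. Having arranged, by choice of $a_1$ (a critical parameter obtained as a supremum/infimum of a suitable connected parameter set), that the first transition time $t_1$ exists and that the behavior on $(t_0,t_1)=(0,t_1)$ is of the required type $\sigma_0=C$, I would then, \emph{freezing} $a_1$ at its critical value, vary $a_2$ to produce the second transition time $t_2>t_1$ with the correct type on $(t_1,t_2)$, and so on. At each stage the key facts needed are: (a) \emph{monotone dependence} — the viscosity solution $u(\phi;\cdot,\cdot)$ is monotone in $\phi$ by the comparison principle, so the relevant ``$u$ exceeds $U^*$ somewhere in phase $i$'' events are monotone in each $a_j$; (b) \emph{continuous dependence} of $u$ on $\phi$ and hence of the transition times $t_i$ on the parameters, away from degenerate configurations — this is where the paper's remark about ``continuity requiring a delicate argument'' applies, because the transition times are defined through the singular quantity $u_x(0,t)=\infty$ and one must rule out that, at the critical parameter, two transition times collide or a transition time escapes to $+\infty$ or down to a previously fixed $t_j$; (c) the \emph{finiteness} and \emph{no-waiting-time} statements of Theorem \ref{thmz1}, which guarantee that between consecutive transition times the behavior is genuinely of one of the two types and that the count cannot exceed what the parameters allow, so the construction closes up with \emph{exactly} $\ell$ transition times and no spurious extra ones.

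The main obstacle I anticipate is precisely step (b) at the critical parameter values: showing that as $a_j\to a_j^*$ the $i$-th transition time $t_i(\mathbf a)$ converges to a finite limit $t_i\in(t_{i-1},t_{i+1})$ and that the limiting solution indeed has the prescribed type on each open subinterval — i.e. that the strict inequalities $u>0$ on $\{0\}\times(t_i,t_{i+1})$ (for $\sigma_i=L$) or $u=0$ there (for $\sigma_i=C$) survive the limit, rather than degenerating into a borderline case lying outside the classification. To handle this I would use the intersection-number machinery: $N(t)$ is nonincreasing in $t$ and drops by at least one at each time in $\mathcal T_\phi$ (by the zero-number / Sturmian argument underlying Theorem \ref{thmz1}(i)), so along the parameter family the transitions cannot accumulate and cannot disappear without $N$ changing in a controlled way; combined with the monotonicity in $\mathbf a$ and a careful choice of which parameter governs which phase, this pins down the critical values and forces the exact pattern. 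The remaining verifications — that $\phi_{\mathbf a}\in X_1$, that GBU occurs in finite time for the relevant parameter range (by the known blow-up criteria for large data), and that $u$ recovers regularity for large $t$ so that $\sigma_{\ell+1}=C$ is automatic (by \cite{PZ}) — are routine and I would relegate them to short remarks.
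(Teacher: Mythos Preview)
Your high-level strategy---a multi-parameter family, iterated selection of critical parameters, and zero-number bookkeeping---matches the paper's. But there is a genuine gap at the point you yourself flag: the continuity of the critical parameter functions $\mu_j^*(\tilde\mu_{j+1})$ in the remaining parameters. The mechanism you propose for this, namely monotonicity of $u$ in $\mathbf a$ together with the drop of $N(t)=z(u(\cdot,t)-U^*)$ at each $t\in\mathcal T_\phi$, does not do the job. The function $N(t)$ controls the number of transitions for a \emph{single} solution; it tells you nothing about how the critical parameter $\mu_j^*$ moves as the \emph{other} parameters $\tilde\mu_{j+1}$ vary. And monotonicity, while it makes $\mu_j^*$ well defined as an infimum, does not by itself rule out a jump discontinuity: one can have $\liminf_{\zeta\to\tilde\mu_{j+1}}\mu_j^*(\zeta)<\mu_j^*(\tilde\mu_{j+1})$ without any contradiction arising from monotonicity or from the intersection count with $U^*$.

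What the paper does here is substantially different and is the technical heart of the proof. Assuming for contradiction that lower semicontinuity fails, one obtains two solutions $v$ and $w$ (at parameters $\mu_j^*(\tilde\mu_{j+1})$ and at the smaller liminf value $\lambda$, respectively) which \emph{both} satisfy the critical behavior on each of the intervals $J_1,\dots,J_j$: on each $J_\ell$ both have $u(0,\cdot)=0$ throughout and $u_x(0,\cdot)=\infty$ at some time (or vice versa, depending on the deformation type). The key device is then to compare $v$ not with $w$ but with the rescaled solutions $w_\beta(x,t)=\beta^\alpha w(\beta^{-1}x,\beta^{-2}t)$ for $\beta<1$ close to $1$, and to study $\mathcal N_\beta(t)=z_{|[0,\beta]}(v(\cdot,t)-w_\beta(\cdot,t))$. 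Because the initial data $\Phi_\mu$ and $\Phi_\lambda$ differ only in the first $j$ deformation slots (disjoint supports), one has $\mathcal N_\beta(0)\le j-1$ for $\beta$ near $1$. On the other hand, the rescaling desynchronizes the transition times of $v$ and $w_\beta$ just enough that, on each $J_\ell$, Lemma~\ref{lemz20B} forces $\mathcal N_\beta$ to drop by at least one (otherwise $v-w_\beta$ would keep a fixed sign near $x=0$ on $J_\ell$, producing infinitely many elements of $\mathcal T_v$ or $\mathcal T_w$ as $\beta\to 1$, contradicting Theorem~\ref{thmz1}(i)). This yields $j$ drops from an initial value $\le j-1$: contradiction.

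So the missing ingredient in your plan is a zero-number argument applied to the \emph{difference of two solutions} (Proposition~\ref{lemz20A}, Lemma~\ref{lemz20B}), combined with the rescaling trick and the careful design of $\Phi_\mu$ so that distinct parameters act on disjoint spatial supports (giving the bound $z(\Phi_\mu-\Phi_\lambda)\le j-1$). Without this, the inductive continuity step does not close. A secondary remark: the number of parameters to be critically tuned is not $\#\{i:\sigma_i=L\}$ but the number of \emph{unstable} transitions in the pattern (each $C$--$C$ transition and each $L$--$L$ bouncing); pure $C$--$L$ and $L$--$C$ transitions are stable and need no critical selection.
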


Typical behaviors given in Theorems~\ref{thmz1} and \ref{conjz1} 
are illustrated in Fig.~\ref{FigLBC}--\ref{FigMixed}.

The name transition set for $\mathcal{T}$ is motivated by the fact that, in view of Theorems~\ref{thmz1} and \ref{conjz1},
the elements of $\mathcal{T}$ play the role of 
 transition times between intervals with behaviors C,L.
The passage from classical to LBC (resp., to classical)  
is associated with GBU with (resp., without) LBC.
As for the passage from LBC to classical it corresponds to the phenomenon of regularization.
Here we discover a new type of behavior, which we call {\it bouncing.} 
This is when the solution passes from LBC to LBC, through a single time of recovery of boundary conditions.
Whereas solutions with separated LBC time intervals should be rather stable, 
the phenomena of GBU without LBC or of bouncing are expected to be unstable.

\begin{figure}[h]
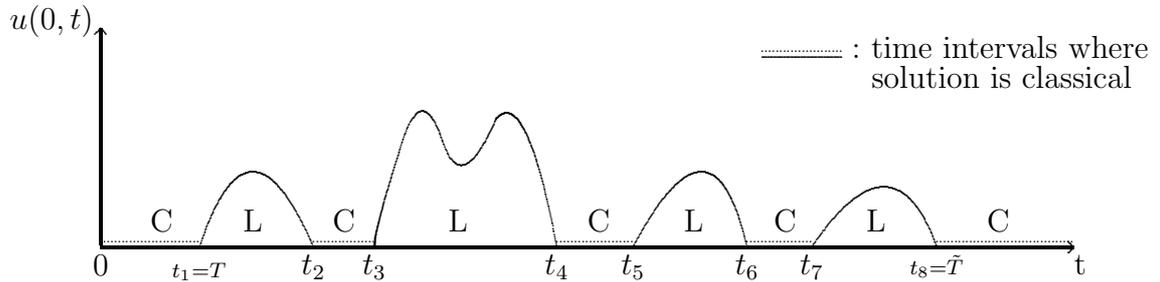

\[
\beginpicture
\setcoordinatesystem units <1cm,1cm>
\setplotarea x from -6 to 6, y from -1 to 4

\setdots <0pt>
\linethickness=1pt
\putrule from -5 0 to 7.8 0
\putrule from -5 0 to -5 2.9

\setdots <1.4pt> 
\setlinear
\plot -5 0.075 -3.7 0.075 /
\plot -2.2 0.075 -1.4 0.075 /
\plot 1 0.075 2 0.075 /
\plot 3.5 0.075 4.35 0.075 /
\plot 6 0.075 7.8 0.075 /

\setdots <0pt> 

\setquadratic

\plot -3.7 0 -3 1 -2.2 0  /

\plot -1.4 0  -1.3 0.5 -1 1.5 /
\plot -1 1.5 -0.75 1.8 -0.5 1.4 /
\plot -0.5 1.4 -0.2 1.1 0.2 1.7 /
\plot  0.2 1.7 0.6 1.5 1 0 /

\plot 2 0 2.9 1 3.5 0  /

\plot 4.35 0 5.3 0.8 6 0  /

\put {t} [lt] at 7.8 -.1
\put {$u(0,t)$} [rc] at -5.1 3

\setlinear 
\setdots <0pt> 
\plot 7.725 -.075 7.8 0 /
\plot 7.725 .075 7.8 0 /
\plot -5.075 2.825 -5 2.9 /
\plot -5 2.9 -4.925 2.825 /

\put {$0$}  [ct] at -5 -.1
\put {C}  [ct] at -4.2 0.5
\put {$^{t_1=T}$}  [ct] at -3.7 -.2
\put {L}  [ct] at -3 0.5
\put {$t_2$}  [ct] at -2.2 -.1
\put {C}  [ct] at -1.8 0.5
\put {$t_3$}  [ct] at -1.4 -.1
\put {L}  [ct] at -0.3 0.5
\put {$t_4$}   [ct] at 1 -.1
\put {C}  [ct] at 1.55 0.5
\put {$t_5$}   [ct] at 2 -.1
\put {L}  [ct] at 2.8 0.5
\put {$t_6$}   [ct] at 3.5 -.1
\put {C}  [ct] at 4 0.5
\put {$t_7$}   [ct] at 4.35  -.1
\put {L}  [ct] at 5.2 0.5
\put {$^{t_8=\tilde T}$}   [ct] at 6 -.1
\put {C}  [ct] at 6.8 0.5

\setlinear
\setdots <1.4pt> 
\plot 3.7 2.6 4.75 2.6 /
\setdots <0pt> 
\plot 3.7 2.525 4.75 2.525 /
\put {:~time intervals where }  [ct] at 6.9 2.8
\put {\ \phantom{a,}solution is classical}  [ct] at 6.62 2.4

\endpicture
\] 
\caption{A solution with exactly $4$ losses and recoveries of boundary conditions } 
\label{FigLBC}
\end{figure}

\begin{figure}[h]
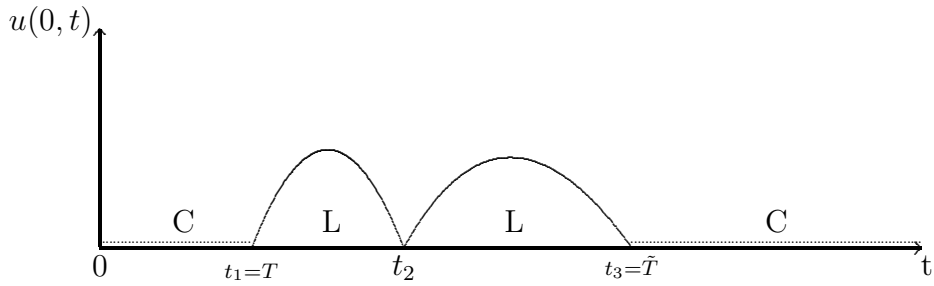

\[
\beginpicture
\setcoordinatesystem units <1cm,1cm>
\setplotarea x from -6 to 6, y from -1 to 3.5

\setdots <0pt>
\linethickness=1pt
\putrule from -5 0 to 5.8 0
\putrule from -5 0 to -5 2.9

\setdots <1.4pt> 
\setlinear
\plot -5 0.075 -3 0.075 /
\plot 2 0.075 5.8 0.075 /

\setdots <0pt> 

\setquadratic

\plot -3 0 -2 1.3 -1 0  /
\plot -1 0 0.4 1.2 2 0  /

\put {t} [lt] at 5.8 -.1
\put {$u(0,t)$} [rc] at -5.1 3

\setlinear 
\setdots <0pt> 
\plot 5.725 -.075 5.8 0 /
\plot 5.725 .075 5.8 0 /
\plot -5.075 2.825 -5 2.9 /
\plot -5 2.9 -4.925 2.825 /

\put {$0$}  [ct] at -5 -.1
\put {C}  [ct] at -3.9 0.5
\put {$^{t_1=T}$}  [ct] at -3 -.2
\put {L}  [ct] at -1.95 0.5
\put {$t_2$}  [ct] at -1 -.1
\put {L}  [ct] at 0.45 0.5
\put {$^{t_3=\tilde T}$}   [ct] at 2 -.1
\put {C}  [ct] at 3.9 0.5

\endpicture
\] 
\caption{A solution with exactly $1$ bouncing}
\label{FigBouncing}
\end{figure}

\begin{figure}[h]
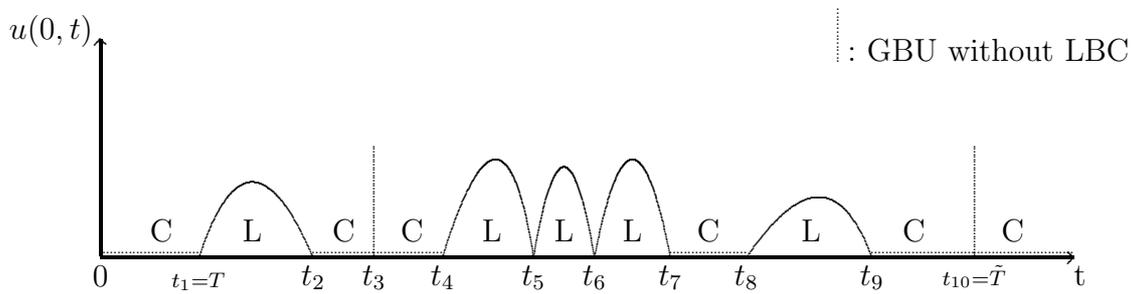

\[
\beginpicture
\setcoordinatesystem units <1cm,1cm>
\setplotarea x from -6 to 6, y from -1 to 4

\setdots <0pt>
\linethickness=1pt
\putrule from -5 0 to 7.8 0
\putrule from -5 0 to -5 2.9

\setdots <1.4pt> 
\setlinear
\plot -5 0.075 -3.7 0.075 /
\plot -2.2 0.075 -1.4 0.075 /
\plot -1.4 0.075 -0.5 0.075 /
\plot 2.5 0.075 3.5 0.075 /
\plot 5.15 0.075 6.5 0.075 /
\plot 6.5 0.075 7.8 0.075 /

\setdots <1pt> 

\plot -1.4 0  -1.4 1.5 /
\plot 6.5 0  6.5 1.5 /

\setdots <0pt> 

\setquadratic

\plot -3.7 0 -3 1 -2.2 0  /

\plot -0.5 0 0.2 1.3 0.7 0  /
\plot 0.7 0 1.1 1.2 1.5 0  /
\plot 1.5 0 2 1.3 2.5 0  /

\plot 3.5 0 4.45 0.8 5.15 0  /

\put {t} [lt] at 7.8 -.1
\put {$u(0,t)$} [rc] at -5.1 3

\setlinear 
\setdots <0pt> 
\plot 7.725 -.075 7.8 0 /
\plot 7.725 .075 7.8 0 /
\plot -5.075 2.825 -5 2.9 /
\plot -5 2.9 -4.925 2.825 /

\put {$0$}  [ct] at -5 -.1
\put {C}  [ct] at -4.2 0.5
\put {$^{t_1=T}$}  [ct] at -3.7 -.2
\put {L}  [ct] at -3 0.5
\put {$t_2$}  [ct] at -2.2 -.1
\put {C}  [ct] at -1.8 0.5
\put {$t_3$}  [ct] at -1.4 -.1
\put {C}  [ct] at -0.9 0.5
\put {$t_4$}   [ct] at -0.5 -.1
\put {L}  [ct] at 0.15 0.5
\put {$t_5$}   [ct] at 0.7 -.1
\put {L}  [ct] at 1.1 0.5
\put {$t_6$}   [ct] at 1.5 -.1
\put {L}  [ct] at 2 0.5
\put {$t_7$}   [ct] at 2.5  -.1
\put {C}  [ct] at 3 0.5
\put {$t_8$}   [ct] at 3.5 -.1
\put {L}  [ct] at 4.35 0.5
\put {$t_9$}   [ct] at 5.15 -.1
\put {C}  [ct] at 5.7 0.5
\put {$^{t_{10}=\tilde T}$}   [ct] at 6.5 -.1
\put {C}  [ct] at 7 0.5

\setlinear
\setdots <1.4pt> 
\plot 4.7 2.6 4.7 3.3 /
\put {:~GBU without LBC }  [ct] at 6.75 2.9

\endpicture
\] 
\caption{A solution with mixed behaviors (2 LBC, 2 GBU without LBC and 1 double bouncing)}
\label{FigMixed}
\end{figure}

\begin{remark} \label{nonmonot}
{\bf (Zero number properties)}
(i) Assume 
\begin{equation}
\label{phiinfty0} 
\| \phi \|_\infty \le c_p
\end{equation}
or $\phi$ symmetric and let $t\in \mathcal{T}$.
It follows from Proposition~\ref{zerob2} below that, if the solution changes type through time $t$, then
it must ``lose an odd 
number of intersections with $ U^*$'', more precisely, $\lim_{s\to t^-} N(s)-N(t)$ is odd. 
On the other hand, if it does not change type, then it must lose an even number of intersections with $ U^*$.
More generally this remains true provided $u(1,t)\ne c_p$. It $u(1,t)=c_p$, then
the above remains true if we replace $N$ with the number of sign changes of $u(\cdot,t)-U^*$ over $[0,b]$
 with $b<1$ close enough to $1$.
\end{remark}

\begin{remark} \label{higher-dim}
{\bf (Radial case in higher dimensions)}
Consider \eqref{vhj1} in the case that $ \Omega $ is a ball or an annulus in $\R^n$.
Let $ \Omega = (\rho,1) $ with $\rho\in(0,1)$.
Then \eqref{vhj1} for a radial solution turns out
\be\label{radial}
\begin{cases}
\displaystyle{ u_t- u_{rr} - \frac{n-1}{r} u_r =|u_r|^p }, & \quad \rho<r<1,\ t>0,\vspace{5pt} \\
u =0, & \quad r\in\{\rho, 1\},\ t>0,\\
\noalign{\vskip 1mm}
u(r,0) =\phi(r),  & \quad \rho<r<1,
\end{cases}
\ee
with $ r := |x| $. 
The singular steady state with singularity at $ r = 1 $ is given by
\be\label{Uradial}
U_{rad}^*(r) = \int_r^1 \xi^{1-n} 
\biggl[ \frac{(p-1)\bigl( \xi^{1 - (n-1)(p-1)}-1 \bigr)}{ (n-1)(p-1) - 1}  \biggr]^{ - \frac{1}{p-1} } d \xi,\quad
\rho<r<1.
\ee
It is immediate that $ U_{rad}^* $ behaves similarly to $ U^* $ near $ r = 1 $, i.e., 
\[
U_{rad}^*(r) = c_{n,p} (1-r)^\alpha + o \left( (1-r)^\alpha \right) \quad \mbox{ for } r<1 \mbox{ close to } 1
\]
with some $ c_{n,p} > 0 $.
The proofs of Theorems~\ref{thmz1} and \ref{conjz1} can be easily modified using $ U_{rad}^* $ instead of $ U^* $. 
Similar modifications work in the case $\Omega=B_1$, 
provided we assume that $\|\phi\|_\infty\le U_{rad}^*(0)$, given by formula \eqref{Uradial}.
\end{remark}

\begin{remark} \label{further-development}
{\bf (Further development)}
Our solutions given in Theorem \ref{conjz1} lose at most two intersections with $ U^*$ at each $ t \in {\mathcal T} $.
There should be other possibilities.
In addition, we do not deal with GBU rate and recovery rate at $ t \in {\mathcal T} $ in this paper.
If these questions are solved, then the behavior of global viscosity solutions will be completely clarified.
We used blow-up profile given in \cite{PS3} in the proofs of Theorems~\ref{thmz1} and \ref{conjz1}. 
However we could replace them by zero number arguments.  
These together with the iterative method based on the continuity of critical parameters in the proof of Theorem \ref{conjz1} 
will play a crucial role. 
This kind of methods will be applied to other equations and we will 
treat these issues in forthcoming papers.
\end{remark}

\subsection{Differences from Fujita equation}  

Blow-up problems in nonlinear parabolic equations have been studied the most extensively 
for the so-called Fujita equation
\be\label{fujita-eq}
\begin{cases}
u_t-\Delta u=u^q, & \quad x\in\R^n,\ t>0,\\
u(x,0)=\phi(x) \ge 0,  & \quad x\in\R^n 
\end{cases}
\ee
with $ q > 1 $.
A solution $ u $ of \eqref{fujita-eq} is said to blow up at $ t = T < \infty $ if $\displaystyle\limsup_{t \to T} \| u (t) \|_{L^\infty}$ $ = \infty$. 
There are critical values of the exponent $ q $ related to continuation after blow-up time in \eqref{fujita-eq}.
Denote by $q_S=(n+2)/(n-2)_+$ the Sobolev exponent and by $ q_{JL} $ the Joseph-Lundgren exponent, defined by 
\begin{equation}
\label{eq:JL}
q_{JL} = \left\{ 
\begin{array}{ll}
+ \infty & \quad \mbox{ if } n \leq 10, \vspace{5pt} \\
\displaystyle{ 1 + \frac{ 4 }{n-4 - 2 \sqrt{n-1} } } \;\; & \quad \mbox{ if } n \geq 11.
\end{array}
\right.
\end{equation}
When a solution $ u $ of \eqref{fujita-eq} blows up at $ t = T $, the blow-up is called complete if the proper continuation for $ t > T $ 
identically equals $ + \infty $ in  $ \R^n \times (T, \infty)  $, and incomplete otherwise.
We refer to \cite{GV97} for the definition of proper solution and its main properties. 
Roughly speaking, a proper solution is a weak solution defined as a limit of global classical solutions to an approximate equation.
From the point of view of regularization after blow-up, complete and incomplete blow-up in \eqref{fujita-eq} correspond to GBU 
with LBC and without LBC in \eqref{vhj1}, respectively.
In the case $ q < q_S $, only the complete blowup is possible by \cite{Baras-Cohen:JFA71}.
On the other hand, besides complete blow-up solutions, incomplete blow-up occurs when $ q \ge q_S $ 
\cite{Ni-Sacks-Tavantzis:JDE54, Mizoguchi:MZ239}.
We note that these works do not give information on the behavior of weak solutions after blow-up time.
The simplest incomplete blow-up solution is a peaking solution and its existence is known from
 \cite{GV97, Mizoguchi:JFA220} (see also \cite{FMP05,MM09}).
Here a global weak solution $u$ is called a peaking solution if $ u $  blows up at $ t = T < \infty $, recovers regularity just 
after $ t = T $ and exists for $ t > T $ in the classical sense. 
The GBU solution of \eqref{vhj1} without LBC constructed in \cite{PS3} is similar to a peaking solution of \eqref{fujita-eq}.
The natural question on the existence of a solution with multiple blow-up times has been paid attention in the field of the semilinear heat 
equation and was  answered affirmatively for $ q > q_{JL} $ in \cite{Miz1,Miz2}. 
Namely, if $ q > q_{JL} $, then for arbitrary $ m \in \N^* $ there exists a radial solution $ u $ 
of \eqref{fujita-eq} and $ 0 < t_1 < t_2 < \cdots < t_m $ such that $ u $ blows up at $ t = t_i $ and 
that $ u $ is classical in $ (t_{i-1}, t_i) $ for $ i = 1, 2, \cdots, m $, where $ t_0 := 0 $.
The radial solution with multiple blow-up times was constructed in \cite{Miz1,Miz2}
by carefully choosing initial data with $ m $ isolated parts such that 
each part corresponds to initial data of the special radial solution by \cite{Herrero-Velazquez:CRASP319} blowing up around $ t = t_i $ 
for $ i = 1, 2, \cdots, m $.
While the solutions due to \cite{Miz1,Miz2} blow up only at the origin at each blow-up time, a solution 
of different type, which undergoes incomplete blow-up at the origin at the first blow-up time and 
complete blow-up on a sphere at the second blow-up time, was constructed in \cite{Miz-Vaz}.

Whereas complete blow-up solutions of the Fujita equation \eqref{fujita-eq} cease to exist 
 in any weak sense at blow-up time, solutions of \eqref{vhj1} undergoing GBU with LBC 
continue to exist in viscosity sense after blow-up time and recover regularity after a while.
This makes the description of behavior of viscosity solutions after blow-up time more complicated than in \eqref{fujita-eq}.
In other words, when a radial solution of \eqref{fujita-eq} undergoes blow-up at multiple blow-up times, 
only the immediate regularization is possible except at final blow-up time.
On the contrary, various combinations of GBU with LBC and without LBC 
turn out to occur in \eqref{vhj1}, including the new type of behavior called bouncing.
Accordingly, as we shall explain in the next subsection, 
we introduce a method based on an arbitrary number of critical parameters, whose continuity 
requires a delicate argument.
Since we do not rely on any known special solution unlike in the proof for Fujita equation, 
our method is expected to apply to other equations.

\subsection{Ideas of proofs}
(i) The main idea of the proof of Theorem~\ref{thm1} is as follows. 
We construct a multiscale, compactly supported initial data, made of $m$, suitably rescaled, bumps 
which are located farther and farther from the boundary.
The distribution of the sizes and locations of the bumps in terms of the distance to the boundary is rather delicate
and the construction has to be made in a recursive way.
The bump which is closest to the boundary generates the first GBU and LBC.
The second bump is much farther from the boundary and its influence 
becomes significant only after some lapse of time, leaving enough time for the solution to get regularized by 
an effect of the diffusion, before producing a second GBU and LBC. 
Repeating the process, we construct an $m$-bump initial
data which leads to a solution undergoing GBU and LBC (at least) $m$ times.\footnote{Although the above 
description is more convenient for heuristic purposes,
the actual construction is done in the converse direction, first constructing the bump which is farthest from the boundary.}
The estimates leading to each LBC and regularization are provided by suitable rescaling and comparison arguments,
partly inspired in \cite{LS} and \cite{PZ},
that are the object of the preliminary lemmas in Section~3.
As for the proof of Theorem~\ref{thm1B}, it is based on a modification of the proof of Theorem~\ref{thm1} 
and a limiting argument.

(ii) The basic idea of the proof of Theorem~\ref{thmz1}(i) is to show that $N(t)$ is a nondecreasing function for $t\ge 0$
and that $N(t)$ has to drop at any transition time $t\in \mathcal{T}$.
These properties can be shown to be true provided one makes the additional assumption \eqref{phiinfty0}
(see Propositions~\ref{ZeroMonot} and \ref{zerob2}) and this then readily yields the bound \eqref{cardS}.
In the general case, without assumption \eqref{phiinfty0}, $N(t)$ need not be monotone
 (see Proposition~\ref{PropNonmonot})
and the proof requires additional arguments.
In particular, one needs to study the intersection properties of approximate solutions $u_k$ with regular steady states $U_j$
for suitable large $j,k$.
As for the proof of Theorem~\ref{thmz1}(ii) it is based on a further analysis of intersection properties of the solution with $U^*$,
combined with asymptotic estimates of the boundary profile of the solution at times $t\in \mathcal{S}$.

(iii) The proof of Theorem~\ref{conjz1} is rather long and delicate. 
It is based on a modification of the multibump construction in the proof of Theorems~\ref{thm1}
combined with deformation, zero number and recursion arguments.
More precisely we construct a multi-parameter family of initial data
based on suitable deformations of multibumps initial data, and the desired solution is obtained by iteratively selecting appropriate critical values of the parameters. 
The required continuity properties of the critical parameter functions rely upon zero number arguments
applied to the difference of two solutions, whereas the exact structure of the resulting solution
depends on dropping properties of the number of intersections with $ U^* $.
In the construction stated in (i), the behavior of solutions corresponding to each bump of initial data is rather stable, and does not 
give a serious effect to the others.
On the other hand, the behaviors associated with the critical parameters are unstable and sensitive to the other parts.
That is a reason why the proof is long and complicated.

\goodbreak

\section{Preliminary results I:  approximation, comparison and regularity}

Throughout the paper, we shall denote the function distance to the boundary by
$$\delta(x)={\rm dist}(x,\partial\Omega),\quad x\in\Omega,$$
and by $\nu$ the outer normal vector to $\partial\Omega$.

 We start by recalling (see,~e.g.,~\cite{PZ,PS3}) that \eqref{vhj1} 
can be approximated (away from the boundary) by the truncated problems
\be\label{vhj1k}
\begin{cases}
\partial_tu_k-\Delta u_k=F_k(|\nabla u_k|^2), & \quad x\in\Omega,\ t>0,\\
u_k=0, & \quad x\in\partial\Omega,\  t>0,\\
u_k(x,0) =\phi(x),  & \quad x\in\Omega, 
\end{cases}
\ee
where 
\be\label{DefFk}
F_k(s)=\min(s^{p/2},k^{(p-2)/2}s).
\ee
Namely, \eqref{vhj1k} admits a unique, global classical solution $u_k\in C^{1,2}(\overline\Omega\times(0,\infty))$
with $u_k,\nabla u_k\in C(\overline\Omega\times[0,\infty))$,
the sequence $u_k$ is nondecreasing, and
$$u_k\to u\ \hbox{ in $C^{2,1}_{loc}(\Omega\times(0,\infty))$, as $k\to\infty$.}$$

 We next give two versions of the comparison principle that will be used repeatedly in what follows.
 The first one is a standard comparison principle for sub-/super\-solutions (see e.g. \cite[Proposition 2.1]{SZ06})

\begin{proposition}\label{compP0}
Let $\omega$ be any bounded open subset 
of $\Rn$. Let $0\le t_1<t_2$, set $Q:=\omega\times(t_1,t_2)$
and denote by $\partial_PQ$ its parabolic boundary.
Assume that $u_i\in C^{2,1}(Q)\cap C(\overline Q)$, $i=1,2$, satisfy
$$\partial_t u_1-\Delta u_1-|\nabla u_1|^p
\leq \partial_t u_2-\Delta u_2-|\nabla u_2|^p\quad\hbox{ in $Q_T$,}$$
Then
$$\sup_{Q}(u_1-u_2) \leq \sup_{\partial_PQ} (u_1-u_2).$$
\end{proposition}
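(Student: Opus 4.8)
The plan is to deduce this from the classical weak parabolic maximum principle, applied to $w:=u_1-u_2$; the only point requiring mild care is that $C^{2,1}$-regularity — hence the pointwise differential inequality and the second-order test — is assumed only in the \emph{open} cylinder $Q$. So I would first reduce to subcylinders. For $\eta\in(0,t_2-t_1)$ put $Q_\eta:=\omega\times(t_1,t_2-\eta)$, so that $\overline{Q_\eta}=\overline\omega\times[t_1,t_2-\eta]$ is compact, $Q_\eta\subset Q$, and $\partial_PQ_\eta=(\overline\omega\times\{t_1\})\cup(\partial\omega\times[t_1,t_2-\eta])\subset\partial_PQ$. If I can show $\sup_{Q_\eta}w\le\sup_{\partial_PQ_\eta}w$ for each such $\eta$, then, since every $(x,t)\in Q$ lies in $Q_\eta$ for $\eta$ small, I get $w(x,t)\le\sup_{\partial_PQ_\eta}w\le\sup_{\partial_PQ}w$, and taking the supremum over $(x,t)\in Q$ gives the conclusion.

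I would then fix $\eta$ and argue by contradiction, assuming $\sup_{\overline{Q_\eta}}w>\sup_{\partial_PQ_\eta}w$. For $\varepsilon>0$ set $w_\varepsilon(x,t):=w(x,t)-\varepsilon(t-t_1)$. Since $\sup_{\partial_PQ_\eta}w_\varepsilon\le\sup_{\partial_PQ_\eta}w$ while $\sup_{\overline{Q_\eta}}w_\varepsilon\ge\sup_{\overline{Q_\eta}}w-\varepsilon(t_2-t_1)$, for $\varepsilon$ small enough one still has $\sup_{\overline{Q_\eta}}w_\varepsilon>\sup_{\partial_PQ_\eta}w_\varepsilon$. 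By compactness $w_\varepsilon$ attains its maximum over $\overline{Q_\eta}$ at some $(x_0,t_0)$, and this point cannot lie on $\partial_PQ_\eta$; hence $x_0\in\omega$ and $t_0\in(t_1,t_2-\eta]$, so in particular $(x_0,t_0)\in Q$, where $u_1,u_2\in C^{2,1}$ and the differential inequality holds pointwise.

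At $(x_0,t_0)$ the first- and second-order optimality conditions in $x$ give $\nabla w=0$ and $\Delta w\le0$, i.e.\ $\nabla u_1(x_0,t_0)=\nabla u_2(x_0,t_0)$ and $\Delta u_1(x_0,t_0)\le\Delta u_2(x_0,t_0)$, while optimality in $t$ (an interior condition if $t_0<t_2-\eta$, a one-sided one if $t_0=t_2-\eta$, legitimate since $t_2-\eta<t_2$) gives $\partial_tw_\varepsilon(x_0,t_0)\ge0$, i.e.\ $\partial_tw(x_0,t_0)\ge\varepsilon$. Substituting into the differential inequality at $(x_0,t_0)$, the terms $|\nabla u_1|^p$ and $|\nabla u_2|^p$ cancel since the gradients coincide there, and one is left with $\partial_tu_1-\Delta u_1\le\partial_tu_2-\Delta u_2$, that is $\varepsilon\le\partial_tw(x_0,t_0)\le\Delta w(x_0,t_0)\le0$, a contradiction. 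Hence $\sup_{Q_\eta}w\le\sup_{\partial_PQ_\eta}w$, and the reduction above completes the proof.

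I do not expect a serious obstacle here: this is essentially a textbook weak maximum principle. The two places that merit attention are the passage to the subcylinders $Q_\eta$ (forced by the regularity being stated only on the open set $Q$), and the fact that the superquadratic nonlinearity is harmless in this scheme — it is tested only at a point where $\nabla u_1=\nabla u_2$, so no Lipschitz bound on $\xi\mapsto|\xi|^p$ nor any a priori bound on $\nabla u_i$ is needed. (Alternatively one could linearize, writing $|\nabla u_1|^p-|\nabla u_2|^p=b\cdot\nabla w$ with $b$ bounded on $\overline Q$ by continuity of $\nabla u_i$, and invoke the maximum principle for $\partial_tw-\Delta w-b\cdot\nabla w\le0$; but the direct argument above avoids even that.)
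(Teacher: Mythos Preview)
Your proof is correct. The paper itself does not give a proof of this proposition: it simply cites it as a standard comparison principle, referring to \cite[Proposition~2.1]{SZ06}. Your argument is exactly the textbook weak maximum principle proof one would expect to find behind such a citation---the perturbation by $-\varepsilon(t-t_1)$ together with the observation that at an interior maximum the gradients of $u_1$ and $u_2$ coincide, so the nonlinear terms cancel and no structural hypothesis on $|\xi|^p$ beyond continuity is needed. The reduction to the subcylinders $Q_\eta$ to ensure the maximum lies where $C^{2,1}$ regularity is available is handled cleanly. One small caveat in your closing parenthetical: the linearization alternative with $b$ bounded on $\overline Q$ would require $\nabla u_i\in C(\overline Q)$, which is not assumed here; but since this is only an aside and your main argument does not rely on it, it does not affect the validity of the proof.
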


\medskip

The second one is a suitable form of the comparison principle for viscosity solutions. A key point is that comparison is guaranteed
 although we only assume $w\ge 0$ (in the classical sense) on  $\partial\Omega$ for the comparison function from above,
 whereas the viscosity solution $u$ may take positive pointwise values on the boundary
 (but of course it satisfies $u=0$ in the generalized viscosity sense or,
 equivalently, in the sense of monotone approximation by classical solutions of truncated problems).

\begin{proposition}\label{compP}
Let $\phi\in X$ and let $u$ be the corresponding global viscosity solution of \eqref{vhj1}.
Let $\omega$ be any open subset of $\Omega$, let $t_2>t_1\ge0$ and set $Q:=\omega\times(t_1,t_2)$.
Assume that $w\in C^{1,2}(Q)\cap C(\overline Q)$ satisfies
$$\begin{aligned}
w_t-\Delta w&\ge |\nabla w|^p\quad\hbox{ in $Q$,} \\
w&\ge 0 \quad\hbox{ on $(\omega\cap\partial\Omega)\times(t_1,t_2)$,} \\
w&\ge u \quad\hbox{ on $(\Omega\cap\partial\omega)\times(t_1,t_2)$,} \\
 w(\cdot,t_1) &\ge u(\cdot,t_1)\quad\hbox{ in $\omega$.}
\end{aligned}$$ 
Then $w\ge u$ in $\overline{Q}$.
 \end{proposition}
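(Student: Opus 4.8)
The plan is to reduce the viscosity comparison to the classical comparison principle (Proposition~\ref{compP0}) applied to the truncated approximations $u_k$, and then pass to the limit. First I would recall that $u$ is the monotone increasing limit, in $C^{2,1}_{loc}(\Omega\times(0,\infty))$, of the classical solutions $u_k$ of the truncated problems \eqref{vhj1k}, where $F_k(|\nabla u_k|^2)\le |\nabla u_k|^p$ since $F_k(s)\le s^{p/2}$. Hence each $u_k$ is a classical \emph{subsolution} of the untruncated equation on all of $\Omega\times(0,\infty)$, i.e.~$\partial_t u_k-\Delta u_k-|\nabla u_k|^p\le 0$. The goal is therefore to show $w\ge u_k$ in $\overline Q$ for every $k$; letting $k\to\infty$ then gives $w\ge u$ in the interior of $Q$, and by continuity of both $w$ and $u$ up to $\overline Q$, in all of $\overline Q$.

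To get $w\ge u_k$ I would apply Proposition~\ref{compP0} on the set $Q=\omega\times(t_1,t_2)$ with $u_1=u_k$ and $u_2=w$. The differential inequality $\partial_t u_k-\Delta u_k-|\nabla u_k|^p\le 0\le \partial_t w-\Delta w-|\nabla w|^p$ holds in $Q$, so it remains to check $u_k\le w$ on the parabolic boundary $\partial_PQ=(\overline\omega\times\{t_1\})\cup(\partial\omega\times[t_1,t_2))$. The parabolic boundary splits into three pieces: the bottom $\overline\omega\times\{t_1\}$, the ``interior'' lateral part $(\Omega\cap\partial\omega)\times[t_1,t_2)$, and the part of $\partial\omega$ lying on $\partial\Omega$, namely $(\partial\Omega\cap\overline\omega)\times[t_1,t_2)$. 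On the bottom, $u_k(\cdot,t_1)\le\phi=u(\cdot,t_1)\le w(\cdot,t_1)$ on $\omega$ (and on $\partial\omega\cap\Omega$ by the lateral hypothesis), using $u_k\le u$. On the interior lateral part, $u_k\le u\le w$ by hypothesis. On the part of $\partial\omega$ contained in $\partial\Omega$, we have $u_k=0\le w$ by the boundary condition in \eqref{vhj1k} and the hypothesis $w\ge0$ on $(\omega\cap\partial\Omega)\times(t_1,t_2)$. Thus $u_k\le w$ on all of $\partial_PQ$, and Proposition~\ref{compP0} gives $\sup_Q(u_k-w)\le \sup_{\partial_PQ}(u_k-w)\le 0$, i.e.~$u_k\le w$ in $\overline Q$.

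The only point requiring a little care — and the main (minor) obstacle — is the regularity bookkeeping on $\partial_PQ$. One must be careful that $\omega$ may meet $\partial\Omega$, so $\partial\omega$ is not a priori contained in $\Omega$, and that $u_k$ (unlike the viscosity solution $u$) genuinely vanishes on $\partial\Omega$ with $u_k,\nabla u_k\in C(\overline\Omega\times[0,\infty))$, which legitimizes applying Proposition~\ref{compP0} on $\omega$ with $u_k$ continuous up to $\overline Q$. Also, in the degenerate case $t_1=0$, the hypothesis reads $w(\cdot,0)\ge\phi$ and $u_k(\cdot,0)=\phi$, so the bottom inequality still holds. Passing to the limit is then immediate by the locally uniform (in fact monotone) convergence $u_k\uparrow u$ on $\Omega\times(0,\infty)$ together with continuity of $u$ and $w$ on $\overline Q$, which closes the argument.
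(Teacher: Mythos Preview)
Your proposal is correct and follows essentially the same approach as the paper: approximate $u$ by the classical solutions $u_k$ of the truncated problems, use $u_k\le u$ together with $u_k=0$ on $\partial\Omega$ and $F_k(s)\le s^{p/2}$ to verify the hypotheses of Proposition~\ref{compP0} on $Q$, and pass to the limit. The only minor slip is the chain ``$u_k(\cdot,t_1)\le\phi=u(\cdot,t_1)$'' when $t_1>0$ (the equality with $\phi$ fails then), but since you correctly invoke $u_k\le u$ this does not affect the argument.
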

 
For convenience, we give a short proof.

\begin{proof} 
 Consider the smooth solutions $u_k$ of the truncated problems \eqref{vhj1k}.
Since $u\ge u_k$, our assumptions imply that $w\ge u_k$ on the parabolic boundary of $Q$.
Since $F_k(|\nabla v|^2)\le |\nabla v|^p$, it follows from Proposition~\ref{compP} that
$w\ge u_k$ on $Q$.
For each $(x,t)\in Q$, passing to the limit $k\to\infty$, we deduce that
$w(x,t)\ge u(x,t)$. 
Finally, since $u,w\in C(\overline\Omega\times[0,\infty))$, this extends to $\overline Q$.
\end{proof}

The solution $u$ satisfies the following continuous dependence estimate with respect initial data
(see e.g.~\cite[Theorem~3.1]{PS3}):
\be\label{contdep}
\|u(\phi_1;t)-u(\phi_2;t)\|_\infty\le \|\phi_1-\phi_2\|_\infty, \quad\hbox{ for all $t>0$.}
\ee
As another useful maximum principle property, let us recall (see, e.g.,~\cite[Lemma 3.1]{PS2} and \cite[Section~3]{PS2}) 
that for all $t_0>0$, 
there exist $M=M(t_0)>0$ and $k_0\ge 1$ such that
\be\label{Liptime0}
|u_t|\le M\quad\hbox{ in $\Omega\times [t_0,\infty)$}
\ee
 and the solutions $u_k$ of the truncated problems \eqref{vhj1k} satisfy
\be\label{Liptimek}
|\partial_tu_k|\le M \quad\hbox{ in $\Omega\times [t_0,\infty)$ for all $k\ge k_0$}.
\ee
Since $u\in C(\overline\Omega\times[0,\infty))$, \eqref{Liptime0} guarantees the following global Lipschitz estimate in time up to the boundary
\be\label{Liptime}
|u(x,t)-u(x,s)|\le M(t-s),\quad x\in\overline\Omega,\ t_0\le s<t<\infty.
\ee

We next recall (see \cite{BDL04}) that the existence of a unique global viscosity solution $u$ of~\eqref{vhj1},
with $u\in C^{1,2}(\overline\Omega\times(0,T))\cap C(\overline\Omega\times[0,\infty))$,
remains true for any initial data $\phi\in C_0(\Omega)$.
However, unlike in the case of $C^1$ initial data (namely $u_0\in X$), $u$ need then not be smooth up to the boundary,
nor achieve the boundary conditions in the classical sense, even for small time.
We first give the following proposition, which guarantees classical regularity for small time 
provided the initial data $\phi\in C_0(\Omega)$ is merely controlled above by the distance to the boundary,
even though $\phi$ is not~$C^1$.
This result will be a useful tool in the proof of Theorem~\ref{thm1},
to ensure that the solution becomes classical again between two losses of boundary conditions.
Set
$$Y:=\Bigl\{\phi\in C(\overline\Omega);\ \phi=0 \hbox{ on $\partial\Omega$ \ and \ $\displaystyle\sup_{\Omega}\frac{\phi}{\delta}<\infty$}\Bigr\}.$$

\begin{proposition}\label{prop1} 
Let $\phi\in Y$. There exists $\tau>0$ such that problem~\eqref{vhj1} admits a unique classical solution $U$ on $[0,\tau]$,
with $U\in C^{1,2}(\overline\Omega\times(0,\tau])\cap C(\overline\Omega\times[0,\tau])$.
Moreover, we have
\be\label{estimnablau}
\sup_{t\in (0,\tau]} t^{1/2} \|\nabla U(t)\|_\infty<\infty,
\ee
and $U$ coincides with the unique viscosity solution $u$ on $[0,\tau]$.
\end{proposition}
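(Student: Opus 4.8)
The plan is to construct the classical solution by a standard parabolic fixed-point argument in a weighted space adapted to the boundary behavior, and then identify it with the viscosity solution by uniqueness. First I would reduce the problem to an integral equation via the Duhamel formula: writing $(e^{t\Delta})_{t\ge 0}$ for the Dirichlet heat semigroup on $\Omega$, a mild solution of \eqref{vhj1} satisfies
$$U(t)=e^{t\Delta}\phi+\int_0^t e^{(t-s)\Delta}\bigl(|\nabla U(s)|^p\bigr)\,ds.$$
The natural space is $E_\tau:=\{U\in C(\overline\Omega\times[0,\tau]):\ \|U\|_{E_\tau}<\infty\}$ with norm $\|U\|_{E_\tau}=\sup_{\overline\Omega\times[0,\tau]}|U|+\sup_{t\in(0,\tau]}t^{1/2}\|\nabla U(t)\|_\infty$, which already encodes \eqref{estimnablau}. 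The key smoothing estimates are the gradient bounds for the Dirichlet heat semigroup: $\|\nabla e^{t\Delta}\psi\|_\infty\le Ct^{-1/2}\|\psi\|_\infty$ for $\psi\in L^\infty$, and, crucially, $\|e^{t\Delta}\psi\|_\infty\le C\|\psi/\delta\|_\infty$ uniformly for small $t$ together with $\|\nabla e^{t\Delta}\psi\|_\infty\le C\|\psi/\delta\|_\infty\,\delta(x)\text{-type control}$, which is where the hypothesis $\phi\in Y$, i.e. $\sup_\Omega \phi/\delta<\infty$, enters: it guarantees that $e^{t\Delta}\phi$ stays bounded in $C^1(\overline\Omega)$ as $t\to 0^+$, so that the linear term $e^{t\Delta}\phi$ lies in $E_\tau$ with norm controlled by $\|\phi/\delta\|_\infty$.

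Next I would estimate the nonlinear term. For $U\in E_\tau$ one has $\||\nabla U(s)|^p\|_\infty\le s^{-p/2}\|U\|_{E_\tau}^p$, and applying the semigroup bounds,
$$\Bigl\|\int_0^t e^{(t-s)\Delta}|\nabla U(s)|^p\,ds\Bigr\|_\infty\le C\|U\|_{E_\tau}^p\int_0^t s^{-p/2}\,ds,$$
$$\Bigl\|\nabla\!\int_0^t e^{(t-s)\Delta}|\nabla U(s)|^p\,ds\Bigr\|_\infty\le C\|U\|_{E_\tau}^p\int_0^t (t-s)^{-1/2}s^{-p/2}\,ds.$$
Here the main obstacle appears: since $p>2$, the integral $\int_0^t s^{-p/2}\,ds$ \emph{diverges} at $s=0$, so the naive contraction scheme fails and one cannot simply take $\tau$ small. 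The resolution is to choose the initial data so that the gradient is genuinely bounded near $t=0$: because $\phi\in Y$ controls $\nabla e^{t\Delta}\phi$ in $L^\infty$ uniformly for $t\in[0,1]$, one should instead work in the space $\tilde E_\tau$ with norm $\sup_{\overline\Omega\times[0,\tau]}(|U|+|\nabla U|)$, i.e.\ demand $\nabla U\in L^\infty$ up to $t=0$. Then $\||\nabla U(s)|^p\|_\infty\le\|U\|_{\tilde E_\tau}^p$ is bounded, the Duhamel integrals converge with an extra power of $\tau$, and the map is a contraction on a ball of $\tilde E_\tau$ for $\tau$ small depending only on $\|\phi/\delta\|_\infty$. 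The price is that one must first verify $\sup_{t\in[0,1]}\|\nabla e^{t\Delta}\phi\|_\infty<\infty$ for $\phi\in Y$; this is the real technical heart, and it follows from the estimate $|\nabla e^{t\Delta}\psi(x)|\le C\|\psi/\delta\|_\infty$ for the Dirichlet heat kernel on a smooth bounded domain, a consequence of Gaussian bounds on $\partial_x p_t^{\Omega}(x,y)$ together with $p_t^\Omega(x,y)\le C\,\delta(y)(t^{1/2}+\delta(y))^{-1}t^{-n/2}e^{-c|x-y|^2/t}$.

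Once existence and uniqueness of $U\in C^{1,2}(\overline\Omega\times(0,\tau])\cap C(\overline\Omega\times[0,\tau])$ on a small interval is established, interior parabolic regularity (bootstrapping from the integral equation, or Schauder estimates since $|\nabla U|^p\in C^\alpha_{loc}$) upgrades $U$ to a classical solution, and the Lipschitz gradient bound gives \eqref{estimnablau} a fortiori. Finally, to identify $U$ with the viscosity solution $u$ of \eqref{vhj1}: $U$ is in particular a (smooth, hence viscosity) solution on $[0,\tau]$ taking the initial data $\phi$ continuously and the zero boundary data classically; by the uniqueness part of the Barles--Da Lio--Lions theory \cite{BDL04} recalled above, the viscosity solution with initial data $\phi\in C_0(\Omega)$ is unique, so $u\equiv U$ on $\overline\Omega\times[0,\tau]$. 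Alternatively one may use the comparison Proposition~\ref{compP} in both directions, sandwiching $u$ between $U$ and itself, since $U$ is a classical solution and hence both a sub- and supersolution in the sense required there. I expect the verification of the boundary-regularity estimate for the Dirichlet heat semigroup under the mere hypothesis $\phi/\delta\in L^\infty$ — rather than $\phi\in C^1$ — to be the only genuinely delicate point; everything else is routine fixed-point and regularity theory.
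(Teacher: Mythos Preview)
Your approach has a genuine gap at the step you yourself flag as the ``real technical heart'': the claimed estimate $\sup_{t\in(0,1]}\|\nabla e^{t\Delta}\phi\|_\infty \le C\|\phi/\delta\|_\infty$ is \emph{false} for general $\phi\in Y$. Take $n=1$, $\Omega=(0,1)$, and $\phi(x)=x(1-x)\sin(1/x)$ (with $\phi(0)=0$): then $\phi\in C_0(\overline\Omega)$ with $|\phi/\delta|\le 1$, so $\phi\in Y$, but $\phi'(x)=(1-2x)\sin(1/x)-x^{-1}(1-x)\cos(1/x)$ is unbounded near $x=0$. If your estimate held, the family $(e^{t\Delta}\phi)_{t\in(0,1]}$ would be uniformly Lipschitz; since $e^{t\Delta}\phi\to\phi$ in $C(\overline\Omega)$, $\phi$ itself would then be Lipschitz---a contradiction. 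Thus the linear part $e^{t\Delta}\phi$ does not lie in your space $\tilde E_\tau$, and the contraction in the unweighted $C^1$ norm cannot even be set up. You correctly diagnose that the $t^{1/2}$-weighted scheme fails because $\int_0^t s^{-p/2}\,ds$ diverges for $p>2$, but the proposed remedy does not work: membership in $Y$ simply does not deliver uniform $C^1$ control of the heat flow.

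The paper avoids the mild formulation entirely. It approximates \eqref{vhj1} by truncated problems with globally Lipschitz nonlinearities $F_k(|\nabla v|^2)$ (which admit global classical solutions $v_k$) and obtains uniform-in-$k$ gradient bounds via a Bernstein auxiliary function $J=t|\nabla v_k|^2+(v_k+\|\phi\|_\infty)^2$, shown to satisfy a maximum principle thanks to the structural inequality $2sF_k'(s)\ge F_k(s)$. The factor $t$ in front of $|\nabla v_k|^2$ is precisely what absorbs the lack of initial gradient regularity: at $t=0$ one has $J\le 4\|\phi\|_\infty^2$ with no gradient term. On the lateral boundary, $\partial_\nu v_k$ is controlled by two comparisons---from above by the classical solution with $C^1$ initial data proportional to the torsion function $\psi$ (here the hypothesis $\phi\le M\delta\le C\psi$ enters), and from below by $e^{t\Delta}\phi_-$---yielding $J\le C$ on the parabolic boundary, hence $t\|\nabla v_k(t)\|_\infty^2\le C$ uniformly in $k$. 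Compactness then produces the classical solution satisfying \eqref{estimnablau}; continuity at $t=0$ is checked by a separate barrier argument, and identification with the viscosity solution is by uniqueness as you say. The moral: the superquadratic gradient nonlinearity defeats semigroup contraction but is harmless for Bernstein-type maximum-principle arguments.
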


\begin{remark}\label{remprop1} 
The time $\tau$ can be chosen uniform in terms of 
\be\label{defMphi}
M:=\max\Bigl\{\displaystyle\sup_{\Omega}\delta^{-1}\phi,\bigl|\inf_{\Omega}\phi\bigr|\Bigr\}.
\ee
\end{remark}

Altough Proposition~\ref{prop1} does not seem to have appeared in the literature,
it is essentially a consequence of Bernstein-type and approximation arguments from \cite{PZ}.
Since the motivation in \cite{PZ} was ultimate regularization for large time,
we need to adapt them to our present purpose and we give a full proof for completeness.

\begin{proof}[Proof of Proposition~\ref{prop1}]
Let $\psi\in C^2(\overline\Omega)$ be the unique solution of the linear problem
\be\label{defpsi}
\begin{cases}
-\Delta\psi=1, & \quad x\in\Omega,\\
\psi=0, & \quad x\in\partial\Omega.
\end{cases}
\ee
By Hopf's lemma, there exists $c_1>0$ such that 
\be\label{hopfpsi}
\psi\ge c_1\delta\quad\hbox{ in $\Omega$.}
\ee
By assumption, we have
$\phi\le M\delta\le c_1M\psi$, where $M$ is given by \eqref{defMphi}.
Let $w$ be the solution of problem~\eqref{vhj1} with initial data $c_1M\psi\in X$.
There exist $\tau>0$ and $C_1>0$ depending only on $M, p, \Omega$, such that 
$w$ is classical on $(0,\tau]$ and
\be\label{supgradw}
\sup_{t\in (0,\tau]}\|\nabla w(t)\|_\infty\le C_1.
\ee

As in the proof of Proposition~\ref{compP}, we next introduce a sequence of truncated problems,
this time with slightly better behaved truncated nonlinearities.
For each integer $k\ge 1$, let $F_k$ be given by
\be\label{eqaaa}
F_k(s)=\begin{cases}
s^{p/2},\quad 0\le s\le k, \\
\noalign{\vskip 1mm}
    k^{p/2}+\frac{p}{2}k^{(p-2)/2}(s-k),\quad s>k.  
\end{cases}
\ee
Note that $F_k\in C^1([0,\infty))$, with $F_k'$ locally H\"older continuous, 
\be\label{eqaa0}
0\le F_k(s)\le |s|^{p/2},\quad s\ge 0
\ee
and
\be\label{eqaa}
2sF'(s)\ge F(s),\quad s\ge 0.
\ee
Since $F_k(|\nabla v|^2)$ has at most quadratic growth at infinity with respect to $\nabla v$,
it is well known that problem \eqref{vhj1k} admits a unique, global classical solution $v=v_k$.
By the maximum principle, we have 
\be\label{estim0vhj1k}
|v|\le \|\phi\|_\infty
\ee
and, in view of \eqref{eqaa0}, it follows from the comparison principle that
$$v\le w\quad\hbox{ in $Q:=\Omega\times(0,\tau)$}.$$
Consequently, using \eqref{supgradw}, we get
\be\label{estim1vhj1k}
\frac{\partial v}{\partial\nu}\ge -C_1\quad\hbox{ in $Q$}.
\ee
On the other hand, we have $v\ge z:=e^{t\Delta}(\phi_-)$,
where $\phi_-=\min(\phi,0)$.
Since, by standard heat semigroup estimate, $\|\nabla z(t)\|_\infty\le C(\Omega)t^{-1/2}\|\phi_-\|_\infty\le C(\Omega)Mt^{-1/2}$, we deduce that
\be\label{estim2vhj1k}
\frac{\partial v}{\partial\nu}\le C(\Omega)t^{-1/2}M\quad\hbox{ in $Q$}.
\ee

Now, following \cite{PZ}, in order to obtain gradient estimates for $v=v_k$ away from $t=0$, uniformly in $k$,
we introduce the following Bernstein-type auxiliary function:
$$J=t|\nabla v|^2+(v+\|\phi\|_\infty)^2.$$
By \eqref{estim0vhj1k}-\eqref{estim2vhj1k}, we have 
$$J\le C_2:=4\|\phi\|_\infty^2+ \max(C^2(\Omega)M^2,C_1^2\tau)
\quad\hbox{ on $\partial_P Q$}$$
(where $\partial_P$ denotes the parabolic boundary).
We compute, in $Q$:
$$J_t=|\nabla v|^2+2t\nabla v\cdot \nabla v_t+2(v+\|\phi\|_\infty)v_t$$
and, using $\Delta(|\nabla v|^2)=2\nabla v\cdot \nabla \Delta v+2\sum_{ij}(v_{ij})^2$, 
$$\Delta J\ge 2t\nabla v\cdot \nabla \Delta v+2|\nabla v|^2+2(v+\|\phi\|_\infty)\Delta v.$$
Therefore,
$$
\begin{aligned}
J_t-\Delta J
&\le -|\nabla v|^2+2t\nabla v\cdot \nabla (F(|\nabla v|^2))+2(v+\|\phi\|_\infty)F(|\nabla v|^2).
\end{aligned}$$
Since
$$
\begin{aligned}
2t\nabla v\cdot \nabla (F(|\nabla v|^2))
&=4t\sum_i v_i\nabla v\cdot \nabla v_iF'(|\nabla v|^2)
=2\nabla v\cdot \nabla(t|\nabla v|^2)F'(|\nabla v|^2)\\
&=2F'(|\nabla v|^2)\nabla v\cdot \nabla J-4(v+\|\phi\|_\infty)|\nabla v|^2F'(|\nabla v|^2),
\end{aligned}$$
it follows that
$$J_t-\Delta J-2F'(|\nabla v|^2)\nabla v\cdot \nabla J
\le -|\nabla v|^2+2(v+\|\phi\|_\infty)\bigl[F(|\nabla v|^2)-2|\nabla v|^2F'(|\nabla v|^2)\bigr].$$ 
Using \eqref{eqaa}, \eqref{estim0vhj1k}, we deduce that
$$J_t-\Delta J-2F'(|\nabla v|^2)\nabla v\cdot \nabla J\le 0.$$
It follows from the maximum principle that
\be\label{controlJ}
\sup_Q J\le \sup_{\partial_P Q} J\le C_2.
\ee
Since $C_2$ independent of $k$, this guarantees that the sequence $(\nabla v_k)$ is uniformly bounded on 
$Q_\eps:=\Omega\times[\eps,\tau]$ for each $\eps\in (0,\tau)$,
and it follows from parabolic estimates
 that $(v_k)$ is relatively compact in $C^{1,2}(\overline{Q_\eps})$.
By a diagonal procedure, we deduce that some subsequence of $v_k$ converges 
to a classical solution $U$ of \eqref{vhj1}$_1$-\eqref{vhj1}$_2$ in $\overline\Omega\times(0,\tau]$, with convergence 
in $C^{1,2}(\overline{Q_\eps})$ for each $\eps\in (0,\tau)$.
Moreover, for each $t\in (0,\tau]$, \eqref{controlJ} implies $t\|\nabla U(\cdot,t)\|_\infty^2\le \limsup_k t \|\nabla v_k(\cdot,t)\|_\infty^2\le C_2$, 
hence estimate \eqref{estimnablau}.

It remains to show that 
\be\label{contU}
U\in C(\overline\Omega\times[0,\tau])\quad\hbox{ with $U(0)=\phi$.}
\ee
Since $\phi\in C_0(\Omega)$, for each $\eta>0$, we may pick a function $\psi_\eta\in C^\infty_0(\Omega)$
such that $\|\phi-\psi_\eta\|_\infty\le \eta$.
Setting $K=\sup_{\Omega} (\Delta\psi_\eta+|\nabla \psi_\eta|^q)$ and $\overline v(x,t)=\psi_\eta+\eta+Kt$, 
and using \eqref{eqaa0}, we have
$$\overline v_t-\Delta \overline v -F_k(|\nabla \overline v|^2)=
K-\Delta\psi_\eta-F_k(|\nabla \psi_\eta|^2)\ge K-\Delta\psi_\eta-|\nabla \psi_\eta|^p \ge 0.$$
Since $\overline v(x,0)=\psi_\eta+\eta\ge \phi$, it follows from the comparison principle that 
$$e^{t\Delta}\phi\le v_k\le \psi_\eta+\eta+Kt\le \phi+2\eta+Kt.$$
Consequently,
$\|v_k(\cdot,t)-\phi\|_\infty\le 3\eta$ for $t>0$ small,
hence \eqref{contU}.

Finally, since $U$ is in particular a viscosity solution, it has to coincide with the unique viscosity solution $u$.
This in turn guarantees the uniqueness of $U$.
The proof of Proposition~\ref{prop1} is complete.
\end{proof} 

\section{Preliminary results II: LBC and regularity properties in time based on local properties of initial data}

For each $\eps>0$, we set
$$\Omega_\eps=\bigl\{x\in\Omega;\ \delta(x)<\eps\bigr\},\quad
\Omega^\eps=\bigl\{x\in\Omega;\ \delta(x)>\eps\bigr\},\quad
\Gamma_\eps=\bigl\{x\in\Omega;\ \delta(x)=\eps\bigr\}$$
(observing that $\Omega_\eps=\Omega$ and $\Omega^\eps,\Gamma_\eps=\emptyset$ for $\eps>0$ sufficiently large).
 The goal of this section is to establish the following two propositions,
 which are important ingredients for the proofs of Theorems~\ref{thm1}, \ref{thm1B} and \ref{conjz1}. 
 
In our first proposition, we show that LBC in short time occurs for suitable initial data,
with support concentrated near the boundary and precisely controled, small $L^\infty$ norm.
We also give a useful, slightly more precise version in the case $n=1$. 

\begin{proposition}\label{lem2} 
(i) There exists $\eps_0\in (0,1)$ such that, for all $\eps\in (0,\eps_0)$, 
there exist a compactly supported $\psi_\eps\in X$ 
with the following properties:
\be\label{suppcond}
{\rm Supp}(\psi_\eps)\subset \Omega_{2\eps}\setminus\Omega_\eps
\ee
\be\label{suppsi}
\|\psi_\eps\|_\infty\le K_1\eps^\alpha,
\ee
\be\label{Tpsieps}
T(\psi_\eps)<\tau_\eps\,=c_0\eps^2, 
\ee
\be\label{LBC0}
\sup_{x\in\partial\Omega} u(\psi_\eps;x,\tau_\eps)>0,
\ee
where $K_1,c_0>0$ are constants independent of $\eps$.

(ii) Let $n=1$, $\Omega=(0,1)$. 
There exist $c_2>c_1>0$ and $C_0=C_0(p)>0$ and, for each $\eta\in (0,1)$, 
there exists $\tilde K=\tilde K(\eta)>0$ such that, for any $\eps\in (0,1/2)$, 
if 
\be\label{suppcondn1}
\phi\ge \tilde K\eps^\alpha\quad\hbox{on $((1-\eta)\eps,(1+\eta)\eps)$,}
\ee
 then
\be\label{LBC0n1}
u(0,t)\ge C_0\eps^\alpha\quad\hbox{ for all $t\in [c_1\eps^2,c_2\eps^2]$.}
\ee
\end{proposition}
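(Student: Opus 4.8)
\textbf{Proof proposal for Proposition~\ref{lem2}.}

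The plan is to build $\psi_\eps$ by a single rescaling of a fixed profile, and to deduce LBC by comparing the rescaled solution, on a slab near the boundary, with a subsolution built from the singular steady state $U^*$. First I would fix, once and for all, a smooth compactly supported profile $\Psi\in X_1$ (for a suitable fixed half-space or strip model) large enough that the corresponding one-dimensional (or half-space) viscous Hamilton--Jacobi solution undergoes GBU with LBC in finite time, at the flat boundary, with a genuinely positive trace; this is exactly the kind of LBC phenomenon established in \cite{PS2, QR16, PS3}, so I may quote it. The key scaling is the natural one for \eqref{vhj1}: if $v(y,s)$ solves $v_s-\Delta v=|\nabla v|^p$, then $v_\lambda(x,t):=\lambda^\alpha v(x/\lambda, t/\lambda^2)$ solves the same equation, with $\alpha=\frac{p-2}{p-1}$ as in the text, and $U^*$ is scale-invariant under exactly this transformation. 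So the recipe is: take $\Psi$ supported in the strip $\{1<\delta<2\}$ of a half-space, place it in $\Omega$ via a boundary normal coordinate chart near a fixed point of $\partial\Omega$, and rescale by $\lambda=\eps$ to get $\psi_\eps$; then \eqref{suppcond} holds by construction, \eqref{suppsi} holds with $K_1=\eps^{-\alpha}\|\psi_\eps\|_\infty=\|\Psi\|_\infty$ up to the distortion of the chart, and \eqref{Tpsieps} holds because $T(\psi_\eps)=\eps^2 T(\Psi)$ up to a controlled perturbation, so $c_0:=2T(\Psi)$ works.

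The substance is \eqref{LBC0}: one must transfer the flat-boundary LBC of the model solution to the curved boundary of $\Omega$, over a time of order $\eps^2$, and control the errors coming from (a) the curvature of $\partial\Omega$ and the lower-order terms in the normal coordinate Laplacian, (b) the fact that the true solution $u(\psi_\eps;\cdot,\cdot)$ is the global viscosity solution, not the model one, and (c) influence of the rest of the domain through the lateral boundary of the slab. The right tool is Proposition~\ref{compP}: on the slab $Q:=\Omega_{3\eps}\times(0,\tau_\eps)$ (or rather on its image under the chart), I would compare $u$ from below with a subsolution and from above with a supersolution, both obtained by rescaling model solutions of the flat problem and absorbing the curvature error into a small drift/forcing term that is $O(\eps)$ relative to the natural scale. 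Because everything lives at spatial scale $\eps$ and time scale $\eps^2$, the curvature of $\partial\Omega$ contributes terms formally of relative size $\eps$, hence negligible for $\eps<\eps_0$; this is the standard blow-up/localization heuristic and is where the bulk of the (routine but delicate) estimates go. The lateral boundary issue (c) is handled because the model solution is compactly supported away from $\Gamma_{3\eps}$ at time $0$ and stays small there up to time $\tau_\eps$ by finite-speed-of-influence type estimates for the heat equation with the nonlinearity controlled by $U^*$-type barriers, so the comparison on $\Omega\cap\partial\omega$ in Proposition~\ref{compP} is satisfied with room to spare. Passing to the limit in the approximations $u_k$ (as in the proof of Proposition~\ref{compP}) keeps the positive lower bound on the boundary trace and yields \eqref{LBC0}.

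For part (ii) with $n=1$, $\Omega=(0,1)$, the situation is cleaner because there is no curvature and no chart: the rescaling $x\mapsto x/\eps$, $t\mapsto t/\eps^2$, $u\mapsto \eps^{-\alpha}u$ maps \eqref{vhj1} on $(0,\eps)$-scale exactly to the same equation, and $U^*$ is exactly fixed. I would choose a fixed model solution $V$ on $(0,\infty)$ (or $(0,L)$ for $L$ large) with initial data $\ge c$ on a fixed interval away from $0$, which loses boundary conditions at some time $s_1$ with $V(0,s)\ge 2C_0$ on a fixed time interval $[\sigma_1,\sigma_2]\ni s_1$; such $V$ exists by \cite{PS3}. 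Then, given $\phi$ satisfying \eqref{suppcondn1} with $\tilde K$ large enough that $\eps^{-\alpha}\phi(\eps\,\cdot)\ge c$ on the relevant fixed interval, monotonicity of the solution map in the initial data (comparison principle, Proposition~\ref{compP0}) gives $\eps^{-\alpha}u(\eps y,\eps^2 s)\ge V(y,s)$ for $y,s$ in the model region, provided $\phi$ is also $\ge$ the rescaled initial data of $V$ — which we can arrange because $V$'s initial data is compactly supported in, say, $((1-\eta)\,y_0,(1+\eta)\,y_0)$ and we just rescale so that this interval matches $((1-\eta)\eps,(1+\eta)\eps)$; evaluating at $y=0$ gives $u(0,\eps^2 s)\ge \eps^\alpha V(0,s)\ge 2C_0\eps^\alpha$ for $s\in[\sigma_1,\sigma_2]$, i.e.\ \eqref{LBC0n1} with $c_1=\sigma_1$, $c_2=\sigma_2$, after shrinking $C_0$. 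One subtlety is that $V$ is defined on a half-line whereas $u$ lives on $(0,1)$: but since $\phi$ is only assumed positive on a tiny interval, I would instead take $V$ supported on $(0,L)\subset(0,1/\eps)$ with a Dirichlet condition at $x=L$ (valid for $\eps$ small so that $L\eps<1$), and note $u$ on $(0,L\eps)$ dominates this $V$-solution by comparison since $u\ge0=V$ at $x=L\eps$; this only loses a fixed amount of boundary value, still $\ge 2C_0$ after relabeling. The main obstacle throughout is the curvature/localization estimate in part (i) — making precise that the rescaled curved problem is a small perturbation of the flat model uniformly on the relevant slab and time interval, and propagating this through the viscosity-solution comparison — but this is technically standard given the preliminary lemmas already announced in Section~3 and the barrier function $U^*$.
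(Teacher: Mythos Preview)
Your part~(ii) is essentially the paper's argument: a fixed model problem on a bounded interval (the paper uses $(0,2)$ with data symmetric about $x=1$ and supported in $(1-\eta,1+\eta)$), rescaled by $\eps$, then compared from below with $u$ via Proposition~\ref{compP} using $u\ge 0$ at the right endpoint.

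For part~(i) your route via a flat model plus boundary chart plus curvature estimates could in principle be pushed through, but it is heavier than necessary, and calling the curvature step ``routine'' understates it: after the chart the transplanted function no longer solves $u_t-\Delta u=|\nabla u|^p$, so turning it into a genuine subsolution requires real control of the variable-coefficient error terms (and note that the LBC results you invoke from \cite{PS2,QR16} are stated for bounded domains, so a literal half-space model is not directly available). The paper bypasses all three of your obstacles (a)--(c) with one clean device: take the model domain to be the unit ball $B_1$ with radial $v_0\in C^\infty_0$ supported in $B_{1/4}$, large enough that the solution $v$ in $B_1$ loses boundary conditions at some time $t_0$ (\cite[Theorem~4]{PS2}); then place the rescaled ball $D_\eps=B(a_\eps,\eps)$ \emph{inside} $\Omega$, tangent to $\partial\Omega$ at a single point $b$. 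The rescaled $v_\eps(x,t)=\eps^\alpha v(\eps^{-1}(x-a_\eps),\eps^{-2}t)$ is then an \emph{exact} viscosity solution of the same equation on $D_\eps$---no chart, no curvature error---and since $u\ge 0=v_\eps$ on all of $\partial D_\eps$, Proposition~\ref{compP} gives $u\ge v_\eps$ throughout $D_\eps\times(0,\infty)$, whence $u(b,\eps^2 t_0)\ge v_\eps(b,\eps^2 t_0)=\eps^\alpha v(-e,t_0)>0$ by radial symmetry of $v$. In short: rather than flattening $\partial\Omega$, fit an interior tangent ball to it; the comparison is then exact and the lateral boundary issue disappears because $v_\eps$ vanishes on the entire sphere $\partial D_\eps$.
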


Our second proposition gives a regularity property for the solution at suitable times, 
depending on the behavior of the initial data at given distance from the boundary.

\begin{proposition}\label{lem3bis} 
Let $\phi\in X\cap C^2(\Omega)$. There exists a constant $L=L(\Omega,p)>0$ and for each $\eps,M>0$, there exists a constant
$\gamma_0=\gamma_0(\Omega,p,\eps,M)>0$
with the following property.
If for some function $H\in C^2(\overline\Omega)$, $H\ge 0$, such that
\be\label{hyplem3a00H}
\|H\|_{C^2(\overline\Omega^{\eps/2})}\le M
\ee
and some constant $\gamma\in [0,\gamma_0)$ we have
\be\label{hyplem3a00}
\phi\le 
\begin{cases}
H &\hbox{ in $\Omega^\eps$,} \\
\noalign{\vskip 1mm}
\gamma &\hbox{ in $\Omega_\eps$,}
\end{cases}
\ee
then $u$ is a classical solution for $t\in [L\gamma,L\gamma_0]$. 
\end{proposition}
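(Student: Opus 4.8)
The plan is to deduce the proposition from Proposition~\ref{prop1}. Concretely, I will exhibit a time $t_0\le L\gamma$ at which $u(\cdot,t_0)\in Y$ with $\sup_\Omega\delta^{-1}u(\cdot,t_0)\le C_1$, where $C_1$ depends only on $\Omega,p,\eps,M$. Granting this, by time‑translation and uniqueness of the viscosity solution, Proposition~\ref{prop1} together with Remark~\ref{remprop1}, applied with initial time $t_0$, gives a classical solution on $[t_0,t_0+\tau_1]$ with $\tau_1=\tau_1(\Omega,p,\eps,M)>0$. Fixing $L=L(\Omega,p)$ once and for all (its only role is to calibrate the two endpoints) and then taking $\gamma_0$ so small that in particular $L\gamma_0\le\tau_1$, we obtain $[L\gamma,L\gamma_0]\subset[t_0,t_0+\tau_1]$, since $t_0\le L\gamma$, and $u$ is classical there with $u=0$ on $\partial\Omega$. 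When $\gamma=0$ one has $\phi\equiv0$ on $\Omega_\eps$, so $\phi\in Y$ and this is merely Proposition~\ref{prop1} with $t_0=0$.

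The interior part of the bound on $\delta^{-1}u(\cdot,t_0)$ is free: since $\|u(t)\|_\infty\le\|\phi\|_\infty\le\max(\gamma,\|H\|_{L^\infty(\Omega^\eps)})\le M$ (requiring $\gamma_0\le M$), we get $\delta^{-1}u(\cdot,t)\le 2M/\eps$ on $\Omega^{\eps/2}$ for every $t$, so only the behaviour on a fixed boundary neighbourhood needs attention. The first step there is a short‑time comparison showing that $u$ becomes uniformly small near $\partial\Omega$ almost instantly. Let $\zeta\in C^2(\overline\Omega)$ be a cutoff of $\delta$, equal to $0$ on $\Omega_{\eps/2}$ and $\ge1$ on $\overline\Omega\setminus\Omega_\eps$, with $\|\zeta\|_{C^2}$ controlled in terms of $\eps$, and set $Z:=2\gamma+M\zeta+\Lambda t$ with $\Lambda=\Lambda(\Omega,p,\eps,M)$ large enough that $Z_t-\Delta Z-|\nabla Z|^p=\Lambda-M\Delta\zeta-M^p|\nabla\zeta|^p\ge0$. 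Then $Z$ is a classical supersolution of \eqref{vhj1} on $\Omega_\eps$, with $Z\ge0$ on $\partial\Omega$, $Z\ge M\ge\|\phi\|_\infty\ge u$ on $\Gamma_\eps\times(0,\infty)$, and $Z(\cdot,0)\ge2\gamma\ge\phi$ on $\Omega_\eps$, so Proposition~\ref{compP} yields $u\le Z$, hence $u\le C_3\gamma$ on $\Omega_{\eps/2}\times[0,L\gamma]$ with $C_3=C_3(\Omega,p,\eps,M)$.

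The heart of the proof is the remaining near‑boundary regularization estimate: from data bounded by $C_3\gamma$ on a fixed boundary neighbourhood, the solution recovers a classical boundary behaviour, with $\delta^{-1}u$ bounded by a constant depending only on $\Omega,p,\eps,M$, within a time $t_0$ of order $\gamma^{2/\alpha}$; since $\alpha=(p-2)/(p-1)<1$ one has $\gamma^{2/\alpha}\le\gamma$ for $\gamma$ small, so $t_0\le L\gamma$ after shrinking $\gamma_0$. This is where the precise exponent enters: \eqref{vhj1} with $p>2$ is invariant under $u\mapsto\lambda^{-\alpha}u(\lambda\,\cdot,\lambda^2\,\cdot)$, so rescaling by the natural length $\lambda\sim\gamma^{1/\alpha}$ turns a feature of height $\sim\gamma$ at $\partial\Omega$ into an $O(1)$ feature on a boundary neighbourhood of size $\sim\lambda^{-1}$, with flattened boundary and bounded outer data — here the reduction $u\le C_3\gamma$ from the previous step, rather than the crude $u\le M$, is essential in order to keep the rescaled outer data below the critical level (a fixed multiple of $c_p$) at which a classical steady state near $\partial\Omega$ still exists. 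One then runs the nonlinear regularization mechanism of \eqref{vhj1}: comparison with the singular steady state $U^*$ and with shifted profiles $A(\delta+a(t))^\alpha$ — which are supersolutions of \eqref{vhj1} near $\partial\Omega$ provided $A<c_p$ and $a$ decreases slowly, in the sense $a(t)^2\gtrsim a(0)^2-ct$ — forces the boundary gradient to become finite after a universal rescaled time, with a uniform bound, in the spirit of the rescaling–comparison arguments behind Proposition~\ref{lem2} and of \cite{PZ,LS}. Undoing the scaling and combining with the interior estimate gives $u(\cdot,t_0)\in Y$ with $\sup_\Omega\delta^{-1}u(\cdot,t_0)\le C_1(\Omega,p,\eps,M)$, and the argument closes as in the first paragraph; the finitely many smallness requirements on $\gamma_0$ are merged at the end.

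The main obstacle I anticipate is precisely this last estimate: obtaining the regularization \emph{quantitatively and uniformly} — by time $O(\gamma^{2/\alpha})$ and with an a priori bound on $\delta^{-1}u$ depending only on $\Omega,p,\eps,M$. What makes it delicate is that the data near $\partial\Omega$ is controlled only in $L^\infty$, with no bound on $\nabla\phi$ there (so $\phi$ may be arbitrarily steep on a thin layer); this rules out a linear‑heat‑semigroup supersolution, whose boundary layer carries an $O(t^{-1/2})$ gradient that is not integrable in time when $p>2$, and forces the genuinely nonlinear comparison above. One must also handle the scaling with care, since the naive rescaled bound yields a boundary slope of order $\lambda^{\alpha-1}\to\infty$: it is exactly to absorb this that the outer‑data reduction $u\le C_3\gamma$ and the decaying shift $a(t)$ in the barriers $A(\delta+a(t))^\alpha$ are used.
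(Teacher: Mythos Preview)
Your overall strategy --- show $u(\cdot,t_0)\in Y$ with a uniform bound on $\delta^{-1}u(\cdot,t_0)$ for some $t_0\le L\gamma$, then invoke Proposition~\ref{prop1} --- is exactly the paper's, and your first comparison step (with $Z=2\gamma+M\zeta+\Lambda t$) is correct and equivalent to the paper's use of Lemma~\ref{lem1}. The gap is in your ``heart of the proof'': the barriers $A(\delta+a(t))^\alpha$ do \emph{not} yield a $\delta$-bound. They remain strictly positive on $\partial\Omega$ (equal to $Aa(t)^\alpha$), so they neither force $u=0$ on $\partial\Omega$ nor control $\delta^{-1}u$. Even granting the rescaled comparison you describe, after undoing the scaling one only obtains $u\le A(\delta+c\lambda)^\alpha$ with $\lambda\sim\gamma^{1/\alpha}$, whose boundary slope is $A\alpha(c\lambda)^{\alpha-1}\sim\gamma^{(\alpha-1)/\alpha}\to\infty$ as $\gamma\to0$ --- precisely the blow-up you flag in your last paragraph but do not actually absorb. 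Letting $a(t)\to0$ only makes this worse, since the limiting barrier $A\delta^\alpha$ itself has infinite boundary gradient. In short, barriers of singular-steady-state type control $u$ by a power $\delta^\alpha$, never by $\delta$, and that is exactly what Proposition~\ref{prop1} requires.

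The paper's mechanism for this step (Lemma~\ref{lem3}) is quite different and much simpler: take the supersolution $w(x,t)=\eta+\kappa\bigl(2\psi(x)+s-t\bigr)$, where $\psi$ solves $-\Delta\psi=1$ with $\psi|_{\partial\Omega}=0$, and $\kappa=(2\|\nabla\psi\|_\infty)^{-p/(p-1)}$ is chosen small enough that $\kappa\ge(2\kappa\|\nabla\psi\|_\infty)^p$, so that $w_t-\Delta w-|\nabla w|^p\ge0$. This $w$ decreases linearly in $t$ while remaining $\ge 2\kappa\psi$, hence at time $\hat s=s+K_3\eta$ (with $K_3=\kappa^{-1}$) one has $u(\cdot,\hat s)\le 2\kappa\psi\le C\delta$ near $\partial\Omega$ --- a genuine, $\gamma$-independent $\delta$-bound, with no rescaling needed. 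Taking $\eta=\gamma+\lambda s$ from the first step and sliding $s$ then gives classicality on the whole interval $[L\gamma,L\gamma_0]$ with $L=2K_3$. So the missing ingredient is not a nonlinear singular barrier but the torsion-function supersolution, which converts an $L^\infty$ bound of size $\eta$ near $\partial\Omega$ directly into a linear $\delta$-bound after time $K_3\eta$.
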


\begin{proof} [Proof of Proposition~\ref{lem2}.]
Consider the following problem
\be\label{vhj1b}
\begin{cases}
v_t-\Delta v=|\nabla v|^p, & \quad x\in B_1,\ t>0,\\
v =0, & \quad x\in\partial B_1,\ t>0,\\
v(x,0) =v_0(x),  & \quad x\in B_1, 
\end{cases}
\ee
where $0\le v_0\in C^\infty_0(\R^n)$ is radially symmetric with ${\rm Supp}(v_0)\subset B_{1/4}$.
Denote by $v$ the unique global viscosity solution of \eqref{vhj1b} and by $T_0:=T(v_0)$ 
its maximal existence time as a classical solution.
After replacing $v_0$ by a sufficiently large multiple, it follows from \cite[Theorem~4]{PS2} that $T_0<\infty$ and 
that there exists $t_0>T_0$ such that
\be\label{gbuv}
\sup_{x\in\partial B_1} v(x,t_0)>0.
\ee

Next pick some point $a\in \Omega$, let $b$ be its projection onto $\partial \Omega$, set $d=|a-b|$
and let $e:=d^{-1}(a-b)$ be the inward unit normal vector to $\partial\Omega$ at the point $b$.
For each $\eps\in(0,d)$, we set $a_\eps=b+\eps e$.
Then $D_\eps:=B(a_\eps,\eps)\subset \Omega$ and $b\in\partial\Omega$.

Now, following \cite{LS}, we consider the following rescaled functions:
\be\label{reslem3a}
\phi_\eps(x):=\eps^\alpha v_0(\eps^{-1}(x-a_\eps))\ge 0,\quad x\in\R^n
\ee
(which is supported in $D_\eps$) and 
$$v_\eps(x,t)=\eps^\alpha v\bigl(\eps^{-1}(x-a_\eps),\eps^{-2}t\bigr),\quad (x,t)\in \overline D_\eps\times (0,\infty].$$
Then $v_\eps$ solves problem~\eqref{vhj1} with $\Omega$ replaced by $B(a_\eps,\eps)$
and initial data $\phi_\eps$.
Let $u=u(\phi_\eps;\cdot,\cdot)$ be the solution of problem~\eqref{vhj1} (in $\Omega$) with initial data $\phi=\phi_\eps$.
By Proposition~\ref{compP}, using $u\ge 0$, we get
\begin{equation*}
    u\geq v_\eps\ \ \ {\rm in }\ D_\eps  \times (0,\infty).
\end{equation*}
Since $v$ is radially symmetric, it follows from \eqref{gbuv} that
\be\label{reslem3b}
u(\phi_\eps;b,\eps^2t_0)\ge v_\eps(b,\eps^2t_0)=\eps^\alpha v\bigl(\eps^{-1}(b-a_\eps),t_0\bigr)
=\eps^\alpha v\bigl(-e,t_0\bigr)>0.
\ee
On the other hand, we note that 
\be\label{reslem3c}
\begin{aligned}
\phi_\eps(x)\ne 0 
&\Longrightarrow |\eps^{-1}(x-a_\eps)|<1/4 \\
&\Longrightarrow |\delta(x)-\eps|=|\delta(x)-\delta(a_\eps)|\le |x-a_\eps|<\eps/4\\
&\Longrightarrow \frac{3\eps}{4}<\delta(x)<\frac{5\eps}{4}.
\end{aligned}
\ee
Finally setting $\psi_\eps=\phi_{4\eps/3}$, we deduce from \eqref{reslem3a}-\eqref{reslem3c} that
 \eqref{suppcond}-\eqref{LBC0} are satisfied with 
$$c_0=\frac{16}{9}t_0,\qquad K_1=\Bigr(\frac{4}{3}\Bigl)^\alpha\|v_0\|_\infty,$$
and assertion (i) is proved with $\eps_0=\frac43 d$.

(ii) The argument is similar, now considering the problem
\be\label{vhj1bn1}
\begin{cases}
v_t-v_{xx}=|v_x|^p, & \quad x\in (0,2),\ t>0,\\
v =0, & \quad x\in\{0,1\},\ t>0,\\
v(x,0)=K\psi(x),  & \quad x\in (0,2), 
\end{cases}
\ee
where $K>0$ and $\psi\in C^\infty_0(\R)$ is symmetric with respect to $x=1$, nontrivial with ${\rm Supp}(\psi)\subset(1-\eta,1+\eta)$
and $0\le \psi\le 1$.
Denote by $v$ the unique global viscosity solution of \eqref{vhj1b} and by $T_0:=T(v_0)$ 
its maximal existence time as a classical solution.
By \cite[Theorem~4]{PS2}, 
we may take $K= \tilde K(\eta)>0$ sufficiently large, such that $T_0<\infty$ and
that there exist  $c_2>c_1>T_0$ and $C_0>0$ such that
$v(0,s)>C_0$ for all $s\in [c_1,c_2]$.
Now consider
$$v_\eps(x,t)=\eps^\alpha v\bigl(\eps^{-1}x,\eps^{-2}t\bigr),\quad (x,t)\in [0,2\eps]\times [0,\infty).$$ 
Then $v_\eps$ solves problem~\eqref{vhj1} with $(0,2)$ replaced by $(0,2\eps)$
and initial data $v_\eps(x,0):=K\eps^\alpha\psi(\eps^{-1}x)$.
Under assumption \eqref{suppcondn1}, since ${\rm Supp}(v_\eps(\cdot,0))\subset((1-\eta)\eps,(1+\eta)\eps)$
and $v_\eps(\cdot,0)\le K\eps^\alpha$, we have $\phi\ge v_\eps(\cdot,0)$ in $(0,2\eps)$.
By Proposition~\ref{compP}, using $u\ge 0$, it follows that
$u\geq v_\eps$ in $[0,2\eps]\times (0,\infty)$,
hence
$u(0,s\eps^2)\ge v_\eps(0,s\eps^2)= \eps^\alpha v\bigl(0,s\bigr)=C_0\eps^\alpha$ for all $s\in [c_1,c_2]$.
\end{proof} 

For the proof of Proposition~\ref{lem3bis}, we need two lemmas.
Our first simple lemma gives a pointwise control from above with linear growth in time.

\begin{lem}\label{lem1} 
Let $h\in X\cap C^2(\overline\Omega)$ and set
\be\label{deflambda}
\lambda=\lambda_h:=\displaystyle\sup_{\overline\Omega} \bigl[\Delta h+|\nabla h|^p\bigr].
\ee
For any $\gamma\ge 0$, if  $\phi\in X$ satisfies $\phi\le h+\gamma$ in $\Omega$, then
$$u\le h + \gamma+\lambda t\quad\hbox{ in $\Omega\times(0,\infty)$.}$$ 
\end{lem}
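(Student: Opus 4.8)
The plan is to prove Lemma~\ref{lem1} by constructing $\overline u:=h+\gamma+\lambda t$ as a supersolution of the viscous Hamilton-Jacobi equation on $\Omega\times(0,\infty)$, and then invoking the comparison principle of Proposition~\ref{compP}. The first step is the direct computation: since $h\in C^2(\overline\Omega)$, the function $\overline u$ belongs to $C^{1,2}(\overline\Omega\times(0,\infty))\cap C(\overline\Omega\times[0,\infty))$, and we have $\overline u_t-\Delta\overline u=\lambda-\Delta h$, while $|\nabla\overline u|^p=|\nabla h|^p$. Hence $\overline u_t-\Delta\overline u-|\nabla\overline u|^p=\lambda-\Delta h-|\nabla h|^p\ge 0$ by the definition \eqref{deflambda} of $\lambda=\lambda_h$, so $\overline u$ is a (classical, hence in particular) supersolution.

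Next I would check the three boundary/initial conditions required in Proposition~\ref{compP}, applied with $\omega=\Omega$ (so that $\Omega\cap\partial\omega=\emptyset$ and the lateral interior-boundary condition $w\ge u$ on $(\Omega\cap\partial\omega)\times(t_1,t_2)$ is vacuous). On $\partial\Omega$ we have $h=0$ (since $h\in X$), so $\overline u=\gamma+\lambda t\ge 0$ there, which is exactly the hypothesis ``$w\ge 0$ on $(\omega\cap\partial\Omega)\times(t_1,t_2)$''. At the initial time we have $\overline u(\cdot,0)=h+\gamma\ge\phi=u(\cdot,0)$ in $\Omega$ by the assumption on $\phi$. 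Thus all hypotheses of Proposition~\ref{compP} are met on any cylinder $Q=\Omega\times(0,t_2)$, and we conclude $\overline u\ge u$ in $\overline\Omega\times[0,t_2]$; since $t_2>0$ is arbitrary this gives $u\le h+\gamma+\lambda t$ in $\Omega\times(0,\infty)$, as claimed.

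There is no serious obstacle here; the only point requiring a little care is that Proposition~\ref{compP} is stated for a comparison function from above $w$ which need only satisfy $w\ge 0$ on $\partial\Omega$ in the classical sense (while the viscosity solution $u$ may be positive there), so it is important to use \emph{that} version of the comparison principle rather than Proposition~\ref{compP0}, which would require $\overline u\ge u$ on all of $\partial\Omega$ pointwise — something we do not know once $u$ has undergone LBC. Alternatively, and essentially equivalently, one can run the argument at the level of the truncated approximations $u_k$ of \eqref{vhj1k}: since $F_k(|\nabla\overline u|^2)\le|\nabla\overline u|^p$, the same computation shows $\overline u$ is a supersolution of the truncated problem, $\overline u\ge u_k$ on the parabolic boundary of $\Omega\times(0,t_2)$ (using $u_k=0\le\gamma+\lambda t$ on $\partial\Omega$ and $u_k(\cdot,0)=\phi\le h+\gamma$), hence $\overline u\ge u_k$ by Proposition~\ref{compP0}, and letting $k\to\infty$ with $u_k\uparrow u$ yields the conclusion. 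Either route is short; I would present the first one for brevity.
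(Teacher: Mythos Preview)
Your proof is correct and follows essentially the same approach as the paper: define $\overline u=h+\gamma+\lambda t$, verify $\overline u_t-\Delta\overline u-|\nabla\overline u|^p=\lambda-\Delta h-|\nabla h|^p\ge 0$, check $\overline u\ge 0$ on $\partial\Omega$ and $\overline u(\cdot,0)\ge\phi$, and apply Proposition~\ref{compP}. Your additional remarks on why Proposition~\ref{compP} (rather than Proposition~\ref{compP0}) is needed, and the alternative route via the truncated approximations $u_k$, are helpful clarifications but not required.
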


\begin{proof} 
 Set $\overline u(x,t)=h(x)+ \gamma+\lambda t$. We have
$$P\overline u:=\overline u_t-\Delta \overline u-|\nabla \overline u|^p=\lambda-\Delta h-|\nabla h|^p\ge 0
\quad\hbox{ in $\overline\Omega\times(0,\infty)$.}$$
Since also $\overline u(0,\cdot)\ge \phi$ by assumption and $\overline u\ge 0$ on $\partial\Omega\times(0,\infty)$,
the conclusion follows from Proposition~\ref{compP}.
\end{proof}

Our second lemma shows that the solution becomes classical again after some short time,
provided it is suitably controled in amplitude\footnote{We stress that, unlike
in more standard criteria, we do not assume any control of the {\it gradient} of the solution
near the boundary.}
near the boundary. It is a localized version (in space near the boundary and in time)
of an observation from~\cite{PZ}.

\begin{lem}\label{lem3} 
There exist $K_2,K_3>0$ depending only on $\Omega, p$ such that if,
for some $\eps, \eta, s>0$, 
\be\label{hyplem3a}
u(\cdot,s)\le \eta\quad\hbox{ in $\Omega_\eps$,}
\ee
and (in case $\Gamma_\eps\neq\emptyset$)
\be\label{hyplem3}
u\le K_2\eps\quad\hbox{ in $\Gamma_\eps\times[s,\hat s]$,\ where $\hat s=s+K_3\eta$,}
\ee
then
$$\hbox{$u$ is a classical solution for $t>\hat s$ close to $\hat s$.}$$
\end{lem}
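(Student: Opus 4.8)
\textbf{Proof plan for Lemma~\ref{lem3}.}
The plan is to combine the linear-in-time pointwise upper bound of Lemma~\ref{lem1} with the short-time classical regularity result of Proposition~\ref{prop1}, applied to the solution restarted at a suitable time $\hat s$. First I would use a comparison argument to propagate the smallness hypothesis \eqref{hyplem3a} forward in time. Specifically, on the subdomain $\Omega_\eps$, build a supersolution of the form $\eta + \lambda(t-s) + (\hbox{a fixed harmonic-type barrier})$: take $w(x,t)=\eta + Ct + Ku(\cdot,t)$-type comparison but more cleanly, work with $\Omega^{\eps/2}$ as the "interior'' where $u$ is already known to be smooth (thanks to interior regularity $u\in C^{1,2}(\Omega\times(0,\infty))$) and with $\Gamma_\eps$ and $\partial\Omega$ as boundary pieces of $\Omega_\eps$. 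On $\Gamma_\eps$, hypothesis \eqref{hyplem3} gives $u\le K_2\eps$; on $\partial\Omega$ one only has the generalized boundary condition, so this is where Proposition~\ref{compP} (comparison with a supersolution that is merely $\ge 0$ on $\partial\Omega$) is essential. Using a rescaled version of the linear problem \eqref{defpsi}, i.e.\ a multiple of $\psi$ scaled to $\Omega_\eps$, together with the Lipschitz-in-time estimate \eqref{Liptime}, I would show that $u(\cdot,\hat s)\le C(\eta + K_2\eps + M K_3\eta)$ throughout $\Omega_\eps$, and in particular $u(\cdot,\hat s)\lesssim \delta(\cdot)$ near $\partial\Omega$ after possibly shrinking the neighborhood.

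The key point is to obtain, at time $\hat s$, a bound of the form $u(\cdot,\hat s)\le M'\delta$ on a fixed neighborhood of $\partial\Omega$, where $M'$ depends only on $\Omega,p$ (not on $\eta,\eps$ once $K_2,K_3$ are fixed small). On $\Gamma_\eps$ this comes directly from $u\le K_2\eps = K_2\cdot\delta$ there; inside $\Omega_\eps$, between $\Gamma_\eps$ and $\partial\Omega$, the maximum principle (Proposition~\ref{compP}) against the barrier $K_2\eps\,\frac{\psi_\eps}{\psi_\eps|_{\Gamma_{?}}}$-type function, bounded below by $c_1\delta$ via Hopf (cf.~\eqref{hopfpsi}), pins $u(\cdot,\hat s)$ below a multiple of $\delta$, provided the choice $\hat s - s = K_3\eta$ is small enough that the source term $|\nabla w|^p$ and the drift term $\lambda t$ accumulated over $[s,\hat s]$ do not spoil the linear decay at the boundary — this is exactly what fixes the relation between $K_2,K_3$ and the geometry. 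I would also need to verify that $u(\cdot,\hat s)\in C(\overline\Omega)$ and is bounded on $\Omega^\eps$ (true since $u\in C(\overline\Omega\times[0,\infty))$), so that $u(\cdot,\hat s)$ belongs to the class $Y$ from Proposition~\ref{prop1}, with $M$-parameter (in the sense of Remark~\ref{remprop1}) controlled uniformly.

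Once $u(\cdot,\hat s)\in Y$ with a controlled constant, Proposition~\ref{prop1} applied with initial time $\hat s$ yields $\tau>0$ and a classical solution $U$ on $[\hat s,\hat s+\tau]$ with $U\in C^{1,2}(\overline\Omega\times(\hat s,\hat s+\tau])\cap C(\overline\Omega\times[\hat s,\hat s+\tau])$, coinciding with the viscosity solution $u$ by uniqueness. Hence $u$ is a classical solution for $t>\hat s$ close to $\hat s$, which is the conclusion. The main obstacle I anticipate is the delicate bookkeeping in the propagation step: one must choose the barrier and the time increment $K_3\eta$ so that the accumulated nonlinear contribution stays subordinate to the linear decay of the barrier at $\partial\Omega$ uniformly in $\eps$ — in particular the $|\nabla w|^p$ term with $p>2$ must be handled on the strip $\Omega_\eps$ where the barrier gradient is of order $K_2$, so $K_2$ must be taken small depending only on $\Omega,p$, and then $K_3$ accordingly; getting these dependencies genuinely independent of $\eta,\eps$ (so that the statement is scale-free) is the technical heart of the argument.
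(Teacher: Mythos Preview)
Your overall architecture is correct --- compare on $\Omega_\eps\times[s,\hat s]$ using Proposition~\ref{compP}, then invoke Proposition~\ref{prop1} at time $\hat s$ --- but the barrier you sketch has the wrong time dependence, and this is the heart of the argument. A supersolution of the form $\eta+\lambda(t-s)+(\text{spatial barrier})$, or anything coming from Lemma~\ref{lem1} or the Lipschitz estimate~\eqref{Liptime}, is \emph{increasing} in $t$. At $t=\hat s$ it still carries the offset $\eta$, so you only get $u(\cdot,\hat s)\le \eta+C$ in $\Omega_\eps$; a constant bound is not a bound by $\delta$, and $u(\cdot,\hat s)$ is not placed in $Y$. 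Your remark that ``the accumulated drift over $[s,\hat s]$ should not spoil the linear decay'' misidentifies the issue: there is no linear decay in your barrier to begin with.

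The paper's barrier is \emph{decreasing} in time:
\[
w(x,t)=\eta+\kappa\bigl[2\psi(x)+s-t\bigr],\qquad \kappa:=(2\|\nabla\psi\|_\infty)^{-p/(p-1)},\quad K_3:=\kappa^{-1},
\]
with $\psi$ from~\eqref{defpsi}. One line gives $w_t-\Delta w-|\nabla w|^p=\kappa-(2\kappa|\nabla\psi|)^p\ge 0$. The choice $\hat s=s+K_3\eta$ is not ``small enough''; it is exactly the time at which the $\eta$-offset has been absorbed, so that $w(\cdot,\hat s)=2\kappa\psi$. By Hopf~\eqref{hopfpsi}, $w\ge 2\kappa\psi\ge 2\kappa c_1\delta$ throughout $Q$, which yields $w\ge 0$ on $\partial\Omega$ and $w\ge 2\kappa c_1\eps$ on $\Gamma_\eps$; setting $K_2:=2\kappa c_1$ makes the lateral hypothesis~\eqref{hyplem3} match. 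Proposition~\ref{compP} then gives $u\le w$ in $Q$, hence $u(\cdot,\hat s)\le 2\kappa\psi$ in $\Omega_\eps$; together with $u\le\|\phi\|_\infty$ in $\Omega^\eps$ this yields $u(\cdot,\hat s)\le C\psi\in Y$, and Proposition~\ref{prop1} closes the argument. The dependencies $K_2,K_3=K_2,K_3(\Omega,p)$ fall out automatically from $\kappa$ and $c_1$; no delicate bookkeeping is needed.
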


\begin{proof} 
We modify a comparison argument from \cite{PZ} (see \cite[Lemma 3.2]{PZ}).
Let $\psi\in C^2(\overline\Omega)$ be the unique solution of the linear problem \eqref{defpsi}
and let $c_1>0$ be the constant of inequality \eqref{hopfpsi}.
Let $\kappa=(2\|\nabla\psi\|_\infty)^{-p/(p-1)}$, $K_3=\kappa^{-1}$, $\hat s=s+K_3\eta$,
and define
$$w(x,t)=\eta+\kappa\bigl[2\psi(x)+s-t\bigr],\quad (x,t)\in \overline Q, \quad\hbox{ with } Q:=\Omega_\eps\times(s,\hat s].$$
We compute
$$w_t-\Delta w-|\nabla w|^p=-\kappa+2\kappa-\bigl(2\kappa|\nabla\psi|\bigr)^p\ge 0\quad\hbox{ in $Q$.}$$ 
On the other hand, by the choice of $\hat s$, we have $w\ge 2\kappa\psi$ in $Q$, 
hence in particular $w\ge 0$ on $\partial\Omega\times(s,\hat s)$
and,  using \eqref{hopfpsi}, $w\ge 2\kappa c_1\eps$ on $\Gamma_\eps\times(s,\hat s)$.
 Moreover we have 
$w(x,s)\ge \eta$ in $\Omega_\eps$.
Choosing $K_2=2\kappa c_1$,  in view of assumptions \eqref{hyplem3a}-\eqref{hyplem3},
 it follows from Proposition~\ref{compP} that $u\le w$ in~$Q$. 
In particular, we have
$u(\cdot,\hat s)\le 2\kappa\psi$ in $\Omega_\eps$.
Since also $u\le \|\phi\|_\infty$ 
 in $\overline\Omega\times[0,\infty)$ by Proposition~\ref{compP},  
using \eqref{hopfpsi} we get
$$u(\cdot,\hat s)\le C_1\psi\quad\hbox{ in $\Omega$,\quad with }  C_1:=\max\bigl(2\kappa, (c_1\eps)^{-1}\|\phi\|_\infty\bigr).$$
It then follows from Proposition~\ref{prop1}  
 and uniqueness of the viscosity solution 
that $u$ is a classical solution for $t>\hat s$ close to~$\hat s$.
\end{proof}

\begin{proof} [Proof of Proposition~\ref{lem3bis}.]
Fix $\theta_\eps\in C^2(\overline\Omega)$ such that $\theta_\eps=0$ in $\overline\Omega_{\eps/2}$,
 $\theta_\eps=1$ in $\overline\Omega^\eps$ and $0\le \theta_\eps\le 1$.
 Setting $h=\theta_\eps H$, assumption \eqref{hyplem3a00} guarantees that
$$\phi\le \gamma+h \quad\hbox{ in $\Omega$.}$$
It follows from Lemma~\ref{lem1} that
$$u(x,t)\le \gamma+h(x)+\lambda t\quad\hbox{ in $\Omega\times(0,\infty),$}$$
where $\lambda=\lambda_h$ is given by \eqref{deflambda},
hence in particular
\be\label{uxtgammalambda}
u(x,t)\le \gamma+\lambda t\quad\hbox{ in $\overline\Omega_{\eps/2}\times(0,\infty).$}
\ee
 Let
\be\label{defgamma0}
\overline\gamma:=\textstyle\frac14 \min\bigl\{1,(\lambda K_3)^{-1}\bigr\}K_2\eps,
\ee
where $K_2,K_3$ are given by Lemma~\ref{lem3}, and note that $\overline\gamma$ is uniformly positive
for $\eps$ fixed and $\|H\|_{C^2(\overline\Omega^{\eps/2})}$ bounded.
Assume 
\be\label{condgamma1}
0<\gamma\le  \overline\gamma 
\ee
and take any $\tau$ such that
\be\label{condgamma2}
K_3\gamma\le\tau\le \textstyle K_3\overline\gamma.
\ee
Set $s=(1+K_3\lambda)^{-1}(\tau-K_3\gamma)\in [0,\tau)$ and $\eta=\gamma+\lambda s$,
hence 
\be\label{condgamma3}
s+K_3\eta =\tau.
\ee
By \eqref{uxtgammalambda} we have
\be\label{condeta1}
u(\cdot,s)\le \eta\quad\hbox{ in $\Omega_{\eps/2}.$}
\ee
Moreover,  since \eqref{defgamma0}-\eqref{condgamma2} 
guarantee that $\tau\le \frac14 \lambda^{-1}K_2\eps\le \lambda^{-1}(\frac12 K_2\eps-\gamma)$, hence  $\gamma+\lambda\tau\le K_2\eps/2$,
 inequality \eqref{uxtgammalambda} also implies 
 \be\label{uxtgammalambda2}
u(x,t)\le \gamma+\lambda \tau\le K_2\eps/2\quad\hbox{ in $\Gamma_{\eps/2}\times[s,\tau].$} 
\ee
It then follows  from \eqref{condgamma3}-\eqref{uxtgammalambda2} 
and Lemma~\ref{lem3} that $u$ is a classical solution for $t>\tau$ close to $\tau$.
Since this is true for any $\tau$ satisfying \eqref{condgamma2}, 
we have shown that $u$ is classical on $(K_3\gamma,K_3\overline\gamma]$.

Finally set $\gamma_0=\overline\gamma/2$ and $L=2K_3$. If $\phi$ satisfies \eqref{hyplem3a00} with $\gamma\in[0,\gamma_0)$, 
then the above implies that $u$ is classical on $(K_3\gamma,K_3\overline\gamma]=(L\gamma/2,L\gamma_0]\supset [L\gamma,L\gamma_0]$.
The proposition is proved. 
\end{proof}

\section{Proofs of Theorems~\ref{thm1} and \ref{thm1B}} 

\begin{proof}[Proof of Theorem~\ref{thm1}.]
Let $m\ge 1$. 
 We shall construct a suitable, multibump initial data of the form 
$\phi=\sum_{i=1}^m \phi_i$, with compactly supported $\phi_i$.
Recall that the constants $\eps_0,K_1,L, c_0$ are defined in Propositions~\ref{lem2} and~\ref{lem3bis}.
 In view of the application of Proposition~\ref{lem3bis}, bumps will be sequentially 
added by moving toward the boundary. Therefore, we proceed by backward induction
and recursively define functions $\phi_1,\dots,\phi_m \in C^2(\overline\Omega)$ and numbers $\eps_1>\dots>\eps_m>0$ 
as follows.

Fix $\eps_m\in (0,\eps_0)$ and set
\be\label{LBC21}
\phi_m=\psi_{\eps_m},
\ee
where $\psi_{\eps_m}$ is given by Proposition~\ref{lem2}(i).
Take $i\in\{2,\dots,m\}$ and assume that $\phi_i,\dots,\phi_m \in C^2(\overline\Omega)$ 
and $\eps_m>\dots>\eps_i >0$ 
have already been chosen.
Set
$$h_i:=\sum_{j=i}^m \phi_j\in C^2(\overline\Omega)$$
and
\be\label{defgammai}
\gamma_i=\min\Bigl\{\textstyle\frac12\gamma_0\bigl(\Omega,p,\eps_i,\|h_i\|_{C^2(\overline\Omega)}\bigr),
\frac14 c_0L^{-1}\eps_i^2\Bigr\},
\ee
where the function $\gamma_0$ is given by Proposition~\ref{lem3bis}.
We next choose
\be\label{choiceeps}
\eps_{i-1}=\min\Bigl\{\bigl(K_1^{-1}\gamma_i\bigr)^{1/\alpha}, 
\frac12\bigl(c_0^{-1}L\gamma_i\bigr)^{1/2}\Bigr\}  <\textstyle\frac12\eps_i
\ee
and we set 
$\phi_{i-1}=\psi_{\eps_{ i-1}}$ 
where $\psi_{\eps_{i-1}}$ 
is given by Proposition~\ref{lem2}(i).
Recalling \eqref{LBC21}, we thus have
\be\label{LBC2}
\phi_i=\psi_{\eps_i}, \quad i=1,\dots,m
\ee
and
\be\label{LBC3}
 \eps_1>\dots>\eps_m>0.
\ee

Now set 
\be\label{defphi}
\phi:=\displaystyle\sum_{j=1}^m \phi_j
\ee
and
$$\begin{aligned}
s_i&:=c_0\eps_i^2,\qquad i=1,\dots,m,\\
\hat s_i&:=L\gamma_i\in [4s_{i-1},\textstyle\frac14 s_i],\qquad  i=2,\dots,m
\end{aligned}$$
(where we used \eqref{defgammai}, \eqref{choiceeps}). Let us verify that the initial data $\phi$ has all the desired properties.
Since $\phi\ge\phi_i$  for each $i\in\{1,\dots,m\}$, it follows from \eqref{LBC0}, \eqref{LBC2} 
 and Proposition~\ref{compP} that
\be\label{conclthm1}
\sup_{x\in\partial\Omega} u(\phi;\cdot,s_i)\ge \sup_{x\in\partial\Omega} u(\phi_i;\cdot,s_i)>0,
\qquad i\in\{1,\dots,m\}.
\ee
Next for each $i\in\{2,\dots,m\}$, by \eqref{suppcond}, \eqref{suppsi}, \eqref{choiceeps}-\eqref{LBC3},  
we have
$$\phi=\underbrace{\displaystyle\sum_{j=1}^{i-1} \phi_j}_{\hbox{$0$ in $\Omega^{\eps_i}$}}
+\ \displaystyle\sum_{j=i}^m \phi_j
=h_i\quad\hbox{ in $\Omega^{\eps_i}$}$$
and 
$$\phi\le K_1\eps^\alpha_{i-1}+\underbrace{\displaystyle\sum_{j=i}^m \phi_j}_{\hbox{$0$ in $\Omega_{\eps_i}$}}
= K_1\eps^\alpha_{ i-1} \le \gamma_i\quad\hbox{ in $\Omega_{\eps_i}$.}$$ 
This along with \eqref{defgammai} and Proposition~\ref{lem3bis} guarantees that
\be\label{conclthm2}
\hbox{$u$ is a classical solution for $t\in (L\gamma_i,2L\gamma_i]=(\hat s_i,2\hat s_i]$,}
\qquad i\in\{2,\dots,m\}.
\ee
In view of \eqref{conclthm1}-\eqref{conclthm2} and $s_{i-1}<\hat s_i<2\hat s_i<s_i$, the theorem is thus proved.

For future reference, we note that since $\phi=0$ in $\Omega_{\eps_{1}}$,
\eqref{defgammai} and Proposition~\ref{lem3bis} also guarantee that
\be\label{conclthm2modif}
\hbox{$u$ is a classical solution for $t\in (0,2\hat s_1)$.}
\ee

\end{proof}

We next turn to the proof of Theorem~\ref{thm1B}.
We shall use the following uniform lower bound from \cite{FPS19} on the boundary gradient blow-up profile
of maximal classical solutions.
First recall that, thanks to the regularity of $\Omega$, we can find $\delta_0>0$ such that
every $x\in\Omega_{\delta_0}$ has a unique projection $P(x)$ onto $\partial\Omega$.
In this way, we can in particular extend the outer normal vector field to $\Omega_{\delta_0}$ by setting
$\nu_x=\nu_{P(x)}$, and we then denote $u_\nu(x,t)=\nabla u(x,t)\cdot \nu_x$ 
for all $(x,t)\in\Omega_{\delta_0}\times (0,\infty)$.

\begin{proposition}\label{lemCentral2}
Let $p>2$, $\eps\in(0,1)$ and $M>0$. There exists $\eta=\eta(p,\Omega,M,\eps)\in(0,\delta_0)$
such that for any $\phi\in X\cap C^2(\overline\Omega)$ with $\|\phi\|_{C^2(\overline\Omega)}\le M$ and $T:=T(\phi)<\infty$
and any GBU point $a$ of $u$
(i.e., $\limsup_{t\to T^-,\, x\to a} |\nabla u(x,t)|=\infty$),
 then
\begin{equation}
\label{HypTypeILemSmall2}
r^{1/(p-1)} u_\nu(a+r\nu_a,T) \le -(1-\eps)d_p\quad\hbox{for all $r\in(0,\eta]$}.
\end{equation}
\end{proposition}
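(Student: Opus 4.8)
Proposition~\ref{lemCentral2} is quoted from \cite{FPS19}; it is the uniform version (over the class $\|\phi\|_{C^2(\overline\Omega)}\le M$) of the statement that, at a gradient blow-up point $a$, the solution develops at time $T=T(\phi)$ exactly the boundary profile of the singular steady state, so that its normal derivative blows up like $-d_p\,r^{-1/(p-1)}$ with the universal constant $d_p=c_p\alpha$ (the size of the normal derivative of $U^*$ at unit distance). I would reconstruct the proof by contradiction and parabolic rescaling.

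Suppose the conclusion fails for some $p,\Omega,M,\eps$. Then there exist $\phi_k\in X\cap C^2(\overline\Omega)$ with $\|\phi_k\|_{C^2(\overline\Omega)}\le M$, $T_k:=T(\phi_k)<\infty$, GBU points $a_k\in\partial\Omega$ of $u_k:=u(\phi_k;\cdot,\cdot)$ (GBU points lie on $\partial\Omega$ by interior regularity), and $r_k\downarrow 0$ with $r_k^{1/(p-1)}(u_k)_\nu(a_k+r_k\nu_{a_k},T_k)>-(1-\eps)d_p$. By Proposition~\ref{prop1} and Remark~\ref{remprop1} we have $T_k\ge\tau_0(M)>0$, so the blow-up occurs at a time bounded below. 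Since the equation is invariant under $u\mapsto\lambda^{-\alpha}u(\lambda\cdot,\lambda^2\cdot)$ precisely for $\alpha=(p-2)/(p-1)$, and since $1-\alpha=1/(p-1)$, I would set (rotating so that $\nu_{a_k}$ is a fixed unit vector $e$)
$$v_k(y,s):=r_k^{-\alpha}\,u_k\bigl(a_k+r_k y,\ T_k+r_k^2 s\bigr),$$
a nonnegative classical solution of $\partial_s v_k-\Delta v_k=|\nabla v_k|^p$ on $D_k\times(-T_k r_k^{-2},0)$, with $D_k:=r_k^{-1}(\Omega-a_k)$ and $v_k=0$ on $\partial D_k$ for $s<0$; the failure inequality then reads $(\partial_\nu v_k)(e,0)>-(1-\eps)d_p$. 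Moreover $T_k r_k^{-2}\to\infty$ and $D_k$ converges locally to the half-space $\mathbb H$ tangent to $\partial\Omega$ at $a_k$, with flat boundary $\partial\mathbb H$.

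I would then pass to the limit. The analytic input is a uniform-in-$k$ a priori gradient estimate near the boundary of the type $|\nabla u_k(x,t)|\le C(M)\,\delta(x)^{-1/(p-1)}$ for $x$ near $a_k$ and $0<t\le T_k$, obtained by Bernstein-type computations and comparison with the singular steady state (in the spirit of \cite{LS,PS2,GH08} and of the arguments in Sections~2 and~3 above); after rescaling it gives uniform bounds on $v_k$ and $\nabla v_k$ on compact subsets of $\overline{\mathbb H}\times(-\infty,0]$ away from $\partial\mathbb H$, in particular in a fixed neighborhood of $e$. Interior and boundary parabolic estimates then yield, along a subsequence, $v_k\to V$ in $C^{2,1}_{loc}$ on $\mathbb H\times(-\infty,0]$ and in $C^{1,0}_{loc}$ up to $\partial\mathbb H$ away from the origin, where $V\ge0$ is an ancient solution of the equation on $\mathbb H\times(-\infty,0)$, continuous on $\overline{\mathbb H}\times(-\infty,0]$, vanishing on $\partial\mathbb H$, and with infinite normal derivative as $\dist(\cdot,\partial\mathbb H)\to0$. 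Passing to the limit in the failure inequality gives $(\partial_\nu V)(e,0)\ge-(1-\eps)d_p$.

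The crux, and the main obstacle, is to identify $V$ near $\partial\mathbb H$ as the one-dimensional singular steady state, i.e. to prove $V(y,0)=c_p\,\bigl(\dist(y,\partial\mathbb H)\bigr)^\alpha(1+o(1))$ as $\dist(y,\partial\mathbb H)\to0$, whence $(\partial_\nu V)(e,0)=-d_p$, contradicting $-(1-\eps)d_p>-d_p$. This requires a Liouville-type classification of the ancient solutions produced by the rescaling: one must exclude a strictly slower boundary singularity, any genuine dependence along $\partial\mathbb H$, and any nontrivial dependence on $s$, and at the same time pin down the universal constant $c_p$. In one space dimension this reduces to the final-time profile analysis of \cite{PS3} — via phase-plane comparison for the normal ODE $-g''=(g')^p$, whose extremal with $g(0)=0$, $g'(0^+)=+\infty$ is $g(r)=c_p r^\alpha$, together with intersection-number comparison with the explicit family $c\,(\dist(\cdot,\partial\mathbb H))^\alpha$ — while in general dimensions it is the main content of \cite{FPS19}. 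One must also check that the rescaling is centered correctly, i.e. that the singularity does not concentrate away from the origin and that $V$ is translation invariant along $\partial\mathbb H$; this is handled by the uniform estimates together with a compactness/covering argument over boundary points of $\Omega$ near $a_k$. An alternative to the whole scheme is a direct sub/supersolution comparison in a space-time neighborhood of $(a,T)$, but building a subsolution with the sharp lower profile $c_p\delta^\alpha$ that still vanishes on $\partial\Omega$ is delicate, which is precisely why the rescaling and classification route — and the input from \cite{FPS19} — is the natural one.
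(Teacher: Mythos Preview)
Your sketch is essentially correct and follows the same route as \cite{FPS19}, which is exactly what the paper does: it does not reprove the result but cites \cite[Proposition~5.2 and estimate~(1.13)]{FPS19} directly, adding only the observation that the dependence of $\eta$ is uniform over $\|\phi\|_{C^2(\overline\Omega)}\le M$. The one point the paper singles out, and which you should make more precise, is the source of this uniformity: the Bernstein-type gradient bound $|\nabla u(x,t)|\le C_1\delta(x)^{-1/(p-1)}+C_2$ comes from \cite[Theorem~3.2]{SZ06} with $C_1=C_1(n,p)$ and $C_2=C_2(p,\Omega,M)$, and the time-derivative bound $\sup_{\Omega\times(0,\infty)}|u_t|\le C_3$ comes from \cite[Proposition~2.4]{SZ06} with $C_3=C_3(p,\Omega,M)$. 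These are the two a~priori estimates that survive the rescaling and give the compactness you need; citing \cite{LS,PS2,GH08} for the gradient bound is less sharp than pointing to \cite{SZ06}, and you did not mention the uniform $u_t$ bound, which is also used in \cite{FPS19} to control the time dependence of the rescaled solutions. Otherwise your contradiction--rescaling--Liouville scheme is the correct architecture, and your identification of the Liouville classification on the half-space as the crux is accurate.
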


Proposition~\ref{lemCentral2} was essentially proved in \cite{FPS19}
(see Proposition 5.2 and estimate (1.13)), except for the uniform dependence of $\eta$.
The latter can be easily checked along the proof, using the fact that the Bernstein estimate
\be\label{estBernstein1}
|\nabla u(x,t)|\le C_1\delta^{-1/(p-1)}(x)+C_2\quad\hbox{ in $\Omega\times [0,T)$} 
\ee
from \cite[Theorem~3.2]{SZ06} and the time derivative estimate $\sup_{\Omega\times(0,\infty)} |u_t|\le C_3$
(see \cite[Proposition~2.4]{SZ06}), hold with uniform constants $C_1=C_1(n,p)$, 
$C_2=C_2(p,\Omega,M)$ and $C_3=C_3(p,\Omega,M)$.

\begin{proof}[Proof of Theorem~\ref{thm1B}.]
It is based on a modification of the proof of Theorem~\ref{thm1} and a limiting argument.
Let $m\ge 2$ and let $\phi_i$, $s_i$ and $\hat s_i$ be as the proof of Theorem~\ref{thm1},
which satisfy $s_{i-1}<\hat s_i<2\hat s_i<s_i$ for all $i\in\{2,\dots,m\}$.
We define the family of initial data
\be\label{defphiL}
\Phi_\lambda:=\lambda\phi_1+\displaystyle\sum_{j=2}^m \phi_j,\quad \lambda\in[0,1],
\ee
and denote by $u_\lambda$ the corresponding global viscosity solution of~\eqref{vhj1}.
For any $\lambda\in[0,1]$, by the arguments leading to \eqref{conclthm1} and \eqref{conclthm2}, we have
\be\label{conclthm1BB}
\sup_{x\in\partial\Omega} u_\lambda(\cdot,s_i)>0,\quad i\in\{2,\dots,m\}
\ee
and
\be\label{conclthm2BB}
\hbox{$u_\lambda$ is a classical solution for $t\in(\hat s_i,2\hat s_i)$ and $i\in\{2,\dots,m\}$.}
\ee
Also, for $\lambda=1$, we have
\be\label{conclthm1BBm}
\sup_{x\in\partial\Omega} u_1(\cdot,s_1)>0
\ee
whereas, for $\lambda=0$, since $\Phi_0=\sum_{j=2}^m \phi_j$,
we may apply \eqref{conclthm2modif} with $\eps_1,\hat s_1$ replaced by $\eps_2,\hat s_2$
to get that
\be\label{conclthm2BBm}
\hbox{$u_0$ is a classical solution for $t\in(0,2\hat s_2)$.}
\ee

Next let
$$\lambda^*= \inf E,\qquad E:=\bigl\{\lambda\in[0,1];\ T(\Phi_\lambda)< \hat s_2\bigr\}.$$
Note that $1\in E$ by \eqref{conclthm1BBm}, so that $\lambda^*\in[0,1]$ is well defined.
If $\lambda^*>0$ then, for all $\lambda\in[0,\lambda^*)$, we have
$u_\lambda=0$ on $\partial\Omega\times [0,\hat s_2]$, 
hence
$$u_\lambda=0\quad\hbox{ on $\partial\Omega\times [0,2\hat s_2]$}$$
by \eqref{conclthm2BB}.
By continuous dependence (cf.~\eqref{contdep}), it follows that
\be\label{ustarzero}
u_{\lambda^*}=0\quad\hbox{ on $\partial\Omega\times [0,2\hat s_2]$.}
\ee
Moreover, \eqref{ustarzero} remains true in case $\lambda^*=0$ due to \eqref{conclthm2BBm}.

On the other hand, by definition, there exists a sequence $\lambda_j\to \lambda^*$
with $\lambda_j\ge \lambda^*$, such that
$T_j:=T(\Phi_{\lambda_j})<\hat s_2$.
Since $\partial\Omega$ is compact, $u_{\lambda_j}$ admits at least a GBU point $a_j\in\partial\Omega$.
Observing that $\sup_{\lambda\in [0,1]} \|\Phi_\lambda\|_{C^2(\overline\Omega)}<\infty$,
 we may apply \eqref{HypTypeILemSmall2} with $\eps=\frac12$ and,
 after integrating along the normal direction, we obtain
$$ u_{\lambda_j}(a_j+r\nu_{a_j},T_j) \ge \frac{c_p}{2}r^\alpha\quad\hbox{for all $r\in(0,\eta]$}.$$
After extracting a subsequence, we may assume that $a_j\to a\in\partial\Omega$ and $T_j\to t_0\in [0,\hat s_2]$.
Passing to the limit by means of \eqref{contdep}  and of the continuity of $u_{\lambda^*}$, we deduce that
$$u_{\lambda^*}(a+r\nu_a,t_0) \ge \frac{c_p}{2}r^\alpha\quad\hbox{for all $r\in(0,\eta]$},$$
hence in particular $u_{\lambda^*}(\cdot,t_0)\not\in C^1(\overline\Omega)$.
Therefore $T(\Phi_{\lambda^*})\le t_0\le \hat s_2$.
Taking \eqref{ustarzero} into account, it follows that $u_{\lambda^*}$ undergoes gradient blow-up at $T(\Phi_{\lambda^*})$ 
without loss of boundary conditions.
In view of \eqref{conclthm1BB}, \eqref{conclthm2BB}, this completes the proof 
(since $m\ge 2$ is arbitrary).
\end{proof}

\section{Estimates for $n=1$}

Let us first recall the following estimates from \cite{PS3}
(see \cite[Lemmas 5.1-5.4]{PS3}).

\begin{lem}\label{basic-prop0} 
  Let $\phi\in X_1$ and $t_0>0$. There exists a constant $K>0$ (depending on $\phi$ and $t_0$) with the following properties.
  
(i) For all $t\ge t_0$,
  \be\label{SGBUprofileUpperEstxx}
u_{xx}(x,t) \le K, \quad 0<x<1,
\ee
  \be\label{SGBUprofileUpperEst2}
u_x(x,t) \le U^*_x(x)+\, Kx, \quad 0<x<1,
\ee
\be\label{SGBUprofileUpperEst3}
u_x(x,t) \ge -U^*_x(1-x)-\, K(1-x), \quad 0<x<1.
\ee
In particular, there exists $\bar x\in (0,\frac12)$ (depending on $\phi$ and $t_0$) such that
\be\label{SGBUprofileUpperEst4}
u_x(x,t) \ge -U^*_x(x), \quad 0<x<\bar x.
\ee

(ii) Let $t\ge t_0$ and assume that there exists a sequence $(x_j, t_j)\to (0,t)$ such that $u_x(x_j,t_j)\to \infty$.
Then 
\be\label{profile2}
|u(x,t)-u(0,t)-U^*(x)| \le K x^2, \qquad 0< x\le \textstyle\frac12,
\ee
\be\label{profile}
|u_x(x,t)-U_x^*(x)| \le K x, \qquad 0<x\le \textstyle\frac12.
\ee
Moreover, \eqref{profile2}-\eqref{profile} are true whenever $u(0,t)>0$.
\end{lem}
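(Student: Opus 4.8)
The plan is to handle part~(i) estimate by estimate -- the bound on $u_{xx}$ being essentially immediate, the two gradient bounds being \emph{sharp} one-sided Bernstein estimates, and \eqref{SGBUprofileUpperEst4} a corollary -- and then to upgrade these, with the help of the gradient blow-up hypothesis, to the refined profile estimates of part~(ii).

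\emph{Part (i).} Since $u\in C^{1,2}\bigl((0,1)\times(0,\infty)\bigr)$ solves the equation pointwise in the interior, we have $u_{xx}=u_t-|u_x|^p\le u_t\le M$ for $t\ge t_0$ by \eqref{Liptime0}, which is \eqref{SGBUprofileUpperEstxx} with $K=M$. Estimates \eqref{SGBUprofileUpperEst2} and \eqref{SGBUprofileUpperEst3} are mirror images of one another under $x\mapsto 1-x$ (the equation on $(0,1)$ with zero boundary data being symmetric, so $\tilde u(x,t)=u(1-x,t)$ is again a viscosity solution), hence it suffices to prove \eqref{SGBUprofileUpperEst2}. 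This is the substantive point: one already knows, from the (uniform in $k$) Bernstein estimate for the truncated problems~\eqref{vhj1k} and passage to the limit $u_k\uparrow u$, that $|u_x(x,t)|\le C_1 x^{-1/(p-1)}+C_2$ for $x$ near $0$ and all $t>0$ (cf.~\eqref{estBernstein1}), and the issue is to sharpen the leading coefficient $C_1$ down to the \emph{extremal} value carried by the singular steady state $U^*$. I would do this by a barrier/comparison argument for $v:=u_x$, which solves the quasilinear parabolic equation $v_t-v_{xx}=p|v|^{p-2}v\,v_x$ on $(0,1)\times(t_0,\infty)$: construct a stationary (or slowly time-dependent) supersolution of this equation modeled on $U^*_x$, with an $x$-derivative that behaves like $U^*_x(x)+Kx$ near $x=0$, the order-$x$ correction and the freedom in $K$ absorbing the mismatch on the lateral boundary $x=\tfrac12$ and at the initial time $t=t_0$, while near $x=0$ no boundary condition is needed because $U^*_x$ dominates in the sharp sense. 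As usual one argues first with the $u_k$ and then lets $k\to\infty$. Finally \eqref{SGBUprofileUpperEst4} follows from \eqref{SGBUprofileUpperEst3}: for $x$ near $0$ its right-hand side $-U^*_x(1-x)-K(1-x)$ is bounded below, say by $-C$, whereas $-U^*_x(x)\to-\infty$ as $x\to 0$, so there is $\bar x\in(0,\tfrac12)$ with $u_x(x,t)\ge -C\ge -U^*_x(x)$ for $0<x<\bar x$.

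\emph{Part (ii).} The upper half of \eqref{profile2} needs no extra hypothesis: integrating \eqref{SGBUprofileUpperEst2} from $0$ to $x$ (the integrand being integrable, as $\tfrac{1}{p-1}<1$) and using $u(0,t)=\lim_{x\to0}u(x,t)$ by continuity gives $u(x,t)-u(0,t)\le U^*(x)+\tfrac{K}{2}x^2$. The lower bounds in \eqref{profile2}--\eqref{profile} are where the gradient blow-up hypothesis enters and where the main difficulty lies. My plan is to rescale $u$ around $(0,t)$ at the natural gradient blow-up rate and invoke the classification of limiting boundary profiles (as in \cite{PS2, SZ06, GH08}): every such limit is the singular steady state $U^*$, so the rescaled solutions converge, locally and up to the boundary in $C^1$, to $U^*$; reading this back yields $u_x(x,t)=U^*_x(x)+o\bigl(U^*_x(x)\bigr)$ near $0$, and then the second-order bound $u_{xx}(\cdot,t)\le K$ from \eqref{SGBUprofileUpperEstxx}, combined with the lower bound $u_{xx}=u_t-|u_x|^p\ge -M-|u_x|^p$ and the two-sided control $|u_x|\le 2U^*_x$ near $0$ coming from \eqref{SGBUprofileUpperEst2} and \eqref{SGBUprofileUpperEst4}, upgrades the $o(\cdot)$ to the claimed error $Kx$ in \eqref{profile}; \eqref{profile2} then follows by integration. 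For the last assertion, it suffices to recall that $u(0,t)>0$ forces $u_x(0^+,t)=+\infty$ in the sense of \eqref{lemz3ahyp0}: this is part of the loss-of-boundary-condition picture (cf.~\cite{BDL04, PS2, PS3}), and indeed, since each $u_k$ vanishes at $x=0$, the monotone convergence $u_k\uparrow u$ with $u(0,t)>0$ cannot be uniform near $x=0$, which makes $u_x(x,t)\to+\infty$ as $x\to0^+$; hence the hypothesis of~(ii) holds with $(x_j,t_j)$ taken on the vertical segment $\{t_j=t\}$, and the previous analysis applies. Throughout, the crux -- the sharp leading constant in \eqref{SGBUprofileUpperEst2}--\eqref{SGBUprofileUpperEst3} and the lower profile bound in~(ii) -- rests on the extremality of $U^*$ among singular solutions, which is precisely what makes the standard non-sharp Bernstein estimate insufficient and forces either a carefully tuned barrier for $v=u_x$ or a blow-up/classification argument.
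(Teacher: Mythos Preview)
The paper does not prove this lemma; it is quoted from \cite[Lemmas~5.1--5.4]{PS3}. So there is no in-paper argument to compare against, and I evaluate your sketch on its own merits. Your Part~(i) is essentially sound: $u_{xx}\le K$ via $u_t\le M$ is correct, and the deduction of \eqref{SGBUprofileUpperEst4} from \eqref{SGBUprofileUpperEst3} is fine. For the sharp estimate \eqref{SGBUprofileUpperEst2} your barrier idea for $v=u_x$ is in the right spirit but left vague; the route actually taken in \cite{PS3} (and visible in the paper's own Lemma~\ref{lemz3a}) is more elementary: at each fixed time, $w:=u_x$ satisfies the ODE $|w'+|w|^p|=|u_t|\le M$, and on intervals where $w>0$ the substitution $z=w^{1-p}$ yields $|z'-(p-1)|\le (p-1)M\,z^{p/(p-1)}$, from which one bootstraps $z(x)=(p-1)x+O(x^{(2p-1)/(p-1)})$ and hence $w=U^*_x+O(x)$ directly, without constructing a parabolic barrier.

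There is, however, a genuine gap in Part~(ii): your ``upgrade $o(\cdot)$ to $Kx$'' step does not work as written. The two-sided control $|u_x|\le 2U^*_x$ gives only $u_{xx}\ge -M-2^p(U^*_x)^p$, and since $(U^*_x)^p\sim c\,x^{-p/(p-1)}$ is \emph{not} integrable near $0$, integrating $u_{xx}-U^*_{xx}$ from $0$ (or from any fixed point back toward $0$) recovers at best the non-sharp Bernstein bound $|u_x-U^*_x|\le C x^{-1/(p-1)}$, not an $O(x)$ remainder. The rescaling/classification machinery is also heavier than needed and does not by itself close this gap. The correct route is again the fixed-time ODE: first deduce $u_x(0^+,t)=\infty$ from the hypothesis (run the ODE at time $t_j$ from $x_j$ outward to get $u(x,t_j)-u(0,t_j)\ge U^*(x)-Cx^2-o(1)$, pass to the limit using the continuity of $u$ to get the lower half of \eqref{profile2}, which forces infinite slope at $0$), and then integrate the ODE at time $t$ from $x=0$ with $z(0^+)=0$ to obtain \eqref{profile} with the sharp $O(x)$ error. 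Your argument for the final assertion ($u(0,t)>0\Rightarrow u_x(0^+,t)=\infty$) via non-uniform convergence of $u_k$ is also hand-wavy; non-uniformity alone does not force $u_x\to\infty$, and one should instead argue through the profile bound or a comparison with $U^*$.
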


We shall use also the following simple lemma, which connects the sign of $u_t$ near the boundary
with the sign of $u_x-U^*_x$ and is a variant of \cite[Lemma 5.6]{PS3}.

 \begin{lem}\label{lemz3a} 
Let $\phi\in X_1$, $t>0$ and $a\in (0,\textstyle\frac12)$.
Then 
 \be\label{lemz3impl1}
 \hbox{$u_t(\cdot,t)\le 0$ in $(0,a)$} 
\ \Longrightarrow\ \hbox{$u_x(\cdot,t)\le U_x^*$ in $(0,a)$}
\ee
 and
 \be\label{lemz3impl2}
 \hbox{$u_t(\cdot,t)\ge 0$ in $(0,a)$ and $u_x(0,t)=\infty$}  
\ \Longrightarrow\ \hbox{$u_x(\cdot,t)\ge U_x^*$ in $(0,a)$.}
\ee
  \end{lem}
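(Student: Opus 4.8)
The statement to prove is Lemma~\ref{lemz3a}, which links the sign of $u_t$ near $x=0$ with the comparison between $u_x$ and $U_x^*$. The plan is to use the equation itself, rewritten so that $u_t$ controls a first-order ODE inequality for $u_x$ in the spatial variable, and then integrate from $x=0$ (or from a small $x$) toward $a$. Concretely, since $u\in C^{2,1}(\Omega\times(0,\infty))$, for fixed $t$ we have $u_{xx}(x,t)=u_t(x,t)-|u_x(x,t)|^p$ for $x\in(0,a)$; moreover, on $(0,\bar x)$ we know from \eqref{SGBUprofileUpperEst4} that $u_x\ge -U_x^*(x)$, and we may also shrink $a$ so that $u_x$ has a sign or at least is not too negative — in any case the relevant profile estimates from Lemma~\ref{basic-prop0} are available near $0$. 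The singular steady state satisfies $U^*_{xx}=-|U^*_x|^p$ (it solves $-U_{xx}=|U_x|^p$ with $U_x>0$), so setting $g(x):=u_x(x,t)-U^*_x(x)$ we obtain $g'(x)=u_{xx}-U^*_{xx}=u_t-\bigl(|u_x|^p-|U^*_x|^p\bigr)$.

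First I would treat implication \eqref{lemz3impl1}. Assume $u_t(\cdot,t)\le0$ on $(0,a)$. Then $g'(x)\le -\bigl(|u_x|^p-|U^*_x|^p\bigr)$. I want to conclude $g\le0$ on $(0,a)$. Suppose not: then there is a point $x_1\in(0,a)$ with $g(x_1)>0$, i.e.\ $u_x(x_1,t)>U^*_x(x_1)>0$. Since $s\mapsto s^p$ is increasing on $[0,\infty)$, at any point where $g>0$ and $u_x>0$ we get $|u_x|^p-|U^*_x|^p>0$, hence $g'(x)<0$ there; combined with a continuity/connectedness argument this forces $g$ to stay positive and $u_x$ to stay above $U^*_x$ all the way down to $x\to0^+$, so $g$ is decreasing on $(0,x_1]$ and $\liminf_{x\to0^+} g(x)\ge g(x_1)>0$. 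But $U^*_x(x)\to+\infty$ as $x\to0$, so $g(x)\ge g(x_1)>0$ near $0$ would force $u_x(0,t)=+\infty$ together with $u_x(x,t)-U^*_x(x)$ bounded below by a positive constant, which contradicts the profile estimate \eqref{profile} (valid here since $u_x(0,t)=\infty$), namely $|u_x(x,t)-U_x^*(x)|\le Kx\to0$. Alternatively, if $u_x(0,t)<\infty$, then $g(x)\to-\infty$ as $x\to0$ (because $U^*_x(x)\to\infty$), again contradicting $g\ge g(x_1)>0$ near $0$. Either way we reach a contradiction, proving \eqref{lemz3impl1}.

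For \eqref{lemz3impl2}, assume $u_t(\cdot,t)\ge0$ on $(0,a)$ and $u_x(0,t)=\infty$. Now $g'(x)\ge -\bigl(|u_x|^p-|U^*_x|^p\bigr)$, and by \eqref{profile} we know $g(x)\to0$ as $x\to0^+$. I claim $g\ge0$ on $(0,a)$. Suppose there is $x_1\in(0,a)$ with $g(x_1)<0$, i.e.\ $0<u_x(x_1,t)<U^*_x(x_1)$ (positivity of $u_x$ near $0$ being guaranteed since $u_x\to+\infty$ there, after shrinking $a$). At any point where $g<0$ and $u_x>0$ we have $|u_x|^p-|U^*_x|^p<0$, hence $g'(x)>0$; running this from $x_1$ back toward $0$ shows $g$ must stay negative and increasing on $(0,x_1]$, hence $\limsup_{x\to0^+}g(x)\le g(x_1)<0$, contradicting $g(x)\to0$. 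This proves \eqref{lemz3impl2}.

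The main obstacle is making the monotonicity/connectedness step rigorous: one must ensure $u_x>0$ (so that $|\cdot|^p$ is genuinely monotone and the sign of $|u_x|^p-|U^*_x|^p$ matches the sign of $g$) on the subinterval where the argument runs, and control the boundary behavior as $x\to0$. Both are handled by first shrinking $a$ using \eqref{SGBUprofileUpperEst4} and, in case $u_x(0,t)=\infty$, the profile estimate \eqref{profile}; in the generic case of \eqref{lemz3impl1} one can also argue directly from the ODE sign without needing $u_x(0,t)=\infty$, using that $g$ cannot be bounded below by a positive constant as $x\to0$ since $U^*_x$ blows up while $u_x(x,t)$ remains finite for $x$ bounded away from $0$ and is at worst $+\infty$ at $0$ with the matching profile. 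A clean way to organize all this is to run a continuity argument on the set $\{x\in(0,a): g(x)>0\}$ (resp.\ $<0$) and show it is both open and closed in $(0,a)$ and has a limit point at $0$ incompatible with the profile estimates, exactly as in the variant \cite[Lemma 5.6]{PS3}.
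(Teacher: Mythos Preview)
Your argument works and takes a genuinely different route from the paper's. The paper writes $v(x)=u(x,t)$ and exploits an explicit integrating factor: on the set where $v_x>0$, the inequality $-v_{xx}\ge|v_x|^p$ (resp.\ $\le$) becomes $\bigl[(v_x)^{1-p}\bigr]_x\ge p-1$ (resp.\ $\le$), which after integrating from $y$ to $x$ and sending $y\to0$ yields $v_x\le U^*_x$ (resp.\ $\ge$) directly. The profile estimate \eqref{profile} is never invoked; only the value of $v_x(0^+)$ (finite or $+\infty$) enters. You instead set $g=u_x-U^*_x$, read off the sign of $g'$ from $u_t$ and the monotonicity of $s\mapsto s^p$, and close the contradiction via $g(x)\to0$ supplied by \eqref{profile}. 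The paper's version is more self-contained and shorter; yours trades that for a comparison framework that would adapt more readily to other reference profiles.

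One point in \eqref{lemz3impl2} needs tightening. Your parenthetical ``positivity of $u_x$ \dots\ after shrinking $a$'' does not yield the conclusion on the original interval, and invoking \eqref{SGBUprofileUpperEst4} only covers $(0,\bar x)$, which may be strictly smaller than $a$. The clean fix---also what the paper does---is to set $b:=\sup\{x_0\in(0,a]:u_x(\cdot,t)>0\text{ on }(0,x_0)\}\in(0,a]$, run your $g$-argument on $(0,b)$ where $u_x>0$ is guaranteed, obtain $g\ge0$ there, and then note that $b<a$ would force $u_x(b,t)=0$, i.e.\ $g(b)=-U^*_x(b)<0$, contradicting $g(b^-)\ge0$. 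Hence $b=a$. No such issue arises for \eqref{lemz3impl1}, since $g>0$ already forces $u_x>U^*_x>0$ on the relevant set.
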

 
 \begin{proof} Set $v(x)=u(x,t)$. 
 
 First assume $u_t(\cdot,t)\le 0$ in $(0,a)$. Thus we have  $-v_{xx}\ge |v_x|^p$ in $(0,a)$.
 In particular $v_x$ is nonincreasing,
and we may assume $\lim_{x\to 0}v_x>0$ since otherwise the conclusion is obvious.
Therefore the set $\{x\in (0,a);\ v_x>0\}$ is an interval of the form $(0,b)$ with $b\in (0,a]$.
Then we have
 $$[(v_x)^{1-p}]_x=-(p-1)(v_x)^{-p}v_{xx}\ge p-1,\quad 0<x<b.$$
 By integration, it follows that
 $$(v_x)^{1-p}(x)\ge (v_x)^{1-p}(y)+(p-1)(x-y)\ge (p-1)(x-y),\quad 0<y<x<b.$$ 
 Letting $y\to 0$, we obtain
$$v_x(x)\le [(p-1)x]^{-1/(p-1)}=U^*_x(x),\quad 0<x<b,$$
 which gives the desired conclusion.

Next assume $u_t(\cdot,t)\ge 0$ in $(0,a)$ and $u_x(0,t)=\infty$. 
By \eqref{lemz3ahyp0} we have
 \be\label{lemz3lim}
 \lim_{y\to 0}u_x(y,t)=\infty,
 \ee
hence in particular $v_x>0$ for $x>0$ small.
 Let 
 $$b:=\sup\{x_0\in (0,a];\ v_x>0 \hbox{ on } (0,x_0)\}\in (0,a].$$
Since $-v_{xx}\le (v_x)^p$ in $(0,b)$, the same calculation as above now yields
 $$(v_x)^{1-p}(x)\le (v_x)^{1-p}(y)+(p-1)(x-y),\quad 0<y<x<b.$$
Letting $y\to 0$ and using \eqref{lemz3lim}, we get $v_x\ge U^*_x$ in $(0,b)$.
In view of the definition of~$b$, this in turn implies $b=a$. The lemma is proved.
 \end{proof}

The following lemma will enable us to compare $u_x$ and $U^*_x$ in the singular regime.

\begin{lem}\label{lemz3b} 
Let $\phi\in X_1$, $t_2>t_1>0$ and $a\in (0,\textstyle\frac12)$.
Set 
$$Q:=(0,a)\times (t_1,t_2],\qquad \Gamma:=\bigl((0,a]\times\{t_1\}\bigr) \cup \bigl(\{a\}\times (t_1,t_2]\bigr).$$

(i)  Assume that 
 \be\label{lemz3bhyp2}
 \hbox{$u_x-U^*_x\le 0$ on $\Gamma$.}
\ee
Then 
 \be\label{lemz3bconcl1}
 \hbox{$u_x-U^*_x\le 0$ in $Q$.}
 \ee

(ii) Assume that 
 \be\label{lemz3bhyp2b}
 \hbox{$u_x-U^*_x\ge 0$ on $\Gamma$}
\ee
and
 \be\label{lemz3bhyp}
 \hbox{ $u_x(0,s)=\infty$ for all $s\in [t_1,t_2]$.}
 \ee
Then 
 \be\label{lemz3bconcl2}
 \hbox{$u_x-U^*_x\ge 0$ in $Q$.}
 \ee
  \end{lem}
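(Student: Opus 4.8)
The plan is to treat $w := u_x - U^*_x$ as a solution of a linear parabolic inequality on $Q$ and apply the maximum principle, reducing to the given boundary data on $\Gamma$. Concretely, since $u$ solves $u_t = u_{xx} + |u_x|^p$ classically in $\Omega\times(0,\infty)$, differentiating in $x$ gives $u_{xt} = u_{xxx} + p|u_x|^{p-2}u_x\,u_{xx}$, i.e. $w$ (noting $U^*$ is a stationary solution, so $U^*_{xx}+|U^*_x|^p=0$ and hence $(U^*_x)_t=0=(U^*_x)_{xx}+p|U^*_x|^{p-2}U^*_x (U^*_x)_x$) satisfies
\be\label{eqw}
w_t = w_{xx} + b(x,t)\,w_x + c(x,t)\,w \quad\hbox{ in } (0,a)\times(t_1,t_2],
\ee
where the coefficients $b,c$ come from the mean value theorem applied to the smooth map $\xi\mapsto |\xi|^p$ between $u_x$ and $U^*_x$; more precisely $|u_x|^p - |U^*_x|^p = b\,(u_x - U^*_x)$ with $b$ of one sign controlled by the intermediate gradient values, and the remaining term is absorbed into $c\,w$ after writing $u_{xx}=w_x + U^*_{xx}$. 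In the region $(0,a)$ with $a<\tfrac12$, estimates \eqref{SGBUprofileUpperEstxx}-\eqref{SGBUprofileUpperEst4} together with the lower bounds of Lemma~\ref{basic-prop0} guarantee that $u_x$ stays bounded away from $-\infty$ and is comparable to $U^*_x$ near $0$, so that $b$ and $c$ are \emph{locally bounded} on $(0,a]\times[t_1,t_2]$, though they may blow up as $x\to 0^+$.

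For part (i): assume \eqref{lemz3bhyp2}, so $w\le 0$ on $\Gamma$. The subtlety is that the parabolic boundary of $Q$ also contains the segment $\{0\}\times(t_1,t_2]$, where $w$ is not controlled a priori and the coefficients degenerate. Here I would use a barrier / cut-off argument near $x=0$: since $u_x(\cdot,t)\le U^*_x + Kx$ by \eqref{SGBUprofileUpperEst2}, for any $\delta>0$ the function $w - \delta$ is negative on $(0,\rho)\times(t_1,t_2]$ for $\rho=\rho(\delta)$ small (using $w\le Kx$), so on the shrunken cylinder $(\rho,a)\times(t_1,t_2]$ the parabolic boundary consists of pieces where we already know $w<\delta$ (on $\{x=\rho\}$) together with $\Gamma$-pieces where $w\le 0<\delta$; applying the classical maximum principle for \eqref{eqw} on this cylinder (where $b,c$ are bounded) gives $w<\delta$ there, and letting $\rho,\delta\to 0$ yields $w\le 0$ in $Q$. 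One must check the maximum principle tolerates the sign of $c$; if $c$ is not sign-definite, replace $w$ by $e^{-\Lambda t}w$ with $\Lambda$ large to make the zero-order coefficient nonpositive on the fixed sub-cylinder $(\rho,a)\times(t_1,t_2]$.

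For part (ii): assume \eqref{lemz3bhyp2b} and \eqref{lemz3bhyp}. Now $w\ge 0$ on $\Gamma$ and we want $w\ge 0$ on $Q$; the extra hypothesis $u_x(0,s)=\infty$ for all $s\in[t_1,t_2]$ is exactly what pins down the behavior at the degenerate edge $\{x=0\}$. By \eqref{lemz3ahyp0} and \eqref{lemz3bhyp}, $u_x(x,s)\to+\infty$ as $x\to0^+$ for each fixed $s$, so in particular $u_x > U^*_x$ near $x=0$ once we also invoke the profile estimate \eqref{profile} (valid here since $u_x(0,s)=\infty$): $|u_x - U^*_x|\le Kx$ is not enough by itself, but combined with the divergence it forces $w\ge 0$ in a boundary strip whose width may depend on $s$; to get uniformity in $s\in[t_1,t_2]$ I would argue by contradiction, supposing $w$ is negative somewhere in $Q$, take the infimum of $w$ over $\overline Q$ (attained by continuity on the closure away from $x=0$, and excluded at $x=0$ by the blow-up), and locate a first negative interior minimum, contradicting \eqref{eqw} via the strong maximum principle after the $e^{-\Lambda t}$ reduction. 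The main obstacle in both parts is handling the degenerate boundary $x=0$ where the linearized coefficients are unbounded: this is overcome in (i) by the a priori upper bound \eqref{SGBUprofileUpperEst2} providing a one-sided barrier, and in (ii) by the hypothesis \eqref{lemz3bhyp} forcing $u_x$ to dominate $U^*_x$ in a neighborhood of $0$; everything else is a routine application of the parabolic maximum principle on interior sub-cylinders followed by a limiting argument.
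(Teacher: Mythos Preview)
Your proposal is correct and rests on the same core strategy as the paper: the a~priori bound $w\le Kx$ from \eqref{SGBUprofileUpperEst2} in (i), and $|w|\le Kx$ from \eqref{profile} (applicable at each $s\in[t_1,t_2]$ under \eqref{lemz3bhyp}) in (ii), are precisely what prevent the relevant extremum from escaping to the degenerate edge $x=0$, after which the maximum principle finishes. The paper's execution is shorter than your linearization/sub-cylinder route: it simply argues that a positive supremum (resp.\ negative infimum) of $W=u_x-U^*_x$ over $Q$ must, by those bounds and the hypothesis on $\Gamma$, be attained at an interior point $(x_0,t_0)\in Q$; there $W_x=0$ forces $u_{xx}=U^*_{xx}$, whence
\[
W_t-W_{xx}=h(u_x)u_{xx}-h(U^*_x)U^*_{xx}=\bigl[h(u_x)-h(U^*_x)\bigr]U^*_{xx}=-\bigl[h(W+U^*_x)-h(U^*_x)\bigr](U^*_x)^p
\]
with $h(s)=p|s|^{p-2}s$ strictly increasing, and this has the wrong sign at a positive maximum or a negative minimum. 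Two small corrections to your write-up: in (ii), the profile estimate does \emph{not} force $w\ge 0$ in a boundary strip as you suggest---it only gives $w\ge -Kx$---but this is exactly what makes your fallback contradiction argument (negative infimum attained in the interior) work, and is how the paper proceeds; and your $e^{-\Lambda t}$ safeguard is unnecessary, since in fact $c=h'(\xi)\,U^*_{xx}\le 0$ (because $h'\ge 0$ and $U^*_{xx}<0$), so the standard maximum principle applies directly on each sub-cylinder.
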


 \begin{proof} 
First observe that if the function $W:=u_x-U^*_x$ attains a local extremum at some point $(x,t)\in Q$,
then at this point we have $u_{xx}=U^*_{xx}$ hence
$$W_t-W_{xx}= h(u_x)u_{xx}-h(U^*_x)U^*_{xx}=\bigl[h(u_x)-h(U^*_x)\bigr]U^*_{xx}$$
where $h(s)=p|s|^{p-2}s$, that is,
 \be\label{lemz3WtWxx}
 W_t-W_{xx}=-\bigl[h(W+U^*_x)-h(U^*_x)\bigr](U^*_x)^p.
 \ee

Let us first assume \eqref{lemz3bhyp2}. Suppose for contradiction that \eqref{lemz3bconcl1} fails, i.e.~$\sup_Q W\in (0,\infty]$.
By \eqref{SGBUprofileUpperEst2} in Lemma~\ref{basic-prop0}, we have $W\le Kx$ in $Q$.
Therefore, $W$ must attain a positive maximum at some point $(x,t)\in Q$.
Since $h$ is an increasing function, at this point we have $h(W+U^*_x)>h(U^*_x)$, hence \eqref{lemz3WtWxx} yields
$0\le W_t-W_{xx}<0$: a contradiction.

Let us next assume \eqref{lemz3bhyp2b} and \eqref{lemz3bhyp}. 
Suppose for contradiction that \eqref{lemz3bconcl2} fails, i.e. $\inf_Q W\in [-\infty,0)$.
In view of \eqref{lemz3bhyp}, estimate \eqref{profile} in Lemma~\ref{basic-prop0} guarantees that $W\ge -Kx$ in $Q$. 
Therefore, $W$ must attain a negative minimum at some point $(x,t)\in Q$ and we reach a contradiction similarly as before.

The lemma is proved.
  \end{proof}

The next lemma shows that the solutions $u_k$ of the truncated problems \eqref{vhj1k}
have good enough approximation properties 
in the singular regime too. Namely $u_k$ has large space derivative near $0$ whenever $u_x(0,t)=\infty$ and $k$ is large.

\begin{lem}\label{lemapprox} 
Let $\phi\in X_1$. Then we have
$$\lim_{k\to\infty\atop \eta\to 0}\Bigl[ \inf\bigl\{\partial_x u_k(x,t);\ x\in [0,\eta],\ t\in \mathcal{S}\bigr\}\Bigr]=\infty,$$
where $\mathcal{S}$ is defined in \eqref{singulartimesT}.
  \end{lem}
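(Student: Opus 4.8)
The plan is to prove the quantitative statement: for every $R>0$ there exist $k_1\in\N$ and $a\in(0,\tfrac12)$ such that $\partial_xu_k(x,t)\ge R$ for all $k\ge k_1$, all $t\in\mathcal S$ and all $x\in[0,a]$. Since $[0,\eta]\subset[0,a]$ for $\eta\le a$, this gives $\inf\{\partial_xu_k(x,t);\ x\in[0,\eta],\ t\in\mathcal S\}\ge R$ for all $k\ge k_1$ and $\eta\in(0,a]$, which is exactly the asserted limit. We may assume $T(\phi)<\infty$, since otherwise $u$ is a global classical solution and $\mathcal S=\emptyset$. Set $t_0:=T(\phi)>0$. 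Because $u$ is classical up to the boundary on $(0,t_0)$ and, by \cite{PZ}, also on $(\tilde T,\infty)$, we have $u_x(0,t)\in\R$ for $t\notin[t_0,\tilde T]$, hence $\mathcal S\subset[t_0,\tilde T]$; this boundedness of $\mathcal S$ is what will make the estimates below uniform in $t\in\mathcal S$.

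First I would collect two uniform ingredients. (a) If $t\in\mathcal S$ then, by \eqref{lemz3ahyp0}, $\lim_{x\to0}u_x(x,t)=\infty$, so (taking $x_j=1/j$, $t_j=t$) the hypothesis of Lemma~\ref{basic-prop0}(ii) holds; estimate \eqref{profile} then yields $u_x(x,t)\ge U^*_x(x)-Kx$ for $0<x\le\tfrac12$, with $K=K(\phi,t_0)$ independent of $t\in\mathcal S$. Since $U^*(x)=c_px^\alpha$ with $0<\alpha<1$, we have $U^*_x(x)\to+\infty$ as $x\to0^+$. (b) From the equation for $u_k$ and $F_k\ge0$ we get $\partial_{xx}u_k=\partial_tu_k-F_k(|\partial_xu_k|^2)\le\partial_tu_k$, so \eqref{Liptimek} provides $M>0$ and $k_0\ge1$ with $\partial_{xx}u_k\le M$ on $(0,1)\times[t_0,\infty)$ for all $k\ge k_0$. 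Consequently, for $0\le x\le a<\tfrac12$, $t\ge t_0$ and $k\ge k_0$,
$$\partial_xu_k(x,t)=\partial_xu_k(a,t)-\int_x^a\partial_{xx}u_k(\sigma,t)\,d\sigma\ge\partial_xu_k(a,t)-Ma;$$
thus a lower bound on $\partial_xu_k$ at an interior point $x=a$ propagates, up to the fixed error $Ma$, down to the boundary.

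To finish, fix $R>0$ and choose $a\in(0,\tfrac12)$ so small that $U^*_x(a)-(K+M)a\ge2R$. For $t\in\mathcal S$, ingredient (a) gives $u_x(a,t)\ge U^*_x(a)-Ka$. Since $\{a\}\times[t_0,\tilde T]$ is a compact subset of $\Omega\times(0,\infty)$ and $u_k\to u$ in $C^{2,1}_{loc}(\Omega\times(0,\infty))$, the functions $\partial_xu_k(a,\cdot)$ converge to $u_x(a,\cdot)$ uniformly on $[t_0,\tilde T]\supset\mathcal S$, so there is $k_1\ge k_0$ with $\partial_xu_k(a,t)\ge u_x(a,t)-R$ for all $k\ge k_1$ and $t\in\mathcal S$. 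Combining with (b), for $k\ge k_1$, $t\in\mathcal S$ and $x\in[0,a]$,
$$\partial_xu_k(x,t)\ge u_x(a,t)-R-Ma\ge U^*_x(a)-(K+M)a-R\ge R,$$
which is the desired bound.

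The main obstacle is conceptual rather than computational: the sharp blow-up profile \eqref{profile} describes the limit $u$ right up to $x=0$, but the convergence $u_k\to u$ is only interior, so it says nothing directly about $\partial_xu_k$ near the boundary. The device that overcomes this is to evaluate $\partial_xu_k$ at a single interior point $x=a$---where interior uniform convergence, taken over the compact time set $\mathcal S\subset[T(\phi),\tilde T]$, does apply---and then to transport the estimate to $x\in[0,a]$ by the uniform one-sided second-derivative bound $\partial_{xx}u_k\le M$, itself a consequence of the uniform-in-$k$ time-derivative estimate \eqref{Liptimek}. The points requiring care are the uniformity in $t\in\mathcal S$ (which relies on $\mathcal S$ being bounded, via the regularization result of \cite{PZ}, and on the constant $K$ in Lemma~\ref{basic-prop0} depending only on $\phi$ and $t_0$) and the choice of $a$ small enough that $U^*_x(a)$ dominates the error terms.
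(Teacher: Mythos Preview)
Your proof is correct and follows essentially the same route as the paper's: both fix an interior point $a$ where the profile estimate \eqref{profile} and the interior $C^1$ convergence of $u_k$ give a large lower bound on $\partial_xu_k(a,t)$ uniformly for $t\in\mathcal S\subset[T,\tilde T]$, and then propagate this bound down to $[0,a]$ via a one-sided control on $\partial_{xx}u_k$. The only difference is in the propagation step: the paper uses that $F_k(s)>M$ for large $s$ to force $\partial_{xx}u_k<0$ wherever $\partial_xu_k$ is large (so the bound transfers without loss), whereas you use the cruder but sufficient inequality $\partial_{xx}u_k\le\partial_tu_k\le M$ coming straight from $F_k\ge0$, which costs only the harmless extra term $Ma$.
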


 \begin{proof} 
 Let $M, k_0$ be given by \eqref{Liptimek} with $t_0=T$.
 By \eqref{DefFk}, there exist $A_0>0$ and $k_1\ge k_0$ such that $F_k(s)>M$ whenever $s\ge A_0$ and $k\ge k_1$. 
 Let $A\ge A_0$.
 Then it follows from \eqref{vhj1k}, \eqref{Liptimek} that, for any $k\ge k_1$ and $(x,t)\in [0,1]\times[T,\infty)$,
 \be\label{ineqlemapprox}
 \partial_x u_k(x,t)\ge A\  \Longrightarrow [\partial_x^2 u_k](x,t) <0 
 \  \Longrightarrow \inf_{[0,x]}\partial_x u_k(\cdot,t) \ge A.
 \ee
On the other hand, by \eqref{profile}, for any $t\in \mathcal{S}$, we have
$$u_x(x,t)\ge U^*_x(x)-Kx,\quad 0<x<\textstyle\frac12.$$
Therefore there exists $\eta>0$ such that, for any $t\in \mathcal{S}$, $u_x(\eta,t)\ge A+1$.
Moreover, we know from \cite{PZ} that there exists $\tilde T\in(T,\infty)$ such that $\mathcal{S}\subset[T,\tilde T]$.
Since $\partial_xu_k\to u_x$ locally uniformly in $(0,1)\times [T,\tilde T]$, there exists 
$k_2\ge k_1$ such that for all $k\ge k_2$, 
$$\partial_xu_k(\eta,t)\ge A\ \hbox{ for any $t\in \mathcal{S},$}$$
hence $\displaystyle\inf_{[0,\eta]\times \mathcal{S}}\partial_x u_k\ge A$ by \eqref{ineqlemapprox}. This proves the lemma. \end{proof}

\section{Zero number properties I: monotonicity}

We start with the following intersection properties.
Note that these properties are well known for classical solutions,
but are not standard in the context of viscosity solutions.

\begin{proposition}\label{ZeroMonot}
Let $\phi\in X_1$.

(i) For any $t_0\in (0,T(\phi))$, we have 
\be\label{nondegenerateID1}
 0\le  N(t)\le N(t_0)<\infty,\quad t>t_0.
\ee
Moreover, if $N(0)<\infty$ then we may take $t_0=0$ in \eqref{nondegenerateID1}.

(ii) For any $\tau_2>\tau_1>0$ such that $u\ne c_p$ on $\{0\}\times [\tau_1,\tau_2]$, 
the function $N$ is nonincreasing and right continuous on $[\tau_1,\tau_2]$.
Moreover, if $\|\phi\|_\infty\le c_p$, then $N$ is nonincreasing and right continuous on $[0,\infty)$.

(iii) For all $t\in \mathcal{T}$ with $u(1,t)\ne c_p$, the limits 
$N(t^\pm):=\displaystyle\lim_{s\to t^\pm} N(s)$ are well defined and finite and we have
\be\label{NttNt}
N(t^-)\ge N(t)=N(t^+).
\ee
 \end{proposition}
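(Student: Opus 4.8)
The plan is to reduce all three parts to the classical zero-number (intersection-comparison) theory for linear parabolic equations, applied not directly to $u$ — which is only a viscosity solution near $x=0$ — but to the truncated approximations $u_k$, and then to pass to the limit. The key underlying fact is that, on an interval $(0,b)$ on which $u_x(0,t)$ stays finite and bounded (or on a strip where $u$ is classical up to the boundary), the difference $w_k:=u_k-U_j$ between a truncated solution and a suitable regular steady state $U_j$ of the truncated equation solves a linear parabolic equation with smooth coefficients, so the number of its sign changes $z(w_k(\cdot,t))$ is finite for $t>0$, nonincreasing in $t$, and drops strictly whenever $w_k(\cdot,t)$ has a degenerate (multiple) zero. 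This is the Sturm/Angenent machinery; I would cite it and then do the work of transferring it to $N(t)=z(u(\cdot,t)-U^*)$.

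For part~(i): for $t_0\in(0,T(\phi))$ the solution is classical on $[0,1]\times[t_0,T)$ and smooth up to $x=0$, where it equals $0<U^*(0)=0$... — more precisely near $x=0$ we compare with $U^*$ itself. The function $v:=u-U^*$ is not a solution of a single linear equation (since $U^*$ solves the PDE only formally, being singular), so instead I would compare $u$ with the \emph{regular} steady states $U_\mu$ (translates/dilates approximating $U^*$) or argue directly: by Lemma~\ref{basic-prop0}, once $t\ge t_0$ one has the two-sided profile bounds \eqref{SGBUprofileUpperEst2}–\eqref{SGBUprofileUpperEst3} near the two endpoints, which pin the sign of $u-U^*$ near $x=0$ and $x=1$ and force $z(u(\cdot,t)-U^*)$ to be finite for $t\ge t_0$; monotonicity on $(t_0,\infty)$ then follows from the intersection-comparison principle applied to the \emph{classical} (and, past $T$, viscosity-but-interior-classical) solution on compact subintervals of $(0,1)$, combined with the boundary sign control. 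If $N(0)<\infty$ then $\phi-U^*$ has finitely many sign changes and, since the comparison argument works on $[0,1]\times[0,T)$ where $u$ is genuinely classical, we may take $t_0=0$.

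For part~(ii): the hypothesis $u\ne c_p$ on $\{0\}\times[\tau_1,\tau_2]$ means that at each such time either $u(0,t)=0$ (so $u-U^*$ is $<0$, namely $=0-0$... we instead use that $u(0,t)\le U^*$ behaviour near $0$ is determined) or $u(0,t)>0$; the exclusion of the value $c_p$ is exactly what prevents a tangential intersection with $U^*$ from ``sitting'' at the boundary and spoiling semicontinuity. I would prove nonincrease and right-continuity of $N$ on $[\tau_1,\tau_2]$ by: (a) approximating $U^*$ from below by regular steady states $U_j\nearrow U^*$ and $u$ by $u_k\nearrow u$; (b) for fixed large $j$, applying the classical Sturm theorem to $u_k-U_j$ on a strip $(0,b)\times I$ where both are classical, getting that $z((u_k-U_j)(\cdot,t))$ is finite, nonincreasing, right-continuous; (c) letting $k\to\infty$ then $j\to\infty$, using Lemma~\ref{lemapprox} (which guarantees $u_k$ has the right large-gradient behavior near $0$ at singular times, so no intersections are ``lost at the boundary'' in the limit) and the boundary hypothesis $u\ne c_p$ to get lower semicontinuity of the count, and upper semicontinuity from the convergence, yielding equality $N=\lim$. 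The case $\|\phi\|_\infty\le c_p$ is handled because then the comparison principle gives $u\le U^*$ on $\Omega$ (as $U^*\ge\|\phi\|_\infty$ away from the boundary and dominates on the boundary in the generalized sense), which a fortiori excludes $u=c_p$ somewhere relevant and extends the conclusion to all of $[0,\infty)$.

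For part~(iii): this is a local statement at a single transition time $t\in\mathcal T$ with $u(1,t)\ne c_p$. I would pick a small two-sided time window $(t-\rho,t+\rho)$; on $(t,t+\rho)$ one has $u_x(0,t)=\infty$ still possible only at $t$ itself generically, but in any case the argument is: apply part~(ii) on $[t+\epsilon,t+\rho]$ and on $[t-\rho,t-\epsilon]$ to get that $N$ is nonincreasing and right-continuous on each, hence the one-sided limits $N(t^\pm)$ exist; finiteness follows from part~(i); then $N(t^-)\ge N(t^+)$ by global monotonicity on the punctured window (the comparison principle still applies across $t$ on the open interval $(0,1)$ where $u$ is interior-classical), and $N(t)=N(t^+)$ by right-continuity, which survives at $t\in\mathcal T$ precisely because $u(0,t)=0$ and (near $x=1$) $u(1,t)\ne c_p$ keep all intersections with $U^*$ transversal or interior.

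\textbf{Main obstacle.} The delicate point is step (c) of part~(ii): showing that no intersection of $u(\cdot,t)$ with $U^*$ is created or destroyed \emph{at the boundary point $x=0$} in the double limit $k\to\infty$, $j\to\infty$, and that the count is lower-semicontinuous in $t$ through a singular time $t\in\mathcal S$. At such $t$, $u(\cdot,t)$ is tangent to the profile $U^*$ near $x=0$ to high order (Lemma~\ref{basic-prop0}(ii)), so a naive count could jump. The resolution uses Lemma~\ref{lemapprox} — which says $u_k$ genuinely has $\partial_x u_k\to\infty$ near $0$ on $\mathcal S$ — together with the hypothesis $u\ne c_p$ at $x=0$ (and at $x=1$), ensuring that the ``boundary intersection'' is accounted for consistently by working on $(0,b)$ with $b<1$ and tracking the sign of $u-U^*$ at $x=b$; getting all of this uniform in $k,j$ and $t$ is the technical heart of the argument.
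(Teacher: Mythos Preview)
Your plan for part~(i) is broadly aligned with the paper: compare with regular steady states $U_j$ and with truncated approximations $u_k$, apply the classical zero-number principle to $u_k-U_j$, and pass to the limit via lower semicontinuity of the zero count (Lemma~\ref{lemz1}). The paper makes this precise through Lemma~\ref{lemz1b}, which controls $z(u(\cdot,t_1)-U_a)$ for $t_1<T(\phi)$ and $a$ large; your sketch would benefit from isolating that step explicitly.

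For part~(ii), however, your route diverges from the paper and carries a real gap. You propose the double approximation $u_k\to u$, $U_j\to U^*$, and claim the limit recovers both monotonicity and right-continuity of $N$. The difficulty you yourself flag---controlling intersections at $x=0$ in the double limit, especially at singular times---is genuine, and Lemma~\ref{lemapprox} alone does not close it: you still need that $z(u_k(\cdot,t)-U_j)$ equals $N(t)$ (not just bounds it from above) at the \emph{initial} time of each comparison window, and past $T(\phi)$ this is exactly the hard part. The paper avoids this entirely. Its Lemma~\ref{zerob} works directly with $w=u-U^*$: since $N(t)<\infty$ by part~(i), $w(\cdot,t)$ has a definite sign on some $[0,a]$ with $w(a,t)\ne0$; the comparison principles (Propositions~\ref{compP0} and~\ref{compP}, applied with $U^*$ as barrier) propagate this sign forward on a short time interval. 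This reduces the zero count to the smooth region $[a,b]$, where classical Sturm applies. No approximation is needed for~(ii).

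Your treatment of the case $\|\phi\|_\infty\le c_p$ is also incorrect: the comparison principle does \emph{not} give $u\le U^*$ on $\Omega$ (this is false near $x=0$ whenever $u_x(0,t)=\infty$, by Lemma~\ref{basic-prop0}(ii)). What it gives is $\|u(t)\|_\infty<c_p$ for $t>0$ by the strong maximum principle, hence $u(1,t)<c_p=U^*(1)$; this is precisely the boundary condition at $x=1$ needed to invoke Lemma~\ref{zerob} with $b=1$. (The hypothesis ``$u\ne c_p$ on $\{0\}\times[\tau_1,\tau_2]$'' in the statement should be read as a condition at $x=1$; note $U^*(1)=c_p$.)

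Part~(iii) is fine and matches the paper: once~(ii) is available on a neighborhood of $t\in\mathcal{T}$, monotonicity and right-continuity of $N$ there immediately give existence of $N(t^\pm)$ and \eqref{NttNt}.
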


We shall also establish a monotonicity property for the number of intersections of two solutions,
which will be used in the proof of Theorem~\ref{conjz1}.
Actually, we will need to compare solutions of problem~\eqref{vhj1} on different space intervals.
To this end, for any $b\in (0,1]$, we set 
$$X_b:=\bigl\{\phi\in C^1([0,b]);\ \phi\ge 0,\ \phi(0)=\phi(b)=0\bigr\}$$
and the number of sign changes $z_{|[0,b]}$ is defined similarly as in
\eqref{defNt}, replacing $1$ with~$b$.
Also in this and the next section, beside the singular steady state $U^*$, we shall also use 
the regular steady states $U_a$, such that $U_a(0)=0$ and $U_a'(0)=a$, with $a>0$.
Namely,
\be\label{defUa}
U_a(x)=U^*(x+k)-U^*(k),\quad\hbox{ where $k=a^{1-p}/(p-1)$.}
\ee

\begin{proposition}\label{lemz20A} 
Let $b\in(0,1]$, $\phi_1\in X_1$, $\phi_2\in X_b$. 
Denote by $u,v$ the corresponding viscosity solutions of \eqref{vhj1}, 
respectively with $\Omega=(0,1)$ and $\Omega=(0,b)$.
Assume that, for all $t\ge 0$,
\be\label{lemz20AHyp}
\begin{cases}
\hbox{$u(\cdot,t)\not\equiv v(\cdot,t)$ on $[0,1]$,} &\hbox{ if $b=1$,}\\
\noalign{\vskip 1mm}
\hbox{$v(b,t)=0$ (in the classical sense),} &\hbox{ if $b\in(0,1)$,}\\
\end{cases}
\ee
and set
$$\mathcal{N}(t)=z_{|[0,b]}(u(\cdot,t)-v(\cdot,t)),\quad t\ge 0.$$
Then
\be\label{lemz20Aa}
\hbox{$\mathcal{N}(t)$ is finite and nonincreasing on $(0,\infty)$.}
\ee
If moreover $\mathcal{N}(\phi_1-\phi_2)<\infty$, then \eqref{lemz20Aa} remains true 
on $[0,\infty)$.
\end{proposition}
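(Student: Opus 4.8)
\emph{Plan of proof.} The plan is to realize $\mathcal{N}$ as a limit of intersection numbers of the truncated solutions and to run a Sturm--Angenent zero-number argument, the delicate point being the control of zeros near the spatial endpoints. Since the same truncation $F_k$ is used for both problems, the difference $w_k:=u_k-v_k$ of the solutions of the truncated problems \eqref{vhj1k} is a classical solution on $(0,b)\times(0,\infty)$ of a linear parabolic equation $\partial_t w_k-\partial_x^2 w_k=a_k(x,t)\,\partial_x w_k$, where $a_k:=[F_k(|\partial_x u_k|^2)-F_k(|\partial_x v_k|^2)]/[\partial_x u_k-\partial_x v_k]$ (extended continuously where $\partial_x u_k=\partial_x v_k$); for each fixed $k$ this coefficient is bounded on $[0,b]\times[0,T]$ for every $T>0$, because $u_k,v_k$ and their $x$-derivatives are continuous up to the spatial boundary. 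When $b=1$ one has the homogeneous Dirichlet condition $w_k(0,t)=w_k(1,t)=0$; when $b<1$ one has $w_k(0,t)=0$ and $w_k(b,t)=u_k(b,t)>0$ for $t>0$ (strong maximum principle for $u_k$, in the nontrivial case $\phi_1\not\equiv0$). In either case the classical zero-number theory on a bounded interval yields that $\mathcal{N}_k(t):=z_{|[0,b]}(w_k(\cdot,t))$ is finite for all $t>0$, nonincreasing in $t$, and strictly decreasing across any time at which $w_k$ develops a multiple interior zero, such times being locally finite.

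I would then pass to the limit $k\to\infty$. The bound $\mathcal{N}(t)\le\liminf_k\mathcal{N}_k(t)$ is immediate, since at a point $x_0\in(0,b)$ where $u-v$ strictly changes sign one has $w_k(x_0,t)\to w(x_0,t)\ne0$. The reverse estimate is the heart of the matter and amounts to ruling out accumulation of zeros of $w_k$ at $x=0$ (and at $x=1$ when $b=1$) as $k\to\infty$. Away from the endpoints this is automatic: on each $[\varepsilon,b-\varepsilon]$ the coefficients $a_k$ are bounded uniformly in $k$ (since $\partial_x u_k\to u_x$, $\partial_x v_k\to v_x$ locally uniformly in the open interval, with $u_x,v_x$ locally bounded there by interior regularity of viscosity solutions) and $w_k\to w$ in $C^1_{loc}$, so $z_{|[\varepsilon,b-\varepsilon]}(w_k(\cdot,t))\to z_{|[\varepsilon,b-\varepsilon]}(w(\cdot,t))$ for every $t$ that is not a multiple-zero time of $w$. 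Thus everything reduces to the following endpoint estimate: there exist $\bar\varepsilon>0$ and, for each $t_0>0$, a $k_0$ and a neighbourhood $I\ni t_0$, such that for $k\ge k_0$ and $t\in I$ the function $w_k(\cdot,t)$ has a strict sign (hence no sign change) on $(0,\bar\varepsilon)$, and likewise on $(1-\bar\varepsilon,1)$ when $b=1$.

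This estimate I would prove by a case analysis on the endpoint values, using the profile estimates of Lemma~\ref{basic-prop0} and the approximation Lemma~\ref{lemapprox}. If $u(0,t_0)\ne v(0,t_0)$, then $w$ has a strict sign on $(0,\bar\varepsilon)$ for $t$ near $t_0$, and one transfers it to $w_k$ using the uniform interior convergence together with Lemma~\ref{lemapprox} (giving $\partial_x u_k>0$ near $x=0$ at singular times, so $u_k$ is there monotone and controlled by its boundary value plus a profile, similarly for $v_k$) and \eqref{SGBUprofileUpperEst4}. If $u(0,t_0)=v(0,t_0)=c>0$, then for $t$ near $t_0$ both solutions are classical near $x=0$ with $u(x,t)=c+U^*(x)+O(x^2)$, $v(x,t)=c+U^*(x)+O(x^2)$ and $u_x-U^*_x=O(x)$, $v_x-U^*_x=O(x)$ by Lemma~\ref{basic-prop0}(ii); hence $w$ has at $x=0$ a zero of order $\ge 2$, and being a nontrivial solution of a linear parabolic equation of radial type at $x=0$, it has a strict sign near $x=0$ for all $t$ outside a locally finite set, which passes to $w_k$. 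The borderline cases $c=0$ (both classical up to $0$; or the resonant transition case $u(0,t_0)=v(0,t_0)=0$ with $u_x(0,t_0)=v_x(0,t_0)=\infty$) are treated by combining the sign lemmas~\ref{lemz3a}--\ref{lemz3b}, the profiles, Hopf's lemma for the linear equation satisfied by $w$, and, if needed, a uniform-in-$k$ comparison of $u_k,v_k$ with the regular steady states $U_a$ of \eqref{defUa}. The right endpoint when $b<1$ is easy: $\phi_1\not\equiv0$ forces $u>0$ in $(0,1)\times(0,\infty)$, so $w=u(b,t)>0$ at $x=b$ and hence near $x=b$. Granting the endpoint estimate, $\mathcal{N}(t)=\lim_k\mathcal{N}_k(t)$ at every non-exceptional $t$, whence, with right-continuity, $\mathcal{N}$ is finite and nonincreasing on $(0,\infty)$. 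Finally, if $\mathcal{N}(\phi_1-\phi_2)<\infty$ then $\mathcal{N}_k(0)<\infty$ for $k$ large, so the Sturm argument is valid on $[0,\infty)$; combined with $u(\cdot,t)\to\phi_1$, $v(\cdot,t)\to\phi_2$ in $C^1_{loc}$ as $t\to0^+$ and the endpoint estimate near $t=0$, this extends the monotonicity to $[0,\infty)$.

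The main obstacle is precisely the endpoint estimate, and within it the resonant case in which $u$ and $v$ have the same type and the same boundary value at an endpoint --- especially $u(0,t_0)=v(0,t_0)=0$ with $u_x(0,t_0)=v_x(0,t_0)=\infty$ --- where the leading singular parts of $u$ and $v$ cancel and one must extract from the profile estimates, and from a uniform-in-$k$ comparison of $u_k,v_k$ with translates of $U^*$, enough vanishing order of the difference $w$ at the endpoint to exclude that a sign change is emitted from the boundary into the interior. This is the delicate continuity point referred to in the introduction.
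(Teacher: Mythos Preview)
Your proposal contains the right opening move (truncate and apply the classical zero-number principle to $w_k=u_k-v_k$), but then takes an unnecessarily hard route: you aim for the equality $\mathcal{N}(t)=\lim_k\mathcal{N}_k(t)$, which forces you into the ``endpoint estimate'' and, in particular, the resonant case $u(0,t_0)=v(0,t_0)=0$, $u_x(0,t_0)=v_x(0,t_0)=\infty$. Your treatment of that case is only a sketch (``combining the sign lemmas\ldots and, if needed, a uniform-in-$k$ comparison''), and it is not clear how to make it rigorous: the profiles of $u$ and $v$ agree to leading order, the difference is $O(x^2)$, and you have no mechanism ensuring a uniform-in-$k$ sign of $w_k$ near $x=0$. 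This is a genuine gap.

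The paper avoids this difficulty entirely by decoupling the two halves of the argument. For \emph{finiteness}, only the inequality $\mathcal{N}(t)\le\liminf_k\mathcal{N}_k(t)$ is used (Lemma~\ref{lemz1}), together with the fact that at any early classical time $t_0<\min(T(\phi_1),T(\phi_2))$ one has $u_k\equiv u$, $v_k\equiv v$, hence $\mathcal{N}_k(t_0)=\mathcal{N}(t_0)<\infty$; this already gives $\mathcal{N}(t)\le\mathcal{N}(t_0)$ for all $t>t_0$. For \emph{monotonicity}, the paper never returns to $w_k$ near the boundary: it works directly with $w=u-v$. Since $\mathcal{N}(t)$ is now known to be finite, there exist $a_0\in(0,b)$ and $\sigma_0\in\{\pm1\}$ with $\sigma_0 w(\cdot,t)\ge0$ on $[0,a_0]$ and $\sigma_0 w(a_0,t)>0$; by the comparison principle for viscosity solutions (Proposition~\ref{compP0} if $\sigma_0=1$, Proposition~\ref{compP} if $\sigma_0=-1$) this sign is propagated to $[0,a_0]\times[t,t+\varepsilon]$. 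The right endpoint is handled similarly (or trivially when $b<1$, since $u(b,t)>0=v(b,t)$). On the remaining compact interval $[a_0,a_1]$ both $u$ and $v$ are smooth, so the classical zero-number principle applies and yields $\mathcal{N}(s)\le\mathcal{N}(t)$ for $s\in[t,t+\varepsilon]$, i.e.\ right upper semicontinuity of $\mathcal{N}$. Monotonicity then follows from this and Lemma~\ref{lemz1} by the short contradiction argument of Step~2 in the proof of Lemma~\ref{zerob}. The point is that the viscosity comparison principle, applied to $u$ and $v$ themselves, replaces your uniform-in-$k$ endpoint control of $w_k$.
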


The next result shows that the assumption $\|\phi\|_\infty\le c_p$ in Proposition~\ref{ZeroMonot} is not purely technical.
We actually suspect that for any $M>c_p$, the monotonicity of $N(t)$ fails for some $\phi\in X_1$ with $\|\phi\|_\infty=M$.

\begin{proposition}\label{PropNonmonot} 
There exists $\phi_1\in X_1$, with $\|\phi\|_\infty>c_p$ and $T=T(\phi)<\infty$,
and there exist $t_2>t_1>T$ such that $N(t_1)=0$ and $N(t_2)=1$.
\end{proposition}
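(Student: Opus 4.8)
The plan is to exploit the geometry of the singular steady state $U^*(x)=c_px^\alpha$ near $x=0$ and the regularization/LBC machinery already developed. The idea is to build an initial datum $\phi_1\in X_1$ which is large enough (so that $\|\phi_1\|_\infty>c_p$) and concentrated appropriately so that the solution first undergoes GBU and LBC near $x=0$ at time $T$, loses its boundary condition on a time interval, and then, upon regularization, re-enters the classical regime \emph{from below}, i.e.\ with $u(\cdot,t)<U^*$ near $x=0$. On such a classical interval with $u(0,t)=0$ we will have $u(\cdot,t)-U^*<0$ near $x=0$ but, by choosing the right-hand behavior of $\phi_1$ (a large bump far from $x=0$, controlled near $x=1$ so that $u$ stays classical up to $x=1$ via Lemma~\ref{lem3bis}-type estimates), one can arrange $u(\cdot,t)-U^*$ to have, say, exactly one sign change at one time $t_1$ and no sign change at a slightly later (or earlier) time $t_2$. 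The cleanest route: produce a time $t_1$ at which $N(t_1)=1$ (one interior crossing of $U^*$, with $u<U^*$ near $0$ and $u>U^*$ somewhere in the middle before dipping back below near $x=1$) and a time $t_2>t_1$, still in a classical regime, at which the middle hump of $u-U^*$ has been pulled entirely below $U^*$, giving $N(t_2)=0$; then since $N$ went \emph{up} from $t_1$ to some earlier LBC-influenced configuration, or rather one checks $N$ is not monotone by comparing with the value before $t_1$. Actually the simplest statement to target is exactly the one written: find $t_1<t_2$ (both after $T$) with $N(t_1)=0$ and $N(t_2)=1$; this directly contradicts monotone \emph{non}increase. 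To get this, I would use an LBC phase: during LBC near $x=0$ we have $u(0,t)>0=U^*(0)$ and $u_x(0,t)$ finite, so $u>U^*$ just to the right of $0$, contributing to intersections; then after regularization and a subsequent GBU the intersection count can jump up again.

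Concretely, the construction I would carry out: (1) take $\phi_1=\Lambda\psi_{\eps}+$ (a fixed moderate bump $\chi$ supported in $(1/2,1)$ or near $x=1$), where $\psi_\eps$ is the localized datum from Proposition~\ref{lem2} with small $\eps$, and $\Lambda$ large so that $\|\phi_1\|_\infty>c_p$; the bump $\chi$ is chosen small and $C^2$ so that, by Lemma~\ref{lem3bis}/Lemma~\ref{lem3}, the solution remains classical and zero at $x=1$, and also so that $u$ becomes classical again near $x=0$ shortly after the first LBC episode. (2) Using Proposition~\ref{lem2}(ii), the solution has LBC at $x=0$ on an interval $[c_1\eps^2,c_2\eps^2]$; pick $t_1$ just after this interval, in a classical regime, where by continuity $u(0,t_1)=0$ but $u(\cdot,t_1)$ still retains a hump above $U^*$ near $0$ inherited from the LBC phase (so $u-U^*$ changes sign: $+$ near $0$, then $-$ — or with an extra crossing, giving $N(t_1)\ge 1$; arrange exactly $1$ by tuning). (3) Let the solution evolve further; by the ultimate-regularization result of \cite{PZ} and the decay of $u$ in $C^1$, for $t_2$ somewhat larger $u(\cdot,t_2)$ has dropped below $U^*$ everywhere on $(0,1)$ except we want to \emph{contradict} monotonicity, so instead I would choose $t_2$ so that $N$ has gone \emph{up}: e.g.\ a second small bump or the tail of $\chi$ produces a second, later crossing of $U^*$. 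The bookkeeping is: first show $N=0$ at some $t_1$ in a classical window right after the initial transient, then show a later GBU or LBC episode (engineered by the middle/right bump) forces $N=1$ at $t_2>t_1$.

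The technical tools are all in hand: the zero-number comparison of $u(\cdot,t)$ with the \emph{stationary} $U^*$ is legitimate in the viscosity setting by Proposition~\ref{ZeroMonot}-type arguments on classical intervals, and on the singular times $\mathcal{S}$ we control $u-U^*$ pointwise near $0$ by the profile estimates \eqref{profile2}–\eqref{profile} of Lemma~\ref{basic-prop0} and the sign information of Lemma~\ref{lemz3a}–Lemma~\ref{lemz3b}. The LBC phase is delivered by Proposition~\ref{lem2}(ii), and the re-regularization and classical-up-to-$x=1$ properties by Proposition~\ref{lem3bis} and Lemma~\ref{lem3}. Counting sign changes then reduces to a finite, explicit inspection at two well-chosen times.

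\textbf{Main obstacle.} The delicate point is ensuring the \emph{exact} counts $N(t_1)=0$ and $N(t_2)=1$ rather than merely $N(t_1)<N(t_2)$ for some unspecified values; this requires precise control of the shape of $u(\cdot,t)$ relative to $U^*=c_px^\alpha$ both near $x=0$ (where the profile estimates \eqref{profile2}–\eqref{profile} are only $O(x^2)$-accurate and hold on singular/LBC times, not in between) and across the whole interval $(0,1)$ during and after the LBC episode. Tuning the amplitude $\Lambda$ and the auxiliary bump $\chi$ so that no accidental extra crossings appear — while still keeping $\|\phi_1\|_\infty>c_p$ and keeping $x=1$ classical — is where the real work lies; one likely needs a continuity/intermediate-value argument in a parameter (as in the proof of Theorem~\ref{thm1B}) to land on a configuration with precisely the stated intersection numbers.
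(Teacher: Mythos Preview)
Your approach has a fatal gap: the mechanism you propose cannot produce non-monotonicity of $N$. You insist on keeping the solution classical at $x=1$ (via Proposition~\ref{lem3bis}/Lemma~\ref{lem3}), which forces $u(1,t)=0<c_p=U^*(1)$ for all $t$. But then Lemma~\ref{zerob} applies with $b=1$ on the whole time axis (the hypothesis $u-U^*\ne 0$ at $x=b$ is satisfied throughout), and it tells you that $N(t)=N_1(t)$ is \emph{nonincreasing}. So no amount of tuning of bumps near $x=0$, LBC episodes, or intermediate-value arguments in a parameter can make $N$ go up in your construction. Your narrative also wavers on which direction you want $N$ to move; the statement asks for $N(t_1)=0$ and $N(t_2)=1$ with $t_1<t_2$, i.e.\ $N$ must \emph{increase}, and your LBC--then--regularize scenario naturally gives the opposite.

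The missing idea is that the only way $N$ can increase is through the \emph{right} endpoint: one needs $u(1,t)$ to cross the value $c_p=U^*(1)$. The paper does this with a short, clean argument: take a \emph{symmetric} $\phi_0\in X_1$, nondecreasing on $[0,\frac12]$, whose solution $v$ undergoes LBC, and set $\phi=\lambda\phi_0$ with $\lambda$ large. A one-line supersolution computation ($z=\lambda^{-1}u$ satisfies $z_t-z_{xx}\ge|z_x|^p$) gives $u\ge\lambda v$, so one can choose $\lambda$ with $u(0,t_1)=u(1,t_1)>c_p$. Symmetry and monotonicity on $[0,\frac12]$ then force $u(\cdot,t_1)>c_p\ge U^*$ on all of $[0,1]$, hence $N(t_1)=0$. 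Since $u$ eventually decays to $0$ in $C^1$, the boundary value $u(1,t)=u(0,t)$ must pass through $(0,c_p)$ at some $t_2>t_1$; at that moment $u(0,t_2)>U^*(0)=0$ while $u(1,t_2)<U^*(1)=c_p$, so $u-U^*$ changes sign and $N(t_2)\ge 1$. No multibump machinery or parameter continuity is needed.
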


In the proof of Proposition~\ref{ZeroMonot} we shall use the following two lemmas.
 
\begin{lem}\label{lemz1} 
 Let $a,b\in\R$, $a<b$ 
 and assume that $v\in C(a,b)$, with $v\not\equiv 0$, is such that $z(v)<\infty$.
 Let $(v_j)_j$ be a sequence of $C(a,b)$ such that 
 $v_j\to v$ in $C_{loc}(a,b)$. Then $\liminf_{j\to\infty} z(v_j)\ge z(v)$.
\end{lem}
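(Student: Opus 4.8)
The statement is the standard lower semicontinuity of the zero number under $C_{loc}$-convergence, and the plan is to deduce it directly from the definition of $z$ as a supremum over sign-changing tuples, using that $C_{loc}(a,b)$-convergence is uniform on compact subsets.

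First I would dispose of the degenerate values of $z(v)$. Since $v\not\equiv 0$, there is a point $x_*\in(a,b)$ with $v(x_*)\ne 0$; as $v_j(x_*)\to v(x_*)$, we get $v_j\not\equiv 0$, hence $z(v_j)\ge 0$, for all large $j$. This already gives the conclusion when $z(v)\le 0$: the value $z(v)=-\infty$ is excluded by $v\not\equiv 0$, and $z(v)=0$ forces $\liminf_{j\to\infty} z(v_j)\ge 0=z(v)$.

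Now assume $m:=z(v)$ satisfies $1\le m<\infty$. By definition of $z$ there exist points $x_0<x_1<\cdots<x_m$ in $(a,b)$ with $v(x_{i-1})v(x_i)<0$ for $i=1,\dots,m$, so in particular $\mu:=\min_{0\le i\le m}|v(x_i)|>0$. The set $F:=\{x_0,\dots,x_m\}$ is a compact subset of $(a,b)$, hence $v_j\to v$ uniformly on $F$; pick $j_0$ with $\sup_F|v_j-v|<\mu$ for all $j\ge j_0$. Then for $j\ge j_0$ each $v_j(x_i)$ has the same sign as $v(x_i)$, so $v_j(x_{i-1})v_j(x_i)<0$ for $i=1,\dots,m$, which means $z(v_j)\ge m$. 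Letting $j\to\infty$ gives $\liminf_{j\to\infty}z(v_j)\ge m=z(v)$.

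There is no genuine obstacle in this lemma; the only points meriting care are to invoke $C_{loc}$-convergence on the (finite, hence compact) witnessing set $F$ rather than mere pointwise convergence, and to treat separately the cases $z(v)\le 0$ where there is no sign-changing tuple available to be perturbed.
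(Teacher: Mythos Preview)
Your proof is correct and follows essentially the same approach as the paper's: pick witnessing points $x_0<\dots<x_m$ for $z(v)=m$ and use uniform convergence on this finite set to preserve the sign pattern for large $j$. You are slightly more explicit about the trivial case $m=0$, but otherwise the argument is identical.
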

 
 \begin{proof} Let $m=z(v)$. By assumption there exist $a<x_0<\dots<x_m<b$ such that
 $v(x_{i-1})v(x_i)<0$ for $i=1,\dots,m$. Since $v_j\to v$ in $C([x_0,x_m])$,
 there exists $j_0$ such that, for all $j\ge j_0$, we have $v_j(x_{i-1})v_j(x_i)<0$ for $i=1,\dots,m$,
 hence $z(v_j)\ge m$. The lemma follows.
 \end{proof}
 
\begin{lem}\label{lemz1b} 
Let $\phi\in X_1$ and let $0<t_0<t_1<T:=T(\phi)$. Then $N(t_0)<\infty$ and there exists $a_1>0$ such that
 \be\label{ZeroMonot1lem}
0\le z(u(\cdot,t_1)-U_a)\le N(t_0),\quad a\ge a_1.
 \ee
Moreover \eqref{ZeroMonot1lem} remains true with $t_0=0$ provided $N(0)<\infty$.
 \end{lem}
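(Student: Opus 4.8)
The plan is to exploit the classical (smooth) regime $t<T$, where $u$ solves the PDE pointwise up to the boundary, and to combine two ingredients: (a) the standard zero-number nonincrease property for the difference of two classical solutions of a parabolic equation, applied to $u$ and to the regular steady states $U_a$; and (b) a uniform approximation argument showing that, for $a$ large, $U_a$ is close (in $C^1$ on compact subsets, and pointwise everywhere) to the singular steady state $U^*$, so that the intersection number of $u(\cdot,t)-U_a$ is controlled by that of $u(\cdot,t)-U^*$, i.e.\ by $N(t)$.

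First I would record that, since $\phi\in X_1$ and $0<t_0<T$, the solution $u$ is classical and smooth on $[0,1]\times(0,T)$, in particular $u(\cdot,t_0)\in C^1([0,1])$ with $u(0,t_0)=u(1,t_0)=0$. Because $U^*(x)=c_p x^\alpha$ vanishes like $x^\alpha$ at $0$ with infinite slope, while $u(\cdot,t_0)$ has finite slope there, the function $u(\cdot,t_0)-U^*$ is strictly negative on a right-neighborhood of $x=0$; near $x=1$ it is bounded away from $0$ unless $u(1,t_0)=U^*(1)=c_p$, but $u(1,t_0)=0<c_p$, so it is strictly negative near $x=1$ as well. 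Hence all sign changes of $u(\cdot,t_0)-U^*$ occur in a compact subinterval $[\eta,1-\eta]\subset(0,1)$, and $u(\cdot,t_0)-U^*$ has only finitely many, nondegenerate zeros there (its zeros are isolated because $u(\cdot,t_0)-U^*$ is real-analytic in $x$ on $(0,1)$ as a solution of an elliptic-type ODE comparison — or, more simply, because a zero-number argument on $(0,T)$ forces finiteness for $t_0>0$; this is exactly the content of Proposition~\ref{ZeroMonot}(i), which I may assume). Thus $N(t_0)<\infty$.

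Next, from \eqref{defUa} we have $U_a(x)=U^*(x+k)-U^*(k)$ with $k=a^{1-p}/(p-1)\to0^+$ as $a\to\infty$; hence $U_a\to U^*$ locally uniformly on $(0,1]$ and, in fact, uniformly on $[0,1]$ together with $U_a\le U^*$ (by concavity/monotonicity of $U^*$, so that subtracting the shift only decreases the value and the error $U^*(k)$ is $o(1)$). On the compact interval $[\eta,1-\eta]$ where the sign changes of $u(\cdot,t_0)-U^*$ live, the convergence $U_a\to U^*$ is in $C^1$, and since those sign changes are at nondegenerate zeros, for $a\ge a_1$ large enough the function $u(\cdot,t_0)-U_a$ has exactly the same number of sign changes in $[\eta,1-\eta]$ as $u(\cdot,t_0)-U^*$; moreover, near $x=0$ we still have $u(\cdot,t_0)-U_a<0$ for $a$ large (because $U_a(x)\ge c x^\alpha$ near $0$ with infinite slope once $a$ is large, $k$ small), and near $x=1$, $u(1,t_0)-U_a(1)=-U_a(1)<0$. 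Therefore $z(u(\cdot,t_0)-U_a)=N(t_0)$ for all $a\ge a_1$. Finally, for each fixed $a\ge a_1$ the function $U_a$ is a genuine (stationary) classical solution of $U_t-U_{xx}=|U_x|^p$, and $u-U_a$ solves a linear parabolic equation with bounded coefficients on $[0,1]\times(t_0,T)$; by the classical Sturmian zero-number theory (Matano/Angenent), $t\mapsto z(u(\cdot,t)-U_a)$ is nonincreasing on $(t_0,T)$ as long as $u(\cdot,t)-U_a$ does not vanish identically (which it never does, since $u(0,t)=0\ne U_a(0)$... wait, $U_a(0)=0$ too; rather, since $u(1,t)=0\ne U_a(1)>0$). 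Hence
\[
0\le z(u(\cdot,t_1)-U_a)\le z(u(\cdot,t_0)-U_a)=N(t_0),\qquad a\ge a_1,
\]
which is \eqref{ZeroMonot1lem}. For the last assertion, if $N(0)<\infty$ then $\phi-U^*$ already has finitely many nondegenerate sign changes (the boundary behavior at $0$ and $1$ being as above, using $\phi(0)=\phi(1)=0<c_p$ and $\phi\in C^1$), so the same comparison with $U_a$ gives $z(\phi-U_a)=N(0)$ for $a\ge a_1$, and the zero-number nonincrease on $(0,T)$ extends continuously down to $t=0$ because $u(\cdot,t)\to\phi$ in $C^1([0,1])$ as $t\to0$, allowing us to take $t_0=0$.

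**Main obstacle.** The delicate point is the passage from "$u(\cdot,t_0)-U^*$ has $N(t_0)$ sign changes" to "$u(\cdot,t_0)-U_a$ has exactly $N(t_0)$ sign changes for large $a$": this requires knowing that the zeros of $u(\cdot,t_0)-U^*$ in $(0,1)$ are nondegenerate (simple), so that small $C^1$-perturbations preserve the sign-change count and do not create or destroy crossings. Degenerate zeros would only be lost under perturbation, which is harmless for the upper bound we want, so in fact the inequality $z(u(\cdot,t_0)-U_a)\le N(t_0)$ — which is all we strictly need — survives even without nondegeneracy, using semicontinuity (Lemma~\ref{lemz1}) in the form $z(u(\cdot,t_0)-U^*)\le\liminf_{a}z(u(\cdot,t_0)-U_a)$ is the wrong direction; the correct argument is that any sign change of $u(\cdot,t_0)-U_a$ persisting as $a\to\infty$ produces a sign change or a degenerate zero of $u(\cdot,t_0)-U^*$, and a careful counting (isolating the finitely many zeros of $u(\cdot,t_0)-U^*$ and noting $u(\cdot,t_0)-U_a$ can have at most one sign change in a small neighborhood of a simple zero and at most... ) pins down the bound. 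Making this counting rigorous, together with the uniform control of the boundary behavior near $x=0$ for $U_a$ with $a$ large, is the technical heart; everything else is the standard Sturmian machinery applicable because $U_a$ is an honest classical solution and $t_1<T$ keeps us in the smooth regime.
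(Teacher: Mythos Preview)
Your overall strategy matches the paper's: work in the classical regime $t<T$, reduce the count near $x=0$ using the finite slope of $u$ versus the large slope of $U_a$, and then apply the Sturmian nonincrease principle to $u-U_a$. However, the step you flag as the ``Main obstacle'' is a genuine gap that you do not close, and in fact your proposed inequality $z(u(\cdot,t_0)-U_a)\le N(t_0)$ can fail.

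The issue is this: since $U_a<U^*$, the perturbation $u(\cdot,t_0)-U_a=(u(\cdot,t_0)-U^*)+(U^*-U_a)$ shifts $u(\cdot,t_0)-U^*$ \emph{upward}. If $u(\cdot,t_0)-U^*$ has a degenerate zero at some $x_0$ where it is $\le 0$ in a neighborhood (a tangency from below), then this upward shift produces two new sign changes near $x_0$, so $z(u(\cdot,t_0)-U_a)>N(t_0)$ for all large $a$. Your appeal to real-analyticity is unjustified (neither $u(\cdot,t_0)$ nor $U^*$ is known to be analytic in this setting), and the semicontinuity direction in Lemma~\ref{lemz1} indeed goes the wrong way, as you note. (A side remark: $U_a$ has slope exactly $a$ at $x=0$, not infinite slope; the correct reason that $u<U_a$ near $0$ is simply that $a$ can be chosen larger than $\sup_{[0,t_1]}u_x(0,s)$.)

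The paper fixes this with one clean extra idea: rather than comparing with $U_a$ at time $t_0$, it first uses the classical zero-number theory on the interval $[\eta,1]$ (where $U^*$ is smooth) to see that $N$ is nonincreasing on $[t_0,t_1]$ \emph{and} that there exists an intermediate time $t_2\in(t_0,t_1)$ at which all zeros of $u(\cdot,t_2)-U^*$ in $[\eta,1]$ are nondegenerate (Angenent's theorem guarantees such times are dense). At this $t_2$ one gets, by $C^1$-stability of simple zeros, the exact equality $z(u(\cdot,t_2)-U_a)=N(t_2)\le N(t_0)$ for $a$ large. Then the Sturmian monotonicity for $u-U_a$ (which you correctly set up, using $u(1,t)=0<U_a(1)$ at the right endpoint) is applied on $[t_2,t_1]$ rather than $[t_0,t_1]$. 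This two-step route --- first drop from $N(t_0)$ to $N(t_2)$ using monotonicity of $N$, then transfer from $U^*$ to $U_a$ at the nondegenerate time $t_2$ --- is what makes the argument go through and also handles the case $t_0=0$ without needing any nondegeneracy of $\phi-U^*$.
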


 \begin{proof}
Set $U_\infty:=U^*$. Let us first observe that, for any $a\in (0,\infty]$, we have
 \be\label{nonident0}
 u(\cdot,t)-U_a\not\equiv 0\ \hbox{on $[0,1]$,\ for each $t\ge 0$}
 \ee
 (i.e.,~$0\le z(u(\cdot,t)-U_a)\le\infty$). 
Indeed, this is obvious if $u(1,t)\ne U_a(1)$ whereas, if $u(1,t)=U_a(1)>0$ then, 
by applying Lemma~\ref{basic-prop0}(ii) to 
the solution $w(x,t)=u(1-x,t)$, we get $\lim_{y\to 1}u_x(y,t)=-\infty$, hence $u(y,t)\ne U_a(y)$ for $y<1$ close to $1$.

Fix $t_0\in (0,T)$, or $t_0\in [0,T)$ if $N(0)<\infty$, and let $t_1\in(t_0,T)$.
Since 
$$\sup_{s\in [0,t_1]}\|u_x(s)\|_\infty<\infty,$$
there exist $\eta\in (0,1)$ and $a_0>0$ such that 
 \be\label{ZeroMonot1a}
 u<U_{a_0}<U^*\quad\hbox{ in $(0,\eta]\times [0,t_1]$.}
 \ee
Therefore, we have $N(s)=z_{|[\eta,1]}(u(\cdot,s)-U^*)$ for all $s\in [0,t_1]$. 
Since $U^*$ is smooth on $[\eta,1]$ and $U^*(1)>0$, it follows from standard properties of the zero number 
 (cf.~\cite{An88} and see also \cite[Proposition 6.1]{PS3}) that
$N$ is finite and nonincreasing on $[t_0,t_1]$
and there exists $t_2\in (0,t_1)$ such that all zeros of $u(\cdot,t_2)-U^*$ in $[\eta,1]$ are nondegenerate.
Since $U_a$ increases with $a$ and $U_a\to U^*$ in $C^1([\eta,1])$ as $a\to\infty$,
recalling~\eqref{ZeroMonot1a}, there exists $a_1>a_0$ such that
 \be\label{ZeroMonot1c}
 z(u(\cdot,t_2)-U_a)=z(u(\cdot,t_2)-U^*)=N(t_2)\le N(t_0),\quad a\ge a_1.
 \ee
On the other hand, since $u$ and $U_a$ are smooth on $[0,1]\times(0,T)$
with $u=U_a=0$ at $x=0$ and $u=0<U_a$ at $x=1$, the zero number principle
guarantees that $z(u(\cdot,t_2)-U_a)$ is finite and nonincreasing on $(t_0,t_1]$.
This along with \eqref{ZeroMonot1c} yields the desired conclusion.
\end{proof}

 \begin{proof}[Proof of Proposition~\ref{ZeroMonot}(i)]
First of all, we have $0\le N(t)\le\infty$ by \eqref{nonident0}.
Set $T=T(\phi)$ and fix $t_0\in(0,T)$, or $t_0\in[0,T)$ in case $N(0)<\infty$.
Let $t>t_0$ and pick any $t_1\in (t_0,\min(T,t))$.
By Lemma~\ref{lemz1b}, we have
 \be\label{ZeroMonot1c2}
 z(u(\cdot,t_1)-U_a)\le N(t_0)<\infty,\quad a\ge a_1.
 \ee
Next, since $\sup_{s\in [0,t_1]}\|u_x(s)\|_\infty<\infty$, there exists $k_0\ge 0$ such that the solutions $u_k$ of the truncated problems \eqref{vhj1k} satisfy
 \be\label{ZeroMonot1d}
 \hbox{ $u_k\equiv u$ on $[0,1]\times[0,t_1]$ for all $k\ge k_0$.}
 \ee
Moreover, for any integer $j\ge a_1$, there exists $k_j\ge \max(k_0,j)$ such that $U_j$ is a steady state of problem \eqref{vhj1k} with $k=k_j$.
Since $u_{k_j}$ and $U_j$ are smooth, 
with $u_{k_j}=U_j=0$ at $x=0$ and $u_{k_j}=0<U_j$ at $x=1$, we can apply the zero number principle
to infer that
$$z(u_{k_j}(\cdot,t)-U_j)\le z(u_{k_j}(\cdot,t_1)-U_j).$$
This combined with \eqref{ZeroMonot1c2}, \eqref{ZeroMonot1d} yields
$$z(u_{k_j}(\cdot,t)-U_j)\le z(u(\cdot,t_1)-U_j)\le N(t_0).$$
Since $u_{k_j}(\cdot,t)-U_j\to u(\cdot,t)-U^*$ in $C_{loc}(0,1)$ as $j\to\infty$, property \eqref{nondegenerateID1} follows from Lemma~\ref{lemz1}.
The assertion is proved.
\end{proof}

In view of the proof of assertions (ii)(iii), we prepare
the following lemma which gives a more general property of the zero number on intervals $[0,b]$
and will be useful also in the proof of Theorem~\ref{thmz1}.
 In what follows, for any $b\in (0,1]$, we denote
$$N_b(t):=z_{|[0,b]}(u(\cdot,t)-U^*),\quad t>0.$$

\begin{lem} \label{zerob}
Let $\phi\in X_1$,
$b\in (0,1]$ and $\tau_2>\tau_1>0$ be such that 
 \be\label{Openneighb00}
u-U^*\ne 0\quad\hbox{on $\{b\}\times [\tau_1,\tau_2]$.}
 \ee
Then the function $N_b(t)$ is nonincreasing and right continuous on $[\tau_1,\tau_2]$.
\end{lem}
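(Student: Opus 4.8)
\textbf{Proof proposal for Lemma~\ref{zerob}.}
The plan is to reduce the statement to the classical zero-number theory for a uniformly parabolic equation on a fixed bounded interval, by combining two ingredients already available in the excerpt: (a) the uniform-in-$k$ approximation of $u$ by the truncated solutions $u_k$ together with the fact that $u_k$ is a genuine classical solution of a (locally) uniformly parabolic equation, and (b) the a priori control of $u-U^*$ near $x=0$ provided by Lemma~\ref{basic-prop0}. The difference $w_k:=u_k-U_j$ (for suitable regular steady states $U_j$ approximating $U^*$) satisfies a linear parabolic equation with bounded coefficients, so its zero number on $[0,b]$ is finite, nonincreasing in time, and right-continuous, provided we can rule out zeros accumulating at the two endpoints $x=0$ and $x=b$.

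The steps, in order, would be as follows. First I would fix $t_0\in(0,\tau_1)$ and recall from Lemma~\ref{lemz1b} (and its proof) that there is $a_1>0$ so that for all $j\ge a_1$ the regular steady state $U_j$ lies below $U^*$, is a steady state of the truncated problem \eqref{vhj1k} for $k=k_j$ large, and $z(u(\cdot,t_1)-U_j)\le N(t_0)<\infty$ for $t_1\in(t_0,T)$; this gives the finiteness of $N_b$ to start with, via Lemma~\ref{lemz1} after passing $j\to\infty$ as in the proof of part (i). Second, to get \emph{monotonicity on $[\tau_1,\tau_2]$} rather than just on $(0,T)$, I would work with $u_{k_j}$ and $U_j$ directly on the whole interval $[\tau_1,\tau_2]$: since $u_{k_j},U_j$ are classical, $w_{k_j}=u_{k_j}-U_j$ solves a linear parabolic equation; at $x=0$ both vanish, so I must check that zeros of $w_{k_j}$ do not accumulate at $x=0$ uniformly enough — here Lemma~\ref{lemapprox} gives $\partial_x u_{k_j}$ large near $x=0$ on the singular set $\mathcal S$, while on the complementary (non-singular) times $\partial_x u$ near $0$ is bounded, so in either case a short computation comparing $\partial_x u_{k_j}(0,t)$ with $U_j'(0)=j$ isolates the zero at $x=0$. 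At $x=b$, hypothesis \eqref{Openneighb00} together with $U_j\to U^*$ in $C^1([\,\eta,1])$ ensures $u_{k_j}-U_j\ne 0$ at $x=b$ for $j,k_j$ large and all $t\in[\tau_1,\tau_2]$, by the continuous-dependence estimate \eqref{contdep} and uniform convergence $u_{k_j}\to u$ on compact subsets of $(0,1)\times(0,\infty)$; one may also need to shrink $b$ slightly or note $u-U^*$ is continuous and the set $\{b\}\times[\tau_1,\tau_2]$ is compact. Third, with both endpoints under control, the standard zero-number principle for linear parabolic equations on $[0,b]$ (Angenent \cite{An88}) applies to $w_{k_j}$, giving that $t\mapsto z_{|[0,b]}(u_{k_j}(\cdot,t)-U_j)$ is finite, nonincreasing and right-continuous on $[\tau_1,\tau_2]$. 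Fourth, I would pass to the limit $j\to\infty$: lower semicontinuity of the zero number (Lemma~\ref{lemz1}) gives $N_b(t)\le\liminf_j z(u_{k_j}(\cdot,t)-U_j)$, and combined with the uniform bound from the finiteness step this pins down $N_b$; monotonicity is inherited in the limit since it is an inequality between integers, and right-continuity follows by combining monotonicity with the lower-semicontinuity bound at $t^+$, exactly as one argues for $N$ in part (iii).

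The main obstacle I expect is the behavior at the endpoint $x=0$: because $U^*$ (and the $U_j$) are steady states vanishing there with infinite or large slope, one has to argue carefully that the zero of $u_{k_j}(\cdot,t)-U_j$ at $x=0$ is the only zero in a neighborhood of $0$, uniformly in $t\in[\tau_1,\tau_2]$ and with constants that survive the limit $j,k_j\to\infty$. This is precisely where the quantitative profile estimates \eqref{profile2}--\eqref{profile} from Lemma~\ref{basic-prop0} (valid either on the singular set or when $u(0,t)>0$) and the approximation Lemma~\ref{lemapprox} must be used in tandem, splitting $[\tau_1,\tau_2]$ into the (relatively closed) singular part contained in $\mathcal S$ and its complement where $u$ is classical near $0$ with bounded gradient. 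Once this endpoint analysis is in place the rest is a routine invocation of the linear zero-number machinery and a limiting argument that mirrors the already-proven part (i) of Proposition~\ref{ZeroMonot}.
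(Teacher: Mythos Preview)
Your approach via the truncated approximations $u_{k_j}-U_j$ is genuinely different from the paper's, and it has a real gap in the limiting step. You write that ``monotonicity is inherited in the limit since it is an inequality between integers,'' but this is not justified: Lemma~\ref{lemz1} only gives the one-sided bound $N_b(t)\le\liminf_j z_{|[0,b]}(u_{k_j}(\cdot,t)-U_j)$. Monotonicity of each approximating zero-count $Z_j(t)$ together with this lower semicontinuity does \emph{not} force $N_b$ to be nonincreasing; one could have $Z_j\equiv 5$ while $N_b$ jumps from $3$ to $4$. To repair this you would need the reverse inequality---i.e.\ $Z_j(t)=N_b(t)$ for large $j$---at a dense set of times, which in turn requires knowing that the zeros of $u(\cdot,t)-U^*$ are nondegenerate there and that the approximation doesn't create spurious zeros near $x=0$. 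This is essentially the content of the much more elaborate Proposition~\ref{approxdrop} later in the paper, and it is not a routine step. Your endpoint-at-$0$ analysis is also sketchy: the sign of $\partial_x u_{k_j}(0,t)-j$ flips depending on whether $t\in\mathcal S$ or not, and this feeds back into the zero count in a way you have not controlled uniformly in $j$.

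The paper's argument bypasses all of this by working directly with $w=u-U^*$, without any approximation. The key observation is that, since $N_b(t)<\infty$ (Proposition~\ref{ZeroMonot}(i)), at each fixed time $t$ there exist $a\in(0,1)$ and $\sigma\in\{-1,1\}$ with $\sigma w(\cdot,t)\ge 0$ on $[0,a]$ and $\sigma w(a,t)>0$. One then propagates this sign forward on $[0,a]\times[t,t+\eps]$ using the comparison principles for viscosity solutions (Proposition~\ref{compP0} if $\sigma=1$, Proposition~\ref{compP} if $\sigma=-1$), which is exactly the device that handles the singular endpoint $x=0$ without any case-splitting on $\mathcal S$. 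On the remaining interval $[a,b]$ both $u$ and $U^*$ are classical, so the standard zero-number principle applies and yields upper semicontinuity on the right: $\limsup_{s\to t^+}N_b(s)\le N_b(t)$. Monotonicity then follows by a short contradiction argument combining this with the lower semicontinuity from Lemma~\ref{lemz1}. This direct route is both shorter and avoids the limit-passage difficulty in your proposal.
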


 \begin{proof}
It suffices to consider the case $b\in(0,1)$.
Indeed if $b=1$ then, by continuity, for $\eta\in(0,1)$ small,
assumption \eqref{Openneighb00} remains true for $b=1-\eta$ and we have $N_1(t)=z_{|[0,1-\eta]}(u(\cdot,t)-U^*)$ for all $t\in[\tau_1,\tau_2]$.

 {\bf Step~1.} {\it Upper semicontinuity on the right.}
 
We claim that for each $t\in [\tau_1,\tau_2)$, 
 \be\label{Openneighb}
 \limsup_{s\to t^+} N_b(s) \le N_b(t).
 \ee
Set $w=u-U^*$. Since $0\le N(t)<\infty$ by Proposition~\ref{ZeroMonot}(i), 
 there exist $\sigma\in\{-1,1\}$ and $a\in (0,1)$ such that 
   $$\sigma w(\cdot,t)\ge 0 \hbox{ on $[0,a]$ \quad and \quad }\sigma w(a,t)>0.$$
By continuity, there exists $\eps\in (0,\tau_2-t)$ such that
 $ \sigma  w(a,s)>0$
  for all $s\in [t,t+\eps]$.
  Applying Proposition~\ref{compP0}  if $\sigma=1$ and 
  Proposition~\ref{compP} if $\sigma=-1$, we then have
 \be\label{Openneighb1a}
\sigma w(\cdot,t)\ge 0 
 \hbox{ on $[0,a]\times [t,t+\eps]$ \quad and \quad}
  \sigma w(a,t) >0
   \hbox{ on $[t,t+\eps]$.}
 \ee
We may suppose $b>a$, since otherwise $N_b= 0$ on $[t,t+\eps]$ and we are done. 
By \eqref{Openneighb1a} 
we have
 \be\label{Openneighb2}
N_b(s)=z_{|[a,b]}(u(\cdot,s)-U^*),\quad t\le s\le t+\eps.
 \ee
 Since $u, U^*$ are smooth in $[a,b]\times[t,t+\eps]$ and $u\ne U^*$ on $\{a,b\}\times[t,t+\eps]$ 
due to \eqref{Openneighb00}, \eqref{Openneighb1a}, we may apply the zero number principle 
 on $[a,b]$. In view of \eqref{Openneighb2} this yields $N_b(s)\le N_b(t)$ for all $s\in [t,t+\eps]$,
hence \eqref{Openneighb}.
 
\smallskip

 {\bf Step~2.} {\it Monotonicity in $[\tau_0,\tau_1]$.}
Let $\tau_1\le t_1<t_2\le \tau_2$ and suppose for contradiction that 
 \be\label{ZeroMonot20}
N_b(t_2)\ge N_b(t_1)+1.
 \ee
 Set $t_3:=\inf\{t\in (t_1,t_2];\, N_b(t)\ge N_b(t_1)+1\}\in [t_1,t_2]$. 
 We claim that 
 \be\label{ZeroMonot2}
 N_b(t_3)\le N_b(t_1).
 \ee
 Indeed this is obvious if $t_3=t_1$ whereas, 
if $t_3>t_1$, then $N_b(t)\le N_b(t_1)$ on $[t_1,t_3)$, so that Lemma~\ref{lemz1} implies \eqref{ZeroMonot2}.
By \eqref{ZeroMonot20}, \eqref{ZeroMonot2} we thus have $t_3<t_2$,
hence there exists a sequence $t_j\to t_3^+$ such that $N_b(t_j)\ge N_b(t_1)+1$.
But this contradicts \eqref{Openneighb}. Consequently, $N_b$ is nonincreasing on $[\tau_0,\tau_1]$.
  \end{proof}

  \begin{proof}[Proof of Proposition~\ref{ZeroMonot}(ii)(iii)]
 The first part of assertion (ii) is a direct consequence of Lemma~\ref{zerob}.
The second part follows from the fact that, by the strong maximum principle, 
 \be\label{phiinfty}
 \|\phi\|_\infty\le c_p\ \Longrightarrow\ \|u(t)\|_\infty<c_p \ \hbox{ for all $t>0$.}
 \ee

To prove assertion (iii), taking $t\in\mathcal{T}$, we have $u \ne c_p$ on $\{0\}\times[t-\eps,t+\eps]$ for $\eps>0$ small
 by continuity. It thus follows from assertion (ii) that $N$ is nonincreasing and right continuous on $[t-\eps,t+\eps]$. Therefore $N(t^\pm)$ exist
 and \eqref{NttNt} is true.
 \end{proof}

  \begin{proof}[Proof of Proposition~\ref{lemz20A}]
Let $t_0\in (0,T_0)$, with $T_0:=\min(T(\phi_1),T(\phi_2))$,
or $t_0=0$ if $\mathcal{N}(\phi_1-\phi_2)<\infty$.
 Let $u_k,v_k$ be the global classical solutions of the truncated problems \eqref{vhj1k}
 with $\Omega=(0,1)$, $\phi=\phi_1$ and $\Omega=(0,b)$, $\phi=\phi_2$, respectively.
Since $\sup_{s\in [0,t_0]}(\|u_x(s)\|_\infty+\|v_x(s)\|_\infty)<\infty$,
 there exists $k_0\ge 0$ such that 
 \be\label{ZeroMonot1d2}
 \hbox{ $u_k\equiv u$ on $[0,1]\times[0,t_0]$ and  $v_k\equiv v$ on $[0,b]\times[0,t_0]$ for all $k\ge k_0$.}
\ee
The solutions $u_k,v_k$ are smooth, with $u_k=v_k=0$ at $x=0$, $u_k=v_k=0$ at $x=1$ if $b=1$ and
$u_k>0=v_k$ at $x=b$ if $b\in(0,1)$. Consequently, we can apply the zero number principle
to infer that
$$z_{|[0,b]}\bigl((u_k-v_k)(\cdot,t)\bigr)\le z_{|[0,b]}\bigl((u_k-v_k)(\cdot,t_0)\bigr)
=\mathcal{N}(t_0)<\infty,\quad t>t_0$$
(where the equality is due to \eqref{ZeroMonot1d2}).
Since $(u_k-v_k)(\cdot,t)\to (u-v)(\cdot,t)$ in $C_{loc}(0,1)$ as $k\to\infty$, we deduce
from Lemma~\ref{lemz1} and assumption \eqref{lemz20AHyp} that 
$\mathcal{N}(t)$ is finite (i.e., $\mathcal{N}(t)\in\N$) for all $t\ge t_0$.

We next claim that for each $t\ge t_0$, 
 \be\label{OpenneighbW}
 \limsup_{s\to t^+} \mathcal{N}(s) \le \mathcal{N}(t).
 \ee
Set $w=u-v$. Since $\mathcal{N}(t)$ is finite,
by the argument in Step 1 of the proof of Lemma~\ref{zerob}, it follows that there exist 
  $a_0\in (0,1)$, $\eps_0>0$ and $\sigma_0\in\{-1,1\}$ such that
   \be\label{Openneighb1aW1}
 \sigma_0 w(\cdot,t)\ge 0 \hbox{ on $[0,a_0]\times[t,t+\eps_0]$ \quad and \quad}
 \sigma_0  w(a_0,t)> 0\hbox{ on $[t,t+\eps_0]$.}
 \ee
 If $b=1$, then we similarly find $a_1\in (0,1)$, $\eps_1>0$  and $\sigma_1\in\{-1,1\}$ such that
   \be\label{Openneighb1aW2}
 \sigma_1 w(\cdot,t)\ge 0 \hbox{ on $[a_1,1]\times[t,t+\eps_1]$ \quad and \quad}
 \sigma_1  w(a_1,t)> 0\hbox{ on $[t,t+\eps_1]$.}
 \ee
 If $b\in (0,1)$ then, in view of assumption \eqref{lemz20AHyp} and since $u\ge e^{t\Delta}\phi>0$ in $(0,1)\times(0,\infty)$, 
there exist $a_1\in (0,b)$, $\eps_1>0$ such that
   \be\label{Openneighb1aW3}
w(\cdot,t)>0 \hbox{ on $[a_1,b]\times[t,t+\eps_1]$.}
 \ee
Letting $\eps=\min(\eps_0,\eps_1)$ we may suppose $a_1>a_0$, 
since otherwise $\mathcal{N}=0$ on $[t,t+\eps]$ and we are done. 
By \eqref{Openneighb1aW1}-\eqref{Openneighb1aW3} we have
 \be\label{Openneighb2W}
\mathcal{N}(s)=z_{|[a_0,a_1]}(w(\cdot,s)),\quad t\le s\le t+\eps.
 \ee
 Since $u,v$ are smooth on $[a_0,a_1]$ and $u\ne v$ on $\{a_0,a_1\}\times[t,t+\eps]$ 
due to \eqref{Openneighb1aW1}-\eqref{Openneighb1aW3}, we may apply the zero number principle 
 on $[a_0,a_1]$. In view of \eqref{Openneighb2W} this yields $\mathcal{N}(s)\le \mathcal{N}(t)$ for all $s\in [t,t+\eps]$,
hence \eqref{OpenneighbW}.

The monotonicity of $\mathcal{N}$ then follows exactly as in Step 2 of the proof of Lemma~\ref{zerob}.
  \end{proof}

\begin{proof}[Proof of Proposition~\ref{PropNonmonot}]
Fix $\phi_0\in X_1$, with 
\be\label{hypPropNonmonot}
\hbox{$\phi_0$ symmetric and nondecreasing on $[0,\frac12]$, }
\ee
such that the corresponding solution $v$ of \eqref{vhj1} satisfies $v(0,t_1)=v(1,t_1)>0$ for some $t_1>0$.
Let $\phi=\lambda \phi_0$ with $\lambda>1$.
Setting $\mu=\lambda^{-1}$ and $z=\mu u$, we have
 $$z_t-z_{xx}-|z_x|^p=\mu(u_t-u_{xx}-\mu^{p-1}|u_x|^p)=\mu(1-\mu^{p-1})|u_x|^p\ge 0
 \ \hbox{ in $(0,1)\times(0,\infty)$.}$$
It thus follows from Proposition~\ref{compP} that $z\ge v$, hence $u\ge \lambda v$ in $[0,1]\times[0,\infty)$.

Now taking $\lambda>\max\{1,(v(1,t_1))^{-1}c_p\}$, we get $u(1,t_1)>c_p$.
Also, assumption \eqref{hypPropNonmonot} guarantees that $u(\cdot,t)$ is symmetric and 
nondecreasing on $[0,\frac12]$ for each $t>0$ (arguing on the truncated problems 
\eqref{vhj1k} and passing to the limit).
Therefore, $u(\cdot,t_1)>c_p\ge U^*$ in $[0,1]$, hence $N(t_1)=0$.
But we know from \cite{PZ} that $u$ eventually decays to~$0$ in $C^1$ norm, hence $N(t)=0$ for $t\gg 1$.
This implies that, at some $t_2>t_1$, $0<u(0,t_2)=u(1,t_2)<c_p$, hence $N(t_2)\ge 1$. 
  \end{proof}

\section{Zero number properties II: analysis of the transition set $\mathcal{T}$}

Define the sets:
$$\Sigma_+= \bigl\{t>0;\ \exists\, a\in (0,1),\,u(\cdot,t)\ge U^* \hbox{ on $(0,a)$ and $u(a,t)>U^*(a)$}\bigr\},$$ 
$$\Sigma_-= \bigl\{t>0;\ \exists\, a\in (0,1),\,u(\cdot,t)\le U^* \hbox{ on $(0,a)$ and $u(a,t)<U^*(a)$}\bigr\}.$$
We have $\Sigma_+\cap\Sigma_-=\emptyset$ and, as a consequence of Proposition~\ref{ZeroMonot}(i),
\be\label{cupSigplusminus}
(0,\infty)=\Sigma_+\cup \Sigma_-.
\ee
 Moreover, we have
\be\label{SigplusT}
\Sigma_+\subset \mathcal{S}
\ee
(this is obvious if $u(0,t)=0$ and follows from Lemma~\ref{basic-prop0}(ii) if $u(0,t)>0$).

Next, for all $t_0\in (0,T(\phi))$, we have
\be\label{zutmonot}
z(u_t(\cdot,t))\le z(u_t(\cdot,t_0))<\infty,\quad t>t_0.
\ee
 Indeed, since $u$ is smooth on $[0,1]\times (0,T(\phi))$, we have $z(u_t(\cdot,t_0))<\infty$ by standard properties of the zero number 
applied to the equation for $u_t$, and \eqref{zutmonot} then follows from \cite[Proposition 6.2]{PS3}.

 The goal of this section is to prove the following two propositions. The first one guarantees 
immediate loss of boundary conditions (resp., regularization)
 at any time at which the solution dominates (resp., is dominated by) the singular steady state near the boundary.
 Moreover, the recovery of boundary conditions or loss of regularity
 cannot occur as long as the intersection number remains constant.

\begin{proposition}\label{lemz6}
Let $\phi\in X_1$, $b\in (0,1]$ and $t_2>t_1>0$ be such that 
\be\label{hypNconst}
\hbox{$N_b(t)=z_{|[0,b]}(u(\cdot,t)-U^*)$ is constant on $[t_1,t_2]$.}
\ee

(i) If $t_1\in \Sigma_+$, then $u>0$ on $\{0\}\times(t_1,t_2]$.

(ii) If $t_1\in \Sigma_-$, then $u=0$ on $\{0\}\times(t_1, t_2]$ and $u$ is a classical solution on~$[0,\frac12]\times (t_1, t_2]$.
\end{proposition}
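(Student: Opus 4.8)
The strategy is to combine the monotonicity/continuity of the zero number $N_b$ with the sign-definite structure of $u-U^*$ near $x=0$ provided by the definitions of $\Sigma_\pm$, and then to transfer this structure into a comparison argument using $U^*$ as a barrier near the boundary. First I would treat case~(i). Since $t_1\in\Sigma_+$, there is $a\in(0,1)$ with $u(\cdot,t_1)\ge U^*$ on $(0,a)$ and $u(a,t_1)>U^*(a)$. As in Step~1 of the proof of Lemma~\ref{zerob}, the standard and viscosity comparison principles (Propositions~\ref{compP0}, \ref{compP}) propagate this: for some small $\eps>0$ we have $u\ge U^*$ on $[0,a]\times[t_1,t_1+\eps]$ and $u(a,\cdot)>U^*(a)$ on $[t_1,t_1+\eps]$, so $N_b(s)=z_{|[a,b]}(u(\cdot,s)-U^*)$ there. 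The key point is that the assumption \eqref{hypNconst} that $N_b$ is \emph{constant} on $[t_1,t_2]$ forbids $u$ from developing a new intersection with $U^*$ inside $(0,a)$ at any later time: if at some $t'\in(t_1,t_2]$ we had $u(x_0,t')<U^*(x_0)$ for some $x_0\in(0,a)$, then since $u(\cdot,t')\ge U^*$ near $x=0$ would fail — but actually $u(0,t')\ge 0=U^*(0)$ always, and a dip below $U^*$ inside $(0,a)$ followed by the (unchanged) intersections in $[a,b]$ would force $N_b(t')\ge N_b(t_1)+1$ (after using that $z(u(\cdot,t_1)-U^*)$ counts sign changes and the boundary behavior $u(0,\cdot)=0=U^*(0)$, $u(a,\cdot)>U^*(a)$ is frozen). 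I would make this rigorous by the argument in Step~2 of Lemma~\ref{zerob}: let $t_3$ be the first time a new intersection appears in $(0,a)$, apply Lemma~\ref{lemz1} and upper semicontinuity on the right to get a contradiction with \eqref{hypNconst}. Hence $u(\cdot,t)\ge U^*$ on $[0,a]$ for all $t\in(t_1,t_2]$; in particular, by Lemma~\ref{basic-prop0}(ii) or directly from $u_x(0,t)\ge U^*_x(0^+)=\infty$ being incompatible with $u(0,t)=0$ together with the profile estimates, I need $u(0,t)>0$. Actually the cleanest route: if $u(0,t)=0$ at some $t\in(t_1,t_2]$ with $u\ge U^*$ near $0$, then $u_x(0,t)=\infty$ and Lemma~\ref{basic-prop0}(ii) gives $u(x,t)=U^*(x)+O(x^2)$, so $u-U^*$ has a zero of even multiplicity at $0$ — fine — but then the frozen sign $+$ on $(0,a)$ persists and no contradiction yet; so I should instead invoke the regularity dichotomy: a classical solution (with $u(0,t)=0$) cannot have $u\ge U^*$ near the boundary because $U^*_x(0^+)=+\infty$ would force $u_x(0,t)=\infty$, contradicting $u\in C^1$ up to $x=0$. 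Thus $u(0,t)>0$ on $(t_1,t_2]$, which is the claim. (For $t=t_1$ itself it is possible that $u(0,t_1)=0$, which is why the conclusion is stated on the half-open interval.)

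For case~(ii), $t_1\in\Sigma_-$ gives $a\in(0,1)$ with $u(\cdot,t_1)\le U^*$ on $(0,a)$ and $u(a,t_1)<U^*(a)$. By the same comparison-and-zero-number machinery, constancy of $N_b$ forces $u(\cdot,t)\le U^*$ on $[0,a]$ for all $t\in(t_1,t_2]$, with $u(a,t)<U^*(a)$. Fix $t_*\in(t_1,t_2]$ and any $t_1<\tau<t_*$. On $(0,a)$ we have $u(\cdot,\tau)\le U^*$; I then want to say that $U^*$ (or a slightly shifted regular steady state $U_A$ with $A$ large) serves as a supersolution barrier near $x=0$ after time $\tau$. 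More precisely, on the cylinder $(0,a)\times[\tau,t_*]$ we have $u\le U^*$ on the bottom and, on the lateral boundary $\{a\}\times[\tau,t_*]$, $u<U^*$ strictly; on $\{0\}$, $u\ge 0=U^*(0)$ but $U^*$ is a (stationary) solution, so Proposition~\ref{compP} applied on $\omega=(0,a)$ with $w=U^*$ — noting $w\ge 0$ on the part of $\partial\Omega$ inside $\overline\omega$, namely $x=0$ — gives $u\le U^*$ on $(0,a)\times[\tau,t_*]$, which we already knew, but more importantly it shows $u(\cdot,t_*)\le U^*$ with strict inequality at $x=a$. Now I invoke Lemma~\ref{lem3}: take $\eps\in(0,a)$ and observe that $u(\cdot,\tau)\le U^*(\eps)=:C\eps^\alpha$ on $\Omega_\eps=(0,\eps)$ (shrinking $\eps$ so this amplitude condition and the lateral condition $u\le K_2\eps$ on $\{\eps\}\times[\tau,\hat s]$ both hold, using that $u\le U^*$ forces the right smallness since $U^*(\eps)=c_p\eps^\alpha\to 0$; one may need $\eps$ small relative to $a-$gap, and continuity in time gives $u\le 2C\eps^\alpha$ on $\{\eps\}$ for a short time $\hat s-\tau=K_3\eta$). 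Lemma~\ref{lem3} then yields that $u$ is a classical solution for $t$ slightly beyond $\hat s$; since $\tau<t_*$ and $\hat s-\tau$ can be made smaller than $t_*-\tau$, and since this works for all $\tau$ in a dense set, a covering/continuation argument gives that $u$ is classical on all of $(t_1,t_2]$ with $u(0,t)=0$ there.

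The main obstacle I anticipate is the \emph{bootstrapping of the barrier smallness} in case~(ii): Lemma~\ref{lem3} requires both the amplitude bound \eqref{hyplem3a} at one time \emph{and} the lateral bound \eqref{hyplem3} on $\Gamma_\eps$ over a short time interval whose length is tied to $\eta$, so I need a self-consistent choice of $\eps$ and $\eta$ in terms of $a$, $c_p$, and the Lipschitz-in-time bound \eqref{Liptime}, together with the strict inequality $u(a,t)<U^*(a)$ propagated by the zero-number argument (this strictness is what prevents $\eta$ from degenerating). A second delicate point, common to both parts, is making the "constancy of $N_b$ forbids a new intersection near $0$" step airtight: one must carefully count sign changes when $u-U^*$ touches $0$ at the endpoint $x=0$ (which it does whenever $u(0,t)=0$), so I would phrase everything in terms of $z_{|[\eta,b]}$ on a strictly interior interval once the sign near $0$ has been frozen, exactly as in Lemma~\ref{zerob}, and then pass the conclusion back to $[0,b]$. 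Finally, I would note that assertion~(ii)'s claim "$u=0$ on $\{0\}\times(t_1,t_2]$" follows from classicality plus uniqueness of the viscosity solution (Proposition~\ref{prop1}), and the half-open interval in both (i) and (ii) is genuinely necessary because at the transition time $t_1$ itself the solution is typically still in the borderline singular state $t_1\in\mathcal T$.
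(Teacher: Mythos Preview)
Your zero-number step --- freezing the sign of $u-U^*$ on a strip $(0,a]\times(t_1,t_2]$ under the constancy hypothesis \eqref{hypNconst} --- is correct and is exactly Lemma~\ref{lemz20} in the paper. The gap lies in what you do \emph{after} that.

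For part (i): knowing $u\ge U^*$ on $(0,a]\times(t_1,t_2]$ does \emph{not} force $u(0,t)>0$. The scenario $u(0,t)=0$, $u_x(0,t)=\infty$, $u(x,t)=U^*(x)+O(x^2)$ is perfectly consistent with both $u\ge U^*$ near $0$ and with the profile estimate \eqref{profile2}; this is precisely what a transition time $t\in\mathcal T$ with $t\in\Sigma_+$ looks like. Your attempted contradiction (``a classical solution cannot have $u\ge U^*$'') presupposes classicality, which is not available. The paper handles this by a much finer argument: assuming $u(0,\cdot)\equiv 0$ on a subinterval, one has $u_x(0,\cdot)=\infty$ there, and since $z(u_t(\cdot,t))<\infty$ (cf.~\eqref{zutmonot}) one may apply Lemma~\ref{lemz3a} to deduce $u_x\ge U^*_x$ near $0$ at some time, propagate this via the maximum-principle Lemma~\ref{lemz3b}(ii), conclude that $v:=u-U^*$ is a nonnegative supersolution of the heat equation on a small cylinder, and hence obtain $v(\cdot,t)\ge\eps x$ by comparison. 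This \emph{linear} lower bound contradicts the $O(x^2)$ profile \eqref{profile2}. That last contradiction --- linear versus quadratic behavior --- is the genuine mechanism, and your outline does not reach it.

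For part (ii): the same obstruction appears in mirror image. Having $u\le U^*$ is compatible with $u_x(0,t)=\infty$ (again by \eqref{profile2}), so you have not ruled out persistent singularity. Your Lemma~\ref{lem3} route also has a quantitative mismatch: the lateral condition $u\le K_2\eps$ on $\Gamma_\eps$ combined with $u\le U^*(\eps)=c_p\eps^\alpha$ forces $\eps\ge (c_p/K_2)^{1/(1-\alpha)}$, a \emph{lower} bound on $\eps$ that may exceed the available $a$; and even when applicable, Lemma~\ref{lem3} yields classicality only just past one time $\hat s$, leaving gaps that your ``covering/continuation'' does not close. The paper instead splits into two cases: either $u_x(0,s_i)<\infty$ along a sequence $s_i\to t_1^+$, in which case Lemma~\ref{lemz7}(ii) (comparison with a large regular steady state $U_\lambda$) gives classicality on each $(s_i,t_2]$; or $u_x(0,\cdot)=\infty$ on a subinterval, in which case the same $u_x$--$U^*_x$ argument as above (now with Lemma~\ref{lemz3b}(i)) makes $v=u-U^*$ a nonpositive subsolution of the heat equation, yielding $v\le -\eps x$ and the same contradiction with \eqref{profile2}.
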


Our second proposition shows that the intersection number has to drop at any transition time $t\in\mathcal{T}$,
provided no intersection occurs at $x=1$. We actually need a slightly more general version on intervals $[0,b]\subset [0,1]$.

\begin{proposition}\label{zerob2}
Let $\phi\in X_1$, $b\in (0,1]$ and 
assume that $t\in\mathcal{T}$ satisfies $u(b,t)\ne U^*(b)$. Then $N_b(t^-)$ is well defined and we have
\be\label{singulartimes2}
N_b(t^-)\ge N_b(t)+1.
\ee
Moreover, if $\|\phi\|_\infty\le c_p$ or if $\phi$ is symmetric, then \eqref{singulartimes2} with $b=1$ is true for any $t\in \mathcal{T}$.
\end{proposition}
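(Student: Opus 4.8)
\textbf{Proof proposal for Proposition~\ref{zerob2}.} The plan is to derive the estimate from the monotonicity and right continuity of $N_b$ (Lemma~\ref{zerob}) together with Proposition~\ref{lemz6}, via a short contradiction argument; the genuine analytic content has already been placed in those two results, so the proof itself should be essentially mechanical.

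First, for the main assertion: since $t\in\mathcal{T}$ we have $u(0,t)=0$ and $u_x(0,t)=\infty$. The hypothesis $u(b,t)\ne U^*(b)$ together with continuity of $u$ gives some $\eps>0$ with $t-\eps>0$ and $u(b,s)\ne U^*(b)$ for all $s\in[t-\eps,t]$; hence Lemma~\ref{zerob} applies on $[t-\eps,t]$, so there $N_b$ is finite (indeed $N_b(s)\le N(s)<\infty$ by Proposition~\ref{ZeroMonot}(i), and $N_b(s)\ge 0$ because $u(b,s)\ne U^*(b)$ forces $u(\cdot,s)\not\equiv U^*$ on $[0,b]$), nonincreasing, and right continuous. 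In particular $N_b(t^-)$ is well defined and finite and $N_b(t^-)\ge N_b(t)$.

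Next I would argue by contradiction, assuming $N_b(t^-)=N_b(t)$. Since $N_b$ is nonincreasing, integer valued on $[t-\eps,t]$, and $N_b(s)\to N_b(t)$ as $s\to t^-$, there is $\eps_1\in(0,\eps]$ with $N_b\equiv N_b(t)$ on $(t-\eps_1,t]$. Fix $t_1\in(t-\eps_1,t)$, so that $N_b$ is constant on $[t_1,t]$. By \eqref{cupSigplusminus}, either $t_1\in\Sigma_+$ or $t_1\in\Sigma_-$. If $t_1\in\Sigma_+$, then Proposition~\ref{lemz6}(i) applied with $(t_1,t_2)=(t_1,t)$ gives $u>0$ on $\{0\}\times(t_1,t]$, whence $u(0,t)>0$, contradicting $t\in\mathcal{T}$. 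If $t_1\in\Sigma_-$, then Proposition~\ref{lemz6}(ii) gives that $u$ is a classical solution on $[0,\frac12]\times(t_1,t]$, whence $u_x(0,t)<\infty$, again contradicting $t\in\mathcal{T}$. Either way we reach a contradiction, which proves $N_b(t^-)\ge N_b(t)+1$.

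Finally, for the ``moreover'' part it suffices to check that each of the two extra hypotheses forces $u(1,t)\ne U^*(1)=c_p$ at every $t\in\mathcal{T}$, after which the main assertion applies with $b=1$. If $\|\phi\|_\infty\le c_p$, then \eqref{phiinfty} gives $\|u(t)\|_\infty<c_p$, so $u(1,t)<c_p$. If $\phi$ is symmetric, then $u(1-x,t)$ is again the viscosity solution of \eqref{vhj1} with initial data $\phi$, so by uniqueness $u(1,t)=u(0,t)=0<c_p$ for $t\in\mathcal{T}$. I do not anticipate a serious obstacle here: the only mildly delicate points are the passage from equality of the one-sided limit to local constancy of $N_b$ (integrality plus monotonicity) and, in the symmetric case, the inheritance of symmetry by $u(\cdot,t)$; the substantive difficulty in this circle of results lies in Proposition~\ref{lemz6} and Lemma~\ref{zerob}, not in the present deduction.
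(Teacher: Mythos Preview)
Your proof is correct and follows essentially the same approach as the paper's: both use continuity at $x=b$ to invoke Lemma~\ref{zerob}, then argue by contradiction that constancy of $N_b$ on a left neighborhood of $t$ would let Proposition~\ref{lemz6} force either $u(0,t)>0$ or $u_x(0,t)<\infty$, violating $t\in\mathcal{T}$. The only cosmetic difference is that the paper works on a two-sided interval $[t-\eps,t+\eps]$ (using right continuity at $t$ as well) while you work on the one-sided interval $[t-\eps,t]$; since Proposition~\ref{lemz6} already delivers its conclusion at the right endpoint $t_2$, your one-sided version suffices.
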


In view of the proof of Propositions~\ref{lemz6} and \ref{zerob2} we need the following two lemmas.

\begin{lem}\label{lemz20} 
Let $\phi\in X_1$,
$b\in (0,1]$ and $t_2>t_1>0$ satisfy \eqref{hypNconst}. 
Then there exists $a>0$ such that 
$$\hbox{$u>U^*$ in $(0,a]\times (t_1,t_2]$}$$
or
$$\hbox{$u<U^*$ in $(0,a]\times (t_1,t_2]$.}$$
 \end{lem}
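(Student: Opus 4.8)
The statement asserts that if the zero number $N_b$ is constant on $[t_1,t_2]$, then on a small strip $(0,a]\times(t_1,t_2]$ the sign of $w:=u-U^*$ is fixed (always positive, or always negative). The plan is to first use the constancy of $N_b$ to control the sign of $w(\cdot,t)$ near $x=0$ at each individual time, and then to propagate this to a uniform strip using the parabolic comparison principles and the behavior of the zeros.

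First I would fix $t\in[t_1,t_2]$. By Proposition~\ref{ZeroMonot}(i), $N(t)<\infty$, so the function $w(\cdot,t)$ has a definite sign on some interval $(0,a_t)$, i.e.\ there is $\sigma_t\in\{-1,1\}$ and $a_t\in(0,b)$ with $\sigma_t w(\cdot,t)\ge 0$ on $[0,a_t]$ and $\sigma_t w(a_t,t)>0$. The crucial claim is that $\sigma_t$ does not depend on $t$; call the common value $\sigma$. To see this, note that near $x=0$ all the intersections of $u(\cdot,t)$ with $U^*$ are "used up'' to the right of the first sign-determining interval, so a change of $\sigma_t$ at some time would be forced either to create a new intersection near $0$ (contradicting the monotonicity/right-continuity of $N_b$ from Lemma~\ref{zerob}, applicable because the constancy hypothesis \eqref{hypNconst} already presupposes $u-U^*\ne 0$ on $\{b\}\times[t_1,t_2]$) or to have $w(\cdot,t)$ vanish identically near $0$ on a set of times, which is impossible by the real-analyticity-in-$t$ / zero-number structure of $u$ and $U^*$ on compact subsets of $(0,1)$. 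More concretely: the set $\{t: \sigma_t=1\}$ and $\{t:\sigma_t=-1\}$ are both relatively open in $[t_1,t_2]$ (by continuity of $u$ and the strict sign $\sigma_t w(a_t,t)>0$ together with the comparison principles of Propositions~\ref{compP0}--\ref{compP} propagating the sign slightly forward in time, exactly as in Step~1 of the proof of Lemma~\ref{zerob}), so by connectedness of $[t_1,t_2]$ one of them is empty.

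Once $\sigma$ is time-independent, I would extract a uniform $a>0$. For each $t$, the comparison argument (Proposition~\ref{compP0} if $\sigma=1$, Proposition~\ref{compP} if $\sigma=-1$, applied to $w$ versus $0$ on $[0,a_t]\times[t,t+\eps_t]$) shows that the strict sign $\sigma w>0$ on $(0,a_t]$ persists on a forward time interval. A compactness argument on $[t_1,t_2]$ — cover it by finitely many such forward intervals — together with the fact that the first positive zero of $w(\cdot,t)$ (the point $a_t$ where the sign first changes) is bounded below on $[t_1,t_2]$, which itself follows from $N_b$ being constant and the zeros on $[a,b]$ (for $a$ small) being governed by the standard zero-number principle applied to the smooth functions $u,U^*$ on $[a,b]\times[t_1,t_2]$ — yields a single $a>0$ with $\sigma w>0$ on $(0,a]\times(t_1,t_2]$, which is the assertion.

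The main obstacle I expect is making rigorous the claim that the first sign-change point $a_t$ stays bounded away from $0$ uniformly in $t\in[t_1,t_2]$: a priori $a_t$ could shrink to $0$ as $t$ varies, which would prevent choosing a single strip. This is where one must genuinely use the constancy of $N_b$: if $a_t\to 0$ along some sequence $t_j\to t_\ast$, then an intersection of $u$ with $U^*$ would be migrating into $x=0$, and by the right-continuity and monotonicity of $N_b$ from Lemma~\ref{zerob} — combined with the local uniform convergence $u_k\to u$ and the comparison of approximate solutions $u_k$ with regular steady states $U_j$ as used in Proposition~\ref{ZeroMonot} — this would force $N_b$ to strictly drop at $t_\ast$, contradicting \eqref{hypNconst}. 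Handling the possible degeneracy of zeros and the non-classical behavior at $x=0$ (where $u_x$ may be $+\infty$) carefully via the truncated problems is the technical heart of the argument.
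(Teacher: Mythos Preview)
Your plan has the right ingredients --- the sign $\sigma_t$ of $w=u-U^*$ near $x=0$, the comparison principles of Propositions~\ref{compP0}--\ref{compP}, and the constancy of $N_b$ --- but the connectedness argument in Step~2 does not close as written. You justify openness of $\{t:\sigma_t=\pm1\}$ by ``propagating the sign slightly forward in time''. Forward propagation only yields \emph{right}-openness: if $\sigma_{t_0}=\bar\sigma$ then $\sigma_t=\bar\sigma$ for $t\in[t_0,t_0+\eps)$. It says nothing about $t$ slightly \emph{before} $t_0$, so a partition of $[t_1,t_2]$ into two right-open sets is perfectly possible (e.g.\ $[t_1,s)\cup[s,t_2]$), and connectedness does not force one to be empty. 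Moreover, this gap is exactly the same difficulty as your ``main obstacle'': left-openness fails precisely when $a_{s_n}\to 0$ along $s_n\uparrow t$, i.e.\ when a zero of $w(\cdot,s)$ migrates into $x=0$. So Steps~2 and~4 are not independent; you cannot first prove $\sigma_t$ is constant and then worry about uniformity of $a_t$.

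The paper handles both issues at once by a single $\sup$ argument. Fix the sign $\bar\sigma$ at $t_1$, set
\[
J:=\bigl\{t\in(t_1,t_2]:\ \bar\sigma w<0 \text{ in } (0,d]\times(t_1,t]\text{ for some }d>0\bigr\},
\]
which is nonempty by forward propagation, and let $\tau=\sup J$. At time $\tau$ one uses not just the leftmost sign, but the \emph{full} alternating structure: since $N_b(\tau)=m$, there are points $0<x_1<\dots<x_{m+1}<b$ with $(-1)^i\hat\sigma w(x_i,\tau)>0$. These strict signs persist on a two-sided time neighborhood $[\tau-\eps,\bar\tau]$, and because $N_b\equiv m$ there, the region $[0,x_1]$ cannot contain an additional sign change, so $\hat\sigma w\le 0$ on $[0,x_1]\times[\tau-\eps,\bar\tau]$ and (by the strong maximum principle) $\hat\sigma w<0$ on $(0,x_1]\times(\tau-\eps,\bar\tau]$. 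Comparing with the strip coming from $\tau-\eps\in J$ forces $\hat\sigma=\bar\sigma$, hence $\bar\tau\in J$, hence $\tau=t_2$. This replaces your compactness/covering step and your appeal to truncated problems and regular steady states, none of which are needed here.
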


 \begin{proof} 
Set $w=u-U^*$.  
We first note that, for any 
$t_1\le s<t\le t_2$, $d\in (0,1)$ and $\sigma\in \{-1,1\}$,
\be\label{lemz20b}
\left.\begin{aligned}
&\hbox{$\sigma w\le 0$ in $[0,d]\times [s,t]$}\\
&\hbox{$\sigma w<0$ on $\{d\}\times (s,t]$}
\end{aligned}
\ \ \right\}
 \quad\Longrightarrow\quad 
 \hbox{$\sigma w<0$ in $(0,d]\times (s,t],$}
\ee
as a consequence of the strong maximum principle
(applied on each interval $[\eta,d]$ with $\eta\in(0,d)$,
where $u$ and $U^*$ are smooth).

Next, by \eqref{cupSigplusminus}, there exist $c\in (0,1)$ and $\bar\sigma\in \{-1,1\}$ such that
$\bar\sigma w(\cdot,t_1)\le 0$ in $[0,c]$ and $\bar\sigma w(c,t_1)<0$.
By continuity, there exists $t_3\in (t_1,t_2)$ such that $\bar\sigma w(c,t)<0$ for all $t\in [t_1,t_3]$.
Applying Proposition~\ref{compP0} if $\bar\sigma=-1$ and
Proposition~\ref{compP} if $\bar\sigma=1$, we deduce that 
$\bar\sigma w\le 0$ in $[0,c]\times [t_1,t_3]$,
hence $\bar\sigma w<0$ in $(0,c]\times (t_1,t_3]$, by \eqref{lemz20b}.

Now, the set
$$\hbox{$J:=\bigl\{t\in (t_1,t_2];\ \bar\sigma w<0$ in $(0,d]\times (t_1,t]$ for some $d>0\bigr\}$}$$
is a nontempty interval and we put $\tau=\sup J\in (t_1,t_2]$. 
 Since $N_b(\tau)=m:= N_b(t_1)$, there exist
 $0<x_1<\dots<x_{m+1}<b$ and $\hat\sigma\in \{-1,1\}$ such that
$$(-1)^i \hat\sigma w(x_i,\tau)>0,\quad i=1,\dots,m+1.$$
By continuity, there exists $\eps\in (0,\tau-t_1)$ such that
\be\label{lemz20c}
(-1)^i \hat\sigma w(x_i,t)>0,
\quad i=1,\dots,m+1,\quad\hbox{ for all $t\in[\tau-\eps,\bar\tau]$,}
\ee
where $\bar\tau=\min(t_2,\tau+\eps)$.
It follows from \eqref{lemz20c} and $N_b(t)=m$ 
that
$$\hbox{$\hat\sigma w\le 0$ \  in $[0,x_1]\times [\tau-\eps,\bar\tau]$ \quad
and \quad $\hat\sigma w<0$ \ on $\{x_1\}\times [\tau-\eps,\bar\tau]$},$$
hence
\be\label{lemz20d}
\hat\sigma w<0\quad \hbox{ in $(0,x_1]\times (\tau-\eps,\bar\tau]$}
\ee
by \eqref{lemz20b}. 
Now, since $\tau-\eps\in J$, there exists $a\in (0,x_1)$ such that
$\bar\sigma w<0$ in $(0,a]\times  (t_1,\tau-\eps]$.
By \eqref{lemz20d} and continuity, 
we deduce that $\hat\sigma=\bar\sigma$, hence 
\be\label{lemz20e}
\hbox{$\bar\sigma w<0$ in $(0,a]\times (t_1,\bar\tau]$.}
\ee
Therefore, $\bar\tau\in J$ hence
$\bar\tau\le\tau$, i.e. $\bar\tau=t_2$,
so that \eqref{lemz20e} proves the lemma.
\end{proof}

\begin{lem}\label{lemz7} 
Let $\phi\in X_1$, $b\in (0,1]$ and let $t_2>t_1>0$ satisfy \eqref{hypNconst}. 

(i) If $u>0$ on $\{0\}\times[t_1,t_2)$, then $u(0,t_2)>0$.

(ii) If $u(0,t_1)=0$ and $u_x(0,t_1)<\infty$,  then $u=0$ on $\{0\}\times(t_1,t_2]$ and $u$ is a 
classical solution on $ [0,\frac12]\times(t_1,t_2]$.
 \end{lem}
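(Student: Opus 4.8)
The plan is to exploit the constancy of $N_b$ on $[t_1,t_2]$ together with Lemma~\ref{lemz20}, which gives a one-sided comparison between $u$ and $U^*$ on a small boundary strip $(0,a]\times(t_1,t_2]$, and then to translate that comparison into pointwise information on $u(0,\cdot)$ and on regularity up to $x=0$ via the auxiliary estimates of Section~4 (Lemmas~\ref{basic-prop0}, \ref{lemz3a}, \ref{lemz3b}) and Proposition~\ref{prop1}.

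For part~(i), suppose $u>0$ on $\{0\}\times[t_1,t_2)$. Then for each $t\in[t_1,t_2)$ we are in the regime where Lemma~\ref{basic-prop0}(ii) applies, so $u(x,t)=u(0,t)+U^*(x)+O(x^2)$ near $x=0$; in particular $u(\cdot,t)-U^*\ge u(0,t)-Kx^2>0$ for small $x$, so $t_1\in\Sigma_+$. By Lemma~\ref{lemz20} the alternative $u<U^*$ on $(0,a]\times(t_1,t_2]$ is excluded, hence $u>U^*$ on $(0,a]\times(t_1,t_2]$. Letting $x\to0$ along this strip and using continuity of $u$ up to the boundary together with $U^*(0)=0$ gives $u(0,t_2)\ge 0$; to get strict positivity I would argue that $u(\cdot,t_2)\ge U^*$ on $(0,a]$ forces $u_x(0,t_2)=\infty$ unless $u(0,t_2)>0$ — more precisely, if $u(0,t_2)=0$ then $u(x,t_2)\ge U^*(x)=c_p x^\alpha$ forces $u_x(0,t_2)=+\infty$, and then $t_2\in\mathcal{T}$; but by Proposition~\ref{zerob2} (applied on $[0,b]$, using the constancy hypothesis to rule out a drop of $N_b$ through $t_2$) this is incompatible with $N_b$ being constant on $[t_1,t_2]$, since $N_b(t_2^-)\ge N_b(t_2)+1$ would contradict $N_b(t_2^-)=N_b(t_1)=N_b(t_2)$. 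Hence $u(0,t_2)>0$.

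For part~(ii), suppose $u(0,t_1)=0$ and $u_x(0,t_1)<\infty$. Then $t_1\notin\mathcal{S}$, and near $x=0$ the solution lies below $U^*$: indeed $u(x,t_1)\le u_x(0,t_1)x+o(x)\ll c_p x^\alpha=U^*(x)$ for small $x$ since $\alpha<1$, so $t_1\in\Sigma_-$. Lemma~\ref{lemz20} then yields $u<U^*$ on $(0,a]\times(t_1,t_2]$; shrinking $a$ and using \eqref{SGBUprofileUpperEst4}/\eqref{SGBUprofileUpperEst2} we also get $u_x(\cdot,t)\le U^*_x$ there (or directly $u_x-U^*_x\le 0$ via Lemma~\ref{lemz3b}(i), whose boundary hypothesis $u_x-U^*_x\le 0$ on $\Gamma$ follows from $u<U^*$ near $x=0$ combined with the smoothness of $u$ away from the boundary). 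This gives a gradient bound $u_x(\cdot,t)\le U^*_x$ near $x=0$, uniformly for $t\in(t_1,t_2]$, which near $x=0$ translates into $u(x,t)\le U^*(x)\le c_p x^\alpha$; feeding this amplitude control into Lemma~\ref{lem3} (with $\eta$ comparable to $\varepsilon^\alpha$ and $\varepsilon$ small) shows that $u$ is a classical solution for $t$ slightly larger than any chosen $s\in(t_1,t_2)$, and since $s$ is arbitrary, $u$ is classical on $[0,\tfrac12]\times(t_1,t_2]$ with $u=0$ on $\{0\}\times(t_1,t_2]$.

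The main obstacle I anticipate is the passage from the strict one-sided inequality $u<U^*$ (resp.\ $u>U^*$) on the boundary strip to the quantitative boundary-layer control needed to invoke Lemma~\ref{lem3} (for regularity) or to rule out $u(0,t_2)=0$ (for positivity). In particular, in part~(ii) one must be careful that the comparison $u<U^*$ alone does not immediately give a Dirichlet-type bound $u\le C\delta$ near $x=0$ — since $U^*(x)\sim c_p x^\alpha$ with $\alpha<1$ is itself much larger than $x$ — so one genuinely needs the gradient inequality $u_x\le U^*_x$ of Lemma~\ref{lemz3b}(i), and then the observation that $u_x\le U^*_x$ together with $u(0,t)=0$ (which itself must first be established, e.g.\ from $t_1\in\Sigma_-$ and the strong maximum principle applied on $[\eta,a]$, letting $\eta\to0$) yields $u(x,t)\le U^*(x)$, i.e.\ amplitude $O(\varepsilon^\alpha)$ on $\Omega_\varepsilon$, which is exactly the hypothesis of Lemma~\ref{lem3}. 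Handling the endpoint $t_2$ and the closed-interval statement $(t_1,t_2]$ requires applying the argument at each interior $s$ and using the open-endpoint conclusions of Lemma~\ref{lem3} and Lemma~\ref{lemz3b} on $(t_1,t_2]$, which is routine once the interior statement is in place.
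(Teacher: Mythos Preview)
Your proposal has two genuine gaps, one in each part.

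\textbf{Part (i): circularity.} Your endgame---deducing $t_2\in\mathcal{T}$ and then invoking Proposition~\ref{zerob2} to contradict the constancy of $N_b$---is circular. In the paper's logical structure, Proposition~\ref{zerob2} is proved via Proposition~\ref{lemz6}, whose proof (Case~1 of part~(i)) explicitly uses Lemma~\ref{lemz7}(i). So you cannot appeal to Proposition~\ref{zerob2} here. Moreover, the bare strict inequality $u(\cdot,t_2)>U^*$ that Lemma~\ref{lemz20} provides is not quantitatively strong enough on its own: if $u(0,t_2)=0$, the profile estimate \eqref{profile2} gives $u(x,t_2)\le U^*(x)+Kx^2$, which is perfectly compatible with $u>U^*$. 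The paper breaks this by introducing the auxiliary family $u_\lambda$ (solution with initial data $\lambda\phi$, $\lambda<1$), noting $u_\lambda\le\lambda u$, and using classical comparison on a box $(0,a)\times(t_3,t_2)$ to obtain $u_\lambda\ge U^*$ there; this yields $u(\cdot,t_2)\ge\lambda^{-1}U^*$, and now $(\lambda^{-1}-1)c_p x^\alpha\le Kx^2$ does contradict \eqref{profile2} as $x\to0$.

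\textbf{Part (ii): $O(x^\alpha)$ control is not enough.} You correctly place $t_1\in\Sigma_-$ and invoke Lemma~\ref{lemz20} to get $u<U^*$ on $(0,a]\times(t_1,t_2]$. But the rest does not go through. First, the hypotheses of Lemma~\ref{lemz3b}(i) require $u_x-U^*_x\le 0$ on the \emph{parabolic boundary} $\Gamma$, in particular at $x=a$ for all $t\in(t_1,t_2]$; the inequality $u<U^*$ says nothing about gradients there, and \eqref{SGBUprofileUpperEst2} only gives $u_x\le U^*_x+Kx$, not $u_x\le U^*_x$. Second---and this is the decisive point---even granting $u\le U^*$, the resulting amplitude bound $u(x,t)\le c_p x^\alpha$ is \emph{not} what Lemma~\ref{lem3} or Proposition~\ref{prop1} require: Lemma~\ref{lem3} needs $u\le K_2\eps$ on $\Gamma_\eps$ (a linear-in-$\eps$ bound), and Proposition~\ref{prop1} needs $\phi\le M\delta$. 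Since $\alpha<1$, the H\"older bound $c_p\eps^\alpha$ dominates $K_2\eps$ for small $\eps$, so the hypothesis fails. The paper closes this gap by comparing with a \emph{regular} steady state $U_\lambda$ (chosen large enough that $u\le U_\lambda$ on the parabolic boundary of $(0,a)\times(t_1,t_2]$), obtaining $u(x,t)\le U_\lambda(x)\le\lambda x$; from this linear bound, the mean-value theorem together with $u_{xx}\le K$ from \eqref{SGBUprofileUpperEstxx} gives a uniform gradient bound on $(0,a]\times(t_1,t_2]$, and parabolic estimates finish.
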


\begin{proof} 
(i) By Lemma~\ref{lemz20} there exists $a>0$ such that 
 \be\label{lemz7p1}
 \hbox{$u>U^*$ in $(0,a]\times (t_1,t_2]$.}
 \ee
For $\lambda\in (0,1)$, let $u_\lambda$ be the global viscosity solution of problem~\eqref{vhj1} 
with initial data $\lambda\phi$.
As a consequence of Proposition~\ref{compP}, we have 
 \be\label{lemz7stabil}
 \|u(\cdot,t)-u_\lambda(\cdot,t)\|_\infty\le \|u(\cdot,0)-u_\lambda(\cdot,0)\|_\infty= (1-\lambda)\|\phi\|_\infty,\quad t>0
 \ee
(this follows by taking $w=u+\|u(\cdot,0)-u_\lambda(\cdot,0)\|_\infty$ 
as comparison function, and then exchanging the roles of $u$ and $u_\lambda$).
Since $z:=\lambda u$ satisfies
$$z_t-z_{xx}-|z_x|^p=\lambda(u_t-u_{xx}-\lambda^{p-1}|u_x|^p)=\lambda(1-\lambda^{p-1})|u_x|^p\ge 0 
\quad\hbox{in $(0,1)\times (0,\infty)$}$$
it follows from Proposition~\ref{compP} that 
 \be\label{lemz7p2}
 u_\lambda\le \lambda u\quad\hbox{in $(0,1)\times (0,\infty)$.}
 \ee

Fix $t_3\in(t_1,t_2)$ and define the compact set $\Gamma=\bigl([0,a]\times \{t_3\}\bigr)\cup \bigl(\{a\}\times[t_3,t_2]\bigr)$.
Owing to \eqref{lemz7p1} and $u(0,t_3)>0$, we have $\eta=\displaystyle\min_\Gamma\, (u-U^*)>0$.
Taking $\lambda$ close enough to $1$, so that $(1-\lambda)\|\phi\|_\infty<\eta$ and using \eqref{lemz7stabil}, we get
$$u_\lambda-U^*\ge 0\quad\hbox{ on $\Gamma$.}$$
Since $u_\lambda\ge 0$ on $\{0\}\times[t_3,t_2]$
it follows from Proposition~\ref{compP0} that
$$u_\lambda\ge U^*\quad\hbox{ in $(0,a)\times (t_3,t_2)$.}$$
Combining this with \eqref{lemz7p2}, we obtain
$$u(\cdot,t_2)\ge \lambda^{-1} u_\lambda(\cdot,t_2)\ge \lambda^{-1}U^*\quad\hbox{ in $(0,a)$.}$$
By Lemma~\ref{basic-prop0}(ii), we conclude that $u(0,t_2)>0$.

(ii) By our assumptions and \eqref{lemz3ahyp0}, there exist $c\in (0,1)$ and 
$\mu_0>0$ such that, for all $\mu\ge \mu_0$,
 \be\label{lemz7b0a}
 u(\cdot,t_1)<U_\mu<U^*\quad\hbox{ in $(0,c)$,}
 \ee
where the regular steady state $U_\mu$ is defined in \eqref{defUa}.
Consequently, by Lemma~\ref{lemz20}, there exists $a\in (0,c)$ such that 
$$u<U^* \quad\hbox{ in $(0,a]\times [t_1,t_2]$,}$$
hence in particular $u=0$ on $\{0\}\times(t_1,t_2)$.
We may thus choose $\lambda>\mu_0$ sufficiently large so that
 \be\label{lemz7b0b}
 u\le U_\lambda \quad\hbox{ on $\{0,a\}\times [t_1,t_2]$.}
 \ee
It then follows from \eqref{lemz7b0a}, \eqref{lemz7b0b} and Proposition~\ref{compP0} that 
$$u(x,t)<U_\lambda(x)\le \lambda x\quad\hbox{ for all $(x,t)\in (0,a]\times [t_1,t_2]$.}$$
Let $(x,t)\in (0,a]\times [t_1,t_2]$. By the mean-value theorem there exists $y\in (0,x)$ such that $u_x(y,t)=x^{-1}u(x,t)\le \lambda$.
Since $s\mapsto u_x(s,t)-Ks$ is nonincreasing by \eqref{SGBUprofileUpperEstxx} in Lemma~\ref{basic-prop0}(i) (applied with $t_0=t_1$), it follows that
$u_x(x,t)-Kx\le u_x(y,t)-Ky \le \lambda$, hence $u_x(x,t)\le \lambda+K$.
This combined with \eqref{SGBUprofileUpperEst2}-\eqref{SGBUprofileUpperEst3} guarantees that 
$$\displaystyle\sup_{(0,\frac34)\times (t_1,t_2)}|u_x|<\infty.$$
By parabolic estimates we conclude that $u$ is a classical solution on $[0,\frac12]\times(t_1,t_2]$.
 \end{proof}

 With Lemmas~\ref{lemz20} and \ref{lemz7} at hand we can now turn to the proof of Propositions~\ref{lemz6} and \ref{zerob2}.

\begin{proof} [Proof of Proposition~\ref{lemz6}]
Set $v=u-U^*$. 

(i) Assume for contradiction that $u(0,t_3)=0$ for some $t_3\in (t_1,t_2]$.
We consider the following two cases.

{\it Case 1:} there exists $t_4\in (t_1,t_3)$ such that $u(0,t_4)>0$.
Letting
$$t_5=\inf\{s>t_4;\, u(0,s)=0\}\in (t_4,t_3],$$
we then have $u(0,t_5)=0$ by continuity, whereas $u>0$ on $\{0\}\times (t_4,t_5)$.
But this contradicts Lemma~\ref{lemz7}(i).

{\it Case 2:} 
 \be\label{lemz6p0b}
 u(0,s)=0\quad\hbox{ for all $s\in (t_1,t_3]$.}
 \ee
By our assumption, there exists $a\in(0,1)$ such that  
 \be\label{lemz6p1}
\hbox{$v(\cdot,t_1)\ge 0$ in $[0,a]$ \quad and \quad $v(a,t_1)>0$.}
\ee
By continuity there exists $t_4\in (t_1,t_2)$ 
such that $v(a,s)>0$ for all $s\in (t_1,t_4]$.
Applying the comparison principle (Proposition~\ref{compP0}) and then the strong maximum principle, it follows that
 \be\label{lemz6p2}
\hbox{$v>0$ in $(0,a]\times (t_1,t_4]$.}
\ee
In particular \eqref{lemz6p0b} and \eqref{lemz6p2} imply that
 \be\label{lemz6p2a}
\hbox{$u_x(0,s)=\infty$ for all $s\in (t_1,t_4]$.}
\ee
Pick $t_5\in (t_1,t_4)$.
By \eqref{zutmonot} we have $z(u_t(\cdot,t_5))<\infty$. Consequently, we have either
$u_t(\cdot,t_5)\ge 0$ or $u_t(\cdot,t_5)\le 0$ for $x>0$ small.
We deduce from Lemma~\ref{lemz3a} and \eqref{lemz6p2a} that either
 \be\label{lemz6p2b}
 \hbox{$u_x(\cdot,t_5)\ge U_x^*\ \ $ or $\ \ u_x(\cdot,t_5)\le U_x^*\ \ $ for $x>0$ small.}
 \ee
The second alternative in \eqref{lemz6p2b} cannot hold in view of \eqref{lemz6p0b}, 
since this would lead to $v(\cdot,t_5)\le 0$ for $x>0$ small,
contradicting \eqref{lemz6p2}.

Therefore, there exists $c\in (0,a)$ such that
$$
 u_x(\cdot,t_5)\ge U_x^*\quad\hbox{ in $(0,c)$}
$$
 and $u_x(b,t_5)>U_x^*(c)$.
By continuity, there exist $t_6\in (t_5,t_4)$ such that 
$$
 u_x(c,s)>U_x^*(c)\quad\hbox{ for all $s\in [t_5,t_6]$.}
$$
Lemma~\ref{lemz3b}(ii) and \eqref{lemz6p2a} then guarantee 
that $u_x\ge U^*_x$ in $Q=:(0,c)\times (t_5,t_6]$. Consequently, the function $v$ satisfies
$$v_t-v_{xx}=(u_x)^p-(U^*_x)^p\ge 0\quad\hbox{ in $Q$,}$$
with $v>0$ in $Q$.
By comparison with the solution of the heat equation, it follows that
$v(\cdot,t_6)\ge \eps x$ in $(0,c/2)$ for some $\eps>0$. But,
recalling \eqref{lemz6p0b}, this contradicts estimate \eqref{profile2} in Lemma~\ref{basic-prop0}.
Assertion (i) is proved.

(ii) By our assumption, there exists $a\in(0,1)$ such that  
 \be\label{lemz6p1b}
\hbox{$v(\cdot,t_1)\le 0$ in $[0,a]$ \quad and \quad $v(a,t_1)<0$.} 
\ee
By continuity there exists $t_3\in(t_1,t_2)$ such that $v(a,s)<0$ for all $s\in [t_1,t_3]$.
It follows from Proposition~\ref{compP} that
$u\le U^*$ in $[0,a]\times[t_1,t_3]$, hence in particular 
\be\label{lemz6zeroBC}
\hbox{$u(0,s)=0$ for all $s\in[t_1,t_3]$.}
\ee
Moreover, by the strong maximum principle, 
we get
 \be\label{lemz6p2bis}
\hbox{$v<0$ in $(0,a]\times (t_1,t_3]$.}
\ee
We consider the following two cases.

{\it Case 1:} there exists a sequence $s_i\to t_1$ with $s_i> t_1$, such that $u_x(0,s_i)<\infty$.
By Lemma~\ref{lemz7}(ii), it follows that $u$ is a classical solution on $[0,\frac12]\times(s_i,t_2]$ for each $i$, 
hence on~$[0,\frac12]\times(t_1,t_2]$, which is the desired result.

{\it Case 2:} there exists $t_4\in (t_1,t_3)$ such that 
\be\label{lemz6p2a2}
\hbox{$u_x(0,s)=\infty$ for all $s\in (t_1,t_4]$.}
\ee
Pick $t_5\in (t_1,t_4)$. 
By \eqref{zutmonot} we have $z(u_t(\cdot,t_5))<\infty$. Consequently, we have either
$u_t(\cdot,t_5)\ge 0$ or $u_t(\cdot,t_5)\le 0$ for $x>0$ small.
We deduce from Lemma~\ref{lemz3a} and \eqref{lemz6p2a2} that either
 \be\label{lemz6p2b2}
 \hbox{$u_x(\cdot,t_5)\ge U_x^*\ \ $ or $\ \ u_x(\cdot,t_5)\le U_x^*\ \ $ for $x>0$ small.}
 \ee
The first alternative in \eqref{lemz6p2b2} cannot hold, since this would lead to $v(\cdot,t_5)\ge 0$ for $x>0$ small,
contradicting \eqref{lemz6p2bis}.

Let $\bar x$ be given by Lemma~\ref{basic-prop0}(i) with $t_0=t_1$.
Then there exists $c\in (0,\min(\bar x,a))$ such that
$$u_x(\cdot,t_5)\le U_x^*\ \hbox{ in $(0,c)$ \quad and \quad $u_x(c,t_5)<U_x^*(c)$.}$$
By continuity, there exists $t_6\in (t_5,t_4)$ such that 
$$
 u_x(c,s)<U_x^*(c)\quad\hbox{ for all $s\in [t_5,t_6]$.}
$$
Lemma~\ref{lemz3b}(i) then guarantees
that $u_x\le U^*_x$ in $Q:=(0,c)\times (t_5,t_6]$. 
Since also $u_x\ge -U^*_x$ in $Q$ by \eqref{SGBUprofileUpperEst4}, the function $v$ thus satisfies
$$v_t-v_{xx}=|u_x|^p-(U^*_x)^p\le 0\quad\hbox{ in $Q$,}$$
with $v<0$ in $Q$.
By comparison with the solution of the heat equation, it follows that
$v(\cdot,t_6\le -\eps x$ in $(0,c/2)$ for some $\eps>0$. But
recalling \eqref{lemz6p2a2}, this contradicts estimate \eqref{profile2} in Lemma~\ref{basic-prop0}.
The lemma is proved.
\end{proof}

\begin{proof} [Proof of Proposition~\ref{zerob2}]
By continuity, there exists $\eps>0$ such that $u(s,b)\ne U^*(b)$ for all $s\in [t-\eps,t+\eps]$
and Lemma~\ref{zerob} thus guarantees that the function $N_b(t)$ is nonincreasing and right continuous on $[t-\eps,t+\eps]$.
Assume for contradiction that $N_b(t^-)=N_b(t)$. 
Since $N_b(t)$ is integer valued, there exists $\hat\eps\in(0,\eps)$ such that 
\be\label{NsNt}
\hbox{$N_b(s)=N_b(t)$ for all $s\in [t-\hat\eps,t+\hat\eps]$.}
\ee
If $t-\hat\eps\in \Sigma_+$ then, by Proposition~\ref{lemz6}(i), $u(0,s)>0$ for all $s\in(t-\eps,t+\eps)$,
contradicting $u(0,t)=0$.
If $t-\hat\eps\in \Sigma_-$ then, by Proposition~\ref{lemz6}(ii), $u$ is a classical solution on $[0,\frac12]\times(t-\hat\eps,t+\hat\eps)$,
contradicting $u_x(0,t)=\infty$.

 The last statement of the proposition follows from the fact that, 
when $\|\phi\|_\infty\le c_p$ or $\phi$ is symmetric, the condition $u(1,t)\ne U^*(1)$ is satisfied for all $t\in \mathcal{T}$, 
owing respectively to \eqref{phiinfty} or to $u(1,t)=u(0,t)=0$.
\end{proof} 

We end this section with the following variant of Lemma~\ref{lemz20}, 
concerning the number of intersections of two solutions, which will be useful in the proof
of Theorem~\ref{conjz1}.

\begin{lem}\label{lemz20B} 
Let $0<b\le 1$, $\phi_1\in X_1$, $\phi_2\in X_b$,
and denote by $u,v$ the corresponding viscosity solutions of \eqref{vhj1}, 
respectively with $\Omega=(0,1)$ and $\Omega=(0,b)$.
Let $t_2>t_1>0$ and assume that
$$z_{|[0,b]}(u(\cdot,t)-v(\cdot,t))\ \hbox{ is finite and constant on $[t_1,t_2]$.}$$
Then there exists $a\in (0,b)$ such that 
$$\hbox{$u>v$ in $(0,a]\times (t_1,t_2]$}$$
or
$$\hbox{$u<v$ in $(0,a]\times (t_1,t_2]$.}$$
 \end{lem}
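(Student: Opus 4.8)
The plan is to transcribe the proof of Lemma~\ref{lemz20}, replacing the singular steady state $U^*$ by the second solution $v$ and recasting every use of the comparison principle in terms of Proposition~\ref{compP}. Put $w:=u-v$ and $\mathcal N(t):=z_{|[0,b]}(w(\cdot,t))$, which by hypothesis equals a fixed integer $m$ on $[t_1,t_2]$; in particular $w(\cdot,t)\not\equiv0$ there. The one structural lemma I would record first is the analogue of \eqref{lemz20b}: for $t_1\le s<t\le t_2$, $d\in(0,b)$ and $\sigma\in\{-1,1\}$,
\[
\hbox{$\sigma w\le 0$ in $[0,d]\times[s,t]$ \ and \ $\sigma w<0$ on $\{d\}\times(s,t]$}\ \Longrightarrow\ \hbox{$\sigma w<0$ in $(0,d]\times(s,t]$.}
\]
Indeed, on any strip $[\eta,d]\times[s,t]$ with $\eta\in(0,d)$ both $u$ and $v$ are classical solutions, so $w$ solves the linear parabolic equation $w_t-w_{xx}=c(x,t)\,w_x$ with $c(x,t)=\int_0^1 p\,|\theta u_x+(1-\theta)v_x|^{p-2}(\theta u_x+(1-\theta)v_x)\,d\theta$, which is bounded on compact subsets of $(0,b)\times(0,\infty)$ (interior regularity of $u$ and $v$, together with $p>2$); if $\sigma w$ vanished at an interior point, the strong maximum principle would force $\sigma w\equiv0$ on that whole strip up to that time, contradicting $\sigma w(d,\cdot)<0$ on $(s,t]$. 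Here it is essential that $d<b$, so that $x=d$ is an interior point of the domain of $v$.

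Next I would carry out the ``initial interval'' step at time $t_1$. Since $\mathcal N(t_1)=m<\infty$ and $w(\cdot,t_1)\not\equiv0$ on $[0,b]$, an elementary argument on the (finite) number of sign changes gives $c\in(0,b)$ and $\bar\sigma\in\{-1,1\}$ with $\bar\sigma w(\cdot,t_1)\le0$ on $[0,c]$ and $\bar\sigma w(c,t_1)<0$ (should $w(\cdot,t_1)$ vanish identically on an initial subinterval, one simply takes $c$ slightly beyond the maximal such subinterval). By continuity there is $t_3\in(t_1,t_2)$ with $\bar\sigma w(c,\cdot)<0$ on $[t_1,t_3]$. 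The one genuine change of substance relative to Lemma~\ref{lemz20} enters here: instead of comparing $u$ with the supersolution $U^*$, I apply Proposition~\ref{compP} on $\omega=(0,c)$ with $v$ as the comparison function from above for $u$ when $\bar\sigma=1$, and with $u$ as the comparison function from above for $v$ (whose domain $(0,b)$ contains $(0,c)$ because $c<b$) when $\bar\sigma=-1$. In both cases the hypotheses of Proposition~\ref{compP} are met: the barrier is $\ge0$ at $x=0$ (both $u\ge0$ and $v\ge0$, which is exactly what that proposition tolerates, unlike Proposition~\ref{compP0}); it dominates the solution on $\{c\}\times(t_1,t_3]$ by the strict sign of $w(c,t_1)$ and continuity; and it dominates it at $t=t_1$ by the weak sign of $w(\cdot,t_1)$ on $[0,c]$. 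This yields $\bar\sigma w\le0$ on $[0,c]\times[t_1,t_3]$, and the displayed maximum-principle step upgrades it to $\bar\sigma w<0$ on $(0,c]\times(t_1,t_3]$.

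From this point on the proof is a line-by-line transcription of Step~2 of Lemma~\ref{lemz20}. Set $J:=\{t\in(t_1,t_2]:\ \bar\sigma w<0 \hbox{ in } (0,d]\times(t_1,t] \hbox{ for some } d>0\}$, which contains $(t_1,t_3]$ and is an interval, and let $\tau:=\sup J$. Because $\mathcal N(\tau)=m$ there are points $0<x_1<\dots<x_{m+1}<b$ and $\hat\sigma\in\{-1,1\}$ with $(-1)^i\hat\sigma w(x_i,\tau)>0$; these persist by continuity on $[\tau-\eps,\bar\tau]$, $\bar\tau:=\min(t_2,\tau+\eps)$, and the constancy of $\mathcal N$ rules out an extra sign change on $[0,x_1]$, so $\hat\sigma w\le0$ on $[0,x_1]\times[\tau-\eps,\bar\tau]$ with $\hat\sigma w<0$ on $\{x_1\}\times[\tau-\eps,\bar\tau]$, whence $\hat\sigma w<0$ on $(0,x_1]\times(\tau-\eps,\bar\tau]$ by the maximum-principle step. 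Since $\tau-\eps\in J$, one has $\bar\sigma w<0$ on $(0,a]\times(t_1,\tau-\eps]$ for some $a<x_1$; comparing signs at $t=\tau-\eps$ forces $\hat\sigma=\bar\sigma$, hence $\bar\sigma w<0$ on $(0,a]\times(t_1,\bar\tau]$, i.e.\ $\bar\tau\in J$, which forces $\bar\tau\le\tau$ and therefore $\bar\tau=\tau=t_2\in J$. Unwinding $t_2\in J$ gives $a\in(0,b)$ with $\bar\sigma w<0$ on $(0,a]\times(t_1,t_2]$, that is $u<v$ if $\bar\sigma=1$ and $u>v$ if $\bar\sigma=-1$.

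I expect the only real difficulty to be bookkeeping rather than mathematics: in each of the two sign cases one must keep straight which of $u$ and $v$ plays the role of the viscosity solution and which that of the barrier in Proposition~\ref{compP}, and verify that the compatibility required at $x=0$ — merely nonnegativity of the barrier, with possible loss of boundary conditions for both solutions — is exactly what that proposition allows; this is what replaces the asymmetric use of Propositions~\ref{compP0} and \ref{compP} in Lemma~\ref{lemz20}. The minor point that $w(\cdot,t_1)$ might vanish on an initial subinterval also deserves a one-line remark, but causes no trouble.
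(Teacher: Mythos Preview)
Your proposal is correct and follows precisely the route the paper indicates (the paper's proof says only ``It is completely similar to the proof of Lemma~\ref{lemz20}, replacing $U^*$ with $v$''). Your explicit observation that, because both $u$ and $v$ may lose boundary conditions at $x=0$, Proposition~\ref{compP} must be invoked in \emph{both} sign cases (swapping the roles of solution and barrier) rather than using Proposition~\ref{compP0} for one sign as in Lemma~\ref{lemz20}, is exactly the small substantive adjustment hidden behind the paper's one-line proof, and you handle it correctly.
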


\begin{proof} [Proof of Lemma~\ref{lemz20B}]
It is completely similar to the proof of Lemma~\ref{lemz20}, replacing $U^*$ with $v$.
\end{proof}

\section{Proof of Theorem~\ref{thmz1}} 

We first give a simpler proof of Theorem~\ref{thmz1}(i)  in a special but already representative case.

\begin{proof} [Proof of Theorem~\ref{thmz1}(i) under the assumption $\|\phi\|_\infty\le c_p$.]
Set $T=T(\phi)$ and fix $t_0=0$ if $N(0)<\infty$, or any $t_0\in (0,T)$ otherwise. 
By Propositions~\ref{ZeroMonot}(ii) and \ref{zerob2}, $N(t)$ is nonincreasing on $[t_0,\infty)$ and we have
$N(t^-)\ge N(t)+1$ for all $t\in \mathcal{T}$.
Since $\mathcal{T}\cap [0,T)=\emptyset$, it follows that $\#\mathcal{T}\le N(t_0)$.
This proves the assertion.
\end{proof} 

We now turn to the proof of Theorem~\ref{thmz1}(i) in the general case.
Since $N$ need not be monotone in general 
 (cf.~Proposition~\ref{PropNonmonot}),
the proof is more delicate.  
We shall show that, around any transition time $t\in \mathcal{T}$, the number of intersections of the approximate solutions $u_k$
with regular steady states $U_j$ (cf.~\eqref{defUa}) has to drop for suitably large $j,k$.
This is the purpose of the following proposition.
In what follows, for any positive integers $j, k$, we set
$$Z_{k,j}(t)=z_{[0,1]}\bigl(u_k(\cdot,t)-U_j\bigr),\quad t\ge 0.$$

\begin{proposition} \label{approxdrop}
Let $t\in \mathcal{T}$. For any $\tau\in (0,t)$, there exist $j_0\ge 1$ and 
a sequence of integers $(\kappa_j)_{j\ge j_0}$ (depending on $t, \tau$) such that,
 for all $j\ge j_0$ and $k\ge\kappa_j$,
\be \label{approxdropB}
\hbox{$Z_{k,j}$ is finite and nonincreasing on $(0,\infty)$}
\ee
 and 
\be \label{approxdropA}
Z_{k,j}(t-\tau)\ge Z_{k,j}(t+\tau)+1.
\ee
\end{proposition}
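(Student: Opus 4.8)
The plan is to deduce \eqref{approxdropA} from \eqref{approxdropB} by exhibiting a single drop of $Z_{k,j}$ near $t$, detected by comparing the classical pair $(u_k,U_j)$ with the pair $(u,U^*)$ away from the endpoints while controlling the behaviour near $x=0$ and $x=1$ via the local results of Section~7. For \eqref{approxdropB}: since $U_j$ is concave and increasing, $|U_j'|\le U_j'(0)$ on $[0,1]$, so for $k\ge j^2$ one has $F_k(|U_j'|^2)=|U_j'|^p$ and $U_j$ is a classical steady state of \eqref{vhj1k}; hence $w_{k,j}:=u_k-U_j$ solves a linear, uniformly parabolic equation on $(0,1)\times(0,\infty)$ with bounded coefficients (for each fixed $k,j$), with $w_{k,j}(0,\cdot)\equiv 0$ and $w_{k,j}(1,\cdot)\equiv -U_j(1)<0$. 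By the zero number principle (\cite{An88}), $Z_{k,j}(s)=z_{[0,1]}(w_{k,j}(\cdot,s))$ is finite for $s>0$ and nonincreasing on $(0,\infty)$, and it decreases strictly at any $s_0$ where $w_{k,j}(\cdot,s_0)$ has a multiple zero in $(0,1)$ or $\partial_x w_{k,j}(0,s_0)=0$. By monotonicity, \eqref{approxdropA} for a given $\tau$ follows once we produce $\tau',\tau''\in(0,\tau)$ and thresholds $j_0\ge 1$, $(\kappa_j)_{j\ge j_0}$ (with $\kappa_j\ge j^2$) such that $Z_{k,j}(t-\tau')\ge Z_{k,j}(t+\tau'')+1$ for $j\ge j_0$, $k\ge\kappa_j$.

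Next I would fix $b\in(0,1]$ close to $1$ with $u(b,t)\ne U^*(b)$ and $u(\cdot,t)-U^*$ of constant sign on $[b,1)$ — possible by Lemma~\ref{basic-prop0}(ii) (if $u(1,t)=c_p$ then $u(\cdot,t)>U^*$ near $x=1$). By continuity and Lemma~\ref{zerob}, $N_b$ is nonincreasing and right continuous near $t$, and Proposition~\ref{zerob2} gives $N_b(t^-)\ge N_b(t)+1=N_b(t^+)+1$. Using that $N_b$ is a downward staircase near $t$, together with $(0,\infty)=\Sigma_+\cup\Sigma_-$ (see \eqref{cupSigplusminus}) and Proposition~\ref{lemz6}, each interval of constancy of $N_b$ adjacent to $t$ carries a fixed type near $x=0$: either $u>0$ on $\{0\}\times(\text{interval})$ (LBC), or $u$ is classical up to $x=0$ there with $u=0$. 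I would then pick $\tau'\in(0,\tau)$ in the last such interval before $t$ and $\tau''\in(0,\tau)$ in the first one after $t$, chosen generically so that in addition $N_b(t-\tau')=N_b(t^-)$, $N_b(t+\tau'')=N_b(t^+)$, and every zero of $u(\cdot,t+\tau'')-U^*$ in $(0,1)$ is nondegenerate.

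For the lower bound: on $[\eta_0,b]$ with $\eta_0>0$ small (depending on $\tau'$), $u(\cdot,t-\tau')-U^*$ realizes $N_b(t^-)$ sign changes at points $\ge\eta_0$, and since $u_k\to u$, $U_j\to U^*$ there, so does $w_{k,j}(\cdot,t-\tau')$ once $j$ and then $k$ are large; if moreover $u(1,t-\tau')\ge c_p$ one gains one further sign change close to $x=1$ (where $w_{k,j}(1,\cdot)<0$ while $w_{k,j}\to u-U^*>0$ just inside). Thus $Z_{k,j}(t-\tau')\ge N_b(t^-)+\sigma_1$, where $\sigma_1\in\{0,1\}$ encodes the behaviour near $x=1$ (taken the same at $t-\tau'$ and $t+\tau''$; see below). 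For the upper bound at $t+\tau''$: on a fixed interval $(0,\rho_0]$, $w_{k,j}(\cdot,t+\tau'')$ has no sign change — in the classical regime because $u\le U_\lambda$ near $x=0$ for a regular steady state $U_\lambda$ as in the proof of Lemma~\ref{lemz7}(ii), whence $u_k\le u\le U_\lambda<U_j$ for $j>\lambda$, uniformly in $k$; in the LBC regime because $u(x,t+\tau'')=u(0,t+\tau'')+U^*(x)+O(x^2)$ by Lemma~\ref{basic-prop0}(ii) while $u_k\to u$ uniformly by Dini, so $u_k>U^*\ge U_j$ near $x=0$ for $k$ large, uniformly in $j$. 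Near $x=1$ one argues symmetrically, picking up at most one sign change, again with indicator $\sigma_1$. On the remaining compact interior interval the nondegeneracy of the zeros of $u(\cdot,t+\tau'')-U^*$ yields genuine convergence of the zero number, so $w_{k,j}$ there has exactly $N_b(t^+)$ sign changes for $j,k$ large. Hence $Z_{k,j}(t+\tau'')\le N_b(t^+)+\sigma_1$, and $Z_{k,j}(t-\tau')-Z_{k,j}(t+\tau'')\ge N_b(t^-)-N_b(t^+)\ge 1$, as required.

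The hard part will be the upper bound $Z_{k,j}(t+\tau'')\le N_b(t^+)+\sigma_1$: the zero number is only lower semicontinuous under $C_{loc}$ convergence, so spurious sign changes of $u_k-U_j$ relative to $u-U^*$ must be excluded by hand. This is what forces the generic choice of $\tau''$ (to make all relevant zeros nondegenerate, restoring true convergence of the zero number) and a precise description of $u_k$ near the singular point $x=0$ at the transition time $t$ — via the $\Sigma_\pm$ dichotomy and Proposition~\ref{lemz6} to fix the local type, the regular steady state barrier of Lemma~\ref{lemz7}(ii) in the classical regime, the profile estimate of Lemma~\ref{basic-prop0}(ii) and Dini's theorem in the LBC regime, and the uniform lower estimate of Lemma~\ref{lemapprox} on $\partial_x u_k$ near $x=0$ at times of $\mathcal{S}$. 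The situation $u(1,t)=c_p$, and more generally the analysis at $x=1$ and the monotone boundary layer of $u_k$ there, requires the most care, and is precisely why the statement is formulated with $N_b$ for $b<1$ rather than with $N$.
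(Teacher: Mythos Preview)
Your overall architecture matches the paper's: establish \eqref{approxdropB} via the classical zero number principle (once $k$ is large enough that $U_j$ is a steady state of the truncated problem), then control the boundary layers near $x=0$ and $x=1$ so that $Z_{k,j}$ reduces to a zero count on a fixed interior interval $[a,b]$, and finally match exactly with $N_b$ at a generic time $\hat t>t$ where all zeros of $u(\cdot,\hat t)-U^*$ on $[a,b]$ are nondegenerate, while using only lower semicontinuity (Lemma~\ref{lemz1}) at $t-\eps$.

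There is, however, a genuine error in your LBC branch near $x=0$. You claim ``$u_k\to u$ uniformly by Dini, so $u_k>U^*\ge U_j$ near $x=0$''. Dini's theorem requires pointwise convergence on the whole compact, but $u_k(0,t+\tau'')=0$ for every $k$ whereas $u(0,t+\tau'')>0$; pointwise convergence fails at $x=0$, and uniform convergence up to the boundary is simply false --- this is precisely the boundary-layer phenomenon discussed in the introduction. Interior convergence gives $u_k>U_j$ only on a compact $[\eta,a]\subset(0,1)$. On $(0,\eta]$ you must instead use Lemma~\ref{lemapprox}: since $u(0,t+\tau'')>0$ implies $t+\tau''\in\mathcal{S}$, that lemma yields $\partial_x u_k>j$ on $[0,\eta_j]$ for $k\ge\kappa(j)$, whence $u_k(x)>jx\ge U_j(x)$ there. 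This is exactly what the paper does in its Step~2, Case~2, and it is the reason the threshold $\kappa_j$ genuinely depends on $j$. You mention Lemma~\ref{lemapprox} in your closing paragraph, but the argument as written leans on Dini where it cannot.

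A smaller imprecision: your claim that the near-$x{=}1$ contribution $\sigma_1$ is the same at $t-\tau'$ and $t+\tau''$ needs the paper's dichotomy on $u(1,t)$. If $u(1,t)<c_p$ one has $u_k-U_j<0$ on $[b,1]$ throughout $[t-\eps,t+\eps]$ by continuity, so $\sigma_1=0$; if $u(1,t)\ge c_p$ then $u(1,s)>0$ throughout, and Lemma~\ref{lemapprox} applied to the reflected solution $\tilde u(x,t)=u(1-x,t)$ forces $u_k(\cdot,s)-U_j$ to be strictly decreasing on $[b,1]$, from a positive value at $b$ to $-U_j(1)<0$, so $\sigma_1=1$. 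Either way the correction is a fixed constant on the whole time window, which is how the $\sigma_1$'s are made to match.
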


\begin{proof} 
 We need to carefully control the zeros of $u_k-U_j$ and of $u-U^*$ near $x=0$, near $x=1$, 
and on the remaining interval.
We thus proceed in three steps.

{\bf Step~1.} {\it Behavior near $x=1$.}
We consider two cases.

{\it Case A.} $u(1,t)<c_p$. Then there exists $\eps\in(0,\tau)$ such that $u(1,s)<c_p$ for all $s\in [t-\eps,t+\eps]$.
We may thus choose $b\in (0,1)$ and $j_1\ge 1$ such that
\be \label{approxdrop1}
u(\cdot,s)-U^*<0 \quad\hbox{in $[b,1]$ for all $s\in[t-\eps,t+\eps]$}
\ee
and $u_k(\cdot,s)-U_j<0$ in $[b,1]$ for all $s\in[t,t+\eps]$, $j\ge j_1$ and $k\ge 1$, hence
\be \label{approxdrop2}
Z_{k,j}(s)=z_{[0,b]}\bigl(u_k(\cdot,s)-U_j\bigr)\ \hbox{for all $s\in[t-\eps,t+\eps]$, $j\ge j_1$ and $k\ge 1$.}
\ee

{\it Case B.} $u(1,t)\ge c_p$.
As a consequence of Lemma~\ref{lemapprox} (applied to the reflected solution $\tilde u(x,t)=u(1-x,t)$),
there exist $b\in (0,1)$ and $k_1\ge 1$ such that,  for all~$s>0$,
$$\begin{aligned}
u(1,s)>0 
&\ \Longrightarrow\ \hbox{$\partial_x u_k(\cdot,s)\le 0$ on $[b,1)$ for all $k\ge k_1$}\\
&\ \Longrightarrow\ \hbox{$u_x(\cdot,s)\le 0$ on $[b,1)$.}
\end{aligned}$$
We may thus choose $\eps\in(0,\tau)$ small enough so that 
\be \label{approxdrop3}
u(b,s)\ge u(1,s)>U^*(b) \quad\hbox{for all $s\in[t-\eps,t+\eps]$.}
\ee
Since $u_k\to u$ locally uniformly on $(0,1)\times[0,\infty)$ 
as $k\to\infty$, there exists $k_2\ge k_1$, such that
for all $s\in[t-\eps,t+\eps]$, $j\ge 1$ and $k\ge k_2$, the function $u_k(\cdot,s)-U_j$ is 
decreasing on $[b,1]$, positive at $x=b$ and negative at $x=1$ (recalling that $u_k(1,s)=0$).
Consequently
\be \label{approxdrop4}
Z_{k,j}(s)=z_{[0,b]}\bigl(u_k(\cdot,s)-U_j\bigr)+1\ \hbox{for all $s\in[t-\eps,t+\eps]$, $j\ge 1$ and $k\ge k_2$.}
\ee

In each case, by Lemma~\ref{zerob} and Proposition~\ref{zerob2} there exists $t_1\in (t,t+\eps)$ such that 
\be\label{hypNconst2}
\hbox{$N_b(\sigma)-1\ge N_b(t)=N_b(s)$ \ for $t-\eps\le\sigma<t<s\le t_1$.}
\ee

{\bf Step~2.} {\it Behavior near $x=0$.}
We set $t_2=(t+t_1)/2$ and consider the cases $t\in \Sigma_-$ and $t\in\Sigma_+$ separately.

{\it Case 1.} $t\in \Sigma_-$. By Proposition~\ref{lemz6}(ii) and \eqref{hypNconst2}, $u$ is a classical solution on~$[0,\frac12]\times(t,t_1]$.
Consequently, since 
 $\sup_{[0,\frac12]\times[t_2,t_1]}|u_x|<\infty$,
there exist $a\in (0,b)$ and $j_2\ge j_1$ such that
\be \label{approxdrop5}
u<U_{j_2}<U^* \quad\hbox{in $(0,a]\times [t_2,t_1]$,}
\ee
hence
\be \label{approxdrop6}
u_k(\cdot,s)-U_j<0 \quad\hbox{in $(0,a]$ for all $s\in [t_2,t_1]$, $j\ge j_2$ and $k\ge 1$.}
\ee

{\it Case 2.} $t\in \Sigma_+$. 
By Lemma~\ref{lemz20} and \eqref{hypNconst2} there exists $a\in (0,1)$ such that
\be \label{approxdrop7}
u>U^* \quad\hbox{in $(0,a]\times (t,t_1]$.}
\ee 
hence in particular $[t,t_1]\subset\Sigma_+\subset\mathcal{S}$ by \eqref{SigplusT}. We then deduce from 
Lemma~\ref{lemapprox} that for each $j\ge 1$, there exist $\eta_j\in (0,a)$ and $\kappa_1(j)\ge k_2$ such that
$$u_k> jx\ge U_j \quad\hbox{in $(0,\eta_j]\times(t,t_1]$ for all $k\ge \kappa_1(j)$.}$$
Since $u_k\to u$ locally uniformly on $(0,1)\times[t,t_1]$ as $k\to\infty$ and $U>U_j$ on $(0,1]$, 
we infer from \eqref{approxdrop7} the existence of $\kappa_2(j)\ge \kappa_1(j)$ such that
$u_k> U_j$ in $[\eta_j,a]\times[t,t_1]$ for all $k\ge \kappa_2(j)$. Therefore, we have
\be \label{approxdrop8}
u_k(\cdot,s)-U_j>0 \quad\hbox{in $(0,a]$ for all $s\in [t,t_1]$, $j\ge 1$ and $k\ge \kappa_2(j)$.} 
\ee

{\bf Step~3.} {\it Conclusion.}
For any integer $j\ge 1$, there exists $\kappa_3(j)\ge \kappa_2(j)$ 
such that $U_j$ is a steady state of problem \eqref{vhj1k} for all $k\ge \kappa_3(j)$.
Since $u_k, U_j$ are smooth, $u_k=U_j=0$ at $x=0$ and $u_k=0\ne U_j$ at $x=1$, it follows from the zero number principle that
\be \label{approxdrop9}
\hbox{$Z_{k,j}$ is finite and nonincreasing in $(0,\infty)$ for all $j\ge 1$ and $k\ge \kappa_3(j)$.}
\ee
Also, since $u$ and $U^*$ are classical solutions of $u_t-u_{xx}=|u_x|^p$ on $[a,b]$, in view of 
\eqref{approxdrop1}, \eqref{approxdrop3}, \eqref{approxdrop5}, \eqref{approxdrop7},
the function $z_{[a,b]}\bigl(u(\cdot,t)-U^*\bigr)$ is finite and nonincreasing on $(t_2,t_1)$ and it drops at any time $s$ 
such that $u(\cdot,s)-U^*$ has a degenerate zero.
Consequently, there exist $\hat t\in (t_2,t_1)$ such that all zeros of $u(\cdot,\hat t)-U$  on $[a,b]$
are nondegenerate.
Since $u_k(\cdot,\hat t)-U_j$ converges to $u(\cdot,\hat t)-U^*$ in $C^1([a,b])$ as $k,j\to\infty$, 
there exists $k_3\ge \max(j_2,k_2)$ such that
$$z_{[a,b]}\bigl(u_k(\cdot,\hat t)-U_j\bigr)=z_{[a,b]}\bigl(u(\cdot,\hat t)-U^*\bigr) \quad\hbox{ for all $k,j\ge k_3$.}$$
For any $j\ge k_3$ and $k\ge \max(k_3,\kappa_3(j))$, using \eqref{approxdrop5}-\eqref{approxdrop8}, we obtain
\be \label{approxdrop12}
z_{[0,b]}\bigl(u_k(\cdot,\hat t)-U_j\bigr)=z_{[0,b]}\bigl(u(\cdot,\hat t)-U^*\bigr)=N_b(\hat t).
\ee
On the other hand, by Lemma~\ref{lemz1}, there exists $j_0\ge k_3$ such that
 $z_{[0,b]}\bigl(u_k(\cdot,t-\eps)-U_j\bigr)\ge N_b(t-\eps)$ for all $k,j\ge j_0$.
 For any $j\ge j_0$ and $k\ge \kappa_j:=\max(j_0,\kappa_3(j))$, this along with 
 \eqref{hypNconst2}, \eqref{approxdrop12} guarantees that
\be \label{approxdrop10}
z_{[0,b]}\bigl(u_k(\cdot,t-\eps)-U_j\bigr) \ge N_b(t)+1 \ge N_b(\hat t)+1=z_{[0,b]}\bigl(u_k(\cdot,\hat t)-U_j\bigr)+1.
\ee
In case A, by \eqref{approxdrop2}, \eqref{approxdrop9}, we get
$$Z_{k,j}(t-\eps)\ge Z_{k,j}(\hat t)+1\ge Z_{k,j}(t+\eps)+1.$$
In case B, by \eqref{approxdrop4} and \eqref{approxdrop9}, \eqref{approxdrop10} yields
$$\begin{aligned}
Z_{k,j}(t-\eps)-1
&= z_{[0,b]}\bigl(u_k(\cdot,t-\eps)-U_j\bigr) \\
& \ge z_{[0,b]}\bigl(u_k(\cdot,\hat t)-U_j\bigr)+1= Z_{k,j}(\hat t)\ge Z_{k,j}(t+\eps).
\end{aligned}$$ 
In view of \eqref{approxdrop9} and $\eps\in(0,\tau)$, this proves the proposition.
\end{proof} 

\begin{proof} [Proof of Theorem~\ref{thmz1}(i)  in the general case]
Set $T=T(\phi)$ and fix $\tau_0=0$ if $N(0)<\infty$, or any $\tau_0\in (0,T)$ otherwise. 
Pick $t_0\in (\tau_0,T)$. Then there exists $k_0\ge 1$ such that $u_k=u$ on $[0,1]\times [0,t_0]$ for all $k\ge k_0$.
By Lemma~\ref{lemz1b}, we deduce the existence of $k_1\ge k_0$ such that
\be \label{approxdrop11}
Z_{k,j}(t_0)=z(u(\cdot,t_0)-U_j)\le N(\tau_0)\quad \hbox{ for all $k,j\ge k_1$.}
\ee

Let $m\ge 1$ and assume that $t_1,\dots,t_m\in\mathcal{T}\subset [T,\infty)$, 
with $t_1<\dots<t_m$ if $m\ge 2$. Set $\tau=\frac12\min_{1\le i\le m} (t_i-t_{i-1})$.
For $1\le i\le m$, let $j_{0,i}\ge 1$ and $(\kappa_j^i)_{j\ge j_{0,i}}$ be given by Proposition~\ref{approxdrop} applied with $t=t_i$.
Choose
$$j=\max(k_1,j_{0,1},\dots,j_{0,m}),\quad k=\max(k_1,\kappa_j^1,\dots,\kappa_j^m)$$
and set $s_i=(t_i+t_{i-1})/2$ for all $i\in \{1,\dots,m\}$ and $s_{m+1}=t_m+\tau$.
Then it follows from \eqref{approxdropB}-\eqref{approxdropA} that
$$Z_{k,j}(s_i)-Z_{k,j}(s_{i+1})\ge Z_{k,j}(t_i-\tau)-Z_{k,j}(t_i+\tau)\ge 1,\quad i=1,\dots,m,$$
hence $m\le Z_{k,j}(s_1)-Z_{k,j}(s_{m+1})\le Z_{k,j}(t_0)\le N(\tau_0)$ by \eqref{approxdrop11}.
This proves the assertion.
\end{proof}

\begin{proof} [Proof of Theorem~\ref{thmz1}(ii)]
Let $t_1,t_2\in \mathcal{T}$ be such that $t_1<t_2$ and $(t_1,t_2)\cap \mathcal{T}=\emptyset$.
 We consider two cases.

{\it Case 1:} there exists $t\in(t_1,t_2)$ such that $u(0,t)>0$.
Let 
$$\tau:=\min\bigl\{s>t;\ u(0,s)=0\bigr\}\in (t,t_2].$$
Then $u(0,s)>0$ for all $s\in (t,\tau)$ and, 
by Lemma~\ref{basic-prop0}(ii), we have
$$|u(x,s)-u(0,s)-U^*(x)| \le K x^2, \qquad 0< x\le \textstyle\frac12.$$
Letting $s\to \tau$ and using the continuity of $u$, we obtain
$$|u(x,\tau)-U^*(x)| \le K x^2, \qquad 0< x\le \textstyle\frac12.$$
Therefore $u_x(0,\tau)=\infty$, hence $\tau\in\mathcal{T}$.
Since $(t_1,t_2)\cap \mathcal{T}=\emptyset$, it follows that $\tau=t_2$.
Similarly we obtain $\max\{s<t;\ u(0,s)=0\}=t_1$, 
hence $u(0,\cdot)>0$ on $(t_1,t_2)$.

{\it Case 2:} $u(0,t)=0$ for all $t\in(t_1,t_2)$.
We claim that 
\be \label{bounduxeps}
\sup_{(0,3/4]\times[t_1+\eps,t_2-\eps]} |u_x|<\infty,\quad\hbox{ for each $\eps>0$.} 
\ee
If \eqref{bounduxeps} fails, in view of \eqref{SGBUprofileUpperEst2}, \eqref{SGBUprofileUpperEst3}, there exist $\eps>0$ and 
sequences $t_j\in[t_1+\eps,t_2-\eps]$ and $x_j\to 0$ such that $u_x(x_j,t_j)\to\infty$.
 Passing to a subsequence, we may assume that $t_j\to \tau\in [t_1+\eps,t_2-\eps]$. 
By Lemma~\ref{basic-prop0}(ii), it follows that $u_x(0,\tau)=\infty$, hence $\tau\in\mathcal{T}$: a contradiction.

Finally, by \eqref{bounduxeps} and parabolic estimates
we conclude that $u$ is a classical solution on $[0,\frac12]\times(t_1,t_2)$.
 \end{proof}

\section{Proof of Theorem~\ref{conjz1}} 

It is convenient to first state and prove the following special case of Theorem~\ref{conjz1},
whose proof, based on Theorem~\ref{thm1} and on a simple application of zero number, is considerably easier but will serve as a starting point for the general case.

\begin{theorem}\label{thm2} 
Let $p>2$, $n=1$, $\Omega=(0,1)$ and set $t_0=0$.
For any integer $m\ge 1$, there exist $\phi\in X_1$ and times $0<t_1<\dots<t_{2m}\in\mathcal{T}$ 
such that $u$ is classical on $(t_{2m},\infty)$ and, for each $i\in \{1,\dots,m\}$,
$$\hbox{$u$ is classical on $(t_{2i-2},t_{2i-1})$ and of LBC type on $(t_{2i-1},t_{2i})$.}$$
\end{theorem}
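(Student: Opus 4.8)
The plan is to combine the one-dimensional multibump construction of Theorem~\ref{thm1} with the zero number bounds of Theorem~\ref{thmz1}, choosing the initial data so carefully that the \emph{exact} number of elements of $\mathcal{T}$ can be computed.

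\textbf{Construction of $\phi$.} I would take $\phi\in X_1$ of the form $\phi(x)=\sum_{i=1}^m\bigl(\phi_i(x)+\phi_i(1-x)\bigr)$, where each $\phi_i$ is a thin, steep, unimodal bump centered near a point $\mu_i$, with $0<\mu_1<\dots<\mu_m$, whose heights ($\sim\mu_i^\alpha$) and ratios $\mu_{i-1}/\mu_i$ are chosen exactly as in the proof of Theorem~\ref{thm1}: Proposition~\ref{lem2}(ii) provides the lower bound forcing LBC at $x=0$, while Proposition~\ref{lem3bis} together with the analogue of \eqref{choiceeps} provides the regularization steps. I would also take $\mu_m$, hence all $\mu_i$, small enough that $\|\phi\|_\infty\le c_p$. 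As in the proof of Theorem~\ref{thm1}, this yields times $0<\sigma_1<s_1<\hat s_2<2\hat s_2<s_2<\dots<\hat s_m<2\hat s_m<s_m<\tilde T$ such that: $u$ is a classical solution (in particular $u=0$ at $x=0$) on $(0,\sigma_1)$ and on each $(\hat s_i,2\hat s_i)$, $2\le i\le m$, by Proposition~\ref{lem3bis}; $u(0,s_i)>0$ for $i=1,\dots,m$, by Proposition~\ref{lem2}(ii); $T(\phi)<\infty$, with gradient blowup occurring at $x=0$ since $\phi$ is symmetric and carries a bump near $x=0$, so that $T(\phi)\in\mathcal{T}$ and hence $t_1:=\min\mathcal{T}=T(\phi)$; and $u$ is classical for $t>\tilde T$, by~\cite{PZ}. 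The point requiring care in the choice of the bump profiles is that $\phi-U^*$ should have \emph{exactly} $2m$ sign changes on $[0,1]$: the mirror bumps near $x=1$ stay well below $U^*\approx c_p$ there and contribute none, while each bump $\phi_i$ near $x=0$ — thin, steep and unimodal with peak above $U^*(\mu_i)$ — crosses $U^*$ exactly twice. Thus $N(0)=z(\phi-U^*)=2m<\infty$.

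\textbf{Counting $\mathcal{T}$.} By Theorem~\ref{thmz1}(i), $\#\mathcal{T}\le N(0)=2m$. For the reverse bound, recall from Theorem~\ref{thmz1}(ii) that on each component of $(0,\infty)\setminus\mathcal{T}$ the solution has a well-defined type ``C'' (classical up to $x=0$) or ``L'' (LBC at $x=0$); moreover the first component $(0,t_1)=(0,T(\phi))$ is of type C by maximality of $T(\phi)$, and the last component $(\max\mathcal{T},\infty)$ is of type C as well — otherwise $u(0,\cdot)>0$ for all large $t$, contradicting the $C^1$-decay of~\cite{PZ}, while any interior switch of type inside that component would, via Lemma~\ref{basic-prop0}(ii), produce a further element of $\mathcal{T}$. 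The sample data from the construction realize, in increasing time, the type pattern $\mathrm{C},\mathrm{L},\mathrm{C},\mathrm{L},\dots,\mathrm{C},\mathrm{L},\mathrm{C}$ — that is, $m+1$ C's separated by $m$ L's — at $(0,\sigma_1)$, $s_1$, $(\hat s_2,2\hat s_2)$, $s_2,\dots,(\hat s_m,2\hat s_m)$, $s_m$, $(\tilde T,\infty)$; since these sample sets lie in pairwise distinct components and consecutive ones have opposite types, the type must switch at at least $2m$ distinct points of $\mathcal{T}$. Hence $\#\mathcal{T}=2m$; write $\mathcal{T}=\{t_1<\dots<t_{2m}\}$.

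\textbf{The exact pattern, and the main obstacle.} Since $\|\phi\|_\infty\le c_p$, the function $N$ is nonincreasing and right continuous on $[0,\infty)$ (Proposition~\ref{ZeroMonot}(ii)), $N(0)=2m$, $N(t)\to0$ as $t\to\infty$ (because $u\to0$ in $C^1$ while $U^*(x)\ge c_p x$ near $0$), and $N(t_j^-)\ge N(t_j)+1$ for every $j$ (Proposition~\ref{zerob2}). Summing the downward jumps of the step function $N$ gives
\[
2m=N(0)-\lim_{t\to\infty}N(t)\ \ge\ \sum_{j=1}^{2m}\bigl(N(t_j^-)-N(t_j)\bigr)\ \ge\ 2m,
\]
so $N$ drops by exactly $1$ at each $t_j$ (and nowhere else). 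By Remark~\ref{nonmonot}, an odd drop forces a change of type, so the type alternates along $t_1,\dots,t_{2m}$; as $(t_0,t_1)$ with $t_0=0$ is of type C, the pattern is $\mathrm{C},\mathrm{L},\mathrm{C},\mathrm{L},\dots,\mathrm{C}$, i.e.\ $u$ is classical on $(t_{2i-2},t_{2i-1})$ and of LBC type on $(t_{2i-1},t_{2i})$ for $i=1,\dots,m$, and classical on $(t_{2m},\infty)$ — which is the assertion. The main obstacle is the first step: arranging the bumps so that they satisfy \emph{all} the quantitative hypotheses of Propositions~\ref{lem2}(ii) and~\ref{lem3bis} while at the same time forcing $N(0)$ to equal $2m$ exactly (rather than merely being finite), since it is this precise count that makes the upper and lower bounds on $\#\mathcal{T}$ coincide.
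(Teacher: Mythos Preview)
Your proof is correct and follows essentially the same approach as the paper: arrange the multibump initial data from Theorem~\ref{thm1} so that $N(0)=2m$ exactly, then combine the bound $\#\mathcal{T}\le N(0)$ from Theorem~\ref{thmz1}(i) with the $2m+1$ alternating C/L samples (forcing $\#\mathcal{T}\ge 2m$) to pin down the exact structure. The symmetrization of the data and the extra drop-by-one argument via Remark~\ref{nonmonot} are harmless but unnecessary refinements---the paper simply uses non-symmetric bumps near $x=0$, and once $\#\mathcal{T}=2m$ the $2m+1$ samples already occupy all $2m+1$ components of $(0,\infty)\setminus\mathcal{T}$, which fixes the pattern directly.
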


\begin{proof} [Proof of Theorem~\ref{thm2}]
It follows from Theorem~\ref{thm1} that there exists $\phi\in X$
such that $u$ has at least $m$ losses and recoveries of boundary conditions.  
Moreover, by inspection of the proof, one can see that $\phi$ can be taken in $X_1$ and chosen 
to have exactly $2m$ intersections with $U^*$. 
The conclusion then follows from Theorem~\ref{thmz1}. \end{proof}

Solutions are described in Theorem~\ref{conjz1} by arbitrary finite sequences
$(\sigma_i)_{1\le i\le \ell-1}$ with values in $\{C,L\}$.
It is obvious that Theorem~\ref{conjz1} can be equivalently reformulated as follows, 
and it will be more conveniently proved under this form.

{\bf Theorem~\ref{conjz1}'.}
{\it 
Let $n=1$, $\Omega=(0,1)$ and let $d$ be a positive integer.
For any finite sequence $(\bar\sigma_i)_{1\le i\le d}\in\N^d$,
there exist $\phi\in X_1$ and times $\bar t_{d+1}>\dots>\bar t_1>0$ such that
$$\begin{aligned}
&\hbox{$u(\cdot,t)$ is classical near $x=0$ for $t\in(0,\bar t_1)\cup(\bar t_{d+1},\infty)$}\\
&\qquad\qquad\qquad\qquad\hbox{and for $t$ in the neighborhood of $\bar t_1,\dots,\bar t_{d+1}$}
\end{aligned}$$
and, for each $i\in\{1,\dots,d\}$, 

$\bullet$ if $\bar\sigma_i=0$, then $u$ is classical 
near $x=0$ in $I_i:=(\bar t_i,\bar t_{i+1})$ except for a single time;

$\bullet$ if $\bar\sigma_i\ge 1$, then $\{t\in I_i,\ u(0,t)>0\}$ is 
a nonempty open subinterval of $I_i$ minus $\bar\sigma_i-1$ times.

\noindent Moreover, $u$ is classical up to $x=1$ for all times, i.e.
\be\label{regulat1}
\hbox{$u\in C^{2,1}([\frac12,1]\times(0,\infty))$ and $u=0$ on $\{1\}\times(0,\infty)$.}
\ee
}

The integer $\bar\sigma_i$ represents the number of LBC bumps in the interval $I_i$
and the cases $\bar\sigma_i=0, 1$ or $\ge 2$ respectively correspond 
to the three possible behaviors within each interval $I_i$,
namely: GBU without LBC, single loss and recovery of boundary conditions, and
bouncing (possibly multiple).
The times $\bar t_i$ and indices $\bar\sigma_i$, along with the transition times $t_i$, are represented on Fig.~\ref{FigMixed2}
for a typical example.

\begin{figure}[h]
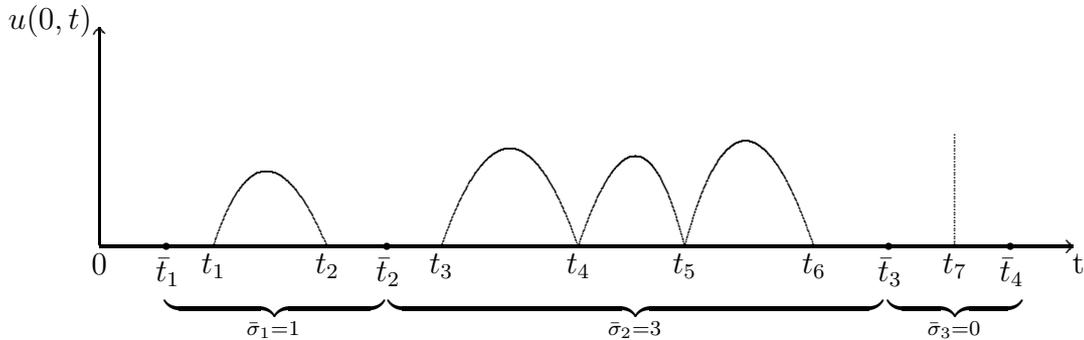

\[
\beginpicture
\setcoordinatesystem units <1cm,1cm>
\setplotarea x from -6 to 6, y from -1 to 3.5

\setdots <0pt>
\linethickness=1pt
\putrule from -5 0 to 7.8 0
\putrule from -5 0 to -5 2.9

\setdots <1pt> 

\plot 6.25 0  6.25 1.5 /

\setdots <0pt> 

\setquadratic

\plot -3.5 0 -2.8 1 -2 0  /

\plot -0.5 0 0.4 1.3 1.3 0  /
\plot 1.3 0 2.05 1.2 2.7 0  /
\plot 2.7 0 3.5 1.4 4.4 0  /

\put {t} [lt] at 7.8 -.1
\put {$u(0,t)$} [rc] at -5.1 3

\setlinear 
\setdots <0pt> 
\plot 7.725 -.075 7.8 0 /
\plot 7.725 .075 7.8 0 /
\plot -5.075 2.825 -5 2.9 /
\plot -5 2.9 -4.925 2.825 /

\put {$\underbrace{\phantom{AAAAAAAA11}}$}  [ct] at -2.7 -0.4
\put {$\underbrace{\phantom{AAAAAAAAAAAAAAAAAAAAA}}$}  [ct] at 2.05 -0.4
\put {$\underbrace{\phantom{AAAAAa}}$}  [ct] at 6.25 -0.4
\put {$^{\bar\sigma_3=0}$}   [ct] at 6.25 -1
\put {$^{\bar\sigma_2=3}$}   [ct] at 2.05 -1
\put {$^{\bar\sigma_1=1}$}   [ct] at -2.7 -1

\put {$0$}  [ct] at -5 -.1

\put {$\bar t_4$}   [ct] at 7 -.15
\put {$\bar t_3$}   [ct] at 5.4 -.15
\put {$\bar t_2$}   [ct] at -1.2 -.15
\put {$\bar t_1$}   [ct] at -4.1 -.15
\put {$t_7$}   [ct] at 6.25 -.1
\put {$t_6$}   [ct] at 4.4 -.1
\put {$t_5$}   [ct] at 2.7 -.1
\put {$t_4$}   [ct] at 1.3 -.1
\put {$t_3$}   [ct] at -0.5 -.1
\put {$t_2$}   [ct] at -2 -.1
\put {$t_1$}   [ct] at -3.5 -.1

\linethickness=1.5pt

\put {$^{^\bullet}$}  [ct] at 7 0.055
\put {$^{^\bullet}$}  [ct] at 5.4 0.055
\put {$^{^\bullet}$}  [ct] at -1.2 0.055
\put {$^{^\bullet}$}  [ct] at -4.1 0.055

\endpicture
\] 
\caption{The times $\bar t_i$ and indices $\bar\sigma_i$ (here $d=3$)}
\label{FigMixed2}
\end{figure}

{\bf Outline of proof of Theorem~\ref{conjz1}/\ref{conjz1}'.}
The proof is rather long and delicate. It is based on a modification of the proof of Theorems~\ref{thm1}
 and \ref{thm2},
along with deformation, zero number and recursion arguments.

In Theorem~\ref{thm2} (via Theorem~\ref{thm1}), 
we have constructed multibump initial data such that the corresponding solutions satisfy $u(0,t)>0$ 
on $m$ open time intervals $J_i$ (LBC bumps), separated and surrounded 
by nontrivial closed intervals where $u(0,t)=0$.
Moreover, our construction was made recursively, using a suitable rescaling for each bump,
with the spatial bump closest to the boundary being responsible for the first LBC time  interval
 $J_1$ of the solution, and so on.

An additional difficulty now is that, whereas multibump solutions with separated LBC time intervals 
should be rather stable, the phenomena of GBU without LBC or of bouncing are expected to be unstable.
To produce an arbitrary solution as in the statements of Theorem~\ref{conjz1}/\ref{conjz1}', the idea is to deform 
this multibump initial data by performing  one of the following three operations
on each space bump $\mathcal{B}_i$  of the initial data,
numbered from left to right by $i\in \{1,\dots,m\}$:
\begin{itemize}

\item[(1)] Reduce the amplitude of $\mathcal{B}_i$ by multiplying by a number in $(0,1)$,
with aim of squeezing the corresponding LBC interval
 $J_i$ to a single time $t\in\mathcal{T}$. 
The resulting $t$ produced in this way will be a GBU time without LBC,
separating two intervals where $u$ is classical;
\smallskip

\item[(2)] (if $i\le m-1$
and operation (1) is not being made on $\mathcal{B}_{i+1}$)
Link $\mathcal{B}_i$ with its right neighbor $\mathcal{B}_{i+1}$ by a suitable deformation,
with the aim of squeezing the separating interval between the LBC  intervals
$J_i$, $J_{i+1}$ 
to a single time $t\in\mathcal{T}$. 
The resulting $t$ will be a bouncing time.
Note that the operation (2) can be repeated with the next bump(s) on the right to create several consecutive rebounds;
\smallskip

\item[(3)] Leave $\mathcal{B}_i$ unchanged.
\end{itemize}

We will actually make a continuous deformation 
along operations (1) and (2) above,
leading to a suitable $q$-parameter family of initial data
$\Phi_{\mu_1,\dots,\mu_q}$ with $(\mu_1,\dots,\mu_q)\in [0,1]^q$
and the desired solution will then be obtained by iteratively selecting appropriate critical values $\mu^*_1,\dots,\mu^*_q$ of the parameters. The required continuity properties of the critical parameter functions will rely upon zero number arguments
applied to the difference of two solutions.

 In view of the proof, we need two propositions. 
The first one gives the building blocks of our multibump construction,
namely the individual bumps and the linking functions between two bumps.

\begin{proposition}\label{lemconv1}
(i) There exist $0<a_1<a_2<\textstyle\frac14$, $c_2>c_1>0$, $K_1=K_1(p)>0$ 
$K>\tilde K(a_1)$ (cf.~Proposition~\ref{lem2}(i)) and,
for all $\eps\in (0,\frac12)$, there exists a nonnegative function $\psi_\eps\in C^2(\R)$, symmetric with respect to $x=\eps$,
such that
\begin{eqnarray}
&&{\hskip -15mm}{\rm Supp}(\psi_\eps)=[(1-a_2)\eps,(1+a_2)\eps], \label{psiepsP1} \\
\noalign{\vskip 1mm}
&&{\hskip -15mm}\hbox{$\psi_\eps>Kx^\alpha$ on $[(1-a_1)\eps,(1+a_1)\eps]$,}  \label{psiepsP2} \\
\noalign{\vskip 1mm}
&&{\hskip -15mm}\|\psi_\eps\|_\infty \le K_1\eps^\alpha,  \label{psiepsP3} \\
\noalign{\vskip 1mm}
&&{\hskip -15mm}\hbox{$\psi_\eps'>0$ on $((1-a_2)\eps,\eps)$,\qquad
$\psi_\eps''\ge 0$ on $[0,1]\setminus ((1-a_1)\eps,(1+a_1)\eps)$,}  \label{psiepsP4} \\
\noalign{\vskip 1mm}
&&{\hskip -15mm}\hbox{$\psi_\eps-U^*$ has exactly one zero in $[(1-a_2)\eps,\eps)$ and 
one zero in $(\eps,(1+a_2)\eps]$,} \\
\noalign{\vskip 1mm}
&&{\hskip -15mm}\hbox{for any $\lambda\in [0,1]$, $\lambda\psi_\eps-U^*$ has at most 2 zeros in $(0,1]$.}
\label{psiepsPZ}
\end{eqnarray}
Moreover, with $c_2>c_1>0$ given by Proposition~\ref{lem2}(ii), 
there exists $\eta=\eta(p)\in(0,1)$ 
such that for any $\tau\in[1-\eta,1]$,
\be\label{psiepsLBC}
u(\tau\psi_\eps;0,t)>0 \ \hbox{ for all $t\in (c_1\eps^2,c_2\eps^2)$.}
\ee

(ii) Let $\eps\in(0,1)$ and $\bar\eps\in(0,\eps/2]$.
Then $(1+a_2)\bar\eps<(1-a_2)\eps$ (hence the supports of $\psi_{\bar\eps}$ and $\psi_\eps$ are disjoint)
and there exists a function $h=h_{\bar\eps,\eps}\in C^2([0,1])$ such that
\begin{eqnarray}
&&{\hskip -15mm}h\ge\hat\psi:=\psi_{\bar\eps}+\psi_\eps,\quad
h_{|[0,(1+a_1)\bar\eps]}=\psi_{\bar\eps},\quad h_{|[(1-a_1)\eps,1]}=\psi_\eps,\label{psiepsPZ1} \\
\noalign{\vskip 1mm}
&&{\hskip -15mm}h>Kx^\alpha\ \hbox{ and }\ (h-Kx^\alpha)''\ge 0 
\quad\hbox{in $[(1+a_1)\bar\eps,(1-a_1)\eps]$,}  \label{psiepsPZ2} \\
\noalign{\vskip 1mm}
&&{\hskip -15mm}\|h\|_\infty \le K_1\eps^\alpha.  \label{psiepsPZ2b}
\end{eqnarray}
Moreover, 
\be\label{psiepsPZ3}
\hbox{for any $\lambda\in [0,1]$, $(\lambda h+(1-\lambda)\hat\psi)-U^*$ has at most 2 zeros in $[\bar\eps,\eps]$.}
\ee
\end{proposition}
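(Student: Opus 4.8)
The plan is to construct everything explicitly from a single profile adapted to the singular steady state $U^*$. I would begin by fixing a smooth cutoff-type template: choose a nonnegative $\chi\in C^2(\R)$, symmetric about $0$, supported in $[-a_2,a_2]$, with $\chi\equiv c_p\,2^\alpha$ (say) on $[-a_1,a_1]$, nondecreasing on $[-a_2,0]$ and convex on $[-a_2,-a_1]\cup[a_1,a_2]$; here $0<a_1<a_2<\frac14$ are to be fixed small. Then set $\psi_\eps(x):=\eps^\alpha\,\chi\bigl(\eps^{-1}(x-\eps)\bigr)$, so that $\psi_\eps$ is symmetric about $x=\eps$, supported exactly in $[(1-a_2)\eps,(1+a_2)\eps]$, and $\|\psi_\eps\|_\infty=\eps^\alpha\|\chi\|_\infty=:K_1\eps^\alpha$. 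The scaling is chosen to match the $x^\alpha$ homogeneity of $U^*$: on the ``plateau'' interval $[(1-a_1)\eps,(1+a_1)\eps]$ we have $\psi_\eps=c_p 2^\alpha\eps^\alpha$, while $U^*(x)=c_p x^\alpha\le c_p((1+a_1)\eps)^\alpha<c_p 2^\alpha\eps^\alpha$, which gives \eqref{psiepsP2} provided we take $K$ slightly smaller than $c_p 2^\alpha$; enlarging $K$ if necessary above $\tilde K(a_1)$ from Proposition~\ref{lem2}(i) is harmless since one can always rescale the plateau height upward. The monotonicity and convexity assertions in \eqref{psiepsP4} are built into the template $\chi$ (note convexity of $x\mapsto\psi_\eps$ is equivalent to convexity of $\chi$, and on the intervals in \eqref{psiepsP4}, $U^*$ being concave is irrelevant since we only assert $\psi_\eps''\ge0$, not $(\psi_\eps-U^*)''\ge0$).

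For the zero-counting statements, the key observation is that $\psi_\eps-U^*$ on $[(1-a_2)\eps,\eps]$ starts negative at the left endpoint (where $\psi_\eps=0<U^*$) and is positive on the plateau, with $\psi_\eps'>0$ and $U^{*\prime}>0$ there; one checks that $\psi_\eps'(x)$ dominates $U^{*\prime}(x)$ to the right of the first crossing by comparing the slopes via the $\eps$-scaling (this is where choosing $a_1$ small relative to the template matters — on $[(1-a_2)\eps,(1-a_1)\eps]$ the derivative $\psi_\eps'=\eps^{\alpha-1}\chi'$ can be made as large as we like relative to $U^{*\prime}\sim\eps^{\alpha-1}$ by scaling $\chi$). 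This forces exactly one zero in $[(1-a_2)\eps,\eps)$; symmetry about $x=\eps$ together with $U^*$ increasing then yields exactly one more zero in $(\eps,(1+a_2)\eps]$, giving the sixth display. For \eqref{psiepsPZ}, since $\lambda\psi_\eps\le\psi_\eps$ and $\lambda\psi_\eps$ vanishes outside the support, $\lambda\psi_\eps-U^*$ is negative near $x=0$ and near $x=1$; on $[0,1]\setminus((1-a_1)\eps,(1+a_1)\eps)$ one has $(\lambda\psi_\eps-U^*)''=\lambda\psi_\eps''-U^{*\prime\prime}\ge0$ (using $\psi_\eps''\ge0$ there and $U^{*\prime\prime}<0$), so on each of these two intervals $\lambda\psi_\eps-U^*$ is convex, hence has at most one sign change per interval; combined with the plateau (where it is monotone, being $\lambda c_p2^\alpha\eps^\alpha-c_p x^\alpha$, strictly decreasing), a short case analysis gives at most $2$ zeros total in $(0,1]$. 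For \eqref{psiepsLBC}, I would apply Proposition~\ref{lem2}(ii): \eqref{psiepsP2} says $\psi_\eps\ge K x^\alpha=\tilde K(a_1)\eps^\alpha\cdot(K/\tilde K(a_1))\,(x/\eps)^\alpha$ on $[(1-a_1)\eps,(1+a_1)\eps]$, which is exactly hypothesis \eqref{suppcondn1} with $\eta=a_1$ after absorbing constants; then $u(\psi_\eps;0,t)\ge C_0\eps^\alpha$ on $[c_1\eps^2,c_2\eps^2]$, and the stability estimate \eqref{contdep} applied to $\tau\psi_\eps$ versus $\psi_\eps$ gives $u(\tau\psi_\eps;0,t)\ge C_0\eps^\alpha-(1-\tau)\|\psi_\eps\|_\infty\ge(C_0-(1-\tau)K_1)\eps^\alpha>0$ once $1-\tau\le\eta(p):=C_0/(2K_1)$.

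For part (ii), the disjointness $(1+a_2)\bar\eps<(1-a_2)\eps$ follows from $\bar\eps\le\eps/2$ and $a_2<\frac14$ since $(1+a_2)\bar\eps\le(1+a_2)\eps/2<(1-a_2)\eps$. To build the linking function $h$, I would glue $\psi_{\bar\eps}$ on the left and $\psi_\eps$ on the right and, on the gap $[(1+a_1)\bar\eps,(1-a_1)\eps]$, interpolate by a function lying above $Kx^\alpha$ with $(h-Kx^\alpha)$ convex there — concretely, take $h=Kx^\alpha+g(x)$ with $g$ a nonnegative convex $C^2$ function matching $\psi_{\bar\eps}-Kx^\alpha$ and its derivative at the left junction and $\psi_\eps-Kx^\alpha$ and its derivative at the right junction, which is possible since both junction values are positive (the plateaus sit well above $Kx^\alpha$) — and then $h\ge Kx^\alpha\ge\hat\psi$ on the gap because on the gap $\hat\psi=0$ there while on $[0,(1+a_1)\bar\eps]$ and $[(1-a_1)\eps,1]$ we set $h=\hat\psi$ (the supports being disjoint, $\hat\psi$ equals whichever bump is active). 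The bound \eqref{psiepsPZ2b} holds since on the gap $h\le Kx^\alpha+(\text{linear-ish bound})\le K_1\eps^\alpha$ after choosing $a_1,a_2$ small, and elsewhere $h=\psi_{\bar\eps}$ or $\psi_\eps$, each $\le K_1\eps^\alpha$. For \eqref{psiepsPZ3}, write $g_\lambda:=\lambda h+(1-\lambda)\hat\psi$; on $[\bar\eps,(1+a_1)\bar\eps]$ and on $[(1-a_1)\eps,\eps]$ we have $g_\lambda=\psi_{\bar\eps}$ resp.\ $\psi_\eps$ (both agree with $\hat\psi$ and $h$ there), and on the gap $g_\lambda=\lambda h$ with $(\lambda h-Kx^\alpha)''=\lambda(h-Kx^\alpha)''\ge0$, so $g_\lambda-U^*$ is convex on the gap (again $-U^{*\prime\prime}>0$), with $g_\lambda-U^*=\lambda h-U^*\ge\lambda Kx^\alpha-c_px^\alpha$... here one must be a little careful: convexity on the gap gives at most two zeros on the gap, but on the adjacent half-plateaus $g_\lambda-U^*$ is the same monotone-decreasing function as before, so a sign-change bookkeeping across the gap and the two half-plateau pieces yields at most $2$ zeros on $[\bar\eps,\eps]$ — this final count is the \textbf{main obstacle}: one needs the convex gap piece to ``connect'' two positive plateau values without oscillation, which forces $g_\lambda-U^*$ to stay positive throughout $[\bar\eps,\eps]$ (both endpoints and the monotone pieces being positive there, by $\psi_{\bar\eps}(\bar\eps)=\psi_{\bar\eps}$ plateau value $>U^*$ near there, similarly at $\eps$), so in fact $g_\lambda-U^*$ may well have $0$ zeros on $[\bar\eps,\eps]$, giving the bound trivially. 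The delicate point throughout is the joint choice of the two small parameters $a_1<a_2$ and the template $\chi$ so that all of \eqref{psiepsP1}--\eqref{psiepsPZ3} hold simultaneously with constants independent of $\eps$; I expect the zero-count inequalities \eqref{psiepsPZ} and \eqref{psiepsPZ3}, rather than the explicit constructions, to require the most care, since they constrain the interaction between the convexity of the bumps/linkers and the concavity of $U^*$.
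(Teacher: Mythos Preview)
Your part (i) is essentially correct and, with the flat-plateau template, arguably cleaner than the paper's version: since $\psi_\eps''\ge 0$ on all of $[0,1]$ (it is $0$ on the plateau and $\ge 0$ off it) while $U^{*\prime\prime}<0$, the function $\lambda\psi_\eps-U^*$ is globally strictly convex on $(0,1]$, so the two-zero bound \eqref{psiepsPZ} and the ``exactly one zero per half'' property fall out immediately. The paper instead uses a template that is strictly concave on the central portion and then splits into the ranges $\lambda$ small versus $\lambda$ large; your approach avoids that case distinction. Your argument for \eqref{psiepsLBC} via Proposition~\ref{lem2}(ii) together with the continuous-dependence estimate \eqref{contdep} is also fine.

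Part (ii), however, has a genuine gap. You repeatedly assert that $\hat\psi=\psi_{\bar\eps}+\psi_\eps$ vanishes on the whole interpolation interval $[(1+a_1)\bar\eps,(1-a_1)\eps]$, but this is false: since $a_1<a_2$, this interval strictly contains the tails $[(1+a_1)\bar\eps,(1+a_2)\bar\eps]$ and $[(1-a_2)\eps,(1-a_1)\eps]$, on which $\hat\psi$ equals $\psi_{\bar\eps}$ and $\psi_\eps$ respectively and is \emph{not} zero. This error propagates in two places. First, your verification of $h\ge\hat\psi$ breaks down: knowing only that $g\ge 0$ gives $h=Kx^\alpha+g\ge Kx^\alpha$, but near the junction $x=(1+a_1)\bar\eps$ one has $\hat\psi=\psi_{\bar\eps}>Kx^\alpha$ by \eqref{psiepsP2}, so $h\ge Kx^\alpha$ does \emph{not} imply $h\ge\hat\psi$ there. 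Second, in your argument for \eqref{psiepsPZ3} you write ``on the gap $g_\lambda=\lambda h$'', which is again based on $\hat\psi=0$ and is therefore wrong; the correct computation must keep the $(1-\lambda)\hat\psi$ term and use $\hat\psi''\ge 0$ on the gap (which does follow from \eqref{psiepsP4}).

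What is actually needed is a convex $C^2$ interpolation $\bar g$ that matches $\hat\psi-Kx^\alpha$ in value \emph{and} first two derivatives at both junctions, satisfies $\bar g>0$, \emph{and} dominates $\hat\psi-Kx^\alpha$ throughout. The paper isolates this as a separate lemma (its Lemma in the appendix): given a $C^2$ function $g$ with $g(X),g(Y)>0$ and $g''>0$ on $[X,Y]$, it explicitly builds $\bar g\in C^2$ with $\bar g=g$ outside $[X,Y]$, $\bar g\ge g$, $\bar g>0$, $\bar g''\ge 0$ on $[X,Y]$. Applying this to $g=\hat\psi-Kx^\alpha$ (which is indeed strictly convex on the gap, since $\hat\psi''\ge 0$ there and $(Kx^\alpha)''<0$) and setting $h=\bar g+Kx^\alpha$ then yields all of \eqref{psiepsPZ1}--\eqref{psiepsPZ2}. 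Your sketch skips this construction entirely and, because of the $\hat\psi=0$ error, does not even identify that the inequality $\bar g\ge g$ is the crux.
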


\begin{figure}[h]
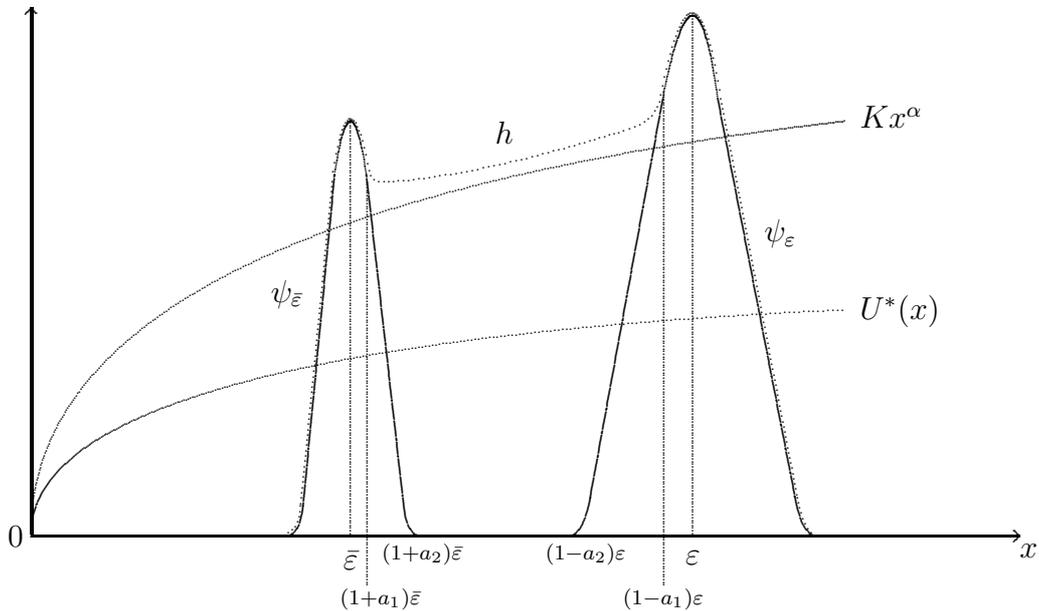

\[
\beginpicture
\setcoordinatesystem units <1cm,1cm>
\setplotarea x from -4 to 10, y from -1 to 6

\setdots <0pt>
\linethickness=1pt
\putrule from -3.5 0 to 9.5 0
\putrule from -3.5 0 to -3.5 7

\setquadratic
% base of \phi_1
\plot  3.85 0.5 3.75 0.16 3.6 0 /
\plot  6.55 0.5 6.65 0.16 6.8 0 /

% base of \phi_2
\plot 1.4 0.4 1.47 0.12 1.6 0 /
\plot 0.076 0.4 0.006 0.12 -0.124  0 /

\setlinear 
% middle of \phi_1
\plot 5.53 5.8 6.55 0.5 /
\plot 4.807 5.8 3.85 0.5 /
% middle of \phi_2
\plot 0.492 4.8 0.076 0.4 /
\plot 0.908 4.8 1.4 0.4 /

\setquadratic

% cap of \phi_1
\plot 4.807 5.8 5.2 6.9 5.53 5.8 /
% cap of \phi_2
\plot 0.492 4.8 0.7 5.5 0.908 4.8 /

% h
\setdots <2pt>
\plot 0.95 4.85 1.01 4.73 1.2 4.7 /
\plot 1.2 4.7 2.55 4.9 4.35 5.39 /
\plot 4.37 5.4 4.63 5.54 4.777 5.8 /
\plot 4.777 5.85 5.2 6.935 5.57 5.8 /
\setlinear 
\plot 5.57 5.8 6.58 0.5 /
\plot 0.46 4.7 0.044 0.4 /
\setquadratic
\plot  6.58 0.5 6.68 0.16 6.83 0 /
\plot 0.47 4.83 0.7 5.53 0.95 4.83 /
\plot 0.04 0.4 -0.03 0.12 -0.24 0 /

%U^*
\setdots <1pt>
\plot -3.5 0  -0.85 2 7.2 3 /
\plot -3.5 0  -0.85 3.5 7.2 5.5 /

% vertical dotted lines
\setlinear 
\setdots <1pt>
\plot 5.2 0 5.2 6.9 /
\plot 0.7 0 0.7 5.5 /

\plot 4.82 -0.65 4.82 5.9 /
\plot 0.92 -0.65 0.92 4.7 /

\put {$x$} [lt] at 9.5 -.1
\put {$\bar\eps$}  [ct] at 0.7 -.2
\put {$\eps$}  [ct] at 5.2 -.2
\put {$^{(1-a_1)\eps}$}  [ct] at 4.82 -.7
\put {$^{(1-a_2)\eps}$}  [ct] at 3.8 -.1
\put {$^{(1+a_1)\bar\eps}$}  [ct] at 1.1 -.7
\put {$^{(1+a_2)\bar\eps}$}  [ct] at 1.65 -.1

\put {$U^*(x)$} [lt] at 7.4 3.2
\put {$Kx^\alpha$} [lt] at 7.4 5.7
\put {$\psi_\eps$} [lt] at 6.14 4.2
\put {$\psi_{\bar\eps}$} [lt] at -0.35 3.4

\put {$h$} [lt] at 2.6 5.5
\put {$0$} [rc] at -3.6 0

\setlinear 
\setdots <0pt> 
\plot 9.425 -.075 9.5 0 /
\plot 9.425 .075 9.5 0 /
\plot -3.575 6.925 -3.5 7 /
\plot -3.5 7 -3.425 6.925 /
\endpicture
\] 

\caption{The functions in Proposition~\ref{lemconv1} (the function $h$ is the dotted curve).}
\label{FigProp}
\end{figure}

In order not to interrupt the main flow of ideas, the proof
of Proposition~\ref{lemconv1}, which is somewhat technical, is postponed to the appendix.

 In the second proposition, we prepare the preliminary sequence of bumps $\phi_1,\dots,\phi_m$.
This sequence is similar to that in the proof of Theorem~\ref{thm1}, 
but needs more precise choices of parameters.
Also, we introduce the corresponding linking functions $h_i$, as well as majorizing functions~$H_i$
that will be used in the deformation step.

  \goodbreak
  
\begin{proposition}\label{propStep1}
Let the constants $L,K_1,c_1,c_2$ and the functions $\gamma_0,\psi_\eps,h_{\bar\eps,\eps}$ be defined in 
Propositions~\ref{lem3bis} and \ref{lemconv1}, and set $c_0=(c_1+c_2)/2$.
For each $m\ge 2$, there exist numbers $0<\eps_1<\dots<\eps_m<\eps_{m+1}<\frac14$,
$\gamma_1,\dots,\gamma_{m+1}>0$ and functions $\phi_1,\dots,\phi_m$, $h_1,\dots,h_m$, $H_1,\dots,H_{m+1}\in C^2([0,1])$, 
with the following properties:
\begin{eqnarray}
&&\phi_i=\psi_{\eps_i},\qquad i\in\{1,\dots,m\}, \label{propStep1Eq1}\\
\noalign{\vskip 1mm}
&&h_i=h_{\eps_i,\,\eps_{i+1}},\qquad i\in\{1,\dots,m-1\},  \label{propStep1Eq2}\\
\noalign{\vskip 1mm}
&&h_m=\phi_m, \\
\noalign{\vskip 1mm}
&&H_i=\max(h_i,\dots,h_m), \qquad i\in\{1,\dots,m\},  \label{propStep1Eq4}\\
\noalign{\vskip 1mm}
&&H_{m+1}=0, \\
\noalign{\vskip 1mm}
&&\hbox{$\|H_1\|_\infty<2^{-\alpha}c_p$ \quad and \quad ${\rm Supp}(H_1)\subset (0,\frac12)$}, 
 \label{propStep1suppH1}\\
\noalign{\vskip 1mm}
&&\gamma_i=\min\Bigl\{\textstyle\frac14\gamma_0\bigl(\frac12\eps_i,\|H_i\|_{C^2([0,1])}\bigr),
\frac{C_1}{2L}\eps_i^2\Bigr\},\qquad i\in\{1,\dots,m+1\}, 
\label{propStep1EqGamma}\\
\noalign{\vskip 1mm}
&&\eps_{i-1}=\min\Bigl\{\textstyle\frac18\eps_i,\bigl(K_1^{-1}\gamma_i\bigr)^{1/\alpha}, 
\frac12\bigl(c_2^{-1}L\gamma_i\bigr)^{1/2}\Bigr\},\qquad i\in\{2,\dots,m+1\}. \label{propStep1EqLast}
\end{eqnarray}
Moreover, the functions $\phi_i$ have disjoint supports and 
\be\label{psiepsPZ1B}
h_i\ge \phi_i+\phi_{i+1},\quad i\in\{1,\dots,m-1\}.
\ee
 Finally, the above remains valid for $m=1$, omitting properties \eqref{propStep1Eq2} and \eqref{psiepsPZ1B}.
\end{proposition}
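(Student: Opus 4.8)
The plan is to construct the quantities $\eps_i,\gamma_i$ and the functions $\phi_i,h_i,H_i$ by a \emph{backward recursion} on $i$, exactly as in the proof of Theorem~\ref{thm1} but keeping track of the extra structure needed later. First I would fix $m\ge 1$ and choose $\eps_{m+1}\in(0,\frac14)$ sufficiently small, then set $\eps_m$ slightly smaller still; the basic idea is that once $\eps_m,\dots,\eps_i$ and hence $\phi_m,\dots,\phi_i$, $h_m,\dots,h_i$, $H_m,\dots,H_i$ are defined, formula \eqref{propStep1EqGamma} defines $\gamma_i$ and then \eqref{propStep1EqLast} defines $\eps_{i-1}$, after which $\phi_{i-1}=\psi_{\eps_{i-1}}$ (Proposition~\ref{lemconv1}(i)), $h_{i-1}=h_{\eps_{i-1},\eps_i}$ (Proposition~\ref{lemconv1}(ii), whose hypothesis $\bar\eps\le\eps/2$ is guaranteed by the factor $\frac18$ in \eqref{propStep1EqLast}), and $H_{i-1}=\max(h_{i-1},\dots,h_m)$. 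So the recursion is well posed provided each step's hypotheses are met, and the only thing to verify is that the global constraints \eqref{propStep1suppH1}, the disjointness of supports, and \eqref{psiepsPZ1B} hold.

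The key steps, in order, are: (1) set up the recursion and check it is consistent, i.e.\ that $\gamma_0(\frac12\eps_i,\|H_i\|_{C^2})$ in \eqref{propStep1EqGamma} makes sense — this uses that $H_i\in C^2([0,1])$ with $H_i\ge0$, so Proposition~\ref{lem3bis} applies — and that $\eps_{i-1}<\frac18\eps_i$, giving $\eps_1<\cdots<\eps_{m+1}<\frac14$; (2) disjointness of the supports: by \eqref{psiepsP1} of Proposition~\ref{lemconv1}, $\mathrm{Supp}(\phi_i)=[(1-a_2)\eps_i,(1+a_2)\eps_i]$, and since $\eps_{i-1}\le\frac18\eps_i$ we have $(1+a_2)\eps_{i-1}\le\frac{1}{8}(1+a_2)\eps_i<(1-a_2)\eps_i$ because $a_2<\frac14$, so consecutive supports are disjoint, hence all are; (3) \eqref{psiepsPZ1B}: this is exactly the first relation in \eqref{psiepsPZ1} of Proposition~\ref{lemconv1}(ii) applied with $\bar\eps=\eps_i$, $\eps=\eps_{i+1}$, so $h_i=h_{\eps_i,\eps_{i+1}}\ge\psi_{\eps_i}+\psi_{\eps_{i+1}}=\phi_i+\phi_{i+1}$; (4) the bound on $H_1$: from \eqref{psiepsP3} and \eqref{psiepsPZ2b} every $h_j$ satisfies $\|h_j\|_\infty\le K_1\eps_j^\alpha\le K_1\eps_{m+1}^\alpha$, hence $\|H_1\|_\infty\le K_1\eps_{m+1}^\alpha$, and choosing $\eps_{m+1}$ small at the very start makes this $<2^{-\alpha}c_p$; similarly $\mathrm{Supp}(H_1)\subset\bigcup_j\mathrm{Supp}(h_j)\subset(0,(1+a_2)\eps_{m+1}]\subset(0,\frac12)$ since $(1+a_2)\eps_{m+1}<\frac{5}{4}\cdot\frac14<\frac12$.

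One subtlety is that $\eps_{m+1}$ must be chosen small enough \emph{a priori} to ensure \eqref{propStep1suppH1}, yet the bound $K_1\eps_{m+1}^\alpha$ on $\|H_1\|_\infty$ does not depend on the later choices $\eps_m,\dots,\eps_1$ (which are all $\le\eps_{m+1}$), so there is no circularity: we simply require $K_1\eps_{m+1}^\alpha<2^{-\alpha}c_p$ and $(1+a_2)\eps_{m+1}<\frac12$ from the outset. Another point to record is that $\gamma_i>0$ for each $i$, which holds because $\gamma_0(\frac12\eps_i,\cdot)>0$ by Proposition~\ref{lem3bis} and $\eps_i>0$; consequently $\eps_{i-1}>0$ as well. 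I expect the main (minor) obstacle to be purely bookkeeping: making sure the indices line up so that $h_i$ depends only on already-constructed data ($h_i=h_{\eps_i,\eps_{i+1}}$ needs $\eps_{i+1}$, available since we go backward from $i=m$ down) and that $H_i=\max(h_i,\dots,h_m)$ is genuinely $C^2$ — here one uses that by \eqref{psiepsP1}, \eqref{psiepsP4} and \eqref{psiepsPZ1} the supports of the $h_j$ are ``almost disjoint'' (they can only overlap on the linking intervals where one of them is defined to coincide with the relevant $\psi$), so in fact on any small interval at most the graph of a single $h_j$ is nonzero and the max is locally just one smooth function, except that one should take $H_i$ to be a fixed $C^2$ function $\ge\max(h_i,\dots,h_m)$ with the same qualitative properties; I would phrase \eqref{propStep1Eq4} accordingly (as is done in the statement, $H_i=\max(h_i,\dots,h_m)$, understanding that the supports are arranged so this is $C^2$, or else replacing it by a $C^2$ majorant — a one-line remark suffices). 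For $m=1$ everything collapses: there are no linking functions, $h_1=\phi_1=\psi_{\eps_1}$, $H_1=h_1$, and \eqref{propStep1Eq2} and \eqref{psiepsPZ1B} are vacuous, so the construction is the single-bump case of Theorem~\ref{thm1} with the refined choice \eqref{psiepsP1}--\eqref{psiepsLBC} of $\psi_{\eps_1}$.
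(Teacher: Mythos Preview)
Your proposal is correct and follows the same backward-recursion strategy as the paper. The one refinement worth noting is the $C^2$ regularity of $H_i$, where you hedge toward a possible majorant: no majorant is needed, because the paper maintains the explicit inductive invariant that $H_i=\phi_i$ on $[0,(1+a_1)\eps_i]$ and $H_i\ge\phi_i$ on $[0,1]$, which combined with \eqref{psiepsPZ1} (giving $h_{i-1}=\phi_i$ on $[(1-a_1)\eps_i,1]$ and $h_{i-1}\ge\phi_i$ everywhere) shows that $H_{i-1}=\max(H_i,h_{i-1})$ equals $H_i$ on $[\eps_i,1]$, equals $h_{i-1}$ on $[0,\eps_i)$, and equals $\phi_i$ on the overlap $[(1-a_1)\eps_i,(1+a_1)\eps_i]$ --- so the max is genuinely $C^2$.
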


\begin{figure}[h]
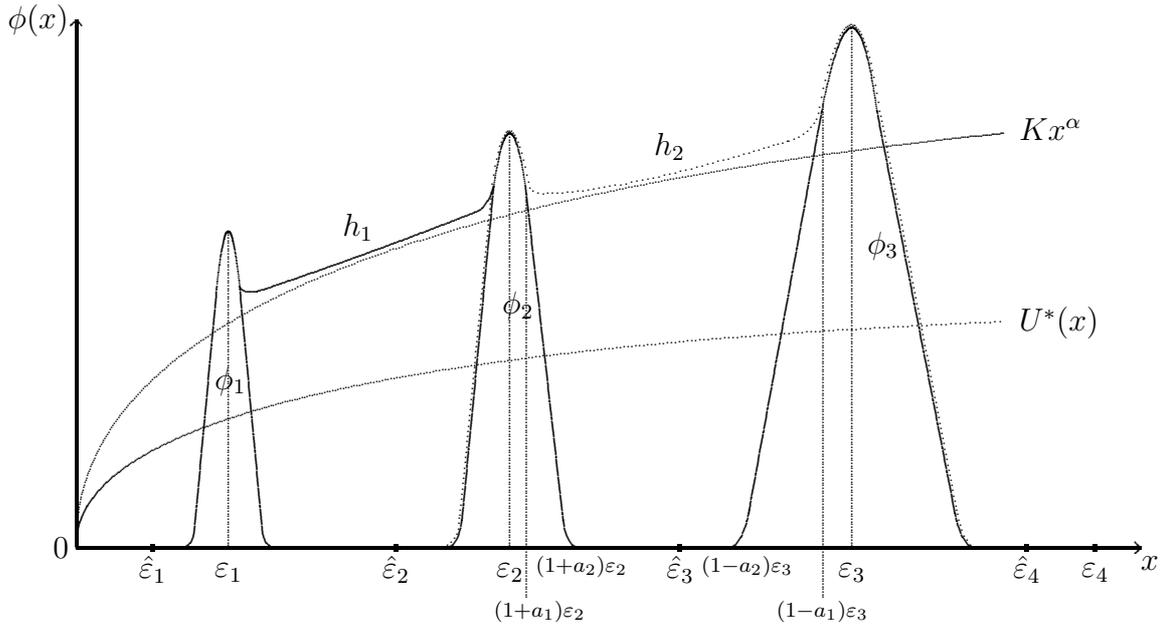

\[
\beginpicture
\setcoordinatesystem units <1cm,1cm>
\setplotarea x from -6 to 6, y from -1 to 6

\setdots <0pt>
\linethickness=1pt
\putrule from -5 0 to 9 0
\putrule from -5 0 to -5 7

\setquadratic
% base of \phi_1
\plot  3.85 0.5 3.75 0.16 3.6 0 /
\plot  6.55 0.5 6.65 0.16 6.8 0 /

% base of \phi_2
\plot 1.4 0.4 1.47 0.12 1.6 0 /
\plot 0.076 0.4 0.006 0.12 -0.124  0 /

% base of \phi_3
\plot -3.45 0.2 -3.5 0.07 -3.6 0 /
\plot -2.55 0.2 -2.5 0.07 -2.4 0 /

\setlinear 
% middle of \phi_1
\plot 5.53 5.8 6.55 0.5 /
\plot 4.807 5.8 3.85 0.5 /
% middle of \phi_2
\plot 0.492 4.8 0.076 0.4 /
\plot 0.908 4.8 1.4 0.4 /
% middle of \phi_3
\plot -3.15 3.46 -3.45 0.2 /
\plot -2.85 3.46 -2.55 0.2 /

\setquadratic

% cap of \phi_1
\plot 4.807 5.8 5.2 6.9 5.53 5.8 /
% cap of \phi_2
\plot 0.492 4.8 0.7 5.5 0.908 4.8 /
% cap of \phi_3
\plot -2.85 3.46 -3 4.2  -3.15 3.46 /

% h_1
\setdots <2pt>
\plot 0.95 4.85 1.01 4.73 1.2 4.7 /
\plot 1.2 4.7 2.55 4.9 4.37 5.4 /
\plot 4.37 5.4 4.63 5.54 4.777 5.8 /
\plot 4.777 5.85 5.2 6.935 5.57 5.8 /
\setlinear 
\plot 5.57 5.8 6.58 0.5 /
\plot 0.46 4.7 0.044 0.4 /
\setquadratic
\plot  6.58 0.5 6.68 0.16 6.83 0 /
\plot 0.47 4.83 0.7 5.53 0.95 4.83 /
\plot 0.04 0.4 -0.03 0.12 -0.24 0 /

% h_2

\setdots <0pt>
\plot -2.62 3.4 -1.2 3.9 0.24 4.45 /
\plot -2.85 3.46 -2.77 3.4 -2.62 3.4  /
\plot 0.24 4.45 0.384 4.56 0.492 4.8 /

%U^*
\setdots <1pt>
\plot -5 0  -2 2 7.2 3 /
\plot -5 0  -2 3.5 7.2 5.5 /

% vertical dotted lines
\setlinear 
\setdots <1pt>
\plot 5.2 0 5.2 6.9 /
\plot 0.7 0 0.7 5.5 /
\plot -3 0 -3 4.2 /

\plot 4.82 -0.65 4.82 5.9 /
\plot 0.92 -0.65 0.92 4.7 /

\put {$x$} [lt] at 9 -.1
\put {$\hat\eps_1$}  [ct] at -4 -.1
\put {$\eps_1$}  [ct] at -3 -.2
\put {$\hat\eps_2$}  [ct] at -0.8 -.1
\put {$\eps_2$}  [ct] at 0.7 -.2
\put {$\hat\eps_3$}  [ct] at 2.94 -.1
\put {$\eps_3$}  [ct] at 5.2 -.2
\put {$^{(1-a_1)\eps_3}$}  [ct] at 4.82 -.7
\put {$^{(1-a_2)\eps_3}$}  [ct] at 3.83 -.1
\put {$^{(1+a_1)\eps_2}$}  [ct] at 1.1 -.7
\put {$^{(1+a_2)\eps_2}$}  [ct] at 1.65 -.1
\put {$\hat\eps_4$}  [ct] at 7.5 -.1
\put {$\eps_4$}  [ct] at 8.4 -.2

\linethickness=1.5pt
\putrule from 8.4 -0.04 to 8.4  0.04
\putrule from 7.5 -0.04 to 7.5  0.04
\putrule from 2.94 -0.04 to 2.94  0.04
\putrule from -0.8 -0.04 to -0.8  0.04
\putrule from -4 -0.04 to -4  0.04

\put {$U^*(x)$} [lt] at 7.4 3.2
\put {$Kx^\alpha$} [lt] at 7.4 5.7
\put {$\phi_3$} [lt] at 5.4 4.2
\put {$\phi_2$} [lt] at 0.6 3.4
\put {$\phi_1$} [lt] at -3.18 2.4
\put {$h_2$} [lt] at 2.6 5.5
\put {$h_1$} [lt] at -1.5 4.43

\put {$\phi(x)$} [rc] at -5.1 7
\put {$0$} [rc] at -5.1 0

\setlinear 
\setdots <0pt> 
\plot 8.925 -.075 9 0 /
\plot 8.925 .075 9 0 /
\plot -5.075 6.925 -5 7 /
\plot -5 7 -4.925 6.925 /

\endpicture
\] 

\caption{The bumps and linking functions in Proposition~\ref{propStep1} (for $m=3$)}
\label{FigMulti}
\end{figure}

\begin{proof}
In view of the application of Proposition~\ref{lem3bis}, bumps will be sequentially added by moving toward the boundary $x=0$.
Therefore, we proceed by backward induction, starting from
\be\label{LBC21B0}
\eps_{m+1}\in \bigl(0,\min\bigl\{\textstyle\frac14,\frac12(c_p/K_1)^{1/\alpha}\bigr\}\bigr),\qquad H_{m+1}=0.
\ee
Take $i\in\{2,\dots,m+1\}$,  and assume that $0<\eps_i<\dots<\eps_{m+1}$, $H_i,\dots,H_{m+1}\in C^2([0,1])$ 
have already been chosen.
If $i\le m$, assume in addition that $\phi_i,\dots,\phi_m$, $h_i,\dots,h_m\in C^2([0,1])$ 
have already been chosen and that they satisfy
\be\label{condHi3}
\hbox{$H_i=\phi_i$ in $[0,(1+a_1)\eps_i]$,\qquad $H_i\ge\phi_i$ in $[0,1]$,}
\ee
where $a_1$ is given by Proposition~\ref{lemconv1}.
We let
$$ 
\gamma_i=\min\Bigl\{\textstyle\frac14\gamma_0\bigl(\frac12\eps_i,\|H_i\|_{C^2([0,1])}\bigr),
\frac{C_1}{2L}\eps_i^2\Bigr\}
$$
where the function $\gamma_0$ is given by Proposition~\ref{lem3bis},
omitting the dependence in $p,\Omega$ for conciseness, and we next choose
$$
\eps_{i-1}=\min\Bigl\{\textstyle\frac18\eps_i,\bigl(K_1^{-1}\gamma_i\bigr)^{1/\alpha}, 
\frac12 \bigl(c_2^{-1}L\gamma_i\bigr)^{1/2}
\Bigr\}.
$$
With the notation of Proposition~\ref{lemconv1}, we then set 
$$\phi_{i-1}=\psi_{\eps_{i-1}},$$
$$
h_{i-1}=\begin{cases}
\hbox{$\phi_m$,} &\hbox{ if $i=m+1$,}\\
\noalign{\vskip 1mm}
\hbox{$h_{\eps_{i-1},\,\eps_i}$,}
&\hbox{ if $i\le m$,}
\end{cases}
$$
and
\be\label{choiceHh}
H_{i-1}=\max(H_i,h_{i-1}).
\ee

Observe that, by \eqref{psiepsPZ1} and \eqref{condHi3},
we have $H_{i-1}=H_i$ on $[\eps_i,1]$, $H_{i-1}=h_{i-1}$ on $[0,\eps_i)$, and 
$H_{i-1}=\phi_i$ on $[(1-a_1)\eps_i,(1+a_1)\eps_i]$.
This in particular guarantees that $H_{i-1}\in C^2([0,1])$. 
Using \eqref{psiepsPZ1} again, we also have $H_{i-1}=\phi_{i-1}$ in $[0,(1+a_1)\eps_{i-1}]$
and, in view of \eqref{choiceHh}, $H_{i-1}\ge\phi_{i-1}$ in $[0,1]$.
Therefore, the new function $H_{i-1}$ satisfies \eqref{condHi3} with $i$ replaced by $i-1$
and we can carry out the iteration from $i=m+1$ down to $i=2$.

Finally, we define 
$\gamma_1=\min\bigl\{\textstyle\frac14\gamma_0\bigl(\frac12\eps_1,\|H_1\|_{C^2([0,1])}\bigr),
\frac{C_1}{2L}\eps_1^2 \bigr\}$.
It is then easy to check that properties \eqref{propStep1Eq1}-\eqref{propStep1EqLast} are true,
whereas the disjointness of the supports of the $\phi_i$ and \eqref{psiepsPZ1B}
respectively follow from \eqref{psiepsP1} and \eqref{psiepsPZ1}.
\end{proof}

\begin{proof}[Proof of Theorem~\ref{conjz1}']
{\bf Step~1.} {\it Preliminary multibump initial data.}
 Our starting point is a multibump initial data, defined by the sum 
$$\hat\phi:=\sum_{i=1}^m \phi_i,$$
where $m$ will be determined herefater and the compactly supported space bumps $\phi_1,\dots,\phi_m$ 
are given by Proposition~\ref{propStep1} (recall that they have disjoint supports).
Suitable deformations will then be applied to $\hat\phi$.
In the rest of the proof, we shall use the various constants and functions defined in Proposition~\ref{propStep1}.

Let us compute the number $m$ of space bumps, and the deformation type
of each bump, in terms of the given sequence $(\bar\sigma_i)$
(recall that we are proving Theorem~\ref{conjz1} under the equivalent form of Theorem~\ref{conjz1}').
 We first inductively define integers $(\kappa_i)_{1\le i\le d+1}$ by setting 
\be\label{defkappa1}
\kappa_1=1,\qquad \kappa_{i+1}=\kappa_i+\max(1,\bar\sigma_i)\ \hbox{ for $i=1,\dots,d$}.
\ee
We then set $m:=\kappa_{d+1}-1$ and define $(\xi_j)_{1\le j\le m}$ by:
\be\label{defkappa}
\begin{cases}
\hbox{$\xi_{\kappa_i}=\bar\sigma_i$} &\hbox{ if $\bar\sigma_i\le 1$,} \\
\noalign{\vskip 1mm}
\hbox{$\xi_{\kappa_i}=\dots=\xi_{\kappa_i+\bar\sigma_i-2}=2$, \ $\xi_{\kappa_i+\bar\sigma_i-1}=1$,}
&\hbox{ if $\bar\sigma_i\ge 2$.}
\end{cases}
\ee
\par\noindent The numbers $\xi_i$ will play the role of deformation indices.
Namely, for all $i\in\{1,\dots,m\}$, if
$$\xi_i=
\begin{cases}
0,&\hbox{ then the amplitude of $\phi_i$ will be reduced,} \\
\noalign{\vskip 1mm}
1,&\hbox{ then $\phi_i$ will be left unchanged,} \\
\noalign{\vskip 1mm}
2,&\hbox{ then $\phi_i$ will be linked with its right neighbor} \\ 
\end{cases}$$
(note that $\xi_m\ne 2$ by \eqref{defkappa}).
As indicated in the outline of proof, these operations are respectively aimed at producing GBU without LBC,
LBC, and bouncing.
 For the example in~Fig.~\ref{FigMixed2}
(where we had $d=3$, $\bar\sigma_1=1$, $\bar\sigma_2=3$ and $\bar\sigma_3=0$),
we need $m=5$ bumps. These bumps and their planned deformations,
as well as the indices $\kappa_i$, $\xi_i$, are depicted in Fig.~\ref{FigMixed3}.

\begin{figure}[h]
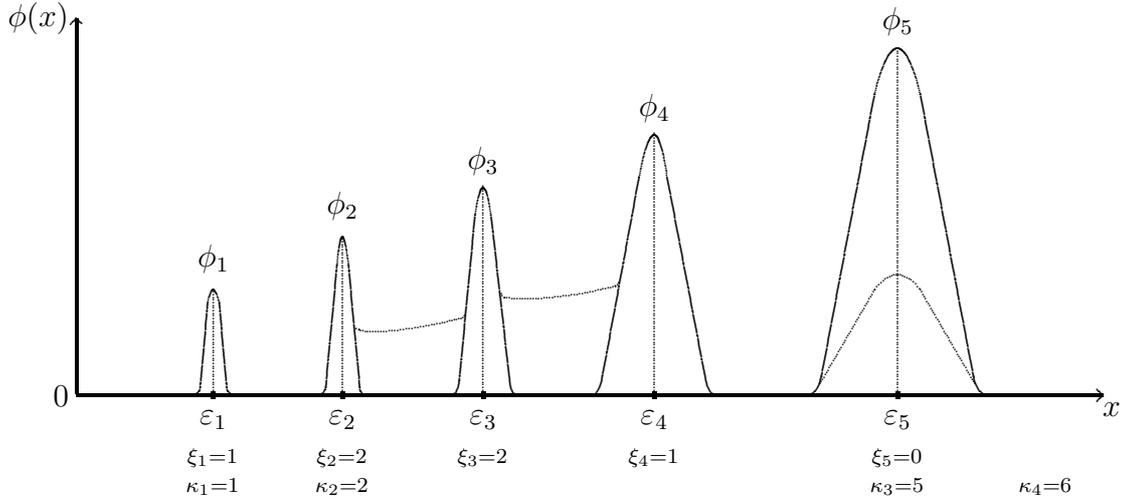

\[
\beginpicture
\setcoordinatesystem units <1cm,1cm>
\setplotarea x from -6 to 6, y from -0.5 to 6

\setdots <0pt>
\linethickness=1pt
\putrule from -5 0 to 8.5 0
\putrule from -5 0 to -5 5

\setquadratic
% base of \phi_1
\plot  4.8 0.25 4.72 0.06 4.6 0 /
\plot  6.8 0.25 6.88 0.06 7 0 /

% base of \phi_2
\plot  1.92 0.25 1.87 0.08 1.8 0 /
\plot  3.27 0.25 3.32 0.08 3.4 0 /

% base of \phi_3
\plot 0.7 0.2 0.73 0.06 0.8 0 /
\plot 0.038 0.2 0.003 0.06 -0.062  0 /

% base of \phi_4
\plot -1.72 0.1 -1.75 0.035 -1.8 0 /
\plot -1.27 0.1 -1.25 0.035 -1.2 0 /

% base of \phi_5
\plot -3.37 0.1 -3.4 0.035 -3.47 0 /
\plot -3.02 0.1 -2.99 0.035 -2.92 0 /

\setlinear 
% middle of \phi_1
\plot 5.5 4 4.8 0.25 /
\plot 6.1 4 6.8 0.25 /

% middle of \phi_1B
\setdots <1pt>
\plot 5.5 1.4 4.78 0.15 /
\plot 6.1 1.4 6.82 0.15 /

\setdots <0pt>
% middle of \phi_2
\plot 2.77 2.9 3.27 0.25 /
\plot 2.403 2.9 1.92 0.25 /
% middle of \phi_3
\plot 0.246 2.4 0.038 0.2 /
\plot 0.454 2.4 0.7 0.2 /
% middle of \phi_4
\plot -1.57 1.73 -1.72 0.1 /
\plot -1.42 1.73 -1.27 0.1 /
% middle of \phi_5
\plot -3.12 1.23 -3.02 0.1 /
\plot -3.27 1.23 -3.37 0.1 /

\setquadratic

% cap of \phi_1
\plot 5.5 4 5.8 4.6 6.1 4 /
% cap of \phi_1B
\setdots <1pt>
\plot 5.5 1.4 5.8 1.6 6.1 1.4 /
\setdots <0pt>
% cap of \phi_2
\plot 2.403 2.9 2.6 3.45 2.77 2.9 /
% cap of \phi_3
\plot 0.246 2.4 0.35 2.75 0.454 2.4 /
% cap of \phi_4
\plot -1.42 1.73 -1.5 2.1  -1.57 1.73 /
% cap of \phi_5
\plot -3.12 1.23 -3.2 1.4 -3.27 1.23 /

% h_1
\setdots <1pt>
\plot 0.59 1.35 0.6 1.315  0.63 1.3 /
\plot 0.63 1.3 1.27 1.3 2.1 1.45 /
\plot 2.09 1.45 2.12 1.47 2.14 1.51 /

% h_2
\setdots <1pt>
\plot -1.28 0.87 -1.29 0.87 -1.35 0.91 /
\plot -1.26 0.86 -0.73 0.87 0.06 1.02 /
\plot 0.08 1.03 0.1 1.05 0.12 1.07 /

% vertical dotted lines
\setlinear 
\setdots <1pt>
\plot  5.8 0  5.8  4.6 /
\plot 2.6 0 2.6 3.5 /
\plot 0.35 0 0.35 2.8 /
\plot -1.5 0 -1.5 2.1 /
\plot -3.2 0 -3.2 1.38 /

\put {$x$} [lt] at 8.5 -.1
\put {$\eps_1$}  [ct] at -3.2 -.2
\put {$\eps_2$}  [ct] at -1.5 -.2
\put {$\eps_3$}  [ct] at 0.35 -.2
\put {$\eps_4$}  [ct] at 2.6 -.2
\put {$\eps_5$}  [ct] at 5.8 -.2

\put {$\phi_1$}  [ct] at -3.2 2
\put {$\phi_2$}  [ct] at -1.5 2.7
\put {$\phi_3$}  [ct] at 0.35 3.3
\put {$\phi_4$}  [ct] at 2.6 4
\put {$\phi_5$}  [ct] at 5.8 5.15

\put {$_{\xi_1=1}$}  [ct] at -3.2 -.7
\put {$_{\xi_2=2}$}  [ct] at -1.5 -.7
\put {$_{\xi_3=2}$}  [ct] at 0.35 -.7
\put {$_{\xi_4=1}$}  [ct] at 2.6 -.7
\put {$_{\xi_5=0}$}  [ct] at 5.8 -.7

\put {$_{\kappa_1=1}$}  [ct] at -3.2 -1.1
\put {$_{\kappa_2=2}$}  [ct] at -1.5 -1.1
\put {$_{\kappa_3=5}$}  [ct] at 5.8 -1.1
\put {$_{\kappa_4=6}$}  [ct] at 7.75 -1.1

\linethickness=1.5pt
\putrule from -3.2 -0.04 to -3.2 0.04
\putrule from -1.5 -0.04 to -1.5 0.04
\putrule from 0.35 -0.04 to 0.35 0.04
\putrule from 2.6 -0.04 to 2.6 0.04
\putrule from 5.8 -0.04 to 5.8 0.04

\put {$\phi(x)$} [rc] at -5.1 5
\put {$0$} [rc] at -5.1 0

\setlinear 
\setdots <0pt> 
\plot 8.425 -.075 8.5 0 /
\plot 8.425 .075 8.5 0 /
\plot -5.075 4.925 -5 5 /
\plot -5 5 -4.925 4.925 /

\endpicture
\] 

\caption{The bumps $\phi_i$ and indices $\kappa_i$, $\xi_i$ corresponding to the example in~Fig.~\ref{FigMixed2}.
The planned deformations are represented by the dotted curves.}
\label{FigMixed3}
\end{figure}

{\bf Step~2.} {\it Deformation of initial data and their intersection properties.} 
Denote by $1\le i_1<\dots<i_q\le m$ the indices of the bumps to be deformed, 
i.e. the elements of $\{1,\dots,m\}$ such that $\xi_i\ne 1$, 
and define the parameter space $\mathcal{J}:=[0,1]^q$.
In the rest of the proof we assume $q\ge 2$. The much easier case $q=1$ can be treated by obvious modifications.
Also, for any $\mu\in\mathcal{J}$ and $j\in \{1,\dots,q\}$, we shall denote $\tilde \mu_j=(\mu_j,\dots,\mu_q)$,
so that we can write $\mu=(\mu_1,\dots,\mu_{j-1},\tilde\mu_j)$ for $j\in \{2,\dots,q\}$.

We construct a $q$-parameter deformation of $\hat\phi$, denoted by $\Phi_\mu$ by setting
\be\label{defPhimu}
\Phi_\mu=\hat\phi+\sum_{1\le \ell\le q\atop \xi_{i_\ell}=0} (\mu_\ell-1)\phi_{i_\ell}
+\sum_{1\le \ell\le q\atop \xi_{i_\ell}=2} \mu_\ell \tilde \phi_{i_\ell},
\quad\mu=(\mu_1,\dots,\mu_q)\in\mathcal{J},
\ee
where $\tilde \phi_{i_\ell}:=h_{i_\ell}-\phi_{i_\ell}-\phi_{{i_\ell}+1}\ge 0$ (owing to \eqref{psiepsPZ1B}).
Observe that the supports of the deformations 
 satisfy
\be\label{defsupportsPhimu}
\begin{cases}
{\rm Supp}(\phi_{i_\ell})\subset I^1_\ell:=\bigl[(1-a_2)\eps_{i_\ell},(1+a_2)\eps_{i_\ell}\bigr]&\hbox{ if $\xi_{i_\ell}=0$,} \\
\noalign{\vskip 1mm}
{\rm Supp}(\tilde \phi_{i_\ell})\subset I^2_\ell:=\bigl[(1+a_1)\eps_{i_\ell},(1-a_1)\eps_{i_\ell +1}\bigr]&\hbox{ if $\xi_{i_\ell}=2$,} 
\end{cases}
\ee
 by \eqref{psiepsP1}, \eqref{psiepsPZ1}, 
\eqref{propStep1Eq1}, \eqref{propStep1Eq2}, and all these intervals are disjoint
(since for all $i\in\{2,\dots,m\}$,\ $\xi_i=0\,\Rightarrow\, \xi_{i-1}\ne 2$, in view of \eqref{defkappa}).

Setting
$$\hat\eps_i=\textstyle\frac34\eps_i,\ \ 1\le i\le  m+1,$$ 
 and recalling that $\eps_{i-1}\le \frac18\eps_i$ by \eqref{propStep1EqLast}, we have
$$\hat\eps_1<\eps_1<\hat\eps_2<\dots<\hat\eps_m<\eps_m<\hat\eps_{m+1}<\eps_{m+1}.$$
We note that for each $i\in\{1,\dots,m\}$, $\Phi_\mu$ satisfies
\be\label{phimucases}
\Phi_\mu=
\begin{cases}
\hat\phi \quad\hbox{ on $[\hat\eps_i,\hat\eps_{i+1}]$}&\hbox{ if $\xi_i=1$} \\
\noalign{\vskip 1mm}
\mu_\ell \hat\phi \quad\hbox{ on $[\hat\eps_i,\hat\eps_{i+1}]$}&\hbox{ if $\xi_i=0,\ i=i_\ell,\ \ell\in\{1,\dots,q\}$} \\
\noalign{\vskip 1mm}
\mu_\ell h_i+(1-\mu_\ell)\hat\phi \quad\hbox{ on $[\eps_i,\eps_{i+1}]$}&\hbox{ if $\xi_i=2,\ i=i_\ell, \ \ell\in\{1,\dots,q\}$.} \\
\end{cases}
\ee
For simplicity, we shall denote the corresponding solutions by
$$u(\mu;x,t)=u(\Phi_\mu;x,t), \quad\mu\in\mathcal{J}.$$
 Owing to \eqref{propStep1suppH1}, we also have
\be\label{controlphimucp}
\|\Phi_\mu\|_\infty<c_p,\quad \mu\in\mathcal{J},
\ee
\be\label{controlsuppphimu}
{\rm Supp}(\Phi_\mu)\subset (0,\textstyle\frac12),\quad \mu\in\mathcal{J},
\ee
 and there exists $a>0$ such that 
\be\label{controlphimuUa}
\Phi_\mu(x)\le U_a(1-x),\quad 0\le x\le 1,\quad \mu\in\mathcal{J},
\ee
where the regular steady state $U_a$ is defined in \eqref{defUa}.
It follows from \eqref{controlphimuUa} and Proposition~\ref{compP} that $u(\mu;x,t)\le U_a(1-x)$ for all $(x,t)\in [0,1]\times[0,\infty)$.
As a consequence of Lemma~\ref{basic-prop0}(ii) (applied to the reflected solution $\tilde u(x,t)=u(1-x,t)$), we have 
\be\label{controlphimuat1a}
\hbox{$u(\mu;\cdot,\cdot)\in C^{2,1}([\frac12,1]\times(0,\infty))$}
\ee
and
\be\label{controlphimuat1}
u(\mu;1,t)=0,\quad t\ge 0,\quad \mu\in\mathcal{J}.
\ee

For all $\mu\in\mathcal{J}$ and $i\in\{1,\dots,m\}$, as a direct consequence of \eqref{phimucases} and Proposition~\ref{lemconv1}, the initial data $\Phi_\mu$ enjoy the following intersection properties
(these properties, for the example from Fig.~\ref{FigMixed2}, are illustrated in Fig.~\ref{FigSign}):
\begin{eqnarray}
&{\hskip -5mm}\xi_i=2 &\Longrightarrow\ z_{|[\eps_i,\eps_{i+1}]}(\Phi_\mu-U^*)\le 2
\ \hbox{ with $\Phi_\mu-U^*>0$ in $\{\eps_i,\eps_{i+1}\}$,} \label{intersecmu1} \\
\noalign{\vskip 1mm}
&{\hskip -5mm}\xi_i=0 &\Longrightarrow\ z_{|[\hat\eps_i,\hat\eps_{i+1}]}(\Phi_\mu-U^*)\le 2
\ \hbox{ with $\Phi_\mu-U^*<0$ in $\{\hat\eps_i,\hat\eps_{i+1}\}$,}  \label{intersecmu2}
\end{eqnarray}
and we also have
\be\label{intersecmu3}
\begin{aligned}
&\xi_i=1 \ \ \Longrightarrow\ z_{|[\eps_i,\hat\eps_{i+1}]}(\Phi_\mu-U^*)=1\\ 
&\qquad\qquad\qquad\qquad \ \hbox{ with $(\Phi_\mu-U^*)(\eps_i)>0>(\Phi_\mu-U^*)(\hat\eps_{i+1})$}
\end{aligned}
\ee
and
\be\label{intersecmu4}
\begin{aligned}
&(\xi_i\ne 0,\ \hbox{ with $\xi_{ i-1}\ne 2$ if $i\ge 2$}) \ \ \Longrightarrow\ z_{|[\hat\eps_i,\eps_i]}(\Phi_\mu-U^*)=1\\ 
 &\qquad\qquad\qquad\qquad \ \hbox{ with $(\Phi_\mu-U^*)(\hat\eps_i)<0<(\Phi_\mu-U^*)(\eps_i)$.}
\end{aligned}
\ee
In particular, by an easy induction argument, it follows that
\be\label{intersecmu5}
 z_{|[0,1]}(\Phi_\mu-U^*)\le 2m. 
\ee

\begin{figure}[h]
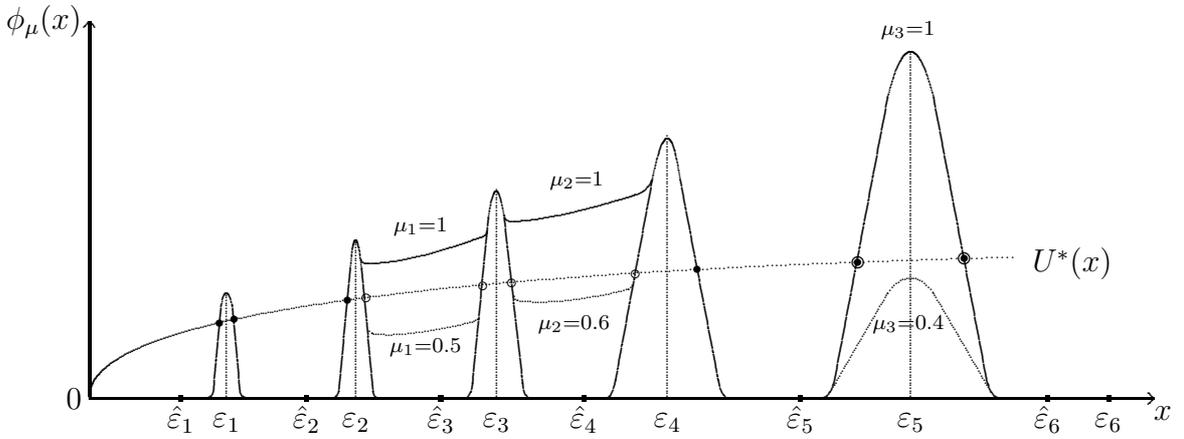

\[
\beginpicture
\setcoordinatesystem units <1cm,1cm>
\setplotarea x from -6 to 6, y from -0.5 to 6

\setdots <0pt>
\linethickness=1pt
\putrule from -5 0 to 9 0
\putrule from -5 0 to -5 5

\setquadratic
% base of \phi_1
\plot  4.8 0.25 4.72 0.06 4.6 0 /
\plot  6.8 0.25 6.88 0.06 7 0 /

% base of \phi_2
\plot  1.92 0.25 1.87 0.08 1.8 0 /
\plot  3.27 0.25 3.32 0.08 3.4 0 /

% base of \phi_3
\plot 0.7 0.2 0.73 0.06 0.8 0 /
\plot 0.038 0.2 0.003 0.06 -0.062  0 /

% base of \phi_4
\plot -1.72 0.1 -1.75 0.035 -1.8 0 /
\plot -1.27 0.1 -1.25 0.035 -1.2 0 /

% base of \phi_5
\plot -3.37 0.1 -3.4 0.035 -3.47 0 /
\plot -3.02 0.1 -2.99 0.035 -2.92 0 /

\setlinear 
% middle of \phi_1
\plot 5.5 4 4.8 0.25 /
\plot 6.1 4 6.8 0.25 /

% middle of \phi_1B
\setdots <1pt>
\plot 5.5 1.4 4.78 0.15 /
\plot 6.1 1.4 6.82 0.15 /

\setdots <0pt>
% middle of \phi_2
\plot 2.77 2.9 3.27 0.25 /
\plot 2.403 2.9 1.92 0.25 /
% middle of \phi_3
\plot 0.246 2.4 0.038 0.2 /
\plot 0.454 2.4 0.7 0.2 /
% middle of \phi_4
\plot -1.57 1.73 -1.72 0.1 /
\plot -1.42 1.73 -1.27 0.1 /
% middle of \phi_5
\plot -3.12 1.23 -3.02 0.1 /
\plot -3.27 1.23 -3.37 0.1 /

\setquadratic

% cap of \phi_1
\plot 5.5 4 5.8 4.6 6.1 4 /
% cap of \phi_1B
\setdots <1pt>
\plot 5.5 1.4 5.8 1.6 6.1 1.4 /
\setdots <0pt>
% cap of \phi_2
\plot 2.403 2.9 2.6 3.45 2.77 2.9 /
% cap of \phi_3
\plot 0.246 2.4 0.35 2.75 0.454 2.4 /
% cap of \phi_4
\plot -1.42 1.73 -1.5 2.1  -1.57 1.73 /
% cap of \phi_5
\plot -3.12 1.23 -3.2 1.4 -3.27 1.23 /

% h_1
\setdots <0pt>
\plot 0.455 2.42 0.5 2.36 0.6 2.35 /
\plot 0.6 2.35 1.27 2.45 2.18 2.7 /
\plot 2.18 2.7 2.31 2.77 2.4 2.9 /

\setdots <1pt>
\plot 0.59 1.35 0.6 1.315  0.63 1.3 /
\plot 0.63 1.3 1.27 1.3 2.1 1.45 /
\plot 2.09 1.45 2.12 1.47 2.14 1.51 /

% h_2
\setdots <0pt>
\plot -1.37 1.8 -1.41 1.81 -1.43 1.85 /
\plot -1.37 1.8 -0.73 1.87 0.18 2.15 /
\plot 0.181 2.15 0.217 2.185 0.23 2.23 /

\setdots <1pt>
\plot -1.28 0.87 -1.29 0.87 -1.35 0.91 /
\plot -1.26 0.86 -0.73 0.87 0.06 1.02 /
\plot 0.08 1.03 0.1 1.05 0.12 1.07 /

\setdots <1pt>
\plot -5 0  -2 1.25 7.2 1.875 /

% vertical dotted lines
\setlinear 
\setdots <1pt>
\plot  5.8 0  5.8  4.6 /
\plot 2.6 0 2.6 3.5 /
\plot 0.35 0 0.35 2.8 /
\plot -1.5 0 -1.5 2.1 /
\plot -3.2 0 -3.2 1.38 /

\put {$x$} [lt] at 9 -.1
\put {$\eps_1$}  [ct] at -3.2 -.2
\put {$\eps_2$}  [ct] at -1.5 -.2
\put {$\eps_3$}  [ct] at 0.35 -.2
\put {$\eps_4$}  [ct] at 2.6 -.2
\put {$\eps_5$}  [ct] at 5.8 -.2
\put {$\eps_6$}  [ct] at 8.4 -.2
\put {$\hat\eps_6$}  [ct] at 7.6 -.1
\put {$\hat\eps_5$}  [ct] at 4.35 -.1
\put {$\hat\eps_4$}  [ct] at 1.5 -.1
\put {$\hat\eps_3$}  [ct] at -0.38 -.1
\put {$\hat\eps_2$}  [ct] at -2.15 -.1
\put {$\hat\eps_1$}  [ct] at -3.8 -.1

\put {$U^*(x)$} [lt] at 7.4 2
\put {${}^{\mu_2=1}$} [lt] at 1.05 3
\put {${}^{\mu_2=0.6}$} [lt] at 0.9 1.1
\put {${}^{\mu_1=1}$} [lt] at -1 2.4
\put {${}^{\mu_1=0.5}$} [lt] at -1.05 0.8
\put {${}^{\mu_3=1}$} [lt] at 5.41 5
\put {${}^{\mu_3=0.4}$} [lt] at 5.3 1.1

\linethickness=1.5pt
\putrule from 7.6 -0.04 to 7.6 0.04
\putrule from 8.4 -0.04 to 8.4 0.04
\putrule from 4.35 -0.04 to 4.35 0.04
\putrule from 1.5 -0.04 to 1.5 0.04
\putrule from -0.38 -0.04 to -0.38 0.04
\putrule from -2.15 -0.04 to -2.15 0.04
\putrule from -3.8 -0.04 to -3.8 0.04

\put {$^{^\bullet}$}  [ct] at -3.275 1.047
\put {$^{^\bullet}$}  [ct] at -3.08 1.1
\put {$^{^\bullet}$}  [ct] at -1.59 1.35
\put {$^\circ$}  [ct] at -1.355 1.39
\put {$^\circ$}  [ct] at 0.18 1.555
\put {$^\circ$}  [ct] at 0.56 1.6
\put {$^\circ$}  [ct] at 2.185 1.715
\put {$^{^\bullet}$}  [ct] at 3.01 1.76
\put {$^{^\bullet}$}  [ct] at 5.12 1.86
\put {$^{^\bullet}$}  [ct] at 6.52 1.91
\put {$\circ$}  [ct] at 5.102 1.89
\put {$\circ$}  [ct] at 6.504 1.944

\put {$\phi_\mu(x)$} [rc] at -5.1 5
\put {$0$} [rc] at -5.1 0

\setlinear 
\setdots <0pt> 
\plot 8.925 -.075 9 0 /
\plot 8.925 .075 9 0 /
\plot -5.075 4.925 -5 5 /
\plot -5 5 -4.925 4.925 /

\endpicture
\] 

\caption{Sign changes of $\phi_\mu-U^*$ (for $m=5$, $q=3$ with $i_1=2$, $i_2=3$, $i_3=5$
and $\xi_2=\xi_3=2$, $\xi_5=0$)}
\label{FigSign}
\end{figure}

On the other hand, we observe that, for all $j\in \{1,\dots,q-1\}$ 
and $\lambda,\mu\in \mathcal{J}$, we have
\be\label{countintersecphi}
\tilde\lambda_{j+1}=\tilde\mu_{j+1} \ \Longrightarrow\  z_{|[0,1]}(\Phi_\mu-\Phi_\lambda)\le j-1. 
\ee
Indeed, in view of $\phi_i,\tilde \phi_i\ge 0$ and \eqref{defPhimu}, 
for each $\ell\in \{1,\dots,j\}$,
the function $\Phi_\mu-\Phi_\lambda$ does not change sign in the interval
$I^1_\ell$ if $\xi_{i_\ell}=0$ and $I^2_\ell$ if $\xi_{i_\ell}=2$ (cf.~\eqref{defsupportsPhimu}),
whereas $\Phi_\mu-\Phi_\lambda\equiv 0$ outside of these $j$ intervals. 

\goodbreak
{\bf Step~3.} {\it LBC and regularity properties of the deformed solutions.} 

Define the times
$$s_i=c_0\eps_i^2,\ s_i^-=c_1\eps_i^2,\ \ s_i^+=c_2\eps_i^2,
\qquad i\in\{1,\dots,m\},$$
$$\hat s_i=\textstyle\frac32L\gamma_i,\ \hat s_i^-=L\gamma_i,\ \hat s_i^+=2L\gamma_i,
\qquad i\in\{1,\dots,m+1\}.$$
 By \eqref{propStep1EqGamma}, \eqref{propStep1EqLast}, we note that (cf.~Fig.~\ref{FigTime1}) 
$$\cdots<\hat s_i^-<\hat s_i<\hat s_i^+<s_i^-<s_i<s_i^+<\hat s^-_{i+1}<\hat s_{i+1}<\hat s^+_{i+1}<\dots$$

\begin{figure}[h]
\[
\beginpicture
\setcoordinatesystem units <1cm,1cm>
\setplotarea x from -6 to 6, y from -1 to 3

\setdots <0pt>
\linethickness=1pt
\putrule from -5 0 to 7.5 0
\putrule from -5 0 to -5 2.7

\setdots <1.3pt> 
\setquadratic
\plot -0.65 0  0.5 1.5 1.65 0  /
\setdots <0pt>

\put {t} [lt] at 7.5 -.1
\put {$u(0,t)$} [rc] at -5.1 2.8

\setlinear 
\setdots <0pt> 
\plot 7.425 -.075 7.5 0 /
\plot 7.425 .075 7.5 0 /
\plot -5.075 2.625 -5 2.7 /
\plot -5 2.7 -4.925 2.625 /

\put {$0$}  [ct] at -5 -.1
\put {C for all $\mu_\ell$}  [ct] at -2.95 0.85
\put {$\overbrace{\phantom{aaaaaaaA}}$}  [ct] at -3 0.4
\put {$\hat s_i^-$}  [ct] at -3.85 -.17
\put {$\hat s_i$}  [ct] at -2.95 -.24
\put {$\hat s_i^+$}  [ct] at -2.15 -.17
\put {$s_i^-$}   [ct] at -0.5 -.15
\put {$s_i$}   [ct] at 0.5 -.29
\put {$s_i^+$}   [ct] at 1.5 -.15
\put {$\hat s_{i+1}^-$}  [ct] at 3.5 -.17
\put {$\hat s_{i+1}$}   [ct] at 4.5  -.24
\put {$\hat s_{i+1}^+$}  [ct] at 5.5 -.17
\put {C for all $\mu_\ell$}   [ct] at 4.5 0.85
\put {$\overbrace{\phantom{aaaaaaaaa}}$}  [ct] at 4.5 0.4
\put {$\underbrace{\phantom{AAAAAAAAAAAAAAAAAAAAAAAA1}}$}  [ct] at 0.7 -0.55

\put {${}^{\hbox{LBC for $\mu_\ell$ close to $1$}}$}  [ct] at 0.6 2.25
\put {${}^{\hbox{C for $\mu_\ell=0$}}$}  [ct] at 0.5 -1.2

\linethickness=1pt
\putrule from -3.9 -0.04 to -3.9 0.04
\put {$^\bullet$}  [ct] at -3 0.058
\putrule from -2.1 -0.04 to -2.1 0.04

\putrule from -0.5 -0.04 to -0.5 0.04
\putrule from 0.5 -0.04 to 0.5 0.04
\putrule from 1.5 -0.04 to 1.5 0.04

\putrule from 3.5 -0.04 to 3.5 0.04
\put {$^\bullet$}  [ct] at 4.5 0.058
\putrule from 5.5 -0.04 to 5.5 0.04

\setlinear 
\setdots <1pt> 
\plot -3 0.06 4.5 0.06 /

\endpicture
\] 

\caption{Time behavior of the deformed solutions for $i=i_\ell$ with~$\xi_{i_\ell}=0$} 
\label{FigTime1}
\end{figure}

We claim that 
for all $\mu\in\mathcal{J}$, we have:
\begin{eqnarray}
&&{\hskip -5mm}
\hbox{If $i\in\{1,m+1\}$ or if $i\in\{2,\dots,m\}$ and $\xi_{i-1}\ne 2$, then} \label{jointLBC1} \\
&&{\hskip -5mm}\quad\hbox{$u(\mu;\cdot,\cdot)$ is classical on $[\hat s_i^-,\hat s_i^+],$} \notag \\
\noalign{\vskip 2mm}
&&{\hskip -5mm}\hbox{If $i\in\{1,\dots,m\}$ and $\xi_i\ne 0$, then} \label{jointLBC2} \\
&&{\hskip -5mm}\quad\hbox{$u(\mu;0,s)>0$ for all $s\in[s_i^-,s_i^+]$,} \notag \\
\noalign{\vskip 2mm}
&&{\hskip -5mm}\hbox{If $\ell\in\{1,\dots,q\}$ and $\xi_{i_\ell}=0$, then} \label{jointLBC3} \\
&&{\hskip -5mm}\quad\begin{cases}
\hbox{$u(\mu;\cdot,\cdot)$ is classical on $[\hat s_{i_\ell},\hat s_{i_\ell+1}]$,} 
&\hbox{ if $\mu_\ell=0$} \notag \\
\noalign{\vskip 2mm}
\hbox{$u(\mu;0,s)>0$ for all $s\in[s_{i_\ell}^-,s_{i_\ell}^+]$,} &\hbox{ if $\mu_\ell$ is close to $1$,} \label{jointLBC4} \\
\end{cases} \\
\noalign{\vskip 2mm}
&&{\hskip -5mm}\hbox{If $\ell\in\{1,\dots,q\}$ and $\xi_{i_\ell}=2$ (hence $i_\ell\le m-1$), then} \\
&&{\hskip -5mm}\quad\begin{cases}
\hbox{$u(\mu;\cdot,\cdot)$ is classical on 
$[\hat s^-_{i_\ell+1},\hat s^+_{i_\ell+1}]$,} &\hbox{ if $\mu_\ell=0$} \\
\noalign{\vskip 1mm}
\hbox{$u(\mu;0,s)>0$ for all $s\in[s_{i_\ell},s_{i_\ell+1}]$,} &\hbox{ if $\mu_\ell$ is close to $1$} \notag \\
\end{cases}
\end{eqnarray}
(these behaviors are illustrated in Fig.~\ref{FigTime1}-\ref{FigTime2}).

\begin{figure}[h]
\[
\beginpicture
\setcoordinatesystem units <1cm,1cm>
\setplotarea x from -6 to 6, y from -1 to 3

\setdots <0pt>
\linethickness=1pt
\putrule from -5 0 to 7.5 0
\putrule from -5 0 to -5 2.9

\setquadratic
\setdots <1.3pt> 
\plot -4.5 0  -3 1.5 -0.25 1  /
\plot -0.25 1  0.75 0.8 1.75 1  /
\plot 1.75 1  4.5 1.5 6 0  /
\setdots <0pt>
\plot -4.1 0  -3 1.1 -1.9 0 /
\plot 3.3 0  4.5 1.1 5.7 0 /

\put {t} [lt] at 7.5 -.1
\put {$u(0,t)$} [rc] at -5.1 3

\setlinear 
\setdots <0pt> 
\plot 7.425 -.075 7.5 0 /
\plot 7.425 .075 7.5 0 /
\plot -5.075 2.825 -5 2.9 /
\plot -5 2.9 -4.925 2.825 /

\put {$0$}  [ct] at -5 -.1
\put {LBC for all $\mu_\ell$}  [ct] at -3 -1.2
\put {$\underbrace{\phantom{aaaaaaaA}}$}  [ct] at -3 -0.55
\put {$s_i^-$}  [ct] at -3.85 -.14
\put {$s_i$}  [ct] at -2.95 -.3
\put {$s_i^+$}  [ct] at -2.05 -.14
\put {$\hat s^-_{i+1}$}   [ct] at -0.5 -.17
\put {$\hat s_{i+1}$}   [ct] at 0.5 -.24
\put {$\hat s^+_{i+1}$}   [ct] at 1.6 -.17
\put {$s^-_{i+1}$}  [ct] at 3.5 -.14
\put {$s_{i+1}$}   [ct] at 4.55  -.3
\put {$s^+_{i+1}$}  [ct] at 5.55 -.14
\put {LBC for all $\mu_\ell$}  [ct] at 4.5 -1.2
\put {$\underbrace{\phantom{aaaaaaaA}}$}  [ct] at 4.5 -0.55

\put {${}^{\hbox{LBC for $\mu_\ell$ close to $1$}}$}  [ct] at 0.6 2.25
\put {${}^{\hbox{C for $\mu_\ell=0$}}$}  [ct] at 0.5 -1.2
\put {$\underbrace{\phantom{aaaaaaaaA}}$}  [ct] at 0.5 -0.55

\linethickness=1pt
\putrule from -3.9 -0.04 to -3.9 0.04
\put {$^\bullet$}  [ct] at -3 0.058
\putrule from -2.1 -0.04 to -2.1 0.04

\putrule from -0.5 -0.04 to -0.5 0.04
\putrule from 0.5 -0.04 to 0.5 0.04
\putrule from 1.5 -0.04 to 1.5 0.04

\putrule from 3.5 -0.04 to 3.5 0.04
\put {$^\bullet$}  [ct] at 4.5 0.058
\putrule from 5.5 -0.04 to 5.5 0.04

\setlinear 
\setdots <1pt> 
\plot -3 0.06 4.5 0.06 /

\endpicture
\] 

\caption{Time behavior of the deformed solutions for $i=i_\ell$ with~$\xi_{i_\ell}=2$}
\label{FigTime2}
\end{figure}

\goodbreak
Let us first check the regularity properties in \eqref{jointLBC1} and 
\eqref{jointLBC4}.
Let $i\in\{2,\dots,m+1\}$, and in case $\xi_{i-1}=2$ assume $i=i_\ell+1$ with $\mu_\ell=0$.
By \eqref{psiepsP1}, \eqref{psiepsPZ1}, \eqref{propStep1Eq2} and \eqref{propStep1Eq4}, we have
$$
\Phi_\mu\le H_i\ \hbox{ on $[\frac12\eps_i,1]$.}
$$
On the other hand, since $(1+a_2)\eps_{i-1}<\frac12\eps_i$ by \eqref{propStep1EqLast}, 
it follows from \eqref{psiepsP1}, \eqref{psiepsP3}, \eqref{psiepsPZ2b} that
$$
\Phi_\mu\le K_1\eps_{i-1}^\alpha\le\gamma_i \hbox{ on $[0,\frac12\eps_i]$}.
$$
Also, if $i=1$, then
$$\hbox{$\Phi_\mu=0$ on $[0,\frac12\eps_1]$.}$$
Therefore, recalling \eqref{propStep1EqGamma}, 
\eqref{controlsuppphimu}, Proposition~\ref{lem3bis}
guarantees that the solution $u(\mu;\cdot,\cdot)$ is classical on the time interval $\bigl[L\gamma_i,L\gamma_0(\frac12\eps_i,\|H_i\|_{C^2([0,1])})\bigr]$ $\supset [\hat s_i^-,\hat s_i^+]$.
This proves \eqref{jointLBC1} and the first part of \eqref{jointLBC4}.

Next assume $i=i_\ell$ with $\xi_{i_\ell}=0$ and $\mu_\ell=0$. 
By \eqref{psiepsP1}, \eqref{psiepsPZ1}, \eqref{propStep1Eq2} and \eqref{propStep1Eq4}, we have
$$
\Phi_\mu\le H_{i+1}\ \hbox{ on $[\frac12\eps_{i+1},1]$.}
$$
On the other hand, in case $i\ge 2$, since $(1+a_2)\eps_{i-1}<2\eps_{i-1}$ and 
$\frac12\eps_{i+1}<(1-a_2)\eps_{i+1}$, we have
\be\label{Phimunull}
\hbox{ $\Phi_\mu=0$ on $[2\eps_{i-1},\frac12\eps_{i+1}]$ 
\ \ and \ \
 $\Phi_\mu\le K_1 \eps^\alpha_{i-1}\le\gamma_i$ 
 on $[0,2\eps_{i-1})$,}
 \ee
Also \eqref{Phimunull} remains true for $i=1$ with $\eps_0:=0$ (omitting the second condition).
In both cases we thus have 
$$\Phi_\mu\le\gamma_i\ \hbox{ on $[0,\frac12\eps_{i+1}]$.}$$
Therefore, in view of \eqref{propStep1EqGamma}, 
\eqref{controlsuppphimu}, Proposition~\ref{lem3bis}
guarantees that $u(\mu;\cdot,\cdot)$ is classical on 
the time interval $\bigl[L\gamma_{i},L\gamma_0(\frac12\eps_{i+1},\|H_{i+1}\|_{C^2([0,1])})\bigr]$.
Since, by \eqref{propStep1EqGamma}, 
this interval contains $[\hat s_{i},\hat s_{i+1}]$, 
the first part of \eqref{jointLBC3} follows.

As for the LBC properties,  \eqref{jointLBC2} and the second part of \eqref{jointLBC3}
follow from \eqref{psiepsLBC}.
To check the second part of \eqref{jointLBC4}, note that
if $i=i_\ell$ with $\xi_{i_\ell}=2$ and $\mu_\ell$ is close to~$1$, then 
 \eqref{psiepsP2} and \eqref{psiepsPZ2} 
guarantee that
$$\Phi_\mu(x)\ge Kx^\alpha 
\hbox{ on $((1-a_1)x,(1+a_1)x)$ for all $x\in [\eps_{i-1},\eps_i]$.}$$
Since $K>\tilde K(a_1)$, the conclusion follows from Proposition~\ref{lem2}(ii).

\goodbreak

{\bf Step~4.} {\it Definition of the critical values.}

For any $\sigma\in\{0,2\}$, any closed subinterval $J$ of $[0,\infty)$ and continuous function $w$ on $\{0\}\times J$, we set
$$\mathcal{E}(w,J,\sigma)=
\begin{cases}
\max_J w &\hbox{ if $\sigma=0$} \\ 
\noalign{\vskip 1mm}
\min_J w &\hbox{ if $\sigma=2$} \\ 
\end{cases}$$
and we define the intervals
$$J_\ell=
\begin{cases}
 [\hat s_{i_\ell},\hat s_{i_\ell+1}]&\hbox{ if $\xi_\ell=0$,} \\
\noalign{\vskip 1mm}
[s_{i_\ell},s_{i_\ell+1}] &\hbox{ if $\xi_\ell=2$,}
\end{cases}
\qquad\ell\in \{1,\dots,q\}$$
(the intervals $J_\ell$ are represented by horizontal dotted lines in Fig.~\ref{FigTime1}-\ref{FigTime2}).

We shall iteratively define $q$ critical parameter functions 
$$\mu_1^*(\tilde\mu_2),\cdots,\mu_{q-1}^*(\tilde\mu_q), \mu_q^*$$
and $q+1$ auxiliary solutions
$$u_0^*(\tilde\mu_1,x,t),u_1^*(\tilde\mu_2,x,t),\dots,u_{q-1}^*(\tilde \mu_q;x,t), u_q^*(x,t)$$
as follows. Set 
\be\label{defujstar0}
u_0^*(\tilde\mu_1,x,t)=u(\tilde\mu_1,x,t).
\ee
Take $j\in \{1,\dots,q\}$ and assume we have already defined $u_{j-1}^*(\tilde \mu_j;x,t)$.
Then, for any $\tilde\mu_{j+1}\in[0,1]^{q-j}$, we consider the set
$$E_j(\tilde\mu_{j+1}):=\Bigl\{\mu_j\in [0,1];\ 
\mathcal{E}\bigl(u_{j-1}^*(\mu_j,\tilde \mu_{j+1};0,\cdot),J_j,\xi_{i_j}\bigr)>0\Bigr\}$$
 (understood as $E_q:=\bigl\{\mu_q\in [0,1];\ \mathcal{E}\bigl(u_{q-1}^*(\mu_q;0,\cdot),J_q,\xi_{i_q}\bigr)>0\bigr\}$
if $j=q$).
We note that $E_j(\tilde\mu_{j+1})\ne\emptyset$ since $1\in E_j(\tilde\mu_{j+1})$ by 
the second parts of \eqref{jointLBC3}-\eqref{jointLBC4}. We then define
$$\mu_j^*(\tilde\mu_{j+1})=\inf E_j(\tilde\mu_{j+1})$$
and
\be\label{defujstar}
u_j^*(\tilde \mu_{j+1};x,t)= u_{j-1}^*(\mu_j^*(\tilde\mu_{j+1}),\tilde\mu_{j+1};x,t).
\ee
 (respectively understood as $\mu_q^*=\inf E_q$ and $u_q^*(x,t)= u_{q-1}^*(\mu_q^*;x,t)$
if $j=q$).
Similarly, in what follows we shall make the convention that the variable $\tilde\mu_{q+1}$ is empty.

For further use, we immediately note by induction that
each of the functions $u_j^*(\tilde \mu_{j+1};\cdot,\cdot)$ is the solution of \eqref{vhj1} 
with initial data $\Phi_\Theta$ for some $\Theta\in\mathcal{J}$, namely:
\be\label{ustarlambdamu}
\begin{aligned}
&\qquad\hbox{for any $j\in\{0,\dots,q\}$ and $\tilde\mu_{j+1}\in[0,1]^{q-j}$, there exists}\\
&\hbox{$(\lambda_1,\dots,\lambda_j)\in [0,1]^j$ such that $ u_j^*(\tilde \mu_{j+1};\cdot,\cdot)=u(\lambda_1,\dots,\lambda_j,\tilde\mu_{j+1};\cdot,\cdot)$}.
\end{aligned}
\ee

{\bf Step~5.} {\it Properties of the critical parameter functions $\mu_j^*(\tilde\mu_{j+1})$.}

Let $Q=\overline\Omega\times(0,\infty)$.
Let $D$ be the constant given by Lemma~\ref{basic-prop0}(ii) (denoted there by $K$),
which can be chosen uniformly with respect to $\mu\in\mathcal{J}$, 
owing to $\sup_{\mu\in\mathcal{J}}\|\phi_\mu\|_{C^2([0,1])}<\infty$.
Set $V_D(x):=U^*(x)-Dx^2$.

We shall prove by joint induction that, for all $j\in \{1,\dots,q-1\}$, 
\begin{eqnarray}
&&{\hskip -1cm}\hbox{for all $\tilde\mu_{j+1}\in[0,1]^{q-j}$ and $\ell\in\{1,\dots,j\}$,} \notag \\  
&&{\hskip -1cm}\hbox{there exists $T_\ell=T_\ell(\tilde \mu_{j+1})\in J_\ell$ s.t.} \label{jointind1}  \\
&&{\hskip -1cm}\begin{cases}
u_j^*(\tilde \mu_{j+1};\cdot,T_\ell)\ge V_D \hbox{ in $(0,1)$ \ and \ }
u_j^*(\tilde \mu_{j+1};0,\cdot)=0 \ \hbox{ in $J_\ell$}
 &\hbox{ if $\xi_{i_\ell}=0$} \\
\noalign{\vskip 0.5mm}
u_j^*(\tilde \mu_{j+1};\cdot,\cdot)\ge V_D \hbox{ in $(0,1)\times J_\ell$ \ and \ }
u_j^*(\tilde \mu_{j+1};0,T_\ell)=0
&\hbox{ if $\xi_{i_\ell}=2$;} \\
\end{cases} \notag \\
\noalign{\vskip 4mm}
&&{\hskip -1cm}\hbox{the map} \ [0,1]^{q-j}\ni \tilde\mu_{j+1}\mapsto \mu_j^*(\tilde\mu_{j+1}) 
\ \hbox{ is continuous;} \label{jointind2} \\
\noalign{\vskip 4mm}
&&{\hskip -1cm} \hbox{the map} \ [0,1]^{q-j}\ni \tilde\mu_{j+1}\mapsto u_j^*(\tilde\mu_{j+1},x,t) 
\ \hbox{is continuous,} \label{jointind3} \\
&&{\hskip -1cm} \hbox{uniformly w.r.t.~$(x,t)\in Q$,} \notag
\end{eqnarray} 
and that
\begin{eqnarray}
&&{\hskip -1cm}\hbox{ for all $\ell\in\{1,\dots,q\}$, there exists $T_\ell\in J_\ell$ s.t.} \notag  \\
&&{\hskip -1cm}\begin{cases}
u_q^*(\cdot,T_\ell)\ge V_D \hbox{ in $(0,1)$ \ and \ }
u_q^*(0,\cdot)=0 \ \hbox{ in $J_\ell$}
 &\hbox{ if $\xi_{i_\ell}=0$} \\
\noalign{\vskip 0.5mm}
u_q^*(\cdot,\cdot)\ge V_D \hbox{ in $(0,1)\times J_\ell$ \ and \ }
u_q^*(0,T_\ell)=0
&\hbox{ if $\xi_{i_\ell}=2$} \\
\end{cases} \label{jointind1b}
\end{eqnarray}
(note that \eqref{jointind1b} corresponds to \eqref{jointind1} with $j=q$). 

To initialize the induction, we observe that the continuity property \eqref{jointind3} is true for $j=0$ 
(indeed, since $u_0^*(\tilde\mu_1,x,t)=u(\Phi_\mu;x,t)$, it follows from \eqref{contdep}
and the fact that $\Phi_\mu$ depends continuously in $L^\infty(\Omega)$-norm on the parameters~$\mu$).
This is the only part of the induction hypotheses \eqref{jointind1}-\eqref{jointind3} that will used in the first induction step~$j=1$.
Thus take $j\in \{1,\dots,q\}$ and, if $j\ge 2$, assume that \eqref{jointind1}-\eqref{jointind3}$_{j-1}$ 
are true.\footnote{Here and below this notation of course means that the formulae
are understood with the index~$j$ replaced by the index in subscript (here $j-1$).}

First observe that, for all $\tilde\mu_{j+1}\in[0,1]^{q-j}$, if $\xi_j=0$ (resp., $\xi_j=2$) then we have
$$\mu_j^*(\tilde\mu_{j+1})=\inf 
\Bigl\{\lambda\in [0,1];\ \max_{t\in J_j} u_{j-1}^*(\lambda,\tilde \mu_{j+1};0,t)>0\Bigr\}
\quad\hbox{(resp., $\displaystyle\min_{t\in J_j}$).}$$
We note that $\mu_j^*(\tilde\mu_{j+1})\in [0,1)$ due to \eqref{jointLBC3}-\eqref{jointLBC4}.
Therefore, there exist sequences $t_k\in J_j$ and $\lambda_+^k\to \mu_j^*(\tilde\mu_{j+1})^+$ 
as $ k \to \infty $ such that, for $ k \ge 1 $,
\be\label{prop1ujstar}
\begin{cases}
u_{j-1}^*(\lambda^k_+,\tilde \mu_{j+1};0,t_k)>0
 &\hbox{ if $\xi_{i_j}=0$} \\
\noalign{\vskip 1mm}
u_{j-1}^*(\lambda^k_+,\tilde \mu_{j+1};0,\cdot)>0\ \hbox{ in $J_j$}
&\hbox{ if $\xi_{i_j}=2$.} \\
\end{cases}
\ee
Also, if $\mu_j^*(\tilde\mu_{j+1})>0$, then there exist sequences $\hat t_k\in J_j$ 
and $\lambda^k_-\to \mu_j^*(\tilde\mu_{j+1})^-$ as $ k \to \infty $ such that, for $ k \ge 1 $,
\be\label{prop1ujstar0}
\begin{cases}
u_{j-1}^*(\lambda^k_-,\tilde \mu_{j+1};0,\cdot)=0\ \hbox{ in $J_j$}
 &\hbox{ if $\xi_{i_j}=0$} \\
\noalign{\vskip 1mm}
u_{j-1}^*(\lambda^k_-,\tilde \mu_{j+1};0,\hat t_k)=0
&\hbox{ if $\xi_{i_j}=2$.} \\
\end{cases}
\ee

To prove \eqref{jointind1}$_j$, or~\eqref{jointind1b} if $j=q$, it suffices to check it in the case $\ell=j$
(since for $\ell\in\{1,\dots,j-1\}$ it directly follows from \eqref{defujstar} and our induction hypothesis).
By Lemma~\ref{basic-prop0}(ii) and \eqref{prop1ujstar}, we have
\be\label{prop2ujstar}
\begin{cases}
u_{j-1}^*(\lambda^k_+,\tilde \mu_{j+1};\cdot,t_k)\ge V_D\ \hbox{ in $(0,1)$}
 &\hbox{ if $\xi_{i_j}=0$} \\
\noalign{\vskip 1mm}
u_{j-1}^*(\lambda^k_+,\tilde \mu_{j+1};\cdot,\cdot)\ge V_D\ \hbox{ in $J_j\times(0,1)$}
&\hbox{ if $\xi_{i_j}=2$.} \\
\end{cases}
\ee
Passing to a subsequence, there exists $T_j=T_j(\tilde\mu_{j+1})\in J_j$ such that $t_k\to T_j$,
 $\hat t_k\to T_j$ as $ k \to \infty $. 
Owing to the continuity property \eqref{jointind3}$_{j-1}$, 
we may pass to the limit $k\to\infty$ in \eqref{prop2ujstar}.
In view of \eqref{defujstar}, this yields the inequalities in \eqref{jointind1}$_j$.
If $\mu_j^*(\tilde\mu_{j+1})>0$, then we similarly get the equalities in \eqref{jointind1}$_j$ by passing to the limit in 
\eqref{prop1ujstar0} whereas, in case $\mu_j^*(\tilde\mu_{j+1})=0$, these equalities directly follow from 
\eqref{jointLBC3}-\eqref{jointLBC4}.

We next assume $j\le q-1$ and prove the upper semicontinuity in \eqref{jointind2}$_j$.
By \eqref{prop1ujstar} and the continuity property \eqref{jointind3}$_{j-1}$, for each $k\ge 1$, 
there exists $\alpha_k>0$ such that, for all $\zeta\in[0,1]^{q-j}$ with $|\tilde\mu_{j+1}-\zeta|\le\alpha_k$, we have
$u_{j-1}^*(\lambda^k_+,\zeta;0,t_k)>0$ if $\xi_j=0$, and
$u_{j-1}^*(\lambda^k_+,\zeta;0,\cdot)>0$ in $J_j$ if $\xi_j=2$.
Consequently, $\mu_j^*(\zeta)\le \lambda^k_+$ and, letting $k\to\infty$, we thus get
$$\limsup_{\zeta\to\tilde\mu_{j+1}} \mu_j^*(\zeta)\le \mu_j^*(\tilde\mu_{j+1}).$$

We now turn to the proof of the lower semicontinuity in \eqref{jointind2}$_j$,
which is more delicate. Assume for contradiction that
\be\label{propujstarcontrad}
\lambda:=\liminf_{\zeta\to\tilde\mu_{j+1}} \mu_j^*(\zeta)<\mu_j^*(\tilde\mu_{j+1}).
\ee
Arguing as above, there exist sequences $\zeta^k\to \tilde\mu_{j+1}$, $\hat\lambda^k_\pm\to \lambda$ 
as $ k \to \infty $ and $\hat t_k\in J_j$ such that, for $ k \ge 1$,
$$
\begin{cases}
u_{j-1}^*(\hat\lambda^k_-,\zeta^k;0,\cdot)=0 \hbox{ in $J_j$ \ and \ }
u_{j-1}^*(\hat\lambda^k_+,\zeta^k;0,\hat t_k)\ge V_D \hbox{ in $(0,1)$}
 &\hbox{ if $\xi_{i_j}=0$} \\
\noalign{\vskip 1mm}
u_{j-1}^*(\hat\lambda^k_+,\zeta^k;0,\cdot)\ge V_D\ \hbox{ in $(0,1)\times J_j$ \ and \ }
u_{j-1}^*(\hat\lambda^k_-,\zeta^k;0,\hat t_k)=0
&\hbox{ if $\xi_{i_j}=2$.} \\
\end{cases}
$$
Passing to a subsequence, there exists $\hat T_j\in J_j$ such that $\hat t_k\to \hat T_j$ as $ k \to \infty $. 
Using the continuity property \eqref{jointind3}$_{j-1}$ and passing to the limit $k\to\infty$, we get
\be\label{prop3ujstar}
{\hskip -4mm} \begin{cases}
u_{j-1}^*(\lambda,\tilde\mu_{j+1};0,\cdot)=0 \hbox{ in $J_j$ and }
u_{j-1}^*(\lambda,\tilde\mu_{j+1};0,\hat T_j)\ge V_D \hbox{ in $(0,1)$}
 &{\hskip -2mm}\hbox{ if $\xi_{i_j}=0$} \\
\noalign{\vskip 1mm}
u_{j-1}^*(\lambda,\tilde\mu_{j+1};0,\cdot)\ge V_D \hbox{ in $(0,1)\times J_j$ and }
u_{j-1}^*(\lambda,\tilde\mu_{j+1};0,\hat T_j)=0
&{\hskip -2mm}\hbox{ if $\xi_{i_j}=2$.} \\
\end{cases}
\ee
Denoting $\mu=\mu_j^*(\tilde\mu_{j+1})$, we set
\be\label{defvw}
\begin{aligned}
v(x,t)&:=u_{j-1}^*(\mu,\tilde\mu_{j+1};x,t)=u_j^*(\tilde \mu_{j+1};x,t), \\
w(x,t)&:=u_{j-1}^*(\lambda,\tilde\mu_{j+1};x,t).
\end{aligned}
\ee
 We then use \eqref{jointind1}$_j$ applied to $u_j^*(\tilde \mu_{j+1};x,t)$,
\eqref{prop3ujstar} and, in case $j\ge 2$,
\eqref{jointind1}$_{j-1}$ applied to $u_{j-1}^*(\lambda,\tilde\mu_{j+1};x,t)$.
It follows that, for all $\ell\in\{1,\dots,j\}$,
\be\label{prop4ujstarJ}
\begin{cases}
v(0,\cdot)=w(0,\cdot)=0 \hbox{ in $J_\ell$} &\hbox{ if $\xi_{i_\ell}=0$} \\
\noalign{\vskip 1mm}
v_x(0,\cdot)=w_x(0,\cdot)=\infty \hbox{ in $J_\ell$} &\hbox{ if $\xi_{i_\ell}=2$} \\
\end{cases}
\ee
and there exist $T_\ell,\hat T_\ell\in J_\ell$ such that
\be\label{prop4ujstarT}
\begin{cases}
v_x(0,T_\ell)=w_x(0,\hat T_\ell)=\infty &\hbox{ if $\xi_{i_\ell}=0$} \\
\noalign{\vskip 1mm}
v(0,T_\ell)=w(0,\hat T_\ell)=0 &\hbox{ if $\xi_{i_\ell}=2$.} \\
\end{cases}
\ee

To reach a contradiction, the idea is now to apply the natural scaling to $ w $ 
and to examine the dropping properties of their number of intersections with the solution $v$.
Namely, for $\beta\in (0,1)$, set 
\be\label{propujstardef}
w_\beta(x,t)=\beta^\alpha w(\beta^{-1}x,\beta^{-2}t),\quad (x,t)\in [0,\beta]\times[0,\infty)
\ee
and note that $w_\beta$ is the viscosity solution of problem \eqref{vhj1} with $\Omega=(0,\beta)$ and 
with initial data $\beta^\alpha w(\beta^{-1}x,0)$. 
Moreover, by \eqref{controlphimuat1}, we have
\be\label{wbetaat1}
w_\beta(\beta,t)=0,\quad t\ge 0.
\ee
{For $\beta\in (0,1]$, we set}
$$\mathcal{N}_\beta(t):=z_{|[0,\beta]}(v(\cdot,t)-w_\beta(\cdot,t)),\quad t>0.$$
By Proposition~\ref{lemz20A} and \eqref{wbetaat1} we have
\be\label{prop8ujstar}
\hbox{for all $\beta\in (0,1)$, \ $\mathcal{N}_\beta(t)$ is finite and nonincreasing on $[0,\infty)$.}
\ee
Also, by \eqref{countintersecphi}, \eqref{ustarlambdamu} and \eqref{defvw}, we have
\be\label{N0jminus1}
\mathcal{N}_1(0)\le j-1.
\ee

Let $\tau_0:=(0,\min(T^*_1,T^*_2))$, where $T^*_1, T^*_2$ denote the classical existence times of $v, w$.
We claim that there exist $\beta_0\in(0,1)$ and $\tau\in(0,\frac12\tau_0)$ such that, 
\be\label{prop5ujstar}
\mathcal{N}_\beta(\tau)\le j-1\quad\hbox{for all $\beta\in [\beta_0,1)$.}
\ee
To show this, we first note that, by \eqref{defPhimu}, \eqref{ustarlambdamu}, \eqref{defvw}
and since $\mu>\lambda$ due to \eqref{propujstarcontrad}, there exists $\eta\in(0,1)$ such that
 $v(\cdot,0)\ge w(\cdot,0)$ in $[\eta,1]$ and $v(\eta,0)>w(\eta,0)$.
By continuity, there exists $\tau_1\in (0,\tau_0)$ such that $v(\eta,t)>w(\eta,t)$ for all $t\in [0,\tau_1]$,
hence
\be\label{prop6ujstar}
\hbox{$v>w$ in $[\eta,1)\times (0,\tau_1]$}
\ee
by the strong maximum principle.
Also, by standard zero number properties for classical solutions we may choose 
$\tau\in (0,\tau_1)$ such that 
\be\label{prop7ujstar}
\hbox{all zeros of $v(\cdot,\tau)-w(\cdot,\tau)$ in $[0,1]$ are nondegenerate.}
\ee
Next extending $w$ by odd reflection at $x=1$, we obtain a function $\tilde w\in C^1([0,2]\times [0,\tau_0))$
and, setting $\tilde w_\beta(x,t)=\beta^\alpha \tilde w(\beta^{-1}x,\beta^{-2}t)$,
we see that $\tilde w_\beta(\cdot,\tau)\to \tilde w(\cdot,\tau)$ in $C^1([0,1])$ as $\beta\to 1^-$.
By \eqref{N0jminus1} and \eqref{prop7ujstar}, it follows that 
$$z(\tilde w_\beta(\cdot,\tau)-v(\cdot,\tau))=\mathcal{N}_1(\tau)\le\mathcal{N}_1(0)\le j-1$$ 
for $\beta$ close to $1$.
Assuming also $\beta>\eta$ and using \eqref{prop6ujstar},
along with the fact that $v>0>\tilde w_\beta$ for $x\in (\beta,1)$, we deduce \eqref{prop5ujstar}.

Next, writing $J_\ell=[t_\ell^-,t_\ell^+]$, we claim the existence of $\bar\beta\in(\beta_0,1)$ such that 
\be\label{prop13ujstar}
\hbox{for all $\beta\in (\bar\beta,1)$}, \quad \mathcal{N}_\beta(t_\ell^+)\le \mathcal{N}_\beta(t_\ell^-)-1,\quad \ \ell=1,\dots,j.
\ee
To prove the claim, assume for contradiction that there exist $\ell\in\{1,\dots,j\}$
and a sequence $\beta_k\to 1$ with $\beta_k\in (\beta_0,1)$, such that $\mathcal{N}_{\beta_k}(t_\ell^+)\ge \mathcal{N}_{\beta_k}(t_\ell^-)$.
By~\eqref{prop8ujstar}, for each~$k$, 
we have $\mathcal{N}_{\beta_k}(t)=\mathcal{N}_{\beta_k}(t_\ell^-)$ on $J_\ell$,
hence we may apply Lemma~\ref{lemz20B} to deduce the existence of $a_k\in (0,\beta_k)$ such that 
\be\label{prop15ujstar}
\hbox{$v \le w_{\beta_k}$ in $(0,a_k]\times J_\ell$ 
\quad or \quad $v \ge w_{\beta_k}$ in $(0,a_k]\times J_\ell$}.
\ee
As a consequence of \eqref{jointLBC1}, \eqref{jointLBC2},
we have ${\beta_k}^{-2}T_\ell, {\beta_k}^2\hat T_\ell\in J_\ell$ for all $k\ge k_0$ large enough.
It follows from \eqref{prop4ujstarT} and \eqref{prop15ujstar} that
\be\label{prop14ujstar}
\begin{cases}
\hbox{$w_x(0,{\beta_k}^{-2}T_\ell)=\infty$\quad or \quad $v_x(0,{\beta_k}^2\hat T_\ell)=\infty$},
 &\hbox{ if $\xi_{i_\ell}=0$} \\
\noalign{\vskip 1mm}
\hbox{$v(0,{\beta_k}^2\hat T_\ell)=0$ \quad or \quad $w(0,{\beta_k}^{-2}T_\ell)=0$},
&\hbox{ if $\xi_{i_\ell}=2$.}
\end{cases}
\ee
We thus deduce from \eqref{prop4ujstarJ} and \eqref{prop14ujstar} that,
for all $k\ge k_0$, ${\beta_k}^2\hat T_\ell\in \mathcal{T}_v$ or ${\beta_k}^{-2}T_\ell\in \mathcal{T}_w$.
But since the transition sets $\mathcal{T}_v$ and $\mathcal{T}_w$ are finite by~Theorem~\ref{thmz1}, 
this is impossible and claim \eqref{prop13ujstar} follows.

Finally, in view of \eqref{prop8ujstar} and since the intervals $J_\ell$ have disjoint interiors, \eqref{prop13ujstar} contradicts \eqref{prop5ujstar}.
Therefore hypothesis \eqref{propujstarcontrad} cannot be valid and we have proved~\eqref{jointind2}$_j$.

As for property \eqref{jointind3}$_j$ with $j\le q-1$, it follows from \eqref{jointind3}$_{j-1}$, \eqref{jointind2}$_j$ and \eqref{defujstar}.
We have thus proved \eqref{jointind1}-\eqref{jointind3}$_j$ for all $j\in \{1,\dots,q-1\}$ and \eqref{jointind1b}
by induction.

{\bf Step~6.} {\it Conclusion.}
We finally select the critical values of the parameters $\mu:=\Lambda$,
where $\Lambda\in [0,1]^q$ is defined by induction as follows:
\be\label{prop9ujstar} 
\Lambda_q=\mu_q^*  \qquad\hbox{ and }\qquad  \Lambda_{q-j}=\mu_{q-j}^*(\Lambda_{q-j+1},\dots,\Lambda_q),\quad j=1,\dots,q-1.
\ee
Let us check that the solution $u(x,t)=u(\Lambda;x,t)$,
corresponding to the initial data $\phi=\Phi_\Lambda$, has all the required properties.

We observe that, for all $j\in\{0,\dots,q\}$, we have
\be\label{prop10ujstar}
u(x,t)=u^*_j(\tilde\Lambda_{j+1};x,t),
\ee
where we recall that the notation $u_q^*(\tilde\mu_{q+1};x,t)$
is understood as $u_q^*(x,t)$ (cf.~after \eqref{defujstar}).
Indeed \eqref{prop10ujstar} follows by induction, noting that it is true 
for $j=0$ by definition (cf.~\eqref{defujstar0})
and that, if \eqref{prop10ujstar} is true at the level $j-1$ for some $j\in\{1,\dots,q\}$, then
\eqref{defujstar} and \eqref{prop9ujstar} give
$$u^*_j(\tilde\Lambda_{j+1};x,t)=
u_{j-1}^*\bigl(\mu_j^*(\tilde\Lambda_{j+1}),\tilde\Lambda_{j+1};x,t\bigr)=
u^*_{j-1}(\tilde\Lambda_j;x,t)=u(x,t).$$
Then applying \eqref{jointind1b} and \eqref{prop10ujstar} with $j=q$, it follows that, for all $\ell\in\{1,\dots,q\}$,
there exists $T_\ell\in J_\ell$ such that
\be\label{prop11ujstar}
\begin{cases}
u(0,\cdot)=0 \hbox{ in $J_\ell$ \ and \ }
u_x(0,T_\ell)=\infty
 &\hbox{ if $\xi_{i_\ell}=0$} \\
\noalign{\vskip 1mm}
u_x(0,\cdot)=\infty \hbox{ in $J_\ell$ \ and \ }
u(0,T_\ell)=0
&\hbox{ if $\xi_{i_\ell}=2$.}
\end{cases}
\ee
In particular, $T_1,\dots,T_q\in\mathcal{T}$.
Moreover, in view of \eqref{jointLBC1}-\eqref{jointLBC2}, we actually have 
$$T_\ell\in{\rm int}(J_\ell),\quad \ell\in\{1,\dots,q\}.$$
Since $\mathcal{T}$ is finite by Theorem~\ref{thmz1}(i), 
each $T_\ell$ has a neighborhood containing no other point of $\mathcal{T}$
and it follows from Theorem~\ref{thmz1}(ii) that, for some small $\eps>0$,
\be\label{prop12ujstar}
\begin{cases}\hbox{$u$ is classical on $[T_\ell-\eps,T_\ell+\eps]\setminus\{T_\ell\}$}
 &\hbox{ if $\xi_{i_\ell}=0$} \\
\noalign{\vskip 1mm}
\hbox{$u(0,\cdot)>0$ in $[T_\ell-\eps,T_\ell+\eps]\setminus\{T_\ell\}$}
&\hbox{ if $\xi_{i_\ell}=2$,}
\end{cases}
\ee
i.e.~$T_\ell$ is a GBU time without LBC if $\xi_{i_\ell}=0$ and a bouncing time if $\xi_{i_\ell}=2$.
In what follows we denote
$$\Theta=\{T_1,\dots,T_q\}=\Theta_0\cup \Theta_2,$$
where
\be\label{defTheta0}
\Theta_0=\Bigl\{t\in\Theta;\ \hbox{$u$ is classical on $[t-\eps,t+\eps]\setminus\{t\}$}\Bigr\}
\ee
and
\be\label{defTheta2}
\Theta_2=\Bigl\{t\in\Theta;\ \hbox{$u(0,\cdot)>0$ in  $[t-\eps,t+\eps]\setminus\{t\}$}\Bigr\}.
\ee

In order to conclude, it remains to enumerate the other elements of $\mathcal{T}$,
by counting the corresponding drops of the function $N(t)$.
Recall that the indices $\kappa_1,\dots,\kappa_{d+1}$ 
are defined in \eqref{defkappa1}.
We claim that for each $j\in \{1,\dots,k\}$, the interval $(\hat s_{\kappa_j},\hat s_{\kappa_{j+1}})$ 
contains {\it at least}
\be\label{countS1}
\begin{cases}
\hbox{$1$ element of $\Theta_0$} &\hbox{ if $\xi_{\kappa_j}=0$} \\ 
\noalign{\vskip 2mm}
\hbox{$2$ elements of $\mathcal{T}\setminus\Theta$} &\hbox{ if $\xi_{\kappa_j}=1$} \\
\noalign{\vskip 2mm}
\hbox{$(\kappa_{j+1}-\kappa_j-1)$ elements of $\Theta_2$ and $2$ elements of $\mathcal{T}\setminus\Theta$} 
&\hbox{ if $\xi_{\kappa_j}=2$} \\
\end{cases}
\ee
(for the example from~Fig.~\ref{FigSign}, this is illustrated in~Fig.~\ref{FigKappa}).

\begin{figure}[h]
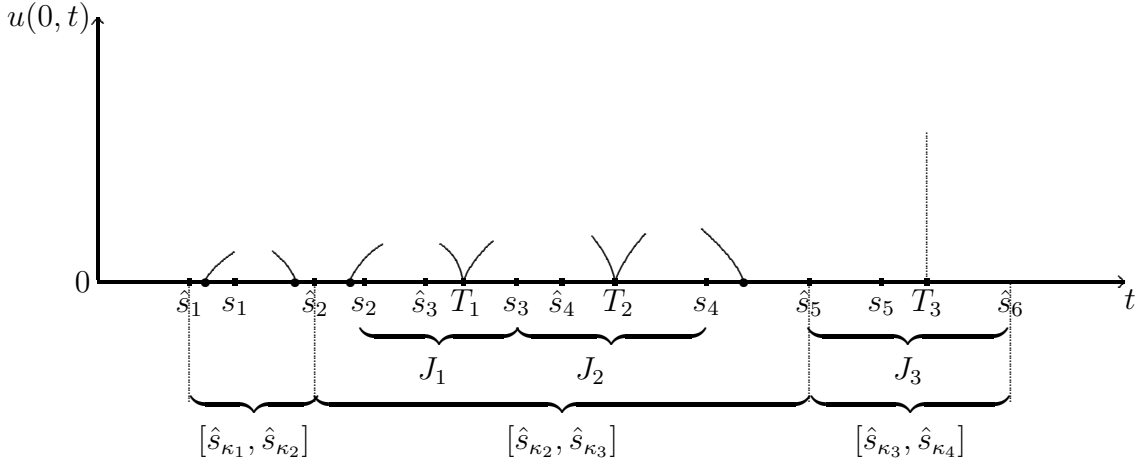

\[
\beginpicture
\setcoordinatesystem units <1cm,1cm>
\setplotarea x from -6 to 6, y from -0.5 to 3.5

\setdots <0pt>
\linethickness=1pt
\putrule from -5 0 to 8.5 0
\putrule from -5 0 to -5 3.5

\put {$\underbrace{\phantom{AAAAAaaaa1}}$}  [ct] at 5.65 -0.3
\put {$J_3$}  [ct] at 5.65 -1  %%J_1
\put {$\underbrace{\phantom{AAAAAAAA}}$}  [ct] at 1.75 -0.3
\put {$J_2$}  [ct] at 1.47 -1
\put {$\underbrace{\phantom{AAAAAAa}}$}  [ct] at -0.52 -0.3
\put {$J_1$}  [ct] at -0.6 -1  %%J_3

\put {$\underbrace{\phantom{AAAAAAAa1}}$}  [ct] at 5.68 -1.2
\put {$[\hat s_{\kappa_3},\hat s_{\kappa_4}]$}  [ct] at 5.68  -1.9

\put {$\underbrace{\phantom{AAAAAAAAAAAAAAAAAAAAA}}$}  [ct] at 1.1 -1.2
\put {$[\hat s_{\kappa_2},\hat s_{\kappa_3}]$}  [ct] at 1.1 -1.9 

\put {$\underbrace{\phantom{AAAAa1}}$}  [ct] at -2.95 -1.2
\put {$[\hat s_{\kappa_1},\hat s_{\kappa_2}]$}  [ct] at -2.95  -1.9

\setquadratic

\put {$^{\bullet}$}  [ct] at -3.58 0.05
\put {$^{\bullet}$}  [ct] at -2.4 0.05
\put {$^{\bullet}$}  [ct] at -1.67 0.05
\put {$^{\bullet}$}  [ct] at 3.5 0.05

\plot -3.6 0 -3.45 0.2 -3.2 0.4  /
\plot -2.7 0.4 -2.5 0.2 -2.4 0  /

\plot -1.7 0 -1.5 0.3 -1.25 0.5  /

\plot -0.5 0.5 -0.3 0.3 -0.2 0  /
\plot -0.2 0 -0.05 0.27 0.2 0.55  /

\plot 1.5 0.6 1.7 0.3 1.8 0  /
\plot 1.8 0 1.95 0.31 2.2 0.64  /

\plot 2.94 0.7 3.3 0.33 3.5 0  /

% vertical dotted lines
\setlinear 
\setdots <1pt>
\plot 5.9 0 5.9 2 /
\plot  4.35 0  4.35 -1.6 /
\plot  -2.15 0  -2.15 -1.6 /
\plot  -3.8 0  -3.8 -1.6 /
\plot  7 0  7 -1.6 /

\put {$t$} [lt] at 8.5 -.1
\put {$s_1$}  [ct] at -3.2 -.2
\put {$s_2$}  [ct] at -1.5 -.2
\put {$s_3$}  [ct] at 0.5 -.2
\put {$s_4$}  [ct] at 3 -.2
\put {$s_5$}  [ct] at 5.3 -.2
\put {$\hat s_6$}  [ct] at 7 -.1
\put {$\hat s_5$}  [ct] at 4.35 -.1
\put {$\hat s_4$}  [ct] at 1.1 -.1
\put {$\hat s_3$}  [ct] at -0.7 -.1
\put {$\hat s_2$}  [ct] at -2.15 -.1
\put {$\hat s_1$}  [ct] at -3.8 -.1

\put {$T_3$}  [ct] at 5.9 -.1
\put {$T_2$}  [ct] at 1.84 -.1
\put {$T_1$}  [ct] at -0.16 -.1

\linethickness=1.5pt
\putrule from -3.2 -0.04 to -3.2 0.04
\putrule from -1.5  -0.04 to -1.5  0.04
\putrule from 0.5 -0.04 to 0.5 0.04
\putrule from 3 -0.04 to 3 0.04
\putrule from 5.3 -0.04 to 5.3 0.04

\putrule from 4.35 -0.04 to 4.35 0.04
\putrule from 1.1 -0.04 to 1.1 0.04
\putrule from -0.7 -0.04 to -0.7 0.04
\putrule from -2.15 -0.04 to -2.15 0.04
\putrule from -3.8 -0.04 to -3.8 0.04

\putrule from 5.9 -0.04 to 5.9 0.04
\putrule from 1.8 -0.04 to 1.8 0.04
\putrule from -0.2 -0.04 to -0.2 0.04

\put {$u(0,t)$} [rc] at -5.1 3.5
\put {$0$} [rc] at -5.1 0

\setlinear 
\setdots <0pt> 
\plot 8.425 -.075 8.5 0 /
\plot 8.425 .075 8.5 0 /
\plot -5.075 3.425 -5 3.5 /
\plot -5 3.5 -4.925 3.425 /

\endpicture
\] 

\caption{Illustration of property \eqref{countS1}
 for the example in~Fig.~\ref{FigSign} (here $\kappa_1=1, \kappa_2=2, \kappa_3=5, \kappa_4=6$).
The bullets $_{^\bullet}$ mark additional times in $\mathcal{T}\setminus\Theta$.}
\label{FigKappa}
\end{figure}

Indeed, the first case in \eqref{countS1} is guaranteed by \eqref{prop11ujstar}-\eqref{prop12ujstar}
and the second case by \eqref{jointLBC1}-\eqref{jointLBC2} and Theorem~\ref{thmz1}(ii). 
To check the last case, by \eqref{defkappa}, we see that  
for each $\kappa\in\{\kappa_j,\dots,\kappa_{j+1}-2\}$
the interval $(s_\kappa,s_{\kappa+1})$ contains at least one element of $\Theta_2$, by 
\eqref{prop11ujstar}-\eqref{prop12ujstar},
whereas each of the intervals 
$(\hat s_{\kappa_j},s_{\kappa_j})$ and 
$(s_{\kappa_{j+1}-1},\hat s_{\kappa_{j+1}})$
contains 
at least one element of $\mathcal{T}\setminus\Theta$ by \eqref{jointLBC1}-\eqref{jointLBC2} and Theorem~\ref{thmz1}(ii).

Now, by Proposition~\ref{zerob2}, \eqref{controlphimucp} and \eqref{countS1}, for all $j\in\{1,\dots,d\}$, we have
\be\label{countS1b}
N(\hat s_{\kappa_j})-N(\hat s_{\kappa_{j+1}})\ge 
\begin{cases}
2 &\hbox{ if $\xi_{\kappa_j}=0$ or $1$} \\
\noalign{\vskip 1mm}
2(\kappa_{j+1}-\kappa_j-1)+2
&\hbox{ if $\xi_{\kappa_j}=2$} \\
\end{cases}
\ee
hence 
$$N(\hat s_{\kappa_j})-N(\hat s_{\kappa_{j+1}})\ge 2(\kappa_{j+1}-\kappa_j).$$ 
Therefore,
$$N(\hat s_1)-N(\hat s_{m+1})=N(\hat s_{\kappa_1})-N(\hat s_{\kappa_{d+1}})
\ge 2(\kappa_{d+1}-\kappa_1)=2m.$$
But $N(0)\le 2m$, due to \eqref{intersecmu5}, 
and $N(t)\in \N$ is nonincreasing by Proposition~\ref{ZeroMonot}(ii) and \eqref{controlphimucp}.
Consequently, the inequality in \eqref{countS1b} is actually an equality. 
For all $j\in\{1,\dots,d\}$, setting $I_j:=(\hat s_{\kappa_j},\hat s_{\kappa_{j+1}})$, it then
follows from \eqref{countS1} and Proposition~\ref{zerob2} that $\mathcal{T}\subset(\hat s_m,\hat s_0)$ and that
$$
\hbox{$I_j\cap \mathcal{T}$ consists of exactly}
\begin{cases}
\hbox{$1$ element of $\Theta_0$,} 
&\hbox{ if $\xi_{\kappa_j}=0$} \\ 
\noalign{\vskip 2mm}
\hbox{$2$ elements of $\mathcal{T}\setminus\Theta$,} 
&\hbox{ if $\xi_{\kappa_j}=1$} \\
\noalign{\vskip 2mm}
\hbox{$(\kappa_{j+1}-\kappa_j-1)$ elements of $\Theta_2$} &{} \\
\hbox{and $2$ elements of $\mathcal{T}\setminus\Theta$,} 
&\hbox{ if $\xi_{\kappa_j}=2$.} \\
\end{cases}
$$
On the other hand, by \eqref{defkappa}, we have $\xi_{\kappa_j}=\bar\sigma_j$ if $\bar\sigma_j\le 1$,
whereas $\xi_{\kappa_j}=2$ and $\kappa_{j+1}-\kappa_j=\bar\sigma_j$ otherwise. 
Applying Theorem~\ref{thmz1}(ii), recalling \eqref{jointLBC1}, \eqref{defTheta0}, \eqref{defTheta2}, it follows that
$$
\begin{cases}
\hbox{$u$ is classical in $I_j$ except for a single time in $\Theta_0$,} &\hbox{ if $\bar\sigma_j=0$} \\ 
\noalign{\vskip 2mm}
\hbox{$\{t\in I_j,\ u(0,t)>0\}$ is a nonempty open subinterval of $I_j$,} &\hbox{ if $\bar\sigma_j=1$} \\
\noalign{\vskip 2mm}
\hbox{$\{t\in I_j,\ u(0,t)>0\}$ is a nonempty open subinterval of $I_j$} &{}  \\
\qquad\qquad\qquad\qquad\qquad\qquad\qquad\hbox{minus $\bar\sigma_j-1$ elements of $\Theta_2$,}  &\hbox{ if $\bar\sigma_j\ge 2$.} \\
\end{cases}
$$
Consequently, the assertion in 
Theorem~\ref{conjz1}' is satisfied with $\bar t_j=\hat s_{\kappa_j}$ for $j=1,\dots,$ $d+1$. 
As for property \eqref{regulat1}, it follows from \eqref{controlphimuat1a}, \eqref{controlphimuat1}. The proof is complete.
\end{proof}

\section{Appendix}

We here prove Proposition~\ref{lemconv1}.
The proof of assertion (ii) will use the following simple lemma. 

\begin{lem}\label{lemconv0}
Let $0<X<Y<1$, $g\in C^2([0,1])$
and assume that 
\be\label{g1hyp1}
g(X)>0,\ \ g(Y)>0,
\ee
\be\label{g1hyp2}
g''>0\quad\hbox{on $[X,Y]$.}
\ee
Then there exists $\bar g\in C^2([0,1])$ such that $\bar g_{|[0,X]\cup[Y,1]}=g$, 
$$\bar g\ge g,\quad \bar g>0\ \hbox{ and }\ \bar g''\ge 0 \ \ \hbox{on $[X,Y]$.}$$
  \end{lem}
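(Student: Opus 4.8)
\emph{The plan is} to dispose of the trivial case and then build $\bar g$ by surgery on $g$ over a slightly shrunk interval, carried out \emph{at the level of the first derivative}, so that the required convexity $\bar g''\ge 0$ is built in automatically and only the positivity $\bar g>0$ and the inequality $\bar g\ge g$ remain to be checked.

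\emph{Reductions.} If $g>0$ on $[X,Y]$ we simply take $\bar g=g$, so assume $m:=\min_{[X,Y]}g\le 0$. Since $g''>0$ on $[X,Y]$, $g$ is strictly convex there and $g'$ is strictly increasing; if $g'(X)\ge 0$ then $g$ would be nondecreasing on $[X,Y]$ and $m=g(X)>0$, a contradiction, so $g'(X)<0$, and symmetrically $g'(Y)>0$. Fix $\rho>0$ small, so that $g'<0$ on $[X,X+\rho]$, $g'>0$ on $[Y-\rho,Y]$ and $X+2\rho<Y-\rho$; $\rho$ will be further shrunk at the end. Now set $\bar g:=g$ on $[0,X+\rho]\cup[Y-\rho,1]$ and
$$\bar g(x):=g(X+\rho)+\int_{X+\rho}^{x}\sigma(t)\,dt ,\qquad X+\rho\le x\le Y-\rho,$$
where $\sigma\in C^1([X+\rho,Y-\rho])$ is a \emph{nondecreasing} function to be chosen with: (a) $\sigma$ and $\sigma'$ agree with $g'$ and $g''$ at both endpoints $X+\rho$ and $Y-\rho$; (b) $\int_{X+\rho}^{Y-\rho}\sigma=g(Y-\rho)-g(X+\rho)$; (c) $\{\sigma<0\}\cap[X+\rho,Y-\rho]\subset(X+\rho,X+2\rho)$; (d) $\sigma-g'$ is $\ge 0$ on an initial subinterval and $\le 0$ on the complementary one. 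Condition (a) makes the three pieces glue into a function $\bar g\in C^2([0,1])$ with $\bar g=g$ on $[0,X]\cup[Y,1]$ and $\bar g''=\sigma'\ge 0$ on $[X,Y]$; condition (b) gives $\bar g(Y-\rho)=g(Y-\rho)$, so the pieces are consistent. Since $\bar g-g$ vanishes at $X+\rho$ and $Y-\rho$ and, by (d), is first nondecreasing then nonincreasing on $[X+\rho,Y-\rho]$, we get $\bar g\ge g$ on $[X,Y]$. Finally $\bar g$ is convex on $[X,Y]$ and, by (c) together with $g'<0$ on $[X,X+\rho]$, it is strictly decreasing on $[X,x_\ast]$ and nondecreasing on $[x_\ast,Y]$ for some $x_\ast\in(X+\rho,X+2\rho)$, whence
$$\min_{[X,Y]}\bar g=\bar g(x_\ast)=g(X+\rho)+\int_{X+\rho}^{x_\ast}\sigma\ \ge\ g(X+\rho)-\rho\,\|g'\|_{L^\infty([X,Y])},$$
because $g'(X+\rho)\le\sigma\le 0$ on $[X+\rho,x_\ast]$ and $x_\ast-(X+\rho)<\rho$. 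Shrinking $\rho$, depending only on $g(X)$ and $\|g'\|_{L^\infty([X,Y])}$ (so that $g(X+\rho)\ge\tfrac12 g(X)$ and $\rho\,\|g'\|_{L^\infty([X,Y])}\le\tfrac14 g(X)$), the right-hand side is $\ge\tfrac14 g(X)>0$, so $\bar g>0$ on $[X,Y]$.

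\emph{The one delicate point} is the construction of $\sigma$ satisfying (a)--(d), an elementary one-dimensional interpolation. The function $g'$ is itself nondecreasing and $C^1$ on $[X+\rho,Y-\rho]$, runs from the negative value $g'(X+\rho)$ to the positive value $g'(Y-\rho)$, and has integral $g(Y-\rho)-g(X+\rho)$; we deform it by forcing it, through a smooth monotone transition matching $g',g''$ at $X+\rho$, to reach a nonnegative value already on $(X+\rho,X+2\rho)$, and then letting it track $g'$ from slightly below on the remainder, matching $g',g''$ at $Y-\rho$. The endpoint $1$-jet conditions (a) are compatible with monotonicity since the prescribed values are correctly ordered and the prescribed derivatives are positive; the sign pattern (d) is visibly produced by this deformation; and (b) is then arranged by a continuity argument, tuning the amount by which $\sigma$ is dropped below $g'$ on the right portion so that the integral runs continuously through the target value (which is attained by $\sigma=g'$ itself). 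Writing an explicit formula via a fixed smooth monotone cutoff is routine. I expect this step --- simultaneously reconciling monotonicity, the two prescribed $1$-jets, the integral normalization and the sign pattern --- to be the only place where care is needed; everything else is the bookkeeping above together with the case reductions and the choice of $\rho$. Equivalently, one may describe $\bar g$ as $\max(g,L)$ for a constant $L\in(0,\min(g(X),g(Y)))$ with the two resulting convex corners rounded off, which is the same object seen from the graph rather than from its derivative, the corner-smoothing then being the delicate step.
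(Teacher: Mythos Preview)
Your framework is sound and close in spirit to the paper's: both build $\bar g$ on the middle interval by prescribing a nondecreasing first derivative that matches $g'$ (with its derivative) at the two endpoints and has the correct integral. The genuine gap is condition~(c). Requiring $\{\sigma<0\}\subset(X+\rho,X+2\rho)$ together with $\sigma$ nondecreasing forces
\[
\int_{X+\rho}^{Y-\rho}\sigma\ \ge\ \int_{X+\rho}^{X+2\rho}\sigma\ \ge\ \rho\,g'(X+\rho),
\]
which is $O(\rho)$, whereas (b) demands $\int\sigma=g(Y-\rho)-g(X+\rho)\to g(Y)-g(X)$ as $\rho\to 0$. When $g(Y)<g(X)$ these are incompatible for the small $\rho$ you need for positivity, and no $\sigma$ satisfying (a)--(d) exists. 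Concretely, for $g(x)=10(x-0.9)^2$ on $[X,Y]=[0,1]$ one has $g(0)=8.1$, $g(1)=0.1$, so (b) asks $\int\sigma\approx-8$ while (c) forces $\int\sigma\ge-18\rho$. Your sketched construction of $\sigma$ (``reach a nonnegative value on $(X+\rho,X+2\rho)$, then track $g'$ from slightly below'') also breaks here: on the stretch where $g'$ is still negative (up to $x=0.9$), tracking $g'$ from below would make $\sigma<0$, violating (c).

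The paper sidesteps this by \emph{not} asking $\sigma\ge 0$ in the middle. It takes $\bar g'=L+\zeta(g'-L)$ with $L=\frac{g(Y)-g(X)}{Y-X}$ and a cutoff $\zeta$ equal to $1$ near $X,Y$ and $0$ in between; then $\bar g'\equiv L$ on the bulk, so $\bar g$ is close to the secant line, which is $\ge\min(g(X),g(Y))>0$. Your closing remark that $\bar g$ is ``equivalently'' $\max(g,\text{const})$ with smoothed corners is a correct heuristic target, but it is \emph{not} the same object as your primary construction (it violates your (c) whenever the zero of $g'$ lies beyond $X+2\rho$), and its smoothing still needs the integral bookkeeping to keep $\bar g(Y)=g(Y)$. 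A minimal repair of your argument is to use the mirror of (c) (localise $\{\sigma>0\}$ near $Y-\rho$) when $g(Y)<g(X)$; a cleaner repair is to drop (c) altogether and obtain positivity from proximity to the secant, which is precisely the paper's route.
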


\begin{proof} 
Let $L=\frac{g(Y)-g(X)}{Y-X}$.
Owing to assumption \eqref{g1hyp2}, there exists $\eta\in (0,(Y-X)/2)$ such that
$X_1:=X+\eta$ and $Y_1:=Y-\eta$ satisfy
\be\label{g1auxil0}
g'<L\ \hbox{ on $[X,X_1]$ \quad and \quad } g'>L \ \hbox{ on $[Y_1,Y]$}.
\ee
Let $\theta\in C^1([0,1])$ satisfy $\theta(0)=1$, $\theta(1)=0$, $\theta'(0)=\theta'(1)=0$ and $\theta'\le 0$.
Next, for $\eps_1,\eps_2\in (0,\eta)$ to be chosen, set 
$$\zeta(x)=
\begin{cases}
\theta\bigl(\eps_1^{-1}(x-X)\bigr) &\hbox{ on $[X,X+\eps_1]$} \\
\noalign{\vskip 1mm}
0 &\hbox{ on $[X+\eps_1,Y-\eps_2]$} \\
\noalign{\vskip 1mm}
\theta\bigl(\eps_2^{-1}(Y-x)\bigr) &\hbox{ on $[Y-\eps_2,Y]$.} \\
\end{cases}$$
Obviously $\zeta\in C^1([X,Y])$ satisfies
\begin{eqnarray}
&&\hskip-15mm \zeta(X)=\zeta(Y)=1,\quad \zeta'(X)=\zeta'(Y)=0,\quad 0\le\zeta\le 1, \label{g1auxil1} \\
\noalign{\vskip 1mm}
&&\hskip-15mm \zeta'\le 0 \ \hbox{ on $[X,X_1]$,} \quad
\zeta=0 \ \hbox{ on $[X_1,Y_1]$,}\quad  \zeta'\ge 0\ \hbox{ on $[Y_1,Y]$}. \label{g1auxil2}
\end{eqnarray}
On the other hand, we compute
$$\int_X^{X_1} \zeta(x)(g'(x)-L)\,dx=\eps_1\int_0^1 \theta(y)(g'(X+\eps_1y)-L)\,dy\sim -c_1\eps_1\quad \eps_1\to 0$$
and
$$I(\eps_2):=\int_{Y_1}^Y \zeta(x)(g'(x)-L)\,dx=\eps_2\int_0^1 \theta(y)(g'(Y-\eps_2y)-L)\,dy\sim c_2\eps_2,\quad \eps_2\to 0,$$
where $c_1=(L-g'(X))\int_0^1 \theta(y)\,dy>0$ and $c_2=(g'(Y)-L)\int_0^1 \theta(y)\,dy>0$.
Since $I(\eps_2)$ depends continuously on $\eps_2$, 
we may thus choose $\eps_1,\eps_2\in (0,\eta)$ small such that 
\begin{eqnarray}
&&\hskip-14mm \int_X^Y \zeta(x)(g'(x)-L)\,dx=0, \label{g1auxil3} \\
&&\hskip-15mm \int_X^Y \zeta(x)|g'(x)-L|\,dx<\min(g(X),g(Y)). \label{g1auxil4}
\end{eqnarray}

Now define
$$\bar g(x)=
\begin{cases}
g(x) &\hbox{ on $[0,X]\cup[Y,1]$} \\
\noalign{\vskip 1mm}
g(X)+L(x-X)+\int_X^x \zeta(y)(g'(y)-L)\,dy &\hbox{ on $(X,Y).$} \\
\end{cases}$$
We have $\bar g'=L+\zeta(g'-L)$ and $\bar g''=\zeta'(g'-L)+\zeta g''$ on $(X,Y)$.
By \eqref{g1auxil1} and \eqref{g1auxil3}, we deduce that $\bar g$ is $C^2$ at $x=X$ and $Y$, hence on $[0,1]$,
whereas \eqref{g1hyp2}, \eqref{g1auxil0} and \eqref{g1auxil2} guarantee that $\bar g''\ge 0$ on $[X,Y]$.
Also, since $g(X)+L(x-X)\ge \min(g(X),g(Y))$ on $[X,Y]$, \eqref{g1auxil4} implies $\bar g>0$ on $[X,Y]$.

Let us finally show that $\bar g\ge g$ on $[X,Y]$.
Using \eqref{g1auxil0} and $0\le\zeta\le 1$, we get
$$\bar g(x)=g(X)+ \int_X^x [L+\zeta(y)(g'(y)-L)]\,dy \ge g(X)+ \int_X^x g'(y)\,dy=g(x)
\ \hbox{ on $[X,X_1]$}$$
and
$$\begin{aligned}
\bar g(x)
&=g(Y)+L(x-Y)-\int_x^Y \zeta(y)(g'(y)-L)\,dy \\
&=g(Y)- \int_x^Y [L+\zeta(y)(g'(y)-L)]\,dy \ge g(Y)-\int_x^Y g'(y)\,dy=g(x)
\ \hbox{ on $[Y_1,Y].$}
\end{aligned}$$
On the other hand, since $\zeta=0$ on $[X_1,Y_1]$, $\bar g(x)$ is an affine function on $[X_1,Y_1]$.
Also, by convexity, we have $g(x)\le \varphi(x):=g(X_1)+L_1(x-X_1)$ on $[X_1,Y_1]$, with $L_1=\frac{g(Y_1)-g(X_1)}{Y_1-X_1}$.
Since $\bar g, \varphi$ are affine on $[X_1,Y_1]$ with
$\bar g(X_1)\ge g(X_1)=\varphi(X_1)$ and $\bar g(Y_1)\ge g(Y_1)=\varphi(Y_1)$,
we deduce that $\bar g\ge\varphi\ge g$ on $[X_1,Y_1]$.
\end{proof}

\begin{proof} [Proof of Proposition~\ref{lemconv1}.]
(i) Let $k_0=U^*(\textstyle\frac12)$ and let
$W\in C^2([\textstyle\frac12,\textstyle\frac32])$ be symmetric with respect to $x=1$, 
with 
$$\hbox{$W(0)\le \textstyle\frac{k_0}{2}$,\quad 
$W(x)=k_0(\textstyle\frac32-x)$ on $[\textstyle\frac54,\textstyle\frac32]$,\quad
$W''<0$ on $[1,\textstyle\frac54]$, \quad $W'<0$ on $(1,\textstyle\frac32]$.}$$
Then choosing $a\in(0,\textstyle\frac14)$ sufficiently small and setting $W_a(x)=W(1+(2a)^{-1}(x-1))$ and
$I_0=[1-a,1+a]$,
the function $W_a\in C^2(I_0)$ is symmetric with respect to $x=1$ and satisfies
$$\hbox{$W_a(1+a)=0$,\qquad $W_a<U^*$ on $I_0$,}$$
$$\hbox{$W_a'<0$ on $(1,1+a]$,\qquad
$W_a'>{U^*}'>0$ on $[1-a,1-\textstyle\frac a3]$,}$$
$$\hbox{
$W_a''<{U^*}''<0$ on $[1-\textstyle\frac a3,1+\textstyle\frac a3]$,\qquad
$W_a''(x)=0$ on $I_0\setminus (1-\textstyle\frac a2,1+\textstyle\frac a2)$.}$$
Let $K>\max(\tilde K(a/4),U^*(1/2))$,
where $\tilde K$ is given by Proposition~\ref{lem2}(ii).
The function $V=\frac{8K}{k_0}W_a$ satisfies 
 \be\label{eq1lemconv1}
 \hbox{$V\ge 2K$ in $[1-\textstyle\frac a2,1+\textstyle\frac a2]$.}
 \ee
We next modify $V$ near $x=1\pm a$ so as to extend it in a convex way to a $C^2(\R)$ function with compact support.
Namely, there exist $a_2,\bar a_2$ with 
$$\textstyle\frac a3<a_1=\textstyle\frac a2<\bar a_2<a<a_2<2a$$
 and a function $\psi\in C^2(\R)$, symmetric with respect to $x=1$, such that ${\rm Supp}(\psi)=I:=[1-a_2,1+a_2]$ and
$$\hbox{$\psi=V$ on $[1-\bar a_2,1+\bar a_2]$,\qquad 
$\psi<U^*,\ \psi'<0,\ \psi''\ge 0\ $ on $[1+\bar a_2,1+a_2)$.}$$
Then, for all $\lambda\in [0,k_0/8K]$, we have $\lambda V \le W_a<U^*$ on $I$,
whereas for all $\lambda\in (k_0/8K,1]$, we have
$$\hbox{$\lambda \psi-U^*<0$ 
on $I\setminus(1-\bar a_2,1+\bar a_2)$,\qquad
$(\lambda \psi-U^*)'>0$ on $[1-\bar a_2,1-\textstyle\frac a3]$, 
}$$
$$\hbox{$(\lambda \psi-U^*)''<0$ on $[1-\textstyle\frac a3,1+\textstyle\frac a3]$,\qquad 
$(\lambda \psi-U^*)'<0$ on $[1+\textstyle\frac a3,1+a_2]$.}$$ 
It follows that, for any $\lambda\in [0,1]$, $\lambda \psi-U^*$ has at most 2 zeros in $I$.
Moreover, by \eqref{eq1lemconv1}, we have $\psi-U^*>0$ in $[1-\textstyle\frac a2,1+\textstyle\frac a2]$,
so that $\psi-U^*$ has exactly one zero in $[1-a_2,1)$ and 
one zero in $(1,1+a_2]$.
Using $\eps^\alpha U^*(\eps^{-1}x)\equiv U^*(x)$, it is easily checked that $\psi_\eps(x)=\eps^\alpha\psi(\eps^{-1}x)$ 
enjoys all properties \eqref{psiepsP1}-\eqref{psiepsPZ}.

Moreover, since $K>\tilde K(a/4)$, \eqref{psiepsLBC} is a consequence of Proposition~\ref{lem2}(ii).

(ii) First we note that, assuming $a<1/8$, we have $a_2<2a<1/4$, hence $(1+a_2)\bar\eps<5\eps/8<(1-a_2)\eps$,
so that $\psi_{\bar\eps}$ and $\psi_\eps$ have disjoint supports. The
existence of $h$ with properties \eqref{psiepsPZ1}-\eqref{psiepsPZ2} is a consequence of Lemma~\ref{lemconv0}, applied with
$$X=(1+a_1)\bar\eps,\quad Y=(1-a_1)\eps, \quad g=\psi_{\bar\eps}+\psi_\eps-Kx^\alpha,$$
and setting $h:=\bar g+Kx^\alpha$ with $\bar g$ given by the lemma.
Assumptions \eqref{g1hyp1} and \eqref{g1hyp2} of Lemma~\ref{lemconv0} follow from 
\eqref{psiepsP2}, \eqref{psiepsP4} and $(x^\alpha)''<0$.

Let us check property \eqref{psiepsPZ2b}.
Since $h-Kx^\alpha$ is convex in $[X,Y]$ by \eqref{psiepsPZ2}, we have
$$\sup_{[X,Y]}h\le KY^\alpha+ \max\bigl\{h(X)-KX^\alpha,h(Y)-KY^\alpha\bigr\}.$$
Using \eqref{psiepsP3}, \eqref{psiepsPZ1} and $\bar\eps\le \eps/2$, it follows that
$$\begin{aligned}
\sup_{[X,Y]}h
&\le\max\bigl\{h(X)+K(Y^\alpha-X^\alpha),h(Y)\bigr\}\\
&\le\max\bigl\{\psi_{\bar\eps}\bigl((1+a_1)\bar\eps\bigr)+K((1-a_1)\eps)^\alpha,\psi_\eps\bigl((1-a_1)\eps\bigr)\bigr\}\\
&\le\max\bigl\{K_1\bar\eps^\alpha+K\eps^\alpha,K_1\eps^\alpha\bigr\}
\le\max\bigl\{2^{-\alpha}K_1+K,K_1\bigr\}\eps^\alpha=K_1\eps^\alpha,
\end{aligned}$$
assuming $K_1\ge (1-2^{-\alpha})^{-1}K$ in \eqref{psiepsP3} without loss of generality.
Since also $h\le K_1\eps^\alpha$ in $[0,X]\cup[Y,1]$ by \eqref{psiepsP3} and \eqref{psiepsPZ1}, we deduce \eqref{psiepsPZ2b}.

Let us finally check \eqref{psiepsPZ3}. For $\lambda\in [c_p/K,1]$, 
\eqref{psiepsP2}, \eqref{psiepsPZ1} and the first part of \eqref{psiepsPZ2} yield
 \be\label{proofapp1}
 \lambda h+(1-\lambda)\hat\psi-U^*\ge (\lambda K-c_p)x^\alpha>0
\quad\hbox{in $[\bar\eps,\eps]$.}
\ee
Thus assume $\lambda\in [0,c_p/K)$. By \eqref{psiepsP2}, \eqref{psiepsPZ1}, we have
 \be\label{proofapp2}
\lambda h+(1-\lambda)\hat\psi-U^*=\psi_\eps+\psi_{\bar\eps}-U^*>0
\quad\hbox{in $[\bar\eps,(1+a_1)\bar\eps]\cup[(1-a_1)\eps,\eps]$}
\ee
whereas, by the second parts of \eqref{psiepsP4} and \eqref{psiepsPZ2}, we get
 \be\label{proofapp3}
 \bigl[\lambda h+(1-\lambda)\hat\psi-U^*\bigr]''
\ge [(\lambda K-c_p)x^\alpha]''>0
\quad\hbox{in $[(1+a_1)\bar\eps,(1-a_1)\eps]$.}
\ee
Combining \eqref{proofapp1}-\eqref{proofapp3}, we deduce \eqref{psiepsPZ3} and 
this concludes the proof.
\end{proof}

{\bf Acknowledgement.} 
NM is supported by the JSPS Grant-in-Aid for Scientific Research (B) (No.20H01814).
PhS is partially supported by the Labex MME-DII (ANR11-LBX-0023-01).


\begin{thebibliography}{99}
\bibitem{A96} 
{\sc N. Alaa,}
Weak solutions of quasilinear parabolic equations with measures as initial data,
\textit{Ann. Math. Blaise Pascal} 3 (1996), 1-15.
              
\bibitem{ABG89} 
{\sc N.D. Alikakos, P.W. Bates, C.P. Grant,}
Blow up for a diffusion-advection equation,
\textit{Proc. Roy. Soc. Edinburgh Sect. A } 113 (1989), 181-190.
		
\bibitem{An88}
{\sc S. Angenent,}
The zero set of a solution of a parabolic equation,
\textit{J. Reine Angew. Math. 390} (1988), 79-96.
		
\bibitem{ARS04} 
{\sc J.M. Arrieta, A. Rodriguez-Bernal, Ph. Souplet,}
Boundedness of global solutions for nonlinear parabolic equations
involving gradient blow-up phenomena,
 \textit{Ann. Sc. Norm. Super. Pisa Cl. Sci.} (5) 3 (2004), 1-15.              
                        		
\bibitem{AS20}
{\sc A. Attouchi, Ph. Souplet,}
Gradient blow-up rates and sharp gradient estimates for diffusive Hamilton-Jacobi equations,
\textit{Calculus of Variations and PDE} 59, (2020) 153.

\bibitem{Baras-Cohen:JFA71} 
{\sc P. Baras and L. Cohen,}
Complete blow-up after $ T_{\max} $ for the solution of a semilinear heat equation,
\textit{J. Functional Analysis} 71 (1987), 142-174.

\bibitem{BB}
{\sc G. Barles, J. Burdeau,}
The Dirichlet problem for semilinear second-order degenerate elliptic equations and 
applications to stochastic exit time control problems,
\textit{ Comm. Partial Differential Equations} 20 (1995), 129-178.

\bibitem{BDL04} 
{\sc G. Barles, F. Da Lio,}
On the generalized Dirichlet problem for viscous Hamilton-Jacobi equations,
\textit{J. Math. Pures Appl.} 83 (2004), 53-75.

\bibitem{BSW} 
{\sc M. Ben-Artzi, Ph. Souplet, F.B. Weissler,}
The local theory for viscous Hamilton-Jacobi equations in Lebesgue spaces,
\textit{J. Math. Pures Appl.} 81 (2002), 343-378.

\bibitem{BKL} 
{\sc S. Benachour, G. Karch, Ph. Lauren\c cot,}
Asymptotic profiles of solutions to viscous Hamilton-Jacobi equations,
\textit{J. Math. Pures Appl.} 83 (2004), 1275-1308.

\bibitem{CG96} 
{\sc G.R. Conner, C.P. Grant,}
Asymptotics of blowup for a convection-diffusion equation with conservation,
\textit{Differential Integral Equations} 9 (1996), 719-728.            

\bibitem{CIL} 
{\sc M. Crandall, H. Ishii, P.-L. Lions,}
User's guide to viscosity solutions of second order partial differential equation,
\textit{Bull. Amer. Math. Soc.} 27 (1992), 1-67.

\bibitem{Esteve}
{\sc C. Esteve}, 
Single-point gradient blow-up on the boundary for diffusive Hamilton-Jacobi equation in domains with
non-constant curvature,
\textit{J. Math. Pures Appl.} 137 (2020),  143-177.                         
                          
\bibitem{FLa} 
{\sc M. Fila, J. Lankeit}, 
Continuation beyond interior gradient blow-up in a semilinear parabolic equation,
\textit{Math. Ann.} 377 (2020), 317-333.

\bibitem{FL94} 
{\sc M. Fila, G.M. Lieberman,}
Derivative blow-up and beyond for quasilinear parabolic equations,
\textit{Differential Integral Equations} 7 (1994), 811-821.              
              
\bibitem {FPS19}
{\sc R. Filippucci, P. Pucci, Ph. Souplet}, 
A Liouville-type theorem in half space and its application to the gradient blow up behavior for 
superquadratic diffusive Hamilton-Jacobi equations.
\textit{Comm. Partial Differential Equations} 45 (2020), 321-349.
               
\bibitem{FMP05} 
{\sc M. Fila, H. Matano, P. Pol\'a\v cik,}
Immediate regularization after blow-up,
\textit{SIAM J. Math. Anal.} 37 (2005), 752-776.
 
 \bibitem{FlSo}
{\sc W.H. Fleming, H.M. Soner,}
Controlled Markov Processes and Viscosity Solutions, in: Appl. Math., Springer-Verlag, New-York, 1993.       
              
\bibitem{Fri} 
{\sc A. Friedman,}
Partial differential equations of parabolic type, Prentice-Hall, 1964.

         
\bibitem{GV97}
{\sc V.A. Galaktionov, J.L. V\'azquez,}
Continuation of blow-up solutions of nonlinear heat equations in several space dimensions,
\textit{Comm. Pure Appl. Math.} 50 (1997), 1-67.

\bibitem{GH08} 
{\sc J.-S. Guo, B. Hu,}
Blowup rate estimates for the heat equation with a nonlinear gradient source term,
\textit{Discrete Contin. Dyn. Syst. 20} (2008), 927-937.    
      
\bibitem{Herrero-Velazquez:CRASP319} 
{\sc M.A. Herrero, J.J.L. Vel\'{a}zquez,}
Explosion de solutions des \'{e}quations paraboliques semilin\'{e}aires supercritiques, 
\textit{C. R. Acad. Sci. Paris} 319 (1994), 141-145.

\bibitem{HM04} 
{\sc M. Hesaaraki, A. Moameni,}
Blow-up positive solutions for a family of nonlinear parabolic equations in general domain in $\mathbb{R}^N$,
\textit{Michigan Math. J.} 52 (2004), 375-389.           

\bibitem{LS}
{\sc Y.-X. Li, Ph. Souplet,}
Single-point gradient blow-up on the boundary for diffusive Hamilton-Jacobi equations in planar domains,
\textit{Comm. Math. Phys.} 293 (2009), 499-517.
                                                  
\bibitem{MM09}
{\sc H. Matano, F.  Merle}, 
 Classification of type I and type II behaviors for a supercritical nonlinear heat equation,
\textit{J. Funct. Anal.} 256 (2009),  992-1064.
             
\bibitem{Mizoguchi:MZ239} 
 {\sc N. Mizoguchi,}
On the behavior of solutions for a semilinear parabolic equation with supercritical nonlinearity, 
\textit{Math. Z.} 239 (2002), 215-229.

\bibitem{Mizoguchi:JFA220}
 {\sc N. Mizoguchi,}
Various behaviors of solutions for a semilinear heat equation after blowup, 
\textit{J. Functional Analysis} 220 (2005), 214-227.

\bibitem{Miz1} 
{\sc N. Mizoguchi,}
Multiple blowup of solutions for a semilinear heat equation,
\textit{Math. Ann.} 331 (2005), 461-473.

 \bibitem{Miz2} 
 {\sc N. Mizoguchi,}
Multiple blowup of solutions for a semilinear heat equation II,
\textit{J. Differential Equations} 231 (2006) 182-194.

\bibitem{Miz-Vaz}
{\sc N. Mizoguchi, J. L. Vazquez} 
Multiple blowup for nonlinear heat equations at different places and different times, 
\textit{Indiana Univ. Math. J.} 56 (2007), 2859-2886.

\bibitem{Ni-Sacks-Tavantzis:JDE54} 
{\sc W.-M. Ni, P.E. Sacks, J. Tavantzis,}
On the asymptotic behavior of solutions of certain quasi-linear parabolic equations, 
\textit{J. Differential Equations} 54 (1984), 97-120.

\bibitem{PS} 
{\sc A. Porretta, Ph. Souplet,}
The profile of boundary gradient blow-up for the diffusive Hamilton-Jacobi equation,
\textit{International Math. Research Notices}  17 (2017), 5260-5301.

 \bibitem{PS2} 
{\sc A. Porretta, Ph. Souplet,}
Analysis of the loss of boundary conditions for the diffusive Hamilton-Jacobi equation,
\textit{Ann. Inst. H. Poincar\'e Anal. Non Lin\'eaire} 34 (2017), 1913-1923.
   
\bibitem{PS3} 
{\sc A. Porretta, Ph. Souplet,}
Blow-up and regularization rates, loss and recovery of boundary conditions for the superquadratic viscous Hamilton-Jacobi equation,
\textit{J. Math. Pures Appl.} 133 (2020) 66-117.

\bibitem{PZ} 
{\sc A. Porretta, E. Zuazua,}
Null controllability of viscous Hamilton-Jacobi equations,
\textit{Ann. Inst. H. Poincar\'e Anal. Non Lin\'eaire} 29 (2012), 301-333.
 
\bibitem{QR16} 
{\sc A. Quaas, A. Rodr\'\i guez,}
Loss of boundary conditions for fully nonlinear parabolic equations with superquadratic gradient terms,
\textit{J. Differential Equations} 264 (2018), 2897-2935.
 
\bibitem{QS07} 
{\sc P. Quittner, Ph. Souplet,}
Superlinear parabolic problems. Blow-up, global existence and steady states. Second Edition.
Birkh\"{a}user Advanced Texts: Basel Textbooks, Birkh\"{a}user Verlag, Basel, 2019.

\bibitem{S02} 
{\sc Ph. Souplet,}
Gradient blow-up for multidimensional nonlinear parabolic equations with general boundary conditions,
\textit{Differential Integral Equations} 15 (2002), 237-256.

\bibitem{SV}
{\sc Ph. Souplet, J.L. V\'azquez}, 
Stabilization towards a singular steady state with gradient blow-up for a convection-diffusion problem,
\textit{Discrete Contin. Dynam. Systems} 14 (2006), 221-234.

\bibitem{SZ06} 
{\sc Ph. Souplet,  Q.S. Zhang,}
Global solutions of inhomogeneous Hamilton-Jacobi equations,
\textit{J. Anal. Math.} 99 (2006), 355-396.
              
\bibitem{ZL13} 
{\sc Z.-C. Zhang, Z. Li,}
A note on gradient blowup rate of the inhomogeneous Hamilton-Jacobi equations,
\textit{Acta Math. Sci. Ser. B (Engl. Ed.)} 33 (2013), 678-686.
 
\end{thebibliography}
\end{document}